\newcommand{\mres}{\mathbin{\vrule height 1.6ex depth 0pt width
		0.13ex\vrule height 0.13ex depth 0pt width 1.3ex}}
\newtheorem{theo}{Theorem}
\newtheorem{lem}{Lemma}[section]
\newtheorem{defi}[lem]{Definition}
\newtheorem{cor}[lem]{Corollary}
\newtheorem{prop}[lem]{Proposition}
\theoremstyle{remark}
\newtheorem{remark}[lem]{Remark}
\newcommand{\eps}{\varepsilon}
\newcommand{\R}{\mathbb{R}}
\newcommand{\C}{\mathbb{C}}
\newcommand{\N}{\mathbb{N}}
\newcommand{\Mm}{\mathbb{H}^+}
\newcommand{\Pmre}{\mathcal S^+}
\newcommand{\Ppmre}{\mathcal S^{++}}
\newcommand{\Ma}{\mathbb{P}}
\newcommand{\Sm}{\mathcal{H}}
\newcommand{\Pm}{\mathcal{H}^{++}}
\newcommand{\Po}{{\mathcal{H}^+}}
\newcommand{\Pon}{{\mathcal{P}}}
\newcommand{\la}{\Lambda}
\newcommand{\p}{\partial}
\numberwithin{equation}{section}
\DeclareMathOperator{\dive}{div}
\newcommand{\rhp}{G}
\newcommand{\rd}{\mathrm{d}}
\newcommand{\der}[2]{\frac{\rd#1}{\rd#2}}
\newcommand{\Mmm}{\mathbb{P}^{++}_\la}
\newcommand{\Mml}{\mathbb{P}_\la}
\renewcommand{\S}{\mathsf{S}}
\newcommand{\dFR}{d_{FR}}
\newcommand{\dih}{d_{H}}
\newcommand{\dihs}{d_{FR	}}
\newcommand{\Smv}{\mathbb{H}}
\newcommand{\narrowcv}{\overset{\ast}{\rightharpoonup}}
\renewcommand{\u}{U}
\newcommand{\qtext}[1]{\quad\mbox{#1}\quad}
\title[Schr\"odinger problem on the Fisher-Rao space]{The Schr\"odinger problem on the non-commutative Fisher-Rao space}
\author[L.~Monsaingeon]{L\'eonard Monsaingeon}
\address[L.~Monsaingeon]{GFM Universidade de Lisboa, Campo Grande, Edif\'icio C6, 1749-016 Lisboa, Portugal
\& IECL Universit\'e de Lorraine, F-54506 Vandoeuvre-l\`es-Nancy Cedex, FRANCE
}
\email{leonard.monsaingeon@univ-lorraine.fr}
\author[D.~Vorotnikov]{Dmitry Vorotnikov}
\address[D.~Vorotnikov]{University of Coimbra, CMUC, Department of
Mathematics, 3001-501 Coimbra, Portugal}{}
\email{mitvorot@mat.uc.pt}
\subjclass[2020]{28A33, 47A56, 49J45, 49Q20, 58B20}
\keywords{entropic regularization, matrix-valued measure, optimal transport,  gamma-convergence}
\begin{document}
\maketitle
\begin{abstract}
We present a self-contained and comprehensive study of the Fisher-Rao space of matrix-valued non-commutative probability measures, and of the related Hellinger space. Our non-com\-mutative Fisher-Rao space is a natural generalization of the classical commutative Fisher-Rao space of probability measures and of the Bures-Wasserstein space of Hermitian positive-definite matrices. We introduce and justify a canonical entropy on the non-commutative Fisher-Rao space, which differs from the von Neumann entropy. We consequently derive the analogues of the heat flow, of the Fisher information, and of the dynamical Schr\"odinger problem. We show the $\Gamma$-convergence of the $\eps$-Schr\"odinger problem towards the geodesic problem for the Fisher-Rao space, and, as a byproduct, the strict geodesic convexity of the entropy.
\end{abstract}
\tableofcontents
\newpage
\section{Introduction}

Recently, deep connections have been established between \emph{optimal transport} and \emph{quantum information theory}, two very active and proficient fields.
The prototypical quantities involved are the quadratic \emph{Wasserstein distance} between two probability measures $\rho_0,\rho_1\in\mathcal P(\mathcal X)$ over a polish space $(\mathcal X,d)$
$$
W_2^2(\rho_0,\rho_1)=\min\limits_\pi\Bigg\{ \int_{\mathcal X\times\mathcal X}d^2(x,y)\,\rd \pi(x,y):
\qquad
\pi\in\mathcal P(\mathcal X\times\mathcal X)\mbox{ has marginals }\rho_0,\rho_1\Bigg\}
$$
and the \emph{Bures distance} between Hermitian positive semi-definite (PSD) matrices $A_0,A_1$ (or more generally two quantum operators)
$$
d_B^2(A_0,A_1)=\min\limits_R\Big\{ \left|\sqrt{ A_0} - R\sqrt{ A_1}\right|^2:\qquad R\mbox{ is unitary}\Bigg\}.
$$
Clearly these two problems share a similar variational structure, where one aims at finding an optimal coupling ($\pi$ or $R$) between a given pair of points.
The Wasserstein distance plays a significant role in probability theory, partial differential equations, geometry, etc \cite{villani08oldnew,villani03topics,S15}.
The Bures distance appears in quantum information theory \cite{BZ17} as a quantum equivalent of the Fisher information metric in information geometry \cite{Ay17}, and yields a noncommutative generalization of the Hellinger distance between probability distributions (actually, the original definition of the Bures distance \cite{B69} is not restricted to the case of PSD matrices, and is applicable in much more general settings, cf. Remark \ref{noncom2}).

The connection between these objects is three-fold.
Firstly, when $\rho_0,\rho_1$ are Gaussian measures with covariance matrices $A_0,A_1$, both distances agree as $W_2^2(\rho_0,\rho_1)=d^2_B(A_0,A_1)$ up to some irrelevant multiplicative factor \cite{alv18,takatsu2011wasserstein}.
Hence the Bures distance can be considered simply as Gaussian optimal transport (at least for real, nondegenerate \emph{positive}-definite matrices).
Secondly, recent attempts have been made to develop an optimal transport theory for \emph{quantum} objects, namely measures whose values are PSD matrices (or operators) \cite{CGT18,N14,CM14,CM17,CGT17,CGGT,PCS16,MM17,GMP16}.
Finally, let us mention that in the scalar case both worlds have been unified into a single theory, by now referred to as \emph{unbalanced optimal transport} \cite{KMV16A,LMS18,CP18} and aiming at providing a transport framework between nonnegative measures with unequal masses. The unbalanced optimal transport of matrix-valued measures has recently been introduced 
in \cite{CGT18A,BV18}.

The current theory however has two significant stipulations:
First of all, the rigorous Riemannian geometric perspective \cite{M17,takatsu2011wasserstein,BJL,MMP18} only works for nonsingular matrices (a covariance matrix must be positive-definite), and the analysis is limited to finite-dimensional statistical manifolds (the space of Gaussian measures, parametrized by the finite-dimensional manifold of symmetric positive-definite matrices $\mathcal S^{++}$, endowed with the Bures Riemannian metric). 
In this work we aim at extending the framework and further studying the Bures-Wasserstein geometry in infinite-dimensional counterparts of these statistical manifolds.
We consider complex Gaussians and possibly degenerate (\emph{semi}-definite) matrices, and we treat the case of matrix-valued \emph{measures}.
Some related attempts in more restrictive settings have been made in \cite{F08} (in the context of signal processing) and recently by the second author in \cite[Remark 2.9]{BV18}.
PSD-valued densities naturally arise in signal processing, geometry (Riemannian metrics) and other applications.
Our two spaces of interest here will be the Hellinger space of PSD-valued measures, and its Fisher-Rao subspace consisting in matrix-measures whose scalar trace integrates to one.
We think of the Hellinger space as a noncommutative version of nonnegative measures, while the normalized Fisher-Rao space can be thought of as the subspace of noncommutative \emph{probability} measures. This complies with the free probability theory, cf. Remark \ref{noncom}, and with the theory of $W^*$- and $C^*$-algebras (in particular, with the original contribution of Bures \cite{B69}), cf. Remark \ref{noncom2}.
It is worth pointing that both spaces are built upon \emph{complex} Hermitian matrices, but will be considered here as \emph{real} (formal) manifolds, cf. \cite{BJL}. 
\\

Our first contribution in this paper is a self-contained and comprehensive study of these two spaces, in particular we will reveal a very geometric structure by showing that the non-commutative Hellinger space is a \emph{metric cone} over the Fisher-Rao subspace.
In other words, the Fisher-Rao space can be viewed as a unit sphere in the ambient Hellinger space, and we shall sometimes accordingly speak of the \emph{spherical} Fisher-Rao space as opposed to the \emph{conic} Hellinger space.
In the commutative case (matrices of size $1$), our Hellinger and Fisher-Rao distances coincide with the classical Hellinger and Fisher-Rao distances \cite{Ay17,KLMP,LM17}. 
Some of our results are perhaps known to experts in the field for finite-dimensional Bures manifolds, but in the infinite-dimensional measure-valued setting the analysis requires significant technical work and we could not find the corresponding statements anywhere in the literature. 
Our starting point will consist in rewriting the \emph{static} Bures-Wasserstein distance as a \emph{dynamical} and more geometric problem, which is nothing but the celebrated Benamou-Brenier formulation \cite{BenamouBrenier00} of optimal transport restricted to Gaussian measures.
The resulting minimization immediately extends to the infinite-dimensional setting, thus giving rise to a geodesic problem: the minimization of an $L^2$ kinetic energy in the space of matrix-measure-valued curves, computed with respect to an infinite-dimensional version of the Bures metric.
This shows that the Hellinger space is a (formal) Riemannian manifold endowed with this extended quantum Fisher-Rao metric, and the Fisher-Rao space is simply a submanifold with induced Riemannian metric. 
This Riemannian structure allows for Riemannian computations in the spirit of Otto \cite{otto01}.
\\

A physical quantity often appearing both in quantum information theory and in optimal transport is entropy.
Entropy conveys significant information about the underlying geometric structures, in the sense that is is canonically associated with a corresponding geometric heat flow.
This heat flow is nothing but the (negative) gradient-flow of the entropy, and in optimal transport the groundbreaking paper \cite{JKO} led to whole variety of results ranging from applied PDEs \cite{S15} to Lott-Sturm-Villani synthetic curvature theory \cite{lott2009ricci}.
In the Bures-Wasserstein setting it turns out that the relevant notion of entropy is not the usual von Neumann entropy from quantum statistical mechanics, but is rather induced by the restriction of the Boltzmann-Shannon entropy (Kullback-Leibler divergence) to Gaussian optimal transport \cite{B13,M17}. In the infinite-dimensional case, the corresponding entropy is very much related to the classical Itakura-Saito divergence \cite{I68,M85,JNG12} from signal processing. 

Over the last few years, a particular regularization of the optimal transport problem, the so-called \emph{entropic optimal transport}, has received considerable attention and allowed for efficient numerical computations and theoretical advances \cite{peyre2019computational,carlier2017convergence,cuturi2013sinkhorn}.
This regularization is related to the Schr\"odinger problem \cite{CGP20,C14,CGP16,leonard2012schrodinger} and to \emph{Euclidean quantum dynamics} \cite{albeverio1989euclidean,Z86}, and can be considered as a blurred version of deterministic optimal transport.
Based on our dynamical framework and inspired from the very generic Schr\"odinger problem discussed formally in \cite{L19}, our second main contribution in this paper consists in justifying an adapted definition of a canonical entropy on the non-commutative Fisher-Rao space, the derivation of the analogous heat flow and Fisher information, and of the corresponding \emph{dynamical} Schr\"odinger problem.
Let us point out that a related \emph{static} Schr\"odinger problem has been considered very recently in \cite{ja20}, but only on the finite-dimensional statistical manifold of real non-singular matrices $\Ppmre$. We would also like to mention the contributions \cite{CPS18,VP19} that discuss an entropic regularization of the static commutative unbalanced optimal transport. A related regularization of the static non-commutative unbalanced optimal transport that employs the von Neumann entropy was suggested in \cite{PCS16}. 

Given a fixed functional on a Riemannian manifold, one can construct \cite{Kh20} two canonical evolutionary processes:
the associated gradient flow (a dissipative system) and Newton's equation (a Hamiltonian system).
The dynamical Schr\"odinger problem can be viewed as a third ``sibling'' in this geometric family, since it merely suffices to fix a functional (entropy) in order to define it, cf. \cite{L19}.
Moreover, any trajectory of the gradient flow solves an appropriate Schr\"odinger problem,  cf. \cite{MTV}.
On the other hand, a Schr\"odinger problem may be viewed as a Newton equation driven by a suitably defined analogue of the Fisher information.
In this connection, let us mention the discussion of several Newton equations on the commutative Fisher-Rao 
space in \cite{Kh20}. 

Our second main contribution will therefore consist in the study of the Schr\"odinger problem on the Fisher-Rao space.
We will prove that, as the temperature parameter $\eps\to 0$, the $\eps$-Schr\"odinger functional Gamma-converges towards the Fisher-Rao $L^2$ kinetic functional.
As a consequence the Schr\"odinger minimizers converge to geodesics.
An indirect byproduct of our explicit construction of the recovery sequences for the $\Gamma$-limit will be the $\frac 12$-geodesic convexity of the entropy in the Fisher-Rao space.
This has been studied in \cite{M17} on the finite-dimensional cone space by direct computations involving second order derivatives, but our additional mass constraint and the measure-theoretic setting both make the analysis more delicate here.
We note that our construction of recovery sequences for the $\Gamma$-convergence is reminiscent of \cite{baradat2020small}, and we will treat abstract metric spaces in our upcoming work \cite{MTV}.
\\

The paper is organized as follows:
In section~\ref{sec:notations} we fix the notations and define the basic concepts to be used throughout.
Section~\ref{s:prem} contains preliminary material on general metric cones, the finite-dimensional Bures-Wasserstein distance, and the associated entropy, Fisher information, heat-flow, and Schr\"odinger problem.
We then proceed with our comprehensive study of the general matrix-valued-measure setting and we introduce the Hellinger space in Section~\ref{sec:Hellinger}.
Section~\ref{sec:spherical} is concerned with the Fisher-Rao space, and also details the cone structure.
In Section~\ref{sec:spherical_heat_Schro} we discuss the spherical Fisher-Rao Riemannian structure, we explain how to carry out the corresponding variant of the Otto calculus and how to compute Fisher-Rao gradients of functionals of measures.
We also define our canonical entropy, we study its associated heat flow, and we show that the corresponding Schr\"odinger problem is well-posed.
Our last Section~\ref{sec:GCV_convex} focuses on the $\Gamma$-convergence of the $\eps$-Schr\"odinger problem towards the geodesic problem as well as the geodesic convexity of the entropy.
Finally, we opted for postponing some technical statements and proofs to the Appendix~\ref{sec:appendix}.

\section{Notations and conventions}
\label{sec:notations}
 We will use the following basic notation: \begin{itemize}
 \item
 The space $\Omega$ is a fixed separable, locally compact, metrizable topological space.
 \item
 $\C^{d\times d}$ is the space of $d\times d$ complex matrices, equipped with the \emph{real} Frobenius product and norm
 $$
 \Phi:\Psi=\Re \, \tr(\Phi^*\Psi)
 \qqtext{and}
 |\Phi|_2=\sqrt {\Phi:\Phi}.
 $$
\item 
$A^{Sym}:=\frac 1 2 (A+A^*)$ will denote the Hermitian part of $A\in \C^{d\times  d}$.
\item 
$I\in \C^{d\times d}$ is the identity matrix.
\item
$
\Sm
$ is the subspace of Hermitian $d\times d$ matrices.
\item
$
\Po
$ is the subspace of Hermitian positive-semidefinite (PSD) $d\times d$ matrices.
\item
$
\Pm
$ is the subspace of Hermitian positive-definite $d\times d$ matrices.

\item
$\mathcal S$ is the subspace of real symmetric $d\times d$ matrices.

\item
$\Pmre$ is the subspace of real symmetric positive-semidefinite matrices of size $d$.
\item
$\Ppmre$ is the subspace of real symmetric positive-definite $d\times d$ matrices.
\item
$\Pon$ is the subspace of Hermitian PSD matrices of size $d$ and of unit trace.
\item
For $A\in \Ppmre$ we write $\mathcal N(A)=\mathcal N(0,A)$ for the Gaussian distribution on $\mathbb R^d$ with mean $0$ and covariance $A$.
%
\item
We recall that for $U, V\in \Sm$ one has $\tr(UV)\in \R$.
If the matrices $U,V$ are positive-semidefinite, then $\tr(UV)\geq 0$.
By the Cauchy-Schwarz inequality,
$
|gU:V|\leq |gU:U|^{\frac 12} |gV:V|^{\frac 12}
$ 
for all $U,V\in\Sm$ and a PSD matrix $g\in\Po$.
We recall moreover the elementary inequality
\begin{equation}
\label{eq:gUU_leq_trgU2}
 \left|gU:U\right|\leq (\tr\, g) |U|_2^2.
\end{equation}
\item
The square root $\sqrt g$ of a PSD matrix $g\in\Po$ is $\sqrt{g}=R\,\operatorname{diag}(\sqrt{D_{ii}})R^*$, where $D=\operatorname{diag}(D_{ii})$ and $g=RDR^*$ is a spectral decomposition of $g$.
\item
The total variation norm of an $\Sm$-valued Radon measure $G$ on $\Omega$ is
$$
\|G\|_{TV}:=\sup\limits_{\Psi\in C_b(\Omega;\Sm),\|\Psi\|_\infty\leq 1} \left|\int_\Omega \rd G(x):\Psi(x)\right|,
$$
were $\|\Psi\|_\infty=\sup\limits_{x\in\Omega} |\Psi(x)|_2$ is computed relatively to the Frobenius norm on $\Sm$.
Clearly this norm is equivalent to $\sum_{i,j}\|G_{ij}\|_{TV}$, where each $\|G_{ij}\|_{TV}$ is the total variation norm (in the usual sense) of the complex measure $G_{ij}$.
Note that only Hermitian test-functions are needed since $G$ itself is Hermitian.
%
%
\item
$\Smv$ is the set of $\Sm$-valued \emph{finite} Radon measures $\rhp$ on $\Omega$, i.e. with $\|\rhp\|_{TV}<+\infty$.
\item
$\Mm\subset \Smv$ is the set of $\Po$-valued finite Radon measures $\rhp$ on $\Omega$.
\item
$\Ma\subset\Mm$ is the set of $\Po$-valued measures $\rhp$ on $\Omega$ with $\tr (\rhp(\Omega))=1$.
These will be our main objects, and can be considered as non-commutative probability measures (hence the notation), cf. Remarks~\ref{noncom}, \ref{noncom2}. 
\item
$\Mml\subset \Ma$ is the set of $\Po$-valued measures $\rhp$ on $\Omega$ with unit trace and absolutely continuous w.r.t. to a reference scalar measure $\la$, in the sense that $\gamma=\tr \rhp\ll\la$.
\item
$\Mmm\subset \Mml$ is the set of $\Pm$-valued measures $\rhp\in \Mml$.
\item
For $\rhp\in\Mm$ we denote the trace measure $\gamma:=\tr \rhp$, which is a nonnegative scalar measure.
The mass of $\rhp$ is $m=\|\gamma\|_{TV}=\gamma(\Omega)$.
Note that the (trace) Schatten 1-norm $|g|_1:=\tr(\sqrt{gg^*})=\tr g\geq C_d|g|_2$ controls the Frobenius norm on $\Po$, hence the mass controls $\|\rhp\|_{TV}\lesssim m$ on $\Mm$.
\item
Whenever $\lambda$ is a positive scalar measure and $\rhp\in \Mm$ we abuse the notations and write $\rhp\ll\lambda$ or $\rhp\ll\lambda I$ for $\gamma=\tr\rhp\ll\lambda$.
(This is \emph{not} equivalent to requiring that $|\rhp_{ij}|\ll\lambda$ for all $i,j$.)
In particular we always have $\rhp\ll \gamma=\tr \rhp$.
The Radon-Nikodym density $g(x)=\der{\rhp}{\gamma}(x)\in\Pon$ is unitary in the sense that automatically $\tr g(x)\equiv 1$.
\item 
We say that a sequence of matrix-valued measures $\rhp^k\in\Mm$ converges \emph{strongly}
$$
\rhp^k\to\rhp
\qqtext{if}
\|\rhp^k-\rhp\|_{TV}\to 0.
$$
\item 
We say that a sequence of matrix-valued measures $\rhp^k\in\Mm$ converges weakly-$*$
$$
\rhp^k\narrowcv \rhp
\qqtext{if}
\int_\Omega\varphi:\rd\rhp^k\to \int_\Omega\varphi:\rd\rhp
$$
for all $\varphi\in C_0(\Omega;\Sm)$, the closure (w.r.t. the uniform norm) of compactly supported functions $C_c(\Omega;\Sm)$.
(This is the predual convergence on $\Mm\subset(C_0(\Omega;\Sm) )^*$.)
 \item
For curves $t\in[0,1]\mapsto \rhp_t\in \Mm$ we write  $\rhp\in C([0,1];\Mm_{TV})$ for the continuity with respect to the strong TV topology.
We should anticipate at this stage that we will define two $\dih,\dFR$ (Hellinger and Fisher-Rao) distances on $\Mm,\Ma$, but the corresponding topologies will always be sequentially equivalent to $TV$.
As a consequence all curves will be continuous indistinctly w.r.t. any of these topologies and we will often omit the index for brevity.
\item
Whenever $\rhp\in \Mm$ one can define the $L^2(\rd\rhp;\Sm)$ space of $\Sm$-valued functions through $\|U\|_{L^2(\rd\rhp)}^2:=\int_\Omega\rd\rhp U: U$.
The Hermitian inner product is defined accordingly, and we refer to \cite{DR97} for further properties of noncommutative $L^p$ spaces.
\item
For a given curve $\rhp\in C([0,1];\Mm)$ the weighted $L^2(0,1;L^2(\rd {G}_t;\Sm))$ space is defined by disintegration $\rd t\otimes \rd\rhp_t$, with
$$
\|U\|_{L^2(0,1;L^2(\rd {G}_t))}^2:=\int_0^1 \int_\Omega \rd\rhp_t U_t:U_t\,\rd t.
$$
Similar definitions can be used for $L^p(0,1;L^2(\rd {G}_t;\Sm))$, $p\in[1,\infty]$.
\item
In a given metric space $(X,d)$ we say that a curve $x:t\in[0,1]\mapsto x_t\in X$ is $AC^p([0,1];X)$ (absolutely continuous) if there exists an $L^p(0,1)$ function $\eta(\tau)\geq 0$ such that
$$
d(x_s,x_t)\leq \int_s^t \eta(\tau)\,\rd \tau,
\hspace{2cm} \forall\, 0\leq s\leq t\leq 1.
$$
In this case the \emph{metric derivative}
$$
|\dot x_t|:=\lim\limits_{\delta\to 0}\frac{d(x_t,x_{t+\delta})}{\delta}
$$
exists almost everywhere, and it is the smallest function $\eta(\tau)$ satisfying the above inequality, \cite[thm. 1.1.4]{AGS06}.

\item
Note that, contrarily to the time-varying weighted Lebesgue spaces $L^p(0,1;L^q(\rd\rhp_t))$ whose elements are only defined $\rd t$-a.e., continuity and absolute continuity are \emph{global} notions defined up to the boundaries $t=0,1$.
For the local continuity, and whenever required, we shall write $C((0,1);X)$ and $AC^p_{loc}((0,1);X)$ to emphasize the distinction.
Unless otherwise specified, $C,AC^p$ always means \emph{globally} in time $t\in[0,1]$.

\item 
By \emph{geodesics} we always mean constant-speed, minimizing metric geodesics.
\item
We will use three separate distances $\dih,\dFR,TV$ on the spaces $\Mm,\Ma$, and the corresponding metric speeds $|\dot \rhp_t|_{H},|\dot \rhp_t|_{FR},|\dot \rhp_t|_{TV}$ of a single curve $(\rhp_t)_{t\in[0,1]}$ may {\it a priori} differ.
We will try to emphasize the difference by writing $AC^p([0,1];\Ma_{H})$, $AC^p([0,1];\Ma_{FR})$, $AC^p([0,1];\Ma_{TV})$ depending on the context. 
($H$ and $FR$ will however coincide.)
\item 
$C$ is a generic positive constant.
%
\end{itemize}
Finally, we fix once and for all a reference scalar, non-negative Radon measure $\la$ on $\Omega$ such that $\tr (\Lambda I)$ is a probability measure. 
(The reader might think of $\la$ as of a normalized Lebesgue measure.)


\section{Preliminaries} \label{s:prem}

\subsection{Metric cones}

We recall \cite{BH99,Bur} that, given a metric space $(X,d_X)$ of diameter $\leq \pi$, one can define another metric space $(\mathfrak C(X),d_{\mathfrak C(X)})$, called a \emph{cone} over $X$, in the following manner.
Consider the quotient $\mathfrak C(X):=X\times [0,\infty)/X\times \{0\}$, that is, all points of the fiber $X\times \{0\}$ constitute a single point of the cone called the apex.
In other words, points in $\mathfrak C(X)$ are of the form $[x,r]$ and we identify points $[x_0,0]\sim[x_1,0]$ for all $x_0,x_1\in X$.
Now set 
\begin{equation}
\label{cone}
d_{\mathfrak C(X)}^2([x_0,r_0],[x_1,r_1]):=r_0^2+r_1^2-2r_0r_1\cos (d_{X}(x_0,x_1)).
\end{equation}
Very few metric spaces are actually cones, and this property provides neat scaling and other nice geometric features \cite{LM17}.
A particularly regular situation appears when the diameter of $X$ is strictly less than $\pi$, since in this case there is a one-to-one correspondence between the geodesics in $X$ and $\mathfrak C(X)$.
Given a cone $Y=\mathfrak C(X)$, $X$ can be embedded canonically as a \emph{sphere} into $Y$ via $X\cong \{[x,1]:\, x\in X\}\subset\mathfrak C(X)$. In particular, $$d_{\mathrm{conic}}(x_0,x_1):=d_{\mathfrak C(X)}([x_0,1],[x_1,1])$$ defines a distance on the sphere $X$. 
On the sphere, the relation \eqref{cone} may be inverted: 
\begin{equation}
\label{cone2}
d_{X}(x_0,x_1)= \arccos \left (1-\frac 1 2 d^2_{\mathrm{conic}}(x_0,x_1)\right).
\end{equation}

\begin{lem}[\cite{BV18}]
\label{charsp}
If  $X$ is a length space, and $Y=\mathfrak C(X)$, then the distance $d_{X}(x_0,x_1)$ coincides with the infimum of $Y$-lengths of continuous curves $([x_t,1])_{t\in[0,1]}$ joining $[x_0,1]$ and $[x_1,1]$ and lying within $X\times \{1\}$. 
\end{lem}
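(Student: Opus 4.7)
The plan is to reduce the claim to a length-equality between the $Y$-length of any spherical curve and the $X$-length of its base curve. Once this is known, the statement is immediate: since $X$ is a length space, $d_X(x_0,x_1)=\inf L_X(x_\cdot)$ taken over continuous curves in $X$ joining $x_0$ to $x_1$; and by the equality of lengths this infimum also equals the infimum of the $Y$-lengths of the lifted curves $([x_t,1])$, which is what is asserted.

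To establish the length equality, I would exploit the explicit relation \eqref{cone2}, which rewrites on the sphere as
\[
d_{\mathrm{conic}}(x,y)\;=\;2\sin\!\Bigl(\tfrac{1}{2}d_X(x,y)\Bigr),
\]
valid because $d_X\leq \pi$. From $\sin\theta\leq \theta$ this immediately yields the pointwise inequality $d_{\mathrm{conic}}\leq d_X$, so that for any partition $0=t_0<\dots<t_N=1$,
\[
\sum_i d_{\mathrm{conic}}\bigl([x_{t_i},1],[x_{t_{i+1}},1]\bigr)\;\leq\;\sum_i d_X(x_{t_i},x_{t_{i+1}}),
\]
which gives $L_Y([x_\cdot,1])\leq L_X(x_\cdot)$ upon taking the supremum over partitions. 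For the reverse inequality I would use the asymptotic equivalence $d_X/d_{\mathrm{conic}}\to 1$ as $d_{\mathrm{conic}}\to 0$ (again read off \eqref{cone2}): given $\eps>0$ there exists $\delta>0$ such that $d_X\leq(1+\eps)d_{\mathrm{conic}}$ as soon as $d_{\mathrm{conic}}\leq\delta$. Since $t\mapsto[x_t,1]$ is continuous on the compact interval $[0,1]$, it is uniformly continuous in $Y$, so for any sufficiently fine partition all consecutive conic distances are $\leq\delta$. Summing then gives $\sum d_X\leq(1+\eps)\sum d_{\mathrm{conic}}\leq(1+\eps)L_Y$, and taking the supremum over fine partitions (which suffices to compute $L_X$, by monotonicity of partition sums under refinement) and sending $\eps\to 0$ yields $L_X\leq L_Y$.

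The only delicate point is the observation that a $Y$-continuous spherical curve automatically has a continuous, hence uniformly continuous, base $x_t$ in $(X,d_X)$; this follows from the fact that $d_X$ and $d_{\mathrm{conic}}$ generate the same topology on the sphere, because $\theta\mapsto 2\sin(\theta/2)$ is a homeomorphism from $[0,\pi]$ onto $[0,2]$. I do not anticipate any serious obstacle beyond this uniform-continuity detail, as the entire argument is local on the sphere where the cone metric is smooth and bi-Lipschitz equivalent to $d_X$ on any set bounded away from $\pi$.
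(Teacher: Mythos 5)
The paper does not actually prove this lemma: it is stated with the attribution~\cite{BV18}, so there is no in-text proof to compare against. Judged on its own, your argument is correct and is the standard one for such statements. The reduction to showing $L_Y\bigl([x_\cdot,1]\bigr)=L_X(x_\cdot)$ for every continuous curve on the sphere is exactly the right move, and both inequalities are handled cleanly: the relation $d_{\mathrm{conic}}(x,y)=2\sin\bigl(\tfrac12 d_X(x,y)\bigr)$ (valid because $\operatorname{diam} X\le\pi$, as required in the definition of $\mathfrak C(X)$) gives $d_{\mathrm{conic}}\le d_X$ and hence $L_Y\le L_X$ directly, and the $\arcsin$-type asymptotic $d_X\le(1+\eps)d_{\mathrm{conic}}$ for $d_{\mathrm{conic}}\le\delta$, together with uniform continuity on $[0,1]$ and monotonicity of partition sums under refinement, gives $L_X\le L_Y$. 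You also correctly flag the one point that needs care, namely that a $Y$-continuous spherical curve has a $d_X$-continuous (hence uniformly continuous) base, which follows since $\theta\mapsto 2\sin(\theta/2)$ is a homeomorphism of $[0,\pi]$ onto $[0,2]$. One could make the hand-off to the length-space definition of $d_X$ slightly more explicit (the infimum ranges over the same family of curves under the bijection $x_\cdot\leftrightarrow[x_\cdot,1]$), and one should note that both inequalities remain trivially valid when one of the lengths is $+\infty$, but these are cosmetic; the proof is complete.
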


\begin{lem}[\cite{LM17}]
\label{l:masscal}
Let $([x_t,r_t])_{t\in[0,1]}$ be a constant-speed geodesic in the metric cone $Y=\mathfrak C(X)$.
Then
\begin{equation}
r^2_t= t r_1^2+(1-t)r_0^2-t(1-t)d_Y^2([x_0,r_0],[x_1,r_1])\leq t r_1^2+(1-t)r_0^2.
\end{equation}
\end{lem}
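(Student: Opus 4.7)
The plan is to reinterpret the cone distance \eqref{cone} as a Euclidean law of cosines and to isometrically ``unroll'' the cone triangle determined by the three points $[x_0,r_0],[x_t,r_t],[x_1,r_1]$ into a triangle in $\R^2$; at that point the identity reduces to Stewart's cevian formula (equivalently, to expanding a convex combination).

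Set $L := d_Y([x_0, r_0], [x_1, r_1])$. By the constant-speed assumption I have $d_Y([x_0,r_0],[x_t,r_t]) = tL$ and $d_Y([x_t,r_t],[x_1,r_1]) = (1-t)L$, so \eqref{cone} produces the three identities
\begin{align*}
t^2L^2 &= r_0^2 + r_t^2 - 2r_0 r_t \cos\alpha,\\
(1-t)^2 L^2 &= r_t^2 + r_1^2 - 2r_t r_1 \cos\beta, \\
L^2 &= r_0^2 + r_1^2 - 2r_0 r_1 \cos\gamma,
\end{align*}
where $\alpha := d_X(x_0, x_t)$, $\beta := d_X(x_t, x_1)$, $\gamma := d_X(x_0, x_1)$. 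The degenerate case $r_t = 0$ is immediate: the first two identities force $L = r_0 + r_1$ and $t = r_0/L$, and a direct substitution confirms the target formula. Henceforth I assume $r_t>0$.

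Next I place three points in $\R^2$ at polar coordinates $\tilde Q_0 = (r_0,0)$, $\tilde Q_t = (r_t,\alpha)$, $\tilde Q_1 = (r_1,\alpha+\beta)$. By construction their consecutive Euclidean distances equal $tL$ and $(1-t)L$. The Euclidean triangle inequality gives $|\tilde Q_0-\tilde Q_1|\leq L$, whereas the $X$-triangle inequality $\alpha+\beta\geq \gamma$, together with $\gamma\in[0,\pi]$ (diameter bound), $\alpha+\beta\in[0,\pi]$, and monotonicity of $\cos$ on that interval, yields $|\tilde Q_0-\tilde Q_1|^2 \geq L^2$. Equality therefore holds, so $\tilde Q_t$ lies on the segment $[\tilde Q_0,\tilde Q_1]$, and the arclength condition $|\tilde Q_0-\tilde Q_t|=tL=t\,|\tilde Q_0-\tilde Q_1|$ identifies $\tilde Q_t = (1-t)\tilde Q_0 + t\,\tilde Q_1$. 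Expanding
\[
r_t^2 = |\tilde Q_t|^2 = |(1-t)\tilde Q_0 + t\,\tilde Q_1|^2 = (1-t)r_0^2 + t\,r_1^2 - t(1-t)|\tilde Q_0-\tilde Q_1|^2
\]
gives the claimed identity, and the upper bound $r_t^2\leq tr_1^2+(1-t)r_0^2$ is immediate from $t(1-t)L^2\geq 0$.

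The delicate point is justifying $\alpha+\beta \leq \pi$ so that the monotonicity of $\cos$ applies in the angle-additivity step. I would handle this by first localizing: on any short subinterval $[s,s']\subset[0,1]$ of parameters, the distances $d_X(x_s,x_{s'})$ are small (by continuity of $\rhp_t$, which is automatic for a geodesic), so the above argument applies on each such subinterval and gives the formula locally. A continuity/gluing argument then promotes it to the full interval $[0,1]$. The only remaining obstruction is a geodesic that actually crosses the apex, but in that situation $x_t$ is constant equal to $x_0$ or $x_1$ on either side of the crossing and the identity becomes trivial on each side.
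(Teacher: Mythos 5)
The paper does not reproduce a proof here: it simply cites Laschos--Mielke \cite{LM17}, so there is no in-paper argument to compare against. Your proposal --- reading the cone metric \eqref{cone} as a planar law of cosines, unrolling $[x_0,r_0],[x_t,r_t],[x_1,r_1]$ into a comparison configuration in $\R^2$, squeezing between the Euclidean triangle inequality and the $X$-triangle inequality to force the comparison triangle to be degenerate, and then reading off the identity from $|(1-t)\tilde Q_0+t\tilde Q_1|^2=(1-t)|\tilde Q_0|^2+t|\tilde Q_1|^2-t(1-t)|\tilde Q_0-\tilde Q_1|^2$ --- is mathematically correct and is the standard Euclidean-cone comparison argument in the spirit of \cite[Ch.~I.5]{BH99}. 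You rightly flag the one delicate point, the requirement $\alpha+\beta\leq\pi$ for the cosine-monotonicity step. Two places in your fix, however, are asserted rather than argued and should be expanded. First, the localization requires knowing that $d_X(x_s,x_u)\to0$ as $u\to s$, which follows from $d_Y([x_s,r_s],[x_u,r_u])\to0$ only when $r$ is bounded away from $0$ on the subinterval; you should state that you work on a compact $[a,b]\subset(0,1)$ where $\inf r_t>0$ (automatic when the geodesic misses the apex, the case you already treat separately). Second, the ``continuity/gluing argument'' deserves one substantive sentence: the local identity on each short $[s,s']$ says precisely that $u\mapsto r_u^2$ agrees there with a quadratic polynomial of leading coefficient $-L^2$; overlapping subintervals force these quadratics to coincide as polynomials, so a single global quadratic with leading coefficient $-L^2$ emerges on $(0,1)$, and the Lipschitz continuity of $t\mapsto r_t$ pins down its boundary values to $r_0^2,r_1^2$, yielding the formula. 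A minor notational slip: the continuity claim should refer to $t\mapsto x_t$, not to the matrix-measure curve $\rhp_t$, which plays no role in this abstract-cone lemma.
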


\begin{lem}
\label{lequiv}
The distances $d_X$ and $d_{\mathrm{conic}}$ are Lipschitz-equivalent on $X$: namely, one has $d_{\mathrm{conic}}\leq d_X \leq  c d_{\mathrm{conic}}$. Here $c=\frac{\arccos(1-D^2/2)}{D}\leq \frac \pi 2$, where $D\leq 2$ is the diameter of $(X, d_{\mathrm{conic}})$.
\end{lem}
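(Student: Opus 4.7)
The plan is to reduce everything to the scalar identity \eqref{cone2}, which after simplification reads
$$
d_X(x_0,x_1)\;=\;2\arcsin\!\Bigl(\tfrac{1}{2}d_{\mathrm{conic}}(x_0,x_1)\Bigr),
\qquad\text{equivalently}\qquad d_{\mathrm{conic}}(x_0,x_1)=2\sin\!\Bigl(\tfrac{1}{2}d_X(x_0,x_1)\Bigr),
$$
using the half-angle formula $1-\cos\theta=2\sin^2(\theta/2)$. Since $d_X$ has diameter $\leq \pi$ and $d_{\mathrm{conic}}$ has diameter $D\in[0,2]$, both sides are well-defined and these two relations are strictly monotone bijections between $[0,\pi]$ and $[0,2]$.

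For the lower bound $d_{\mathrm{conic}}\leq d_X$ I will simply invoke $\sin u \leq u$ for $u\geq 0$ to get $d_{\mathrm{conic}}=2\sin(d_X/2)\leq d_X$. For the upper bound I consider the scalar function
$$
\varphi(r)\;:=\;\frac{2\arcsin(r/2)}{r},\qquad r\in(0,2],
$$
which is increasing on $(0,2]$; this can be seen either by differentiating or, more elegantly, by noting that $g(r):=2\arcsin(r/2)$ is convex on $[0,2]$ with $g(0)=0$, so the difference-quotient $\varphi(r)=g(r)/r$ is nondecreasing. Therefore, for every pair $x_0,x_1\in X$ with $r:=d_{\mathrm{conic}}(x_0,x_1)\leq D$,
$$
d_X(x_0,x_1)\;=\;r\,\varphi(r)\;\leq\; D\,\varphi(D)\cdot\frac{r}{D}\cdot\frac{1}{\text{(trivial)}}
\;=\;\varphi(D)\,r\;=\;c\,d_{\mathrm{conic}}(x_0,x_1),
$$
where I identify $\varphi(D)=\frac{2\arcsin(D/2)}{D}=\frac{\arccos(1-D^2/2)}{D}=c$ using the inverse-angle identity once more.

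Finally, the bound $c\leq\pi/2$ follows from the same monotonicity: $\varphi$ is maximized on $(0,2]$ at $r=2$, where $\varphi(2)=\frac{2\arcsin 1}{2}=\frac{\pi}{2}$, so $c=\varphi(D)\leq \varphi(2)=\pi/2$. The only substantive step is the monotonicity of $\varphi$ on $(0,2]$, but this is entirely elementary and presents no real obstacle; the rest is just bookkeeping with the half-angle identity extracted from \eqref{cone2}.
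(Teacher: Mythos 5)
Your proof is correct and follows essentially the same route as the paper. The paper's one-line proof invokes the convexity of $z\mapsto \arccos(1-z^2/2)$, which after the half-angle identity is exactly your $g(r)=2\arcsin(r/2)$; both proofs hinge on the same fact that a convex function vanishing at $0$ has nondecreasing difference quotient $g(r)/r$, giving the sharp constant $c=g(D)/D$. The only cosmetic differences are that you work with $\arcsin$ rather than $\arccos$, and that you derive the lower bound directly from $\sin u\leq u$ rather than via Lemma~\ref{charsp}; these are interchangeable since \eqref{cone2} is an exact algebraic inversion of the cone formula \eqref{cone}.
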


The proof is immediate by Lemma \ref{charsp} and convexity of $z\mapsto \arccos(1-z^2/2)$.
%
\subsection{The Bures-Wasserstein distance}

A notable finite-dimensional example of a metric cone structure is provided by the Bures-Wasserstein distance \cite{BJL,Bures,BZ17} on the space $\Po$ of positive-semidefinite Hermitian matrices. We recall that the Bures-Wasserstein distance can be computed in three different equivalent ways. 
The first one is the explicit formula
\begin{equation}
\label{e:burexp}
\begin{array}{rl}
d_B^2(A_0,A_1):= & \tr A_0 +\tr A_1 -2\tr \sqrt{\sqrt A_0 A_1 \sqrt A_0}
\\
= & \tr A_1 +\tr A_0 -2\tr \sqrt{\sqrt A_1 A_0 \sqrt A_1}
\end{array}
\end{equation}
Note that when $A_0$ and $A_1$ commute this reads explicitly $d_B^2(A_0,A_1)=|\sqrt{A_1}-\sqrt{A_0}|^2_2$.

The second equivalent formulation is more geometric.
\begin{prop} \label{d:bur}
The Bures-Wasserstein space is a geodesic space. Moreover, we have
\begin{equation}
\label{e:minibur} 
d_{B}^2(A_0,A_1)=\frac 1 4 \min_{\mathcal{A}(A_0,A_1)}\int_0^1 A_t U_t :U_t \rd t,
\end{equation}
where the admissible set $\mathcal{A}(A_0,A_1)$ consists of all pairs $(A_t,U_t)_{t\in [0,1]}$ such that
\begin{equation}
\label{eq:ODEb}
\left\{ 
\begin{array}{l} 
A\in C^1([0,1];\Po), U\in C^1([0,1];\Sm)
\\
A|_{t=0}=A_0;\quad A|_{t=1}=A_1,
\end{array}
\right.
\qqtext{and}
\frac {d A_t} {dt} =\left(A_t U_t\right)^{Sym}.
\end{equation}
The Bures-Wasserstein geodesics in $\Po$ correspond to minimizers of \eqref{e:minibur}.
\end{prop}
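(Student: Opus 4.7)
\emph{Strategy.} The plan is to reduce the dynamical problem to a flat Euclidean minimization through the ``square-root lift'' $A_t = F_t F_t^*$ with $F_t\in\C^{d\times d}$. Under this lift, the Lyapunov-type constraint $\dot A_t = (A_t U_t)^{Sym}$ is realized by the simple linear ODE $\dot F_t = \tfrac12 U_t F_t$, and the quadratic Bures action transforms into a plain Frobenius energy via the identity $\tfrac14 A_t U_t : U_t = |\dot F_t|_2^2$. The Benamou--Brenier-type formula \eqref{e:minibur} then reduces to the textbook fact that straight segments are geodesics in the Euclidean space $(\C^{d\times d},|\cdot|_2)$, combined with the purification identity $d_B^2(A_0,A_1)=\min\{|F_1-F_0|_2^2:\,F_iF_i^*=A_i\}$.

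\emph{Lower bound.} Given any admissible pair $(A_t,U_t)\in\mathcal A(A_0,A_1)$, fix any $F_0\in\C^{d\times d}$ with $F_0 F_0^* = A_0$ and solve the linear Cauchy problem $\dot F_t = \tfrac12 U_t F_t$ on $[0,1]$. A direct computation shows that both $F_t F_t^*$ and $A_t$ satisfy the same linear equation $\dot X = (X U_t)^{Sym}$ with the same initial datum, hence by uniqueness $F_t F_t^* = A_t$ for every $t$, and in particular $F_1 F_1^* = A_1$. Using $\tfrac14 A_t U_t:U_t = \tfrac14 |U_t F_t|_2^2 = |\dot F_t|_2^2$ together with Cauchy--Schwarz yields
$$
\tfrac14\int_0^1 A_t U_t:U_t\,\rd t=\int_0^1|\dot F_t|_2^2\,\rd t\geq |F_1-F_0|_2^2\geq d_B^2(A_0,A_1),
$$
where the last inequality is the purification/Uhlmann characterization, itself a consequence of the von Neumann trace inequality applied to \eqref{e:burexp}.

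\emph{Upper bound and geodesic.} For the matching upper bound, select optimal square roots $F_0,F_1$ realizing $|F_1-F_0|_2^2=d_B^2(A_0,A_1)$, and set $F_t=(1-t)F_0+tF_1$, $A_t=F_tF_t^*\in\Po$. In the nondegenerate regime the Lyapunov equation produces a $C^1$ Hermitian $U_t$ with $\dot A_t=(A_t U_t)^{Sym}$, and the resulting action equals exactly $4|F_1-F_0|_2^2=4 d_B^2(A_0,A_1)$. Since $F_t$ has constant Frobenius speed, the Cauchy--Schwarz equality case upgrades $A_t$ to a constant-speed minimizer, proving simultaneously \eqref{e:minibur} and that $(\Po,d_B)$ is a geodesic space.

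\emph{Main obstacle.} The only delicate point is the handling of degenerate boundary data $A_0,A_1\in\Po\setminus\Pm$, where the Lyapunov equation may fail to provide a $C^1$ Hermitian lift $U_t$ of a given $A_t$, so that strictly speaking the infimum need not be attained inside the $C^1$-class. The standard remedy is to regularize $A_i\rightsquigarrow A_i+\eps I\in\Pm$, apply the nondegenerate construction, and pass to the limit $\eps\downarrow 0$ using joint continuity of $d_B^2$ and of the lifted Frobenius energy $\int |\dot F_t|_2^2\,\rd t$ under $F_i^\eps\to F_i$. Everything else in the argument is essentially algebraic.
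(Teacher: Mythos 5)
Your approach is genuinely different from the paper's and is, in its core ideas, correct. Where the paper passes through the Gaussian optimal-transport identification (Propositions~\ref{d:burw}--\ref{d:burwc}, citing Takatsu's dynamical Benamou--Brenier formula for real Gaussians, then lifts to the complex case via $\mathfrak r$), you instead use the square-root lift $A_t=F_tF_t^*$, $\dot F_t=\tfrac12 U_tF_t$, to reduce the minimization to the flat Frobenius geometry of $\C^{d\times d}$ and to the purification formula for $d_B$. That route is more algebraic and self-contained (it never invokes Wasserstein geometry), and the lower bound via Cauchy--Schwarz and the uniqueness of solutions of the linear ODE $\dot X=(XU_t)^{Sym}$ is clean. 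Two remarks are however in order.

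First, in the upper bound you should make explicit why the Lyapunov solution $U_t$ coincides with $2\dot F_t F_t^{-1}$ and thus yields the claimed action $4|F_1-F_0|_2^2$. This requires $\dot F_tF_t^{-1}$ to be Hermitian, which in turn holds precisely because the optimally aligned square roots satisfy $F_0^*F_1\in\Po$ (Uhlmann's optimality condition); it is also this fact that guarantees $F_t$ invertible, hence $A_t\in\Pm$, for $t\in(0,1)$ when $A_0,A_1\in\Pm$. Without noting this, the phrase ``the Lyapunov equation produces $U_t$... and the resulting action equals exactly $4|F_1-F_0|_2^2$'' is a non sequitur: for a generic $F_t$-lift the Lyapunov $U_t$ is not $2\dot F_tF_t^{-1}$ and the action is strictly smaller than $4\int|\dot F_t|_2^2$.

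Second, and more importantly, the handling of the degenerate boundary data is where your sketch and the paper's proof really diverge, and your version leaves a gap. You propose regularizing $A_i\rightsquigarrow A_i+\eps I$ and ``passing to the limit by joint continuity''. The difficulty is that the $\eps$-regularized curve does \emph{not} connect $A_0$ to $A_1$, so continuity of $d_B^2$ alone does not produce an admissible competitor. The limiting straight-line curve $A_t=F_tF_t^*$ (with the un-regularized optimal $F_0,F_1$) does have the right endpoints, but may fail to admit a $C^1$ (or even pointwise Hermitian) potential $U_t$ near $t=0,1$: already in the scalar-block example $A_0=\mathrm{diag}(0,1)$, $A_1=I$ the unique $U_t$ has a $1/t$ blow-up. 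The paper circumvents this by (i) proving lower semicontinuity of the optimal value $\mathcal J^*$ via compactness and (ii) proving the matching upper semicontinuity by \emph{concatenating} the nondegenerate geodesic between $A_0^n,A_1^n$ with explicit low-cost connector paths $A_0\leadsto A_0^n$ and $A_1^n\leadsto A_1$ whose energy $\sim|\sqrt{D_i}-\sqrt{D_i+I/n}|_2^2$ vanishes as $n\to\infty$, using a suitable $\theta_n\to0$ rescaling. Without this concatenation (or some equivalent device, e.g.\ a careful argument that the singular $U_t$ is nonetheless $L^2(0,1;L^2(\rd A_t))$-admissible and that the $C^1$ infimum is not increased), your ``pass to the limit'' step does not close the argument for semi-definite endpoints.
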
 
\noindent
We believe that the claim is known to the experts in the field, at least for the non-singular matrices $A_0,\,A_1$.
We however failed to find an explicit reference (in particular for possibly degenerate matrices), and we provide an independent proof in the Appendix.
\\

The third possible formulation only works for \emph{nonsingular} (definite) matrices $A\in\Pm$, and provides an explicit relation with the quadratic Wasserstein distance $W_2$ on the space of probability measures on Euclidean spaces.
Let us start from the more classical real-valued case $A\in\Ppmre(d)$, and recall that we write $\mathcal N(A)=\mathcal N(0,A)$ for the corresponding multivariate normal distribution with mean $0\in\R^d$ and covariance $A$.
Then
\begin{prop}[\cite{alv18,takatsu2011wasserstein}]
\label{d:burw}
For any $A_0,A_1\in \Ppmre$, we have
\begin{equation}
\label{e:biw} 
d_{B}(A_0,A_1)=W_2\big(\mathcal N(A_0), \mathcal N(A_1)\big).
\end{equation}
\end{prop}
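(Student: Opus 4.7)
The plan is to compute $W_2^2(\mathcal N(A_0),\mathcal N(A_1))$ explicitly and recognize the result as the Bures formula \eqref{e:burexp}. Since both matrices are positive-definite, the Gaussians $\mathcal N(A_i)$ are absolutely continuous on $\R^d$ with smooth densities and finite second moments, so Brenier's theorem applies: the optimal transport map between them is the unique gradient of a convex function pushing one onto the other.

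\smallskip

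The first step is to search for a linear Brenier map. Any $T(x)=Sx$ with $S\in\Ppmre$ is the gradient of the convex quadratic $x\mapsto\tfrac12 x^\top Sx$, and it pushes $\mathcal N(A_0)$ forward to $\mathcal N(SA_0S)$. Thus I need $S\in\Ppmre$ solving $SA_0S=A_1$. Setting $S:=A_0^{-1/2}\bigl(A_0^{1/2}A_1A_0^{1/2}\bigr)^{1/2}A_0^{-1/2}$ and checking directly yields such an $S$, and its uniqueness within $\Ppmre$ follows from standard operator algebra (the map $S\mapsto SA_0S$ is injective on $\Ppmre$ for fixed $A_0\in\Ppmre$). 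By Brenier's theorem this $T$ is the optimal transport map.

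\smallskip

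The second step is the cost computation. With $X\sim\mathcal N(A_0)$ one has
\begin{equation*}
W_2^2\bigl(\mathcal N(A_0),\mathcal N(A_1)\bigr)=\mathbb E\bigl|(S-I)X\bigr|^2=\tr\bigl((S-I)A_0(S-I)\bigr)=\tr(SA_0S)-2\tr(SA_0)+\tr(A_0).
\end{equation*}
The relation $SA_0S=A_1$ gives $\tr(SA_0S)=\tr(A_1)$. For the middle term I use the cyclic trace and $A_0^{1/2}SA_0^{1/2}=(A_0^{1/2}A_1A_0^{1/2})^{1/2}$ to obtain $\tr(SA_0)=\tr\bigl((A_0^{1/2}A_1A_0^{1/2})^{1/2}\bigr)$. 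Substituting yields exactly $d_B^2(A_0,A_1)$ as in \eqref{e:burexp}.

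\smallskip

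No serious obstacle is expected: the only nontrivial conceptual input is Brenier's theorem for absolutely continuous source, and the algebra is elementary. The mild care required is in justifying that the linear candidate $T=\nabla(\tfrac12 x^\top Sx)$ is the Brenier map (and not merely a transport map), which follows because $S\in\Ppmre$ makes the quadratic convex and such gradients are uniquely optimal; everything else is bookkeeping using the symmetry of $A_0,A_1,S$ and the cyclicity of the trace.
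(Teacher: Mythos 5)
Your proof is correct, and it is essentially the standard argument: the paper does not prove this proposition itself but cites it to \cite{alv18,takatsu2011wasserstein}, where exactly this linear-Brenier-map computation is carried out. One small remark: the uniqueness of $S\in\Ppmre$ solving $SA_0S=A_1$ is not actually needed for the cost computation (any such $S$ makes $T=\nabla(\tfrac12 x^\top Sx)$ a gradient of a strictly convex function, hence the unique optimal map by Brenier); you could omit that observation with no loss.
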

\noindent
Actually, one can go beyond \eqref{e:biw} and show that the \emph{real} subspace $\Ppmre$ of the Bures space may be viewed as a totally geodesic submanifold of the space of probability measures on $\R^d$ equipped with the Otto-Wasserstein Riemannian structure, cf. \cite{M17,ja20}.

Let us now treat the general (complex) case $A\in\Pm(d)$.
It is natural to substitute every complex entry of a  Hermitian matrix $A=(A_{jk})$ with the real $(2\times 2)$ block
\begin{equation}
\label{eq:def_inclusion_r}
 a_{jk}=x_{jk}+\mathrm{i}y_{jk}
 \longrightarrow
 \begin{bmatrix}
 x_{jk} & -y_{jk}
 \\
 y_{jk}& x_{jk}
 \end{bmatrix}
\end{equation}
This defines an inclusion function $\mathfrak r: \Pm(d)\to \Ppmre(2d)$.
Using the definition \eqref{e:burexp} it is easy to check that
\begin{equation}
\label{eq:submersion_Bures_real_complex}
d_{B}^2(\mathfrak r(A_0),\mathfrak r(A_1))=2 d_{B}^2(A_0,A_1),
\end{equation}
and employing Proposition \ref{d:burw} we immediately conclude that
\begin{prop}
\label{d:burwc}
For any $A_0,A_1\in \Pm$, we have
\begin{equation}
\label{e:biwc} 
d_{B}(A_0,A_1)=\frac 1 {\sqrt 2} W_2\big(\mathcal N(\mathfrak r(A_0)), \mathcal N(\mathfrak r(A_1))\big).
\end{equation}
\end{prop}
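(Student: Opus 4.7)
The plan is to reduce the complex case to the real case already handled by Proposition~\ref{d:burw}. Given $A_0,A_1\in\Pm(d)$, the realifications $\mathfrak r(A_0),\mathfrak r(A_1)$ lie in $\Ppmre(2d)$, so Proposition~\ref{d:burw} applies and yields
$$d_B(\mathfrak r(A_0),\mathfrak r(A_1))=W_2\big(\mathcal N(\mathfrak r(A_0)),\mathcal N(\mathfrak r(A_1))\big).$$
Combining this with the identity \eqref{eq:submersion_Bures_real_complex}, namely $d_B^2(\mathfrak r(A_0),\mathfrak r(A_1))=2\,d_B^2(A_0,A_1)$, and taking square roots gives exactly \eqref{e:biwc}. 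So the whole content of the proposition is hidden in \eqref{eq:submersion_Bures_real_complex}, and my proof reduces to verifying that identity from the explicit Bures formula \eqref{e:burexp}.

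For this I would exploit the fact that the realification map $\mathfrak r:\C^{d\times d}\to\R^{2d\times 2d}$ is an $\R$-algebra homomorphism: one checks directly from \eqref{eq:def_inclusion_r} that $\mathfrak r(A+B)=\mathfrak r(A)+\mathfrak r(B)$ and $\mathfrak r(AB)=\mathfrak r(A)\mathfrak r(B)$, and that it carries Hermitian to symmetric and PSD to PSD. In particular $\mathfrak r$ preserves the spectrum (with doubled multiplicity) and sends a spectral decomposition of $A\in\Po$ to a spectral decomposition of $\mathfrak r(A)$, so it commutes with continuous functional calculus; in particular $\mathfrak r(\sqrt A)=\sqrt{\mathfrak r(A)}$ and
$$\mathfrak r\big(\sqrt{\sqrt{A_0}\,A_1\sqrt{A_0}}\big)=\sqrt{\sqrt{\mathfrak r(A_0)}\,\mathfrak r(A_1)\sqrt{\mathfrak r(A_0)}}.$$
Moreover $\tr\mathfrak r(A)=2\tr A$, because each real diagonal entry $x_{jj}$ of $A$ contributes a $2\times 2$ block $x_{jj}I$ to $\mathfrak r(A)$. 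Substituting these three observations into \eqref{e:burexp} applied to the real pair $(\mathfrak r(A_0),\mathfrak r(A_1))$ multiplies each of the three terms by $2$, which proves \eqref{eq:submersion_Bures_real_complex}.

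The argument is essentially bookkeeping and there is no serious conceptual obstacle; the only mildly delicate point is to observe that $\mathfrak r$ commutes not merely with polynomial but with continuous functional calculus on PSD matrices, which I would justify via the spectral argument above (alternatively, by approximating $\sqrt{\cdot}$ by polynomials on a spectrum-containing interval and passing to the limit, since $\mathfrak r$ is a bounded linear map). Once \eqref{eq:submersion_Bures_real_complex} is in hand, the proposition follows by a single line as indicated.
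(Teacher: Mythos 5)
Your argument is the same as the paper's: the paper proves the proposition in one line by combining Proposition~\ref{d:burw} with the scaling identity \eqref{eq:submersion_Bures_real_complex}, which it declares ``easy to check'' from the explicit formula \eqref{e:burexp}; you supply the checking (that $\mathfrak r$ is an $\R$-algebra $*$-homomorphism that commutes with the square root and doubles traces), and this is the correct and intended verification.
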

Consequently,  $\Pm$ may be viewed as a totally geodesic submanifold of the Otto-Wasserstein space of probability measures on $\R^{2d}$.

For any element $A\in \Po$, we set
$$
r=r(A)=\sqrt{\tr A}.
$$
Then we can identify $A$ with a pair $[A/r^2,r]\in \mathfrak C(\Pon)$.
(The first factor is normalized to unity in the sense that $\tr(A/r^2)=1$ and we think of $\Pon=\{A\in\Po:\,\tr A=1\}$ as the sphere of radius $r=1$ in $\Po$.)
The related \emph{spherical} Bures distance \cite{U92, BZ17} on $\Pon$ is defined by
\begin{equation}
\label{e:burexps}
d_{SB}^2(A_0,A_1):=\arccos \left (1-\frac 1 2d_B^2(A_0,A_1)\right),
\qquad A_0,A_1\in\Pon,
 \end{equation}
 consistently with \eqref{cone2}.
The fact that the Bures space is actually a metric cone might be well-known to the community but we never saw it explicitly written down. 

\begin{prop}
\label{p:coneb}
The space $(\Po,d_B)$ is a metric cone over $(\Pon,d_{SB})$, where $\Po$ is identified with $\mathfrak C(\Pon)$ via $A\simeq [A/r^2,r]$. 
\end{prop}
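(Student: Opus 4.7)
The strategy is to verify the cone identity \eqref{cone} directly from the explicit Bures formula \eqref{e:burexp}, exploiting positive homogeneity of the matrix square root.

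Given $A_0,A_1\in\Po$, write $A_i=r_i^2\hat A_i$ with $r_i=\sqrt{\tr A_i}$ and $\hat A_i\in\Pon$. Using $\sqrt{t^2M}=t\sqrt{M}$ for any $t\geq 0$ and $M\in\Po$, a direct computation gives
\[
\sqrt{A_0}\,A_1\sqrt{A_0}=r_0^2r_1^2\,\sqrt{\hat A_0}\,\hat A_1\sqrt{\hat A_0},
\qquad
\tr\sqrt{\sqrt{A_0}\,A_1\sqrt{A_0}}=r_0r_1\,F(\hat A_0,\hat A_1),
\]
where $F(B,C):=\tr\sqrt{\sqrt B\,C\sqrt B}$ denotes the quantum fidelity. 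Substituting into \eqref{e:burexp} yields
\[
d_B^2(A_0,A_1)=r_0^2+r_1^2-2r_0r_1\,F(\hat A_0,\hat A_1).
\]
Specializing to $r_0=r_1=1$ gives $F(\hat A_0,\hat A_1)=1-\tfrac12 d_B^2(\hat A_0,\hat A_1)=\cos\bigl(d_{SB}(\hat A_0,\hat A_1)\bigr)$ by the definition \eqref{e:burexps}, and combining the two identities,
\[
d_B^2(A_0,A_1)=r_0^2+r_1^2-2r_0r_1\cos\bigl(d_{SB}(\hat A_0,\hat A_1)\bigr),
\]
which is precisely the cone formula \eqref{cone} for the identification $A\simeq[\hat A,r]$.

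Next I would check that $(\Pon,d_{SB})$ is itself a metric space of diameter at most $\pi$, as required before invoking the cone construction. The fidelity is non-negative since $\sqrt{\hat A_0}\,\hat A_1\sqrt{\hat A_0}$ is PSD, and the bound $F\leq 1$ follows from the polar-decomposition identity $F(\hat A_0,\hat A_1)=\|\sqrt{\hat A_0}\sqrt{\hat A_1}\|_1$ combined with Cauchy--Schwarz $\|XY\|_1\leq\|X\|_2\|Y\|_2$, giving $F\leq\sqrt{\tr\hat A_0}\sqrt{\tr\hat A_1}=1$. Hence $d_{SB}$ takes values in $[0,\pi/2]$, which secures the diameter bound as well as symmetry and nondegeneracy.

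The main obstacle is the triangle inequality for $d_{SB}$, which does not descend directly from that of $d_B$ because the transformation $d\mapsto\arccos(1-d^2/2)$ fails to preserve metrics in general. I would resolve it via Uhlmann's purification theorem: $F(\hat A_0,\hat A_1)=\max|\langle\psi_0|\psi_1\rangle|$ taken over unit-vector purifications $\psi_i\in\C^d\otimes\C^d$ with partial trace equal to $\hat A_i$. This realizes $d_{SB}=\arccos F$ as an angular distance inherited from the round unit sphere of $\C^d\otimes\C^d$, and the triangle inequality follows by routine optimization over purifications. Once this is in hand, the algebraic identity above delivers the claimed cone structure.
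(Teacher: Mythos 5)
Your proposal is correct, but it takes a genuinely different route from the paper. The paper actually omits a direct proof of Proposition~\ref{p:coneb}, deferring to the more general Theorem~\ref{th:cone}, which works in the measure-valued setting $(\Mm,\dih/2)$. That proof proceeds \emph{dynamically}: it invokes an abstract characterization of metric cones from \cite{LM17} (Theorem 2.2), reducing the claim to verifying the scaling identity $\dih^2(r_0^2G_0,r_1^2G_1)=r_0r_1\dih^2(G_0,G_1)+4(r_1-r_0)^2$, which is then checked by an explicit reparametrization of admissible curves in the Benamou--Brenier-type formulation. The step-1 argument there (diameter bound, geodesic property of $(\Mm,\dih)$, and \cite[Cor.~5.11]{BH99}) also handles, as a byproduct, the fact that the ``base'' is indeed a metric space. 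Your argument instead stays entirely static and finite-dimensional: you read off the cone identity $d_B^2(A_0,A_1)=r_0^2+r_1^2-2r_0r_1\cos d_{SB}(\hat A_0,\hat A_1)$ directly from the closed form \eqref{e:burexp} via $1$-homogeneity of the square root, and you establish separately that $d_{SB}=\arccos F$ is a metric on $\Pon$ (the triangle inequality via Uhlmann's purification theorem, realizing $d_{SB}$ as the spherical angle on the unit sphere of $\C^d\otimes\C^d$). What each buys: the paper's route generalizes uniformly to the infinite-dimensional, measure-valued statement at the cost of the abstract cone-characterization machinery; your route is shorter and self-contained for the matrix case (no appeal to \cite{LM17}), but does not extend as is beyond finite dimensions. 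One small caveat: ``the diameter bound secures symmetry and nondegeneracy'' should really read that symmetry follows from symmetry of $F$ and nondegeneracy from faithfulness of the fidelity ($F(\hat A_0,\hat A_1)=1\iff \hat A_0=\hat A_1$) --- both standard, but not consequences of the bound $F\le 1$. Also note that \eqref{e:burexps} has an apparent typo (the exponent on the left should not be there, by comparison with \eqref{cone2}); you have correctly read it as $d_{SB}=\arccos(1-\tfrac12 d_B^2)$.
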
 
\noindent
We omit the proof since a more general statement will be provided in Theorem \ref{th:cone}.

The one-to-one correspondence between Hermitian PSD matrices and real Gaussian distributions from Proposition \ref{d:burwc} allows to define an analogue of the Boltzmann entropy on the Bures-Wasserstein space, cf. \cite{M17}.
Indeed, the usual (negative) Boltzmann entropy $H(\rho)=\int_{\R^{2d}}\rho\log\rho$ of a multivariate Gaussian distribution $\rho=\mathcal N(A)$ reads explicitly \cite{ahm89}
$$
H(\mathcal N(A))=-\frac 1 2 (\log\det(2\pi e A)).
$$
This suggests defining the Bures entropy of a PSD matrix $A\in\Pm$ as the Boltzmann entropy of the associated Gaussian distribution
$$
E_B(A):=-\frac 1 2 (\log\det(2\pi e \mathfrak r (A))),
$$ cf. \cite{F12,B13}.
It will be more convenient for our purpose to use a ``modulated'' version ({\it \`a la} Bregman), making the entropy non-negative and attaining its minimum value (zero) at $I$: 
\begin{equation}
\label{eq:def_Entropy_modulated}
E(A):=\frac 1 2\left[ \tr (\mathfrak r(A))- \tr (\mathfrak r(I))-\log\det (\mathfrak r(A))\right]= \tr A- \tr I-\log\det A. 
\end{equation}
Note that for $A\in \Ppmre$ this is exactly the (doubled) Kullback--Leibler divergence 
$$
E(A)=2D_{KL}\left(\mathcal N(A) \,\big|\, \mathcal N(I)\right)=2\int_{\R^d} \frac{\rho_A}{\rho_I}\log\left(\frac{\rho_A}{\rho_I}\right)\,\rd\rho_I
$$
from $\rho_A=\mathcal N(A)$ to the standard normal distribution $\rho_I=\mathcal N(I)$.
Note also that this differs from the von Neumann entropy
\begin{equation*}
S(A)=\tr (A \log A)=\sum_{\lambda\in\sigma_A}\lambda\log\lambda.
\end{equation*} 
The entropy $S$ is not geodesically convex on the Bures spaces $(\Po,d_B)$ and $(\Pon,d_{SB})$.
(One can mimic the argument from \cite[Section 5.1]{LMS16} to show this for the cone $\Po$;
The case of the sphere $\Pon$ is left as an exercise for the reader.)
However, our entropy $E$ in \eqref{eq:def_Entropy_modulated} turns out to be geodesically convex both on $(\Po,d_B)$ and $(\Pon,d_{SB})$.
For $\Po$ this is proved in \cite{M17} (for real-valued matrices), and for $\Pon$ it will follow from our more general Theorem \ref{theo:1/2_convex}.

%
\subsection{The heat flow and the Schr\"odinger problem on the Bures-Wasserstein space}

The usual heat flow on $\R^d$
\begin{equation}
\label{e:heat1}
\p_t \rho_t=\Delta \rho_t 
\end{equation}
is known \cite{JKO, AGS06, villani03topics} to be the gradient flow of the Boltzmann entropy $H(\rho)=\int\rho\log\rho$ on the Otto-Wasserstein space of probability measures on $\R^d$.
It is easy to check by hand that the set of multivariate normal distributions of zero mean is invariant w.r.t. \eqref{e:heat1}. 
Thus, one can consider the restriction $\rho_t=\mathcal N(A_t)$ of the heat flow onto the real Bures subspace $\Ppmre$, cf. \cite{M17}.
The resulting evolution reads 
\begin{equation}
\label{e:gfb1}
\frac {dA_t}{dt}=2I,
\end{equation}
cf. \cite{MMP18} (the covariance grows linearly).
Similarly, the Wasserstein gradient flow of the relative entropy (Kullback--Leibler divergence) $\int_{\R^d} \rho \log \frac \rho{\mathcal N(I)}$ is  the Fokker-Planck equation 
$$
\p_t \rho_t=\Delta \rho_t-\dive \left(\rho_t \nabla \log \mathcal N(I)\right),
$$
cf. \cite{JKO, AGS06, villani03topics}.
Note that this is exactly the heat flow if $\R^d$ is viewed as a Riemannian manifold with Gaussian volume form, cf. \cite{villani08oldnew}.
One easily checks that if the initial datum $\rho_0=\mathcal N(A_0)$ is Gaussian then the solution remains Gaussian $\rho_t=\mathcal N(A_t)$, and the corresponding flow on the Bures space reads \cite{M17}
\begin{equation}
\label{e:gfb2}
\frac {dA_t}{dt}=2(I-A_t).
\end{equation}
Both \eqref{e:gfb1} and \eqref{e:gfb2} actually make sense on the whole Bures space $\Po$, i.e. even when $A$ are complex-valued and merely semi-definite.
Moreover, it can be rigorously justified (e.g., by mimicking \cite{M17}), at least if we restrict ourselves to $\Pm$, that the extended flows \eqref{e:gfb1} and \eqref{e:gfb2} are exactly the Bures-Wasserstein gradient flows of the entropies $\frac 1 2 E_B(A)$ and $\frac 1 2 E(A)$, respectively.

The production of the Boltzmann entropy $-\frac d {dt} \int_{\R^d} \rho_t \log \rho_t=\int_{\R^d} \frac {|\nabla \rho_t|^2}{\rho_t}$ along the heat flow \eqref{e:heat1} is the celebrated Fisher information.
It is a key ingredient in the formulation of the time-symmetric dynamical version of the Schr\"odinger problem \cite{CGP20, C14, CGP16, L19}:
\begin{equation}
\label{e:oldy}
\frac 1 2\int_0^1\left(\int_{\R^d} \left(\rho_t |u_t|^2+\epsilon^2 \frac {|\nabla \rho_t|^2}{\rho_t} \right) \rd x \right) \rd t
\qquad\rightarrow \min.
\end{equation}
Here the unknown probability density $\rho_t$ and velocity field $u_t$ are related by the transport equation $\p_t \rho_t+\dive (\rho_t u_t)=0$; 
the initial and final configurations $\rho_0,\rho_1$ are prescribed and $\eps>0$ is a temperature parameter.  
Just as the heat flow, the Schr\"odinger problem also leaves the the Gaussian manifold invariant:
\begin{prop}
\label{prop:Schrodinger_Gaussian}
 Assume that $\rho_0=\mathcal N(A_0),\rho_1=\mathcal N(A_1)$ are Gaussians.
 Then the solution to \eqref{e:oldy} remains Gaussian, $\rho_t=\mathcal N(A_t)$ for some explicitly computable $A_t\in\Ppmre$.
\end{prop}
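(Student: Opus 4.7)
The plan is to restrict the infinite-dimensional variational problem \eqref{e:oldy} to the Gaussian submanifold, and then argue that the resulting Gaussian curve is the global minimizer.

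First, I would adopt the ansatz $\rho_t = \mathcal{N}(A_t)$ with $A_t \in \Ppmre$, together with an affine velocity field $u_t(x) = U_t x$ for some $U_t \in \mathcal{S}$. Using $\nabla \rho_t = -A_t^{-1} x\, \rho_t$, a direct computation shows that the continuity equation $\p_t \rho_t + \dive(\rho_t u_t) = 0$ is equivalent to the matrix ODE $\dot A_t = 2(A_t U_t)^{Sym}$, which is precisely the Bures constraint \eqref{eq:ODEb} (up to a harmless rescaling of $U_t$). Standard Gaussian moment identities then give
\[
\int_{\R^d} \rho_t |u_t|^2 \, \rd x = A_t U_t : U_t, \qquad \int_{\R^d} \frac{|\nabla \rho_t|^2}{\rho_t} \, \rd x = \tr(A_t^{-1}),
\]
so that \eqref{e:oldy} restricted to the Gaussian manifold collapses to the explicit finite-dimensional problem
\[
\frac 1 2 \int_0^1 \left( A_t U_t : U_t + \eps^2 \tr(A_t^{-1}) \right) \rd t \to \min
\]
subject to the matrix ODE and the prescribed boundary values $A_0, A_1 \in \Ppmre$. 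Existence of a minimizer $A_t^* \in \Ppmre$ follows from standard direct-method arguments: the Fisher-information penalty $\tr(A_t^{-1})$ keeps the covariance uniformly nondegenerate along minimizing sequences (ruling out escape to $\p \Ppmre$), while the kinetic term provides equicontinuity via Cauchy--Schwarz and the matrix ODE. The associated Euler--Lagrange equations produce a matrix Riccati-type system that explicitly characterizes $A_t^*$.

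Second, I would argue that this Gaussian candidate is in fact the global minimizer of the full problem \eqref{e:oldy}. The cleanest route is through the static-dynamic equivalence of the Schr\"odinger problem: \eqref{e:oldy} admits a unique minimizer whose time-marginals coincide with those of the unique solution of the static Schr\"odinger problem, namely the relative-entropy minimization against the reversible Wiener measure with prescribed marginals $\rho_0, \rho_1$, see \cite{CGP20,CGP16,leonard2012schrodinger}. For Gaussian marginals this Schr\"odinger bridge is classically Gauss--Markov, with covariance solving an explicit matrix Riccati system. Uniqueness, combined with the existence of a Gaussian candidate from the first step, then forces $\rho_t$ to remain Gaussian and yields the explicit formulas for $A_t$.

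The main obstacle is the last step: without invoking the static reformulation, one would have to rule out non-Gaussian competitors with Gaussian endpoints that might achieve a strictly smaller cost. A purely PDE-based alternative would verify that the Gaussian ansatz solves the coupled optimality system (continuity equation together with the quantum Hamilton--Jacobi equation arising from the Fisher-information term) and invoke uniqueness for this system, which is delicate precisely because of the second-order regularizing term. Invoking the classical Gauss--Markov bridge theory shortcuts this analytical difficulty and also produces the explicit covariance dynamics as a byproduct.
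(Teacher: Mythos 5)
Your proposal is essentially correct and invokes the same underlying machinery as the paper, namely the dynamic/static equivalence of the Schr\"odinger problem, but the paper's route is tighter and more self-contained. Where you cite the classical fact that the Schr\"odinger bridge between Gaussians is Gauss--Markov, the paper instead solves the Schr\"odinger $(f,g)$-system explicitly in the Gaussian class: it uses the elementary identities that the heat flow preserves Gaussians (covariance evolves by $B \mapsto B + 2tI$), that the product of Gaussian densities is Gaussian ($\mathcal N(B)\,\mathcal N(C)=\mathcal N([B^{-1}+C^{-1}]^{-1})$), and then solves the resulting $2\times2$ system of matrix equations for $B_0,C_1$, yielding $A_t^{-1}=[B_0+2tI]^{-1}+[C_1+2(1-t)I]^{-1}$ in closed form. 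This directly produces the explicit $A_t$ promised in the statement, which your "matrix Riccati system" phrasing leaves implicit. Your Step~1 (the restriction of \eqref{e:oldy} to affine velocity fields, which recovers exactly \eqref{e:minishro} with the Bures constraint \eqref{eq:ODEb}) is a correct computation and is indeed how the paper motivates the matricial Schr\"odinger problem elsewhere, but once you decide to invoke the Gauss--Markov bridge result, that step becomes logically redundant: uniqueness of the bridge plus its known Gaussianity already finishes the proof, and the existence of a Gaussian candidate minimizer within the restricted class adds nothing. Also note that your compactness sketch in Step~1 is slightly optimistic: finiteness of $\int_0^1 \tr(A_t^{-1})\,\rd t$ only controls degeneracy on average in time, not pointwise, so "uniformly nondegenerate" would need a bit more care, another reason the $(f,g)$-route is the cleaner path.
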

\noindent
We could not find this statement anywhere in the literature and postpone a sketch of proof to the appendix.

In the spirit of \cite{L19}, this suggests considering an analogue of the dynamical Schr\"odinger problem \eqref{e:oldy} on the Bures space $\Po$, just as we have done above for the heat flow.
Related issues for the static Schr\"odinger problem were discussed very recently in \cite{ja20}.
The corresponding Fisher information, i.e. the production of the entropy $\frac 1 2 E_B(A)$ along the restricted heat flow \eqref{e:gfb1}, turns out to be  $\tr A^{-1}$.
The resulting Schr\"odinger problem reads
\begin{equation}
\label{e:minishro} 
\frac 1 2 \int_0^1 \left(A_t U_t :U_t+\eps^2 \tr A_t^{-1}\right) \rd t
\qquad \rightarrow \min,
\end{equation}
where $(A_t,U_t)_{t\in[0,1]}$ belongs to the admissible set $\mathcal{A}(A_0,A_1)$ from Proposition \ref{d:bur}, i.e. satisfy the continuity equation $\frac{dA_t}{dt}=(A_tU_t)^{Sym}$.
As the temperature $\eps\to 0$ the solutions ({\it Schr\"odinger bridges}) are expected to approximate the geodesics on $(\Po,d_{B})$, cf. \eqref{e:minibur}.
We will prove a related but more general statement later on, Corollary \ref{c:gc}.

\section{The Hellinger distance}
\label{sec:Hellinger}

The Hellinger distance on $\Mm$ can be defined as follows. 
\begin{defi} [Hellinger distance]
\label{d:fr1h}
Given two matrix measures $\rhp_0,\rhp_1\in \Mm$ we define
\begin{equation}
\label{e:minifrh}
d_{H}^2(\rhp_0,\rhp_1):=\inf_{\mathcal{A}(\rhp_0,\rhp_1)}\int_0^1\left(\int_{\Omega} \rd \rhp_t U_t :U_t \right) \rd t,
\end{equation}
where the admissible set $\mathcal{A}(\rhp_0,\rhp_1)$ consists of all pairs $(\rhp_t,U_t)_{t\in [0,1]}$ such that
\begin{equation}
\label{eq:ODE}
\left\{ 
\begin{array}{l} 
\rhp\in C([0,1];\Mm_{TV}),
\\
\rhp|_{t=0}=\rhp_0;\quad \rhp|_{t=1}=\rhp_1,
\\
U\in L^2(0,1;L^2(\rd \rhp_t;\Sm))
\end{array}
\right.
\qqtext{and}
\p_t\rhp_t=\left(\rhp_tU_t\right)^{Sym} \quad \mbox{in the weak sense}.
\end{equation}
\end{defi}
For purely aesthetic reasons we dropped a $1/4$ factor in \eqref{e:minifrh} compared to \eqref{e:minibur}.

We say that $\p_t\rhp_t=\left(\rhp_tU_t\right)^{Sym}$ holds in the weak sense if for all test-functions $\Psi\in C^1([0,1];C_b(\Omega;\Sm))$ there holds
\begin{equation}
\label{eq:weak_formulation_ODE}
\int _\Omega \Psi_t:\rd\rhp_t -\int _\Omega \Psi_s:\rd \rhp_s=
\int_s^t\left(\int_\Omega \rd G_\tau :\p_\tau \Psi_\tau + \rd G_\tau U_\tau : \Psi_\tau \right) \rd \tau,
\qquad \forall s,t\in[0,1]. 
\end{equation}
We will see shortly that this automatically implies that $G$ is absolutely continuous in time w.r.t. the TV norm, hence the ODE $\partial_t \rhp_t=(\rhp_t U_t)^{Sym}$ can also be understood as pointwise a.e. $t\in(0,1)$ with values in the Banach space $\Smv_{TV}$ and boundary data taken in the strong sense.
More precisely,
\begin{lem}
\label{lem:mass_estimate_TV}
 Let $(\rhp_t,U_t)_{t\in[0,1]}$ be a solution of \eqref{eq:weak_formulation_ODE} with $\rhp\in C([0,1];\Mm_{TV})$ and mass $m_t:=\|\tr\rhp_t\|_{TV}$.
 Set
 $$
 E:=\|U\|^2_{L^2(0,1;L^2(d\rhp_t))}
 \qqtext{and}
 M:=\max\limits_{t\in[0,1]} m_t.
 $$
 Then the map $t\mapsto\rhp_t$ is $AC^2$ for the TV norm, there holds
 \begin{equation}
 \label{eq:control_TV_leq_sqrt_FR}
  \|\rhp_t-\rhp_s\|_{TV}\leq \sqrt{ME}|t-s|^\frac 12,
  \qquad \forall\,s,t\in[0,1]
 \end{equation}
 and we have the mass estimate
 \begin{equation}
 \label{eq:fund_mass_estimate}
 M\leq E+ 2\min\{m_0,m_1\}.
 \end{equation}
\end{lem}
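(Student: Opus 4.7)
The plan is to proceed in two stages: first establish the mass estimate \eqref{eq:fund_mass_estimate} by testing \eqref{eq:weak_formulation_ODE} against the constant identity matrix, and then repeat the same computation with a general test function to obtain the TV bound \eqref{eq:control_TV_leq_sqrt_FR} and the $AC^2$ regularity. Throughout I will disintegrate $\rd G_\tau = g_\tau \rd\gamma_\tau$ with density $g_\tau \in \Pon$ (so that $\tr g_\tau \equiv 1$) and set $f_\tau := \int_\Omega \rd G_\tau U_\tau : U_\tau$, so that $\int_0^1 f_\tau \rd\tau = E$. The recurring tool will be the pointwise Cauchy-Schwarz inequality $|g_\tau U : V|^2 \leq (g_\tau U:U)(g_\tau V:V)$ for the positive semi-definite bilinear form $(U,V) \mapsto g_\tau U : V$.

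For the mass bound, testing against the admissible space- and time-constant $\Psi \equiv I$ (which is bounded, hence in $C_b(\Omega;\Sm)$, and trivially $C^1$ in $t$ with vanishing derivative) produces the scalar mass evolution
\begin{equation*}
m_t - m_s = \int_s^t \int_\Omega \rd G_\tau U_\tau : I \, \rd\tau.
\end{equation*}
Applying the pointwise Cauchy-Schwarz together with $g_\tau I : I = \tr g_\tau = 1$ yields $|g_\tau U_\tau : I| \leq (g_\tau U_\tau : U_\tau)^{1/2}$; integrating against $\rd\gamma_\tau$, then Cauchy-Schwarz in $L^2(\gamma_\tau)$, followed by a final Cauchy-Schwarz in time, give
\begin{equation*}
|m_t - m_s| \leq \int_s^t \sqrt{f_\tau m_\tau}\, \rd\tau \leq \sqrt{ME}\,|t-s|^{1/2}.
\end{equation*}
In particular $m_t \leq m_0 + \sqrt{ME}$, and taking the supremum in $t$ produces the self-referential inequality $M \leq m_0 + \sqrt{ME}$. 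Solving the associated quadratic in $\sqrt M$ yields $\sqrt M \leq \tfrac12(\sqrt E + \sqrt{E + 4m_0})$, and the elementary bound $(a+b)^2 \leq 2(a^2+b^2)$ then gives $M \leq E + 2m_0$. The symmetric estimate $M \leq E + 2m_1$ follows by time reversal through $\tilde G_t := G_{1-t}$, $\tilde U_t := -U_{1-t}$, which preserves both \eqref{eq:weak_formulation_ODE} and the energy $E$. This completes \eqref{eq:fund_mass_estimate}.

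For the TV estimate and the $AC^2$ regularity, I repeat the same computation with an arbitrary time-independent $\Psi \in C_b(\Omega;\Sm)$ satisfying $\|\Psi\|_\infty \leq 1$. The pointwise Cauchy-Schwarz combined with \eqref{eq:gUU_leq_trgU2} and $\tr g_\tau = 1$ gives $g_\tau \Psi : \Psi \leq |\Psi|_2^2 \leq 1$ and hence $|g_\tau U_\tau : \Psi(x)| \leq (g_\tau U_\tau : U_\tau)^{1/2}$, so the same chain of integrations produces
\begin{equation*}
\left| \int_\Omega \Psi : (\rd G_t - \rd G_s) \right| \leq \int_s^t \sqrt{f_\tau m_\tau}\, \rd\tau \leq \sqrt{ME}\,|t-s|^{1/2}.
\end{equation*}
Taking the supremum over admissible $\Psi$ then gives \eqref{eq:control_TV_leq_sqrt_FR}. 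Finally, the density $\eta(\tau) := \sqrt{f_\tau m_\tau}$ satisfies $\int_0^1 \eta(\tau)^2 \rd\tau \leq ME < \infty$ thanks to \eqref{eq:fund_mass_estimate}, which certifies the $AC^2$ regularity of $t \mapsto \rhp_t$ in the TV topology. The principal subtlety I anticipate is closing the mass bootstrap with the sharp constant $2$ in front of $\min\{m_0,m_1\}$; everything else reduces to a routine chain of Cauchy-Schwarz applications.
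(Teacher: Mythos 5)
Your proof is correct and follows essentially the same route as the paper: test the weak formulation against $\Psi\equiv I$ to derive the self-referential mass bound, close it by elementary algebra, then repeat with a general time-independent $\Psi$, $\|\Psi\|_\infty\leq 1$, using Cauchy-Schwarz together with \eqref{eq:gUU_leq_trgU2} to get the TV modulus and the $AC^2$ property. The only differences are cosmetic: you split the space-time Cauchy-Schwarz into two successive applications and make explicit the quadratic solve and the time-reversal argument that the paper leaves to the reader.
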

\begin{proof}
Note first that by continuity the mass is bounded from above, i.e. $M=\max m_t<+\infty$.
In order to estimate $M$ we first take $\psi\equiv I$ in \eqref{eq:weak_formulation_ODE}.
The Cauchy-Schwarz inequality yields
\begin{multline*}
 |m_t-m_s|
 =
 \left|\int_s^t \int_\Omega 
 \rd\rhp_\tau U_\tau:I \,\rd \tau\right|
 \\
 \leq
 \left( \int_s^t \int_\Omega 
 \rd\rhp_\tau U_\tau:U_\tau \,\rd \tau\right)^\frac 12
 \left(  \int_s^t \int_\Omega 
 \rd\rhp_\tau I:I\,\rd \tau\right)^\frac 12
 \\
 = \sqrt{E} \times \left(\int_s^t \|\tr\rhp_\tau\|_{TV}\right)^\frac 12
 \leq \sqrt{E} \times (M|t-s|)^\frac 12,
\end{multline*}
where we used  $\|\tr\rhp_\tau\|_{TV}=m_\tau\leq M$.
Taking now $s=0$ and picking any time $t$ where $M=\max m_\tau=m_{t}$ is attained gives $M\leq m_0+\sqrt{E}\sqrt M|t-0|^\frac 12\leq m_0+\sqrt{E}\sqrt M$.
Elementary algebra guarantees in turn $M\leq E+2m_0$.
By symmetry we get $M\leq E+2m_1$ as well and \eqref{eq:fund_mass_estimate} follows.

In order to get the absolute continuity in time, take now a time-independent test-function $\psi\in C_b(\Omega;\Sm)$ and fix any $s\leq t$.
Then \eqref{eq:weak_formulation_ODE} and \eqref{eq:gUU_leq_trgU2} imply
\begin{multline*}
\left|\int_\Omega \rd(\rhp_t-\rhp_s):\psi\right|
=
\left| \int_s^t\int_\Omega\rd \rhp_\tau U_\tau : \psi\,\rd \tau \right|
\\
\leq  \int_s^t\left(\int_\Omega\rd \rhp_\tau U_\tau : U_\tau\right)^\frac 12
\left(\int_\Omega\rd \rhp_\tau \psi : \psi\right)^\frac 12 
\rd \tau 
\\
\leq \int_s^t\left(\int_\Omega\rd \rhp_\tau U_\tau : U_\tau\right)^\frac 12
\left(m_\tau \|\psi\|^2_\infty\right)^\frac 12 
\rd \tau 
\\
\leq \sqrt{M}\|\psi\|_\infty \int_s^t\left(\int_\Omega\rd \rhp_\tau U_\tau : U_\tau\right)^\frac 12\rd \tau.
\end{multline*}
Taking the supremum over $\psi$'s such that $\|\psi\|_\infty\leq 1$ gives
$$
\|\rhp_t-\rhp_s\|_{TV}\leq \sqrt M \int_s^t\underbrace{\left(\int_\Omega\rd \rhp_\tau U_\tau : U_\tau\right)^\frac 12}_{\in L^2(0,1)}\rd \tau
$$
and entails the $AC^2$ regularity in total variation.
Applying finally the Cauchy-Schwarz inequality (in time) gives
$$
\|\rhp_t-\rhp_s\|_{TV}
\leq
\sqrt M \left( \int_s^t\int_\Omega\rd \rhp_\tau U_\tau : U_\tau\rd \tau\right)^\frac 12 |t-s|^\frac 12
\leq 
\sqrt{ME}|t-s|^\frac 12
$$
and concludes the proof.
\end{proof}

\begin{theo} 
\label{theo:d_distanceh}
 $d_{H}$ is a distance on $\Mm$.
\end{theo}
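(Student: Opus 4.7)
The claim amounts to verifying the three axioms of a distance --- non-negativity together with finiteness, symmetry, and the triangle inequality --- and, separately, the separating condition $d_{H}(\rhp_0,\rhp_1)=0\Rightarrow \rhp_0=\rhp_1$.

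Non-negativity of the functional \eqref{e:minifrh} is immediate from positive-semidefiniteness of $\rhp_t$. For finiteness (non-emptiness of the admissible set with finite cost) the plan is to construct an explicit competitor that passes through the zero measure. On $[0,1/2]$ I would set $\rhp_t=(1-2t)^2\rhp_0$ together with $U_t=-\tfrac{4}{1-2t}I$, and on $[1/2,1]$ set $\rhp_t=(2t-1)^2\rhp_1$ together with $U_t=\tfrac{4}{2t-1}I$. The two pieces glue continuously through $\rhp_{1/2}=0$, the weak formulation \eqref{eq:weak_formulation_ODE} reduces to a scalar identity since $U_t$ is proportional to $I$ and $\rhp_t$ is a scalar multiple of $\rhp_0$ (resp. $\rhp_1$), and the cost density evaluates to the constant $\rhp_t U_t:U_t=16\,\tr\rhp_0$ (resp. $16\,\tr\rhp_1$), which integrates to the finite total $8(m_0+m_1)$.

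Symmetry follows by time-reversal: $(\tilde\rhp_t,\tilde U_t):=(\rhp_{1-t},-U_{1-t})$ is admissible between $\rhp_1$ and $\rhp_0$ with unchanged cost. For the triangle inequality I would use the classical reparametrize-and-concatenate procedure. Given $\eps>0$ and near-optimal pairs $(\rhp^j,U^j)$, $j=1,2$, between $\rhp_0\leftrightarrow\rhp_b$ and $\rhp_b\leftrightarrow\rhp_1$ with respective costs $C_j\leq d_H^2(\cdot,\cdot)+\eps$, stitch them at a cut time $\alpha\in(0,1)$ by setting $\rhp_t=\rhp^1_{t/\alpha}$, $U_t=\alpha^{-1}U^1_{t/\alpha}$ on $[0,\alpha]$ and $\rhp_t=\rhp^2_{(t-\alpha)/(1-\alpha)}$, $U_t=(1-\alpha)^{-1}U^2_{(t-\alpha)/(1-\alpha)}$ on $[\alpha,1]$. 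A change of variables verifies admissibility and produces total cost $C_1/\alpha+C_2/(1-\alpha)$; optimizing in $\alpha$ yields the value $(\sqrt{C_1}+\sqrt{C_2})^2$, and letting $\eps\to 0$ gives the desired inequality.

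The genuinely delicate point is the separating condition, and this is precisely what Lemma \ref{lem:mass_estimate_TV} was designed for: if $d_{H}(\rhp_0,\rhp_1)=0$ then a minimizing sequence $(\rhp^n,U^n)$ has costs $E_n\to 0$; by \eqref{eq:fund_mass_estimate} the peak masses satisfy $M_n\leq E_n+2\min\{m_0,m_1\}$ and are therefore uniformly bounded, after which \eqref{eq:control_TV_leq_sqrt_FR} with $s=0,\,t=1$ forces $\|\rhp_1-\rhp_0\|_{TV}\leq \sqrt{M_nE_n}\to 0$, hence $\rhp_0=\rhp_1$. This step is where the main technical content of the argument is concentrated --- the other three axioms are essentially scaling and gluing --- while separation crucially exploits the quantitative mass/TV estimate just established above.
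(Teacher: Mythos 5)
Your proposal is correct and follows essentially the same route as the paper's: finiteness via an explicit path passing through the zero measure, the separating condition via Lemma~\ref{lem:mass_estimate_TV} (bounded peak mass $M$ from \eqref{eq:fund_mass_estimate} combined with \eqref{eq:control_TV_leq_sqrt_FR}), and the triangle inequality via the reparametrize-concatenate-optimize-in-$\alpha$ device. The only cosmetic difference is that you build a symmetric two-piece competitor on $[0,1/2]\cup[1/2,1]$ while the paper uses a single piece $(t^2\rhp_1,\tfrac{2}{t}I)$ joining $0$ to $\rhp_1$; both give a finite (if not optimal) cost and the rest of the argument is identical.
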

\begin{proof}
The argument is quite standard and we only sketch the details.
First of all, given any $\rhp_1$ it is easy to see that $(\rhp_t,U_t):=(t^2\rhp_1,\frac 2t I)$ gives an admissible path connecting $0$ to $\rhp_1$ with finite cost.
As a consequence any two points $\rhp_0,\rhp_1$ can be connected going through zero, $\rhp_0\leadsto 0\leadsto\rhp_1$ each in time $1/2$, and $\dih^2(\rhp_0,\rhp_1)$ is therefore always finite.

Assume now that $\dih^2(\rhp_0,\rhp_1)=0$, and let $(\rhp^n_t,U^n_t)_{t\in[0,1]}$ be a minimizing curve with energy $E_n=E[\rhp^n,U^n]\to 0$.
Then Lemma~\ref{lem:mass_estimate_TV} guarantees that
$$
\|\rhp_1-\rhp_0\|_{TV}\leq \sqrt{E^n+2\min\{m_0,m_1\}}\sqrt{E_n}\to 0
$$
hence $\rhp_0=\rhp_1$.

Finally for the triangular inequality, fix any $\rhp_0,\rhp_1\in \Mm$ and take any $\tilde\rhp\in\Mm$.
Consider two minimizing sequences $(\rhp^{n0},U^{n0})$ and $(\rhp^{n1},U^{n1})$ in the definitions of $\dih^2(\rhp_0,\tilde\rhp)$ and $\dih^2(\tilde\rhp,\rhp_1)$, respectively, both in time $t\in[0,1]$.
For any fixed $\theta\in(0,1)$ it is easy to scale the path $\rhp^{n0}$ in time $[0,\theta]$, rescale the path $\rhp^{n1}$ in time $[\theta,1]$, and concatenate them to produce an admissible path $(\check \rhp^n,\check U^n)_{t\in[0,1]}$ connecting $\check\rhp^n_0=\rhp_0 \leadsto \check\rhp^n_{\theta}=\tilde\rhp \leadsto \check\rhp^n_1=\rhp_1$.
The scaling property shows that the energy of the resulting path is
$$
\dih^2(\rhp_0,\rhp_1)
\leq E[\check\rhp^n,\check U^n] = \frac{1}{\theta}E[\rhp^{n0},U^{n0}] + \frac{1}{1-\theta}E[\rhp^{n1},U^{n1}]
$$
and therefore taking $n\to \infty$ gives
$$
\dih^2(\rhp_0,\rhp_1)
\leq
\frac{1}{\theta}\dih^2(\rhp_0,\tilde\rhp) + \frac{1}{1-\theta}\dih^2(\tilde\rhp,\rhp_1).
$$
Choosing $\theta=\frac{\dih(\rhp_0,\tilde\rhp)}{\dih(\rhp_0,\tilde\rhp)+\dih(\tilde\rhp,\rhp_1)}$ finally yields
$$
\frac{1}{\theta}\dih^2(\rhp_0,\tilde\rhp) + \frac{1}{1-\theta}\dih^2(\tilde\rhp,\rhp_1)
=
\left[\dih(\rhp_0,\tilde\rhp)+\dih(\tilde\rhp,\rhp_1)\right]^2
$$
and achieves the proof.
\end{proof}
%
Recalling that we write $d_{B}(g_0,g_1)$ for the Bures distance between $g_0,g_1\in\Po$, we have next
\begin{lem}
\label{lem:dih=int_dB}
The Hellinger distance can be computed by the formula
\begin{equation}
\label{eq:dih=int_dB}
 \dih^2(\rhp_0,\rhp_1)=4\int_\Omega d^2_{B}(\rhp_0,\rhp_1).
\end{equation}
\end{lem}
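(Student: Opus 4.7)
\emph{Setting up the right-hand side and intuition.} The integral $\int_\Omega d_B^2(\rhp_0,\rhp_1)$ is to be read as $\int_\Omega d_B^2(g_0(x),g_1(x))\,\rd\mu(x)$, where $\mu$ is any scalar Radon measure with $\rhp_0,\rhp_1\ll\mu$ and $g_i=\rd\rhp_i/\rd\mu\in\Po$ (e.g.\ $\mu=\tr\rhp_0+\tr\rhp_1$). The $1$-homogeneity $d_B^2(cA,cB)=c\,d_B^2(A,B)$, readable off \eqref{e:burexp}, guarantees that this quantity does not depend on the choice of $\mu$. The underlying intuition is that the continuity equation $\partial_t\rhp_t=(\rhp_tU_t)^{Sym}$ is purely local in $x$---no spatial derivatives appear---so no mass is transported across $\Omega$ and the minimization in \eqref{e:minifrh} decouples into fiberwise Bures problems on $\Po$ at each point.

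\emph{Upper bound.} To establish $\dih^2(\rhp_0,\rhp_1)\leq 4\int_\Omega d_B^2(\rhp_0,\rhp_1)$, I would build a fiberwise competitor. For each $x\in\Omega$, Proposition~\ref{d:bur} furnishes an optimal Bures pair $(g_t(x),U_t(x))\in\mathcal A(g_0(x),g_1(x))$ realizing $\int_0^1 g_t(x)U_t(x):U_t(x)\,\rd t = 4\, d_B^2(g_0(x),g_1(x))$. A standard measurable-selection argument (the minimizing set is nonempty with Borel-measurable dependence on the endpoints) yields jointly Borel families $g_t(x),U_t(x)$. Setting $\rhp_t:=g_t\mu$ then produces an admissible pair in the sense of \eqref{eq:ODE}, and Fubini delivers the desired inequality.

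\emph{Lower bound.} Conversely, take any $(\rhp_t,U_t)\in\mathcal A(\rhp_0,\rhp_1)$ with finite energy $E<\infty$. The key is to introduce a \emph{common} scalar reference $\mu:=\int_0^1 \tr\rhp_t\,\rd t$, which is finite as a non-negative Radon measure by Lemma~\ref{lem:mass_estimate_TV}. Then $\rhp_t\ll\mu$ for $\rd t$-a.e.\ $t$, and by TV-continuity of $\rhp$ together with TV-closedness of $\{\nu:\nu\ll\mu\}$, actually for every $t\in[0,1]$. Writing $\rhp_t=g_t\mu$ with jointly measurable $g_t(x)\in\Po$ and plugging separated test functions $\Psi_t(x)=\varphi(t)\eta(x)$ into \eqref{eq:weak_formulation_ODE}, with $\eta$ running through a countable dense subset of $C_b(\Omega;\Sm)$, produces a $\mu$-null set outside of which $t\mapsto g_t(x)$ weakly solves $\partial_t g_t(x)=(g_t(x)U_t(x))^{Sym}$, takes the correct boundary values $g_0(x),g_1(x)$, and has finite fiber energy $\int_0^1 g_t(x)U_t(x):U_t(x)\,\rd t$ (by Fubini). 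Proposition~\ref{d:bur} applied fiberwise then gives $\int_0^1 g_t(x)U_t(x):U_t(x)\,\rd t\geq 4\, d_B^2(g_0(x),g_1(x))$ for $\mu$-a.e.\ $x$. Integrating against $\mu$ and applying Fubini again yields $E\geq 4\int_\Omega d_B^2(\rhp_0,\rhp_1)$; taking the infimum over admissible pairs concludes.

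\emph{Main obstacle.} The crux is the disintegration step of the lower bound: choosing a suitable common reference $\mu$ and descending the global weak continuity equation to a valid fiberwise weak ODE on a set of full $\mu$-measure, with controlled boundary traces and fiber energies. A secondary technicality is that Proposition~\ref{d:bur} is stated for $C^1$ admissible curves, so a standard density/closure argument is needed to cover the weaker regularity of the disintegrated fibers.
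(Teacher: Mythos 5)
Your upper bound is exactly the paper's argument (fiberwise Bures minimizers, $\rhp_t:=g_t\lambda$, Fubini), and your lower bound reaches the same destination but by a genuinely different route. The paper fixes an arbitrary $\lambda$ dominating only the \emph{endpoints} $\rhp_0,\rhp_1$, and then observes that the Lebesgue decomposition $\rhp_t^n=(\rhp_t^n)^\lambda+(\rhp_t^n)^\perp$ decouples the ODE $\partial_t\rhp_t^n=(\rhp_t^nU_t^n)^{Sym}$ into two independent ODEs for the absolutely continuous and singular parts; since the singular part vanishes at $t=0,1$, one may simply discard it, which can only decrease the energy. This produces a competitor with $\rhp_t^n\ll\lambda$ for all $t$, after which the fiberwise disintegration and comparison with \eqref{e:minibur} proceed as in your argument. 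You instead manufacture a common reference $\mu=\int_0^1\tr\rhp_t\,\rd t$ directly from the given curve and argue $\rhp_t\ll\mu$ for every $t$. This is correct, but your phrasing of the intermediate step is slightly off: the claim ``$\rhp_t\ll\mu$ for a.e.\ $t$'' is not available by abstract measure theory alone (the $t$-null set in ``$\gamma_s(A)=0$ for a.e.\ $s$'' depends on $A$); the clean argument is to fix a Borel $A$ with $\mu(A)=0$, deduce $\gamma_s(A)=0$ for a.e.\ $s$, and upgrade to all $s$ by the continuity of $s\mapsto\gamma_s(A)$ (which follows from $|\gamma_s(A)-\gamma_t(A)|\leq\|\rhp_s-\rhp_t\|_{TV}$), giving $\gamma_t\ll\mu$ for \emph{every} $t$ in one stroke. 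What the paper's projection route buys you is that you never need to construct or reason about $\mu$, and the inequality ``the projected curve is a cheaper competitor'' is immediate; what your route buys you is that it works for an arbitrary admissible pair rather than a minimizing sequence, and it makes more transparent the heuristic you state at the outset---that the continuity equation is local in $x$, so the problem decouples fiber by fiber. Both approaches share the same two residual technicalities, which you correctly flag: the jointly measurable selection for the disintegration, and the fact that Proposition~\ref{d:bur} is stated for $C^1$ admissible pairs whereas the fiber curves are only $AC^2$ (the paper plugs them in without comment, implicitly invoking the same density/closure extension you mention).
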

\noindent
Note that this integral is well-defined by $1$-homogeneity of $d_B^2$ (which is clear from \eqref{e:burexp}), i.e. we mean here
$$
\int_\Omega d^2_{B}(\rhp_0,\rhp_1)
:=
\int_\Omega d^2_{B}\left(\frac{\rd\rhp_0}{\rd\lambda}(x),\frac{\rd\rhp_1}{\rd\lambda}(x)\right)\rd \lambda(x)
$$
for any positive scalar measure $\lambda$ dominating simultaneously (the traces of) $\rhp_0,\rhp_1$.
(The integral is independent of the choice of $\lambda$.)
\begin{proof}
 Let first us prove that $d^2_H\leq 4\int d^2_B$.
 To this end, fix any scalar measure $\lambda$ dominating $\rhp_0,\rhp_1$, and denote the corresponding $\Po$-valued densities $g_0(x):=\frac{\rd\rhp_0}{\rd\lambda}(x),g_1(x):=\frac{\rd\rhp_1}{\rd\lambda}(x)$.
 For $\lambda$-a.e. $x\in\Omega$ fixed, consider a minimizing pair curve/potential in the dynamical definition (Proposition~\ref{d:bur}) of the Bures distance between $g_0(x),g_1(x)$:
 This is a pair $U_t=U_t(x)$ with values in $\Sm$ and $g_t=g_t(x)$ with values in $\Po$ such that
 $$
 d^2_B(g_0,g_1)=\frac 1 4\int_0^1 g_tU_t:U_t \,\rd t,
 \qquad 
 \frac{d}{dt} g_t=(g_tU_t)^{Sym}.
 $$
 Defining the matrix-valued measures
 $$
 \rd \rhp_t:= g_t(x)\cdot\rd\lambda
 $$
 it is easy to see that the weak formulation \eqref{eq:weak_formulation_ODE} of $\p_t\rhp_t=(\rhp_t U_t)^{Sym}$ is satisfied, and the curve $(G_t,U_t)_{t\in[0,1]}$ is an admissible competitor in the minimization \eqref{e:minifrh} for $d^2_H(\rhp_0,\rhp_1)$.
 By definition of $\rhp_t$ and Fubini's theorem we get
 \begin{multline*}
 d^2_H(\rhp_0,\rhp_1)
 \leq 
 \int _0^1\int_\Omega \rd \rhp_t(x) U_t(x):U_t(x) \,\rd t
 \\
 =
 \int _0^1\int_\Omega  \rd\lambda(x) g_t(x) U_t(x):U_t(x) \,\rd t
 =
 \int_\Omega\left(\int_0^1g_t(x) U_t(x):U_t(x) \,\rd t\right) \rd\lambda(x)
 \\
 =4\int_\Omega d^2_B\left(g_0(x),g_1(x)\right)\,\rd\lambda(x)
 =4\int_\Omega d^2_B(\rhp _0,\rhp _1).
 \end{multline*}
Let us now establish the reversed inequality $4\int d^2_B\leq d^2_H$.
To this end, pick a minimizing sequence $(\rhp_t^n,U^n_t)$ in the definition \eqref{e:minifrh} of $\dih^2(\rhp_0,\rhp_1)$, and fix any positive scalar measure $\lambda$ dominating simultaneously $\rhp_0,\rhp_1$.
We claim that we can assume $\rhp_t^n\ll\lambda$ as well for any intermediate time $t\in(0,1)$.
For if not, the linearity of the Lebesgue decomposition $\rhp_t^n=(\rhp_t^n)^\lambda+(\rhp^n_t)^\perp$ with respect to $\lambda$ (for any fixed time) easily shows that $\p_t\rhp_t^n=(\rhp_t^n U^n_t)^{Sym}$ is equivalent to the two separate ODEs $\p_t(\rhp_t^n)^\lambda=((\rhp_t^n)^\lambda U^n_t)^{Sym}$ and $\p_t(\rhp_t^n)^\perp=((\rhp_t^n)^\perp U^n_t)^{Sym}$.
Since $\rhp_0^\perp=\rhp_1^\perp=0$ clearly $((\rhp^n_n)^\lambda,U^n_t)_{t\in[0,1]}$ is an admissible path connecting $\rhp_0=\rhp_0^\lambda,\rhp_1=\rhp_1^\lambda$.
Of course we have $(\rhp^n_t)^\lambda\ll\lambda$ for all $t$, and
$$
\int _0^1\int_\Omega \rd (\rhp^n_n)^\lambda U^n_t:U^n_t\,\rd t
\leq \int _0^1\int_\Omega \rd \rhp^n_t U^n_t:U^n_t\,\rd t
$$
gives a lesser cost so $((\rhp^n_t)^\lambda,U^n_t)$ is a better competitor.

Thus assuming that $\rhp^n_t\ll\lambda$ for all times, and writing as before $g^n_t(x):=\frac{\rd G^n_t}{\rd\lambda}(x)$ for the corresponding densities, the measure-valued ODE simply means now
$$
\frac{d}{dt}g^n_t(x)=(g_t^n(x) U_t^n(x))^{Sym}
\qtext{with endpoints}g_0(x),g_1(x)
$$
for $\lambda$-a.e. $x\in\Omega$.
 In particular $(g_t^n,U_t^n)$ is an admissible curve connecting $g_0,g_1$ (in each fiber $x\in\Omega$), it is therefore an admissible competitor in the characterization \eqref{e:minibur} of $d^2_B(g_0(x),g_1(x))$ for $\lambda$-a.e. $x$, thus
 \begin{multline*}
 4\int_\Omega d^2_B(\rhp_0,\rhp_1)
 =
  4\int_\Omega d^2_B(g_0(x),g_1(x))\,\rd \lambda(x)
  \\
  \leq
  \int_\Omega \left(\int_0^1 g_t^n(x)U_t^n(x):U_t^n(x)\,\rd t\right)\rd \lambda(x)
  =
  \int_0^1 \int_\Omega g_t^n(x)U_t^n(x):U_t^n(x)\,\rd \lambda(x)\,\rd t
  \\
  =\int_0^1 \int_\Omega \rd \rhp_t^n U_t^n:U_t^n\,\rd t
  \xrightarrow[n\to\infty]{}d^2_H(\rhp_0,\rhp_1)
 \end{multline*}
 as desired.
\end{proof}
An immediate and important consequence of this is:
\begin{theo}[Existence of Hellinger geodesics]
\label{theo:exist_geodesicsh}
$(\Mm,\dih)$ is a geodesic space, i.e. for all $\rhp_0,\rhp_1\in \Mm$ the infimum in \eqref{e:minifrh} is always a minimum.
Moreover:
\begin{enumerate}[(i)]
 \item 
 A particular minimizer is given by $\rd\rhp_t=g_t(x)\cdot\rd\lambda$ and $U_t(x)$, where $(g_t,U_t)$ is a Bures geodesic from $g_0(x):=\frac{\rd \rhp_0}{\rd\lambda}(x)$ to $g_1(x):=\frac{\rd \rhp_1}{\rd\lambda}(x)$ in $\lambda$-a.e. fiber $x\in \Omega$ and $\lambda$ is any positive scalar measure dominating $\rhp_0,\rhp_1$.
\item
Any minimizer is a $d_{H}$-Lipschitz curve $t\mapsto\rhp_t$ such that $d_{H}(\rhp_t,\rhp_s)=|t-s|d_{H}(\rhp_0,\rhp_1)$ with potential $U\in  L^\infty(0,1;L^2(\rd \rhp_t;\Sm))$ such that $\|U_t\|^2_{L^2(\rd \rhp_t;\Sm)}=cst=d^2_{H}(\rhp_0,\rhp_1)$ for a.e. $t\in [0,1]$.
\end{enumerate}

\end{theo}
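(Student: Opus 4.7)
The plan combines the fibrewise representation of Lemma~\ref{lem:dih=int_dB} with the existence of Bures geodesics from Proposition~\ref{d:bur} (to construct a minimizer and establish (i) simultaneously), and a standard metric-space argument for the constant-speed statement (ii).

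For existence and part (i), I would fix any non-negative scalar measure $\lambda$ dominating $\tr\rhp_0+\tr\rhp_1$, write the corresponding densities $g_i(x):=\frac{\rd\rhp_i}{\rd\lambda}(x)\in\Po$, and for each fibre $x\in\Omega$ invoke Proposition~\ref{d:bur} to select a Bures-optimal pair $(g_t(x),U_t(x))$ joining $g_0(x)$ to $g_1(x)$. The sensitive point is performing this selection $\lambda$-measurably in $x$, since the Bures geodesic is not guaranteed to be unique on the degenerate locus $\{\det g_0\cdot\det g_1=0\}$; I expect this measurability issue to be the main obstacle. I would address it by using the explicit closed-form expression for the Bures geodesic on the nonsingular stratum (which is continuous in the endpoints) together with an $\varepsilon I$-regularization limit on the singular stratum, or alternatively by a Kuratowski--Ryll-Nardzewski type measurable-selection theorem applied to the closed-valued multifunction $x\mapsto\{\text{minimizers of \eqref{e:minibur} between }g_0(x)\text{ and }g_1(x)\}$. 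Once a measurable selection is in hand, I would define $\rd\rhp_t:=g_t(x)\cdot\rd\lambda$ and verify the weak continuity equation \eqref{eq:weak_formulation_ODE} by pairing the fibrewise ODE $\tfrac{d}{dt}g_t(x)=(g_t(x)U_t(x))^{Sym}$ against a test function and invoking Fubini. A second application of Fubini together with the fibrewise identity $d_B^2(g_0(x),g_1(x))=\tfrac 14\int_0^1 g_t(x)U_t(x):U_t(x)\,\rd t$ would then show that this competitor has cost exactly $4\int_\Omega d_B^2(\rhp_0,\rhp_1)=\dih^2(\rhp_0,\rhp_1)$ via Lemma~\ref{lem:dih=int_dB}. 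Consequently the infimum in \eqref{e:minifrh} is attained, with a minimizer of the form announced in (i).

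For (ii) I would run the classical metric argument. Let $(\rhp_t,U_t)$ be any minimizer and $0\leq s<t\leq 1$; reparametrizing the restriction to $[s,t]$ linearly back onto $[0,1]$ produces an admissible competitor between $\rhp_s$ and $\rhp_t$, and the scaling $\tilde U_\sigma=(t-s)U_{s+(t-s)\sigma}$ gives
$$
\dih^2(\rhp_s,\rhp_t)\leq (t-s)\int_s^t \|U_\tau\|^2_{L^2(\rd\rhp_\tau)}\,\rd\tau .
$$
In particular $\dih(\rhp_s,\rhp_t)\to 0$ as $t\to s$, so $t\mapsto\rhp_t$ is $\dih$-continuous on $[0,1]$. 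Applying the triangle inequality on the uniform partition $\{i/N\}_{i=0}^N$ and Cauchy--Schwarz yields
$$
\dih^2(\rhp_0,\rhp_1)\leq N\sum_{i=0}^{N-1}\dih^2(\rhp_{i/N},\rhp_{(i+1)/N})\leq \int_0^1\|U_\tau\|^2_{L^2(\rd\rhp_\tau)}\,\rd\tau ,
$$
and by minimality of $(\rhp_t,U_t)$ the extreme terms coincide, so all intermediate inequalities are equalities. Equality in Cauchy--Schwarz forces all the $\dih(\rhp_{i/N},\rhp_{(i+1)/N})$ to be equal (hence equal to $\dih(\rhp_0,\rhp_1)/N$), and passing to the limit $N\to\infty$ together with $\dih$-continuity upgrades this to $\dih(\rhp_s,\rhp_t)=(t-s)\dih(\rhp_0,\rhp_1)$ for all $0\leq s\leq t\leq 1$. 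Combining this identity with equality in the restriction estimate forces $\tau\mapsto\|U_\tau\|^2_{L^2(\rd\rhp_\tau)}$ to be a.e.\ equal to the constant $\dih^2(\rhp_0,\rhp_1)$, which is exactly claim (ii).
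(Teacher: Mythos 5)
Your proposal is correct and follows the paper's overall structure for part~(i): select a fibrewise Bures geodesic via Proposition~\ref{d:bur}, package $\rd\rhp_t=g_t(x)\cdot\rd\lambda$, check \eqref{eq:weak_formulation_ODE} by Fubini, and conclude optimality from Lemma~\ref{lem:dih=int_dB}. Where you diverge usefully is in flagging the measurable-selection issue on the degenerate locus; the paper silently asserts that a fibrewise optimal $(g_t(x),U_t(x))$ can be chosen, without discussing measurability. Your two proposed fixes (regularization $g_i+\eps I$ and passage to the limit, or a Kuratowski--Ryll-Nardzewski selection from the closed-valued minimizer multifunction) both close this gap, and this is a genuine improvement in rigor. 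For part~(ii) you take a different route from the paper: the paper reduces constant speed to the arc-length reparametrization Lemma~\ref{lem:constant_speed_reparametrization} in the appendix (if the speed were non-constant, reparametrizing would strictly lower the energy, contradicting minimality), whereas you run a self-contained uniform-partition argument with the triangle inequality and Cauchy--Schwarz, extracting the constant-speed identity from forced equality cases and Lebesgue differentiation. Both are standard and equivalent in spirit; your version avoids invoking the appendix lemma and makes the argument entirely local to the proof, while the paper's version factors the reparametrization trick out as a reusable tool (it is also used in the proof of Lemma~\ref{l:ac2curves}).
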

\begin{proof}
Fix $\rhp_0,\rhp_1$ once and for all.
\begin{enumerate}[(i)]
 \item 
As in the previous proof it is easy to see that the particular choice $(\rhp_t,U_t)$ with $\rd\rhp_t=g_t(x)\cdot\rd\lambda$ (and $U_t$ optimal from $g_0$ to $g_1$ for a.e. $x$) connects $\rhp_0,\rhp_1$ and is therefore an admissible competitor in \eqref{e:minifrh}.
Moreover \eqref{eq:dih=int_dB} gives
\begin{multline*}
 \dih^2(\rhp_0,\rhp_1)
 =
4 \int_\Omega d_B^2(g_0(x),g_1(x))\,\rd\lambda(x)
 =
 \int_\Omega \left(\int_0^1 g_t(x)U_t(x):U_t(x)\,\rd t\right)\,\rd\lambda(x)
 \\
 =
 \int_0^1\int_\Omega  \rd\lambda(x)g_t(x)U_t(x):U_t(x)\,\rd t
 =
\int_0^1\int_\Omega  \rd\rhp_t U_t:U_t \,\rd t
\end{multline*}
thus $(\rhp_t,U_t)$ is a minimizer.
\item
Pick any minimizer $(\rhp_t,U_t)$.
The constant-speed property easily follows from the fact that $t\mapsto \|U_t\|_{L^2(\rd\rhp_t)}$ is constant in time:
For if not, an easy arc length reparametrization (Lemma~\ref{lem:constant_speed_reparametrization} in the appendix) would give an admissible curve with strictly lesser energy.
\end{enumerate}
\end{proof}

We can now establish a comparison between TV and $\dih$, which will be technically convenient and used repeatedly in the sequel.
\begin{theo} 
\label{theo:comparison_2sides_dFR_TV}
For $\rhp_0,\rhp_1\in \Mm$ with masses $m_0,m_1$ there holds
\begin{equation}
\label{eq:comparison_2sides_dFR_TV}
\frac 1{4\sqrt{d}}\dih^2(\rhp_0,\rhp_1)
\leq
\|\rhp_1-\rhp_0\|_{TV}
\leq
\sqrt{m_0+m_1} \dih(\rhp_0,\rhp_1),
\end{equation}
and $\dih$ is topologically (sequentially) equivalent to the total variation distance on $\Mm$.
\end{theo}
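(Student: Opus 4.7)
The plan is to prove the two inequalities independently and then deduce the topological equivalence.

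For the left-hand inequality, starting from Lemma~\ref{lem:dih=int_dB} it suffices to prove the pointwise estimate
\begin{equation*}
d_B^2(A_0,A_1)\leq\sqrt d\,|A_0-A_1|_2\qquad\forall A_0,A_1\in\Po.
\end{equation*}
I would obtain this by chaining three standard inequalities. First, from the explicit formula \eqref{e:burexp} combined with $\tr(A_0^{1/2}A_1^{1/2})\leq\|A_0^{1/2}A_1^{1/2}\|_1=\tr\sqrt{\sqrt{A_0}A_1\sqrt{A_0}}$ (valid because $\tr X\leq\|X\|_1$ applied to $X=A_0^{1/2}A_1^{1/2}$, whose trace is real and nonnegative by cyclicity), one gets the cone-type inequality $d_B^2(A_0,A_1)\leq|A_0^{1/2}-A_1^{1/2}|_2^2$. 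Second, the Powers--Stormer inequality yields $|A_0^{1/2}-A_1^{1/2}|_2^2\leq|A_0-A_1|_1$. Third, the finite-dimensional Schatten comparison $|X|_1\leq\sqrt d\,|X|_2$ on $\C^{d\times d}$ closes the chain. Writing $\rhp_0,\rhp_1\ll\la$ with densities $g_0,g_1$ and using the duality identity $\|\rhp_1-\rhp_0\|_{TV}=\int_\Omega|g_1-g_0|_2\,\rd\la$, integration gives $\dih^2(\rhp_0,\rhp_1)=4\int_\Omega d_B^2(g_0,g_1)\,\rd\la\leq 4\sqrt d\,\|\rhp_1-\rhp_0\|_{TV}$.

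For the right-hand inequality, I would apply Lemma~\ref{lem:mass_estimate_TV} at times $s=0$, $t=1$ to a minimizing Hellinger geodesic $(\rhp_t,U_t)_{t\in[0,1]}$ produced by Theorem~\ref{theo:exist_geodesicsh}, for which $E=\dih^2(\rhp_0,\rhp_1)$. It remains to bound the maximal mass $M=\max_{t\in[0,1]} m_t$ by $m_0+m_1$. Since the explicit minimizer from Theorem~\ref{theo:exist_geodesicsh}(i) has the form $\rd\rhp_t=g_t(x)\,\rd\la$ with $g_t(x)$ a Bures geodesic in each fiber $x\in\Omega$, and $(\Po,d_B)$ is a metric cone with radial coordinate $r^2=\tr A$ by Proposition~\ref{p:coneb}, Lemma~\ref{l:masscal} gives the fiberwise convexity
\begin{equation*}
\tr g_t(x)\leq t\,\tr g_1(x)+(1-t)\,\tr g_0(x).
\end{equation*}
Integrating against $\la$ yields $m_t\leq tm_1+(1-t)m_0\leq m_0+m_1$, and \eqref{eq:control_TV_leq_sqrt_FR} then produces the claimed $\|\rhp_1-\rhp_0\|_{TV}\leq\sqrt{m_0+m_1}\,\dih(\rhp_0,\rhp_1)$.

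For the topological equivalence, TV-convergence immediately implies $\dih$-convergence via the lower bound. Conversely, if $\dih(\rhp^n,\rhp)\to 0$, applying \eqref{eq:fund_mass_estimate} to a minimizing geodesic joining $\rhp^n$ and $\rhp$ gives $M^n\leq\dih^2(\rhp^n,\rhp)+2m$ for $n$ large, and then the mass-difference bound $|m^n-m|\leq\sqrt{M^n}\,\dih(\rhp^n,\rhp)$ (obtained in the proof of Lemma~\ref{lem:mass_estimate_TV}) provides a uniform upper bound on $m^n$; the right-hand inequality then forces $\|\rhp^n-\rhp\|_{TV}\to 0$. The only mildly delicate ingredient in the entire program is the a priori bound on $M$ along a geodesic, but this comes essentially for free from the cone structure already established in Proposition~\ref{p:coneb}.
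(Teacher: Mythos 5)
Your proposal is correct and follows essentially the same route as the paper: the lower bound is obtained via Lemma~\ref{lem:dih=int_dB} chained with the cone-type bound $d_B^2\leq|\sqrt{A_0}-\sqrt{A_1}|_2^2$ (you derive it directly from \eqref{e:burexp} and $\tr X\leq\|X\|_1$, where the paper simply cites \cite{BJL}), Powers--St\o{}rmer, and $|\cdot|_1\leq\sqrt d\,|\cdot|_2$; the upper bound uses Lemma~\ref{lem:mass_estimate_TV} on a Hellinger geodesic with the mass bound from Lemma~\ref{l:masscal} (you keep the linear interpolation where the paper coarsens to $\max\{\tr g_0,\tr g_1\}$ before bounding by the sum, landing on the same $M\leq m_0+m_1$); and the topological equivalence is the same argument, with your use of \eqref{eq:fund_mass_estimate} being an equally valid variant of the paper's direct reuse of \eqref{eq:comparison_2sides_dFR_TV} to bound the masses.
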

Note that this estimate has the correct scaling with respect to mass, i.e. it is $1$-homogeneous (since the Hellinger distance, roughly speaking, scales as the square-root of the mass).
\begin{proof}
Let us start with the lower bound.
From \cite[theorem 1]{BJL} we have that
$$
d^2_B(g_0,g_1)\leq \left| \sqrt{g_1} - \sqrt{g_0} \right|_2^2
$$
for any $g_0,g_1\in\Po$, where $\sqrt{g}$ denotes the canonical square root of a PSD matrix $g$.
Moreover, the Powers-St{\o}rmer inequality \cite{powers1970free} reads exactly
$$
\left|  \sqrt{g_1}- \sqrt{g_0} \right|_2^2
\leq 
\left| g_1 -g_0 \right|_1,
$$
where $|g|_1=\tr( \sqrt{gg^*})=\tr g$ is the Schatten (trace) 1-norm of $g$.
Since we have $|g|_1\leq \sqrt d |g|_2$ we get immediately from Lemma~\ref{lem:dih=int_dB}
\begin{multline*}
 d^2_H(\rhp_0,\rhp_1)
=4\int_\Omega d^2_B(\rhp_0,\rhp_1)
=4\int_\Omega d^2_B(g_0(x),g_1(x))\rd\lambda(x)
\\
\leq
4\int_\Omega \left|  \sqrt{g_1}(x) - \sqrt{g_0}(x) \right|_2^2\rd\lambda(x)
\leq
4\int_\Omega \left|  g_1(x) - g_0(x) \right|_1\rd\lambda(x)
\\
\leq
4\sqrt d\int_\Omega \left| g_1(x) -g_0(x) \right|_2\rd\lambda(x)
=
4\sqrt{d} \|\rhp_1-\rhp_0\|_{TV}
\end{multline*}
as desired.
\\
Let us now turn to the upper bound.
Fix any positive scalar measure such that $\rhp_0,\rhp_1\ll\lambda$.
From Theorem~\ref{theo:exist_geodesicsh} we know that there exists a minimizing curve $(\rhp_t,U_t)_{t\in[0,1]}$ in \eqref{e:minifrh} with $\rd\rhp_t=g_t(x)\cdot\rd\lambda$ and $(g_t,U_t)$ a Bures geodesic from $g_0=\frac{\rd\rhp_0}{\rd\lambda}$ to $g_1=\frac{\rd\rhp_1}{\rd\lambda}$ in a.e. every fiber.
It follows from Lemma \ref{l:masscal} that $\tr g_t\leq \max\{\tr g_0, \tr g_1\}$ for all $t\in [0,1]$ along this geodesic and for $\lambda$-a.e. $x$.
Integrating and using the very rough bound $\max\{a,b\}\leq a+b$ gives here
$$
m_t
=\int_\Omega \tr g_t(x)\,\rd\lambda(x)
\leq 
\int_\Omega \left\{\tr g_0(x)+\tr g_1(x)\right\}\,\rd\lambda(x)
=
m_0+m_1.
$$
Our estimate \eqref{eq:control_TV_leq_sqrt_FR}, with here the mass control $m_t\leq M\leq m_0+m_1$ and the energy of the geodesic $E=\|U\|^2_{L^2(0,1;L^2(\rd\rhp_t))}=\dih^2(\rhp_0,\rhp_1)$, gives
$$
\|\rhp_1-\rhp_0\|_{TV}\leq \sqrt{ME}\leq \sqrt{m_0+m_1} \dih(\rhp_0,\rhp_1)
$$
as desired.
\\
For the topological equivalence, note that the lower bound in \eqref{eq:comparison_2sides_dFR_TV} immediately shows that TV is stronger than $\dih$.
Conversely, assume that $\dih(\rhp^n,\rhp)\to 0$.
We first claim that the masses remain bounded, i.e. $m^n=\tr \rhp^n(\Omega)\leq M$ for some $M$.
Since $\dih(\rhp^n,\rhp)\leq 1$ for $n$ large enough and $|m^n-m|\leq \|\rhp^n-\rhp\|_{TV}$ (test $\psi=I$), \eqref{eq:comparison_2sides_dFR_TV} gives
$$
|m^n-m|\leq \|\rhp^n-\rhp\|_{TV}\leq \sqrt{m^n+m}\dih(\rhp^n,\rhp)\leq \sqrt{m^n+m},
$$
which then guarantees the boundedness of $\{m^n\}$ as claimed.
One last use of \eqref{eq:comparison_2sides_dFR_TV} finally gives $\|\rhp^n-\rhp\|_{TV}\leq \sqrt{m^n+m}\,\dih(\rhp^n,\rhp)\leq \sqrt{M+m}\,\dih(\rhp^n,\rhp)\to 0$ and the proof is complete.
 \end{proof}
\begin{prop}[Upper bound of the distance]
\label{p:impr}
For every pair $\rhp_0,\rhp_1\in \Mm$ with masses $m_0,m_1$ one has
\begin{equation}
\label{impr}
d_{H}^2(\rhp_0,\rhp_1)\leq  4(m_0+m_1).
\end{equation}
\end{prop}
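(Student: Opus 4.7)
The plan is to reduce the statement to a pointwise estimate on the Bures distance via the fiberwise identification of $d_H^2$. Namely, I would apply Lemma~\ref{lem:dih=int_dB} to disintegrate
$$
d_H^2(\rhp_0,\rhp_1) = 4\int_\Omega d_B^2(\rhp_0,\rhp_1) = 4\int_\Omega d_B^2(g_0(x),g_1(x))\,\rd\lambda(x),
$$
where $\lambda$ is any scalar measure dominating $\rhp_0,\rhp_1$ and $g_i := \rd\rhp_i/\rd\lambda$. The only task left is then to control the Bures integrand pointwise by $\tr g_0 + \tr g_1$, whose $\lambda$-integral is exactly $m_0+m_1$.

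For the pointwise bound I would use the explicit formula \eqref{e:burexp}: for any $A_0,A_1\in\Po$,
$$
d_B^2(A_0,A_1) = \tr A_0 + \tr A_1 - 2\tr\sqrt{\sqrt{A_0}\,A_1\,\sqrt{A_0}}.
$$
Since $\sqrt{A_0}\,A_1\,\sqrt{A_0}$ is PSD (being the conjugation of a PSD matrix by a Hermitian matrix), its principal square root is also PSD and has nonnegative trace. Dropping this nonnegative term gives the elementary bound $d_B^2(A_0,A_1)\leq \tr A_0+\tr A_1$, which applied fiberwise to $g_0(x),g_1(x)\in\Po$ and integrated against $\lambda$ yields
$$
d_H^2(\rhp_0,\rhp_1)\leq 4\int_\Omega \bigl(\tr g_0(x)+\tr g_1(x)\bigr)\,\rd\lambda(x) = 4\bigl(m_0+m_1\bigr),
$$
as claimed.

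There is no real obstacle here once Lemma~\ref{lem:dih=int_dB} is in hand, since the argument is purely algebraic and relies only on the nonnegativity of a trace. As a minor sanity check, the bound is $1$-homogeneous with respect to joint rescaling $\rhp_i \mapsto s\rhp_i$ (both sides scale as $s$), consistent with the scaling of $d_H^2$ as the square of a Hellinger-type distance. One could alternatively obtain a weaker constant by concatenating at the apex (using the admissible rays $(t^2\rhp_i,\tfrac{2}{t}I)$ from the proof of Theorem~\ref{theo:d_distanceh} together with the time-reparametrization scaling), but this route gives only $8(m_0+m_1)$ rather than the sharp constant $4$.
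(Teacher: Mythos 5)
Your proof is correct and follows essentially the same route as the paper: reduce via Lemma~\ref{lem:dih=int_dB} to the pointwise bound $d_B^2(g_0,g_1)\leq \tr g_0+\tr g_1$ and integrate. The only (mild) difference is that you obtain the pointwise bound directly from the explicit formula \eqref{e:burexp} by dropping the nonnegative cross term $2\tr\sqrt{\sqrt{A_0}A_1\sqrt{A_0}}$, whereas the paper passes through the intermediate inequality $d_B^2(g_0,g_1)\leq|\sqrt{g_1}-\sqrt{g_0}|_2^2$ from \cite[thm.~1]{BJL}; your shortcut is slightly more self-contained but mathematically equivalent.
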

\begin{proof}
Take any reference measure $\lambda$ dominating $\rhp_0,\rhp_1$, and let $g_0(x),g_1(x)$ be the corresponding Radon-Nikodym densities.
From \cite[thm. 1]{BJL} we have
$$
d^2_B(g_0(x),g_1(x))\leq |\sqrt{g_1}(x)-\sqrt{g_0}(x)|^2_2
\leq
\tr g_0(x)+\tr g_1(x),
$$
where the second inequality follows from simple linear algebra.
Integrating w.r.t $\lambda$ over $\Omega$ and appealing to Lemma~\ref{lem:dih=int_dB} gives
$$
d^2_H(\rhp_0,\rhp_1)=4\int_\Omega d^2_B(g_0(x),g_1(x))\,\rd\lambda(x) \leq 4\int_\Omega (\tr g_0(x) +\tr g_1(x))\,\rd\lambda(x)=4(m_0+m_1)
$$
as desired.
\end{proof}
\begin{remark}[Optimality]
\label{r:ipmr}
The constant in \eqref{impr} is optimal since for every $\rhp\in \Mm$ with mass $m$ we have
\begin{equation}
\label{optimpr}
d_{H}^2(\rhp,0)=  4m.
\end{equation}
The proof is immediate from Lemma~\ref{lem:dih=int_dB} and \eqref{e:burexp} with $A_1=0$.
\end{remark}
\begin{lem}[Characterization of $AC^2$ curves]
\label{l:ac2curves}
A curve $G:[0,1]\to \Mm$ is $AC^2$ w.r.t. $\dih$  if and only if there exists $U\in L^2(0,1;L^2(\rd {G}_t;\Sm))$ such that the weak formulation \eqref{eq:weak_formulation_ODE} of $\partial_t\rhp_t=(\rhp_t U_t)^{Sym}$ holds, in which case $U_t$ is uniquely defined for a.a. $t\in(0,1)$ as an element of $L^2(\rd\rhp_t)$ and the $\dih$ metric speed is
$$
|\dot{G}_t|_H^2=\int_\Omega \rd G_t U_t:U_t
\qqtext{for a.a.}t\in(0,1).
$$ 
Consequently, $G:[0,1]\to \Mm$ is Lipschitz if and only if the corresponding $U$ belongs to $L^\infty(0,1;L^2(\rd {G}_t;\Sm))$, and the corresponding Lipschitz constant coincides with $\|U\|_{L^\infty(0,1;L^2(\rd {G}_t;\Sm))}$.
\end{lem}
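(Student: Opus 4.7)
I will follow the classical Benamou-Brenier equivalence, adapted to the matrix-valued setting. For the \emph{sufficiency} direction, given $U\in L^2(0,1;L^2(\rd G_t;\Sm))$ solving the weak continuity equation, I test the pair $(G_\tau,U_\tau)_{\tau\in[s,t]}$ as a competitor in the definition of $\dih(G_s,G_t)$ for every subinterval $[s,t]\subset[0,1]$. An affine time-rescaling turns it into an admissible pair on $[0,1]$ of energy $(t-s)\int_s^t\int_\Omega \rd G_\tau U_\tau:U_\tau\,\rd\tau$, so Cauchy-Schwarz in time yields
$$
\dih(G_s,G_t)\leq \sqrt{t-s}\,\Bigl(\int_s^t\int_\Omega \rd G_\tau U_\tau:U_\tau\,\rd\tau\Bigr)^{1/2}\leq \int_s^t\eta(\tau)\,\rd\tau,
$$
with $\eta(\tau):=\bigl(\int_\Omega \rd G_\tau U_\tau:U_\tau\bigr)^{1/2}\in L^2(0,1)$. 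Thus $G\in AC^2([0,1];\Mm_{H})$, and Lebesgue differentiation gives $|\dot G_t|_H^2\leq \int_\Omega \rd G_t U_t:U_t$ for a.e. $t$.

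For the \emph{necessity} direction I construct a potential by piecewise-geodesic approximation. For each $n\in\N$, I partition $[0,1]$ into $n$ equal subintervals and, on each $[i/n,(i+1)/n]$, glue a constant-speed Hellinger geodesic from $G_{i/n}$ to $G_{(i+1)/n}$ (existing by Theorem~\ref{theo:exist_geodesicsh}) together with its optimal potential. The resulting $(G^n,U^n)$ satisfies the weak ODE globally, matches $G$ at the $n$-adic points, and has total energy
$$
\int_0^1\int_\Omega \rd G^n_t U^n_t:U^n_t\,\rd t=\sum_{i=0}^{n-1}n\,\dih^2(G_{i/n},G_{(i+1)/n})\leq \int_0^1|\dot G_\tau|_H^2\,\rd\tau,
$$
the last inequality following from $\dih^2(G_{i/n},G_{(i+1)/n})\leq (1/n)\int_{i/n}^{(i+1)/n}|\dot G|_H^2$ via Cauchy-Schwarz. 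The TV-Hölder estimate \eqref{eq:control_TV_leq_sqrt_FR} yields uniform $1/2$-Hölder continuity of the $G^n$ in TV, so combined with pointwise agreement on the dense $n$-adic set I obtain $G^n\to G$ uniformly in TV along a subsequence. Viewing $\rd J^n_t:=\rd G^n_t U^n_t$ as $\Sm$-valued Radon measures on $(0,1)\times\Omega$, the energy bound together with the TV compactness permits weak-$*$ extraction $(G^n,J^n)\narrowcv(G,J)$; lower semicontinuity of the matrix Benamou-Brenier functional then gives a factorization $J_t=G_t U_t$ with $U\in L^2(0,1;L^2(\rd G_t;\Sm))$ solving the weak ODE and
$$
\int_0^1\int_\Omega \rd G_t U_t:U_t\,\rd t\leq \int_0^1|\dot G_\tau|_H^2\,\rd\tau.
$$
Combined with the pointwise sufficiency bound, this forces $|\dot G_t|_H^2=\int_\Omega \rd G_t U_t:U_t$ for a.e. $t$. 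Uniqueness of $U_t$ in $L^2(\rd G_t;\Sm)$ follows because the difference of any two admissible potentials lies in $\ker\{V\mapsto(G_t V)^{Sym}\}$ and therefore has vanishing $L^2(\rd G_t)$-seminorm; and the Lipschitz characterization is immediate from the pointwise identity by replacing $L^2$ with $L^\infty$ in time, with matching essential suprema.

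\emph{The main technical obstacle} will be the joint lower semicontinuity of the matrix-valued Benamou-Brenier functional along weak-$*$ limits of the pairs $(G^n,J^n)$, and the identification of the limit momentum in the correct weighted $L^2$-space — a matrix analogue of the scalar theory of \cite{LMS18,CP18} that has already been developed in \cite{BV18}. The symmetrization $(\cdot)^{Sym}$ in the continuity equation adds only mild bookkeeping. The uniform convergence $G^n\to G$ in TV relies crucially on the Hölder bound \eqref{eq:control_TV_leq_sqrt_FR}, coupled to the uniform energy estimate supplied by the $AC^2$ hypothesis.
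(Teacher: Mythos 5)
Your proof takes a genuinely different route from the paper's for the necessity direction: you approximate $G$ by piecewise geodesics, pass to a weak-$*$ limit, and invoke lower semicontinuity of the matrix Benamou-Brenier functional to extract the potential. The paper instead works \emph{locally in time}: it tests the curve against a fixed scalar $\varphi\in C_b(\Omega;\Sm)$, compares the difference quotient of $\Phi(t)=\int\rd G_t{:}\varphi$ with a single short Hellinger geodesic from $G_t$ to $G_{t+h}$, obtains $|\int\rd(\partial_tG_t){:}\varphi|\leq|\dot G_t|_H\|\varphi\|_{L^2(\rd G_t)}$ in the limit $h\to 0$, and then produces $U_t$ by the Riesz representation theorem in $L^2(\rd G_t;\Sm)$. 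The reverse inequality $|\dot G_t|\geq\|U_t\|_{L^2(\rd G_t)}$ is then a Lebesgue-point argument. That route avoids any compactness or LSC machinery. Your approach buys the more familiar Benamou-Brenier flavor, but at a real cost in prerequisites.

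Three points to fix or flag. First, in the sufficiency direction your second inequality is reversed: Cauchy-Schwarz gives $\int_s^t\eta\leq\sqrt{t-s}\,\|\eta\|_{L^2(s,t)}$, not the inequality you wrote, so you cannot read off $AC^2$ from the $\int_s^t\eta$-criterion directly. The bound $\dih(G_s,G_t)\leq\sqrt{t-s}\,\|\eta\|_{L^2(s,t)}$ does still imply $AC^2$, but via a supplementary argument (either through the equivalent criterion $\sup_{\mathrm{partitions}}\sum_i\dih^2(G_{t_i},G_{t_{i+1}})/(t_{i+1}-t_i)<\infty$, or by first verifying absolute continuity and then showing $|\dot G_t|\leq\eta(t)$ a.e. via Lebesgue differentiation); alternatively, use the paper's constant-speed reparametrization Lemma~\ref{lem:constant_speed_reparametrization}, which upgrades the bound to the sharper $\dih(G_s,G_t)\leq\int_s^t\eta$. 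Second, a bookkeeping point: $\rd G^n_t U^n_t$ is not Hermitian in general, so your momenta should be $J^n_t:=(\rd G^n_t U^n_t)^{Sym}$ if they are to be $\Sm$-valued. Third, and most substantially, the load-bearing step — that weak-$*$ convergence of $(G^n,J^n)$ with a uniform energy bound yields a factorization $J_t=(G_tU_t)^{Sym}$ with $U\in L^2(0,1;L^2(\rd G_t;\Sm))$ and LSC of the energy — is deferred to an external reference and not proved. Note that the dual formula that would underpin this LSC, namely $\frac12 gU{:}U=\sup_\phi\{-\frac12 g\phi{:}\phi+(gU)^{Sym}{:}\phi\}$, appears in the paper only inside Proposition~\ref{prop:AC2_energy_convex_LSC}, whose proof already \emph{uses} the present lemma; so you must either locate the exact matrix LSC/factorization statement in the literature or prove it independently (for instance by a Banach--Alaoglu argument in weighted $L^2$ spaces, as the paper does in Lemma~\ref{lem:dihs_LSC_weak*}) before this proof is complete.
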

\begin{proof}
Notice that our claim about Lipschitz curves immediately follows from the stronger $AC^2$ part of our statement so we only need to establish the latter.

Let us start with the easiest implication, and assume that $(\rhp,U)$ satisfy $\partial_t\rhp_t=(\rhp_tU_t)^{Sym}$ with $U\in L^2(0,1;L^2(\rd\rhp_t))$.
Fix any $[s,t]\subset [0,1]$.
Changing variables $\tau=s+\theta(t-s)$ and rescaling time gives an admissible curve $(\widetilde\rhp_\theta,\widetilde U_\theta)_{\theta\in[0,1]}=(\rhp_\tau,(t-s)U_\tau)_{\tau\in[s,t]}$ connecting $\widetilde\rhp_0=\rhp_s$ to $\widetilde\rhp_1=\rhp_t$.
By Lemma~\ref{lem:constant_speed_reparametrization} we can reparametrize $(\widetilde\rhp_\theta,\widetilde U_\theta)_{\theta\in[0,1]}\leadsto (\check \rhp_\theta,\check U_\theta)_{\theta\in[0,1]}$ with constant speed $\|\check U_\theta\|_{L^2(\rd\check \rhp_\theta)}\equiv cst$.
Taking into account the $(t-s)$ scaling in $\tau=s+\theta(t-s)$ gives exactly
$$
\dih^2(\rhp_s,\rhp_t)
\leq 
\int_0^1 \|\check U_\theta\|_{L^2(\rd\check\rhp_\theta)}^2\rd\theta
=
\left(\int_0^1\|\widetilde U_\theta\|_{L^2(\rd\widetilde\rhp_\theta)}\rd\theta\right)^2
=
\left(\int_s^t\|U_\tau\|_{L^2(\rd\rhp_\tau)}\rd\tau\right)^2
$$
and therefore $\rhp$ is $AC^2$ relatively to the $\dih$ distance.

Let us now turn to the converse implication, and let $(\rhp_t)_{t\in[0,1]}$ be an arbitrary $AC^2$ curve.
We first observe that $\dih^2(\rhp_t,\rhp_0)\leq \int _0^1 |\dot\rhp_\tau|^2\rd \tau<+\infty$ and \eqref{optimpr} control
$$
m_t:=\tr \rhp_t(\Omega)=\frac{1}{4}\dih^2(\rhp_t,0) \leq \frac{1}{4}\left(\dih(\rhp_t,\rhp_0)+\dih(\rhp_0,0)\right)^2=: M.
$$
Exploiting this mass bound in Theorem~\ref{theo:comparison_2sides_dFR_TV}, \eqref{eq:comparison_2sides_dFR_TV} guarantees that $\|\rhp_t-\rhp_s\|_{TV}\leq \sqrt{2M}\dih(\rhp_s,\rhp_t)$.
Hence $t\mapsto \rhp_t$ is absolutely continuous w.r.t. the $TV$ norm and $\partial_t\rhp_t$ is a well-defined and finite ($\Sm$-valued) Radon measure for a.a. $t\in(0,1)$.
For any fixed $\varphi\in C_b(\Omega;\Sm)$ we set
$$
\Phi(t):=\int_\Omega \rd \rhp_t:\varphi,
$$
and observe that $\Phi\in AC([0,1];\R)$ with
\begin{equation}
\label{eq:Phi'=int_ptG_phi}
\Phi'(t)=\int_\Omega \rd(\partial_t\rhp_t):\varphi
\qquad\mbox{for a.a. }t\in(0,1).
\end{equation}
Fix now any point of differentiability $t\in (0,1)$ of $\Phi$ (the set of such points has full measure, and is actually independent of $\varphi$).
Let $h$ be small enough, and pick from Theorem~\ref{theo:exist_geodesicsh} a geodesic $(\tilde\rhp_s,\tilde U_s)_{s\in[0,1]}$ from $\rhp_t$ to $\rhp_{t+h}$, satisfying in particular $\p_s\tilde\rhp_s=(\tilde\rhp_s\tilde U_s)^{Sym}$.
Then
\begin{multline}
\left|\frac{\Phi(t+h)-\Phi(t)}{h}\right|
=
\frac 1h \left|\int_\Omega \rd\rhp_{t+h}:\varphi  - \int_\Omega \rd\rhp_{t}:\varphi\right|
=
\frac 1h \left|\int_\Omega \rd\tilde\rhp_{1}:\varphi  - \int_\Omega \rd\tilde\rhp_{0}:\varphi\right|
\\
=
\frac 1h \left|\int_0^1 \int_\Omega\rd\tilde G_s\tilde U_s:\varphi \,\rd s\right|
\leq 
\frac 1h \left(\int_0^1 \int_\Omega\rd\tilde G_s\tilde U_s:\tilde U_s\,\rd s\right)^\frac 12 
\left(\int_0^1 \int_\Omega\rd\tilde G_s\varphi:\varphi\,\rd s\right)^\frac 12
\\
=\underbrace{\frac{\dih(\rhp_t,\rhp_{t+h})}{h}}_{:=A_h}
\underbrace{\left(\int_0^1 \int_\Omega\rd\tilde G_s\varphi:\varphi\,\rd s\right)^\frac 12}_{:=B_h}
\label{eq:diff_quotient_Phi_h}
\end{multline}
By standard properties of the metric speed \cite[thm. 1.1.2]{AGS06} the first term $A_h\to |\dot\rhp_t|_H$ as $h\to 0$.
In order to take the limit in the second $B_h$ term, it is first easy to argue as before and conclude from \eqref{optimpr} that the mass $\tilde m_s\leq \tilde M$ is bounded uniformly in $s\in (0,1)$ and $h\to 0$.
Observing also that $\dih(\tilde \rhp_0,\tilde \rhp_s)\leq \dih(\tilde \rhp_0,\tilde \rhp_1)=\dih(\rhp_t,\rhp_{t+h})\to 0$ as $h\to 0$ for all $s\in[0,1]$, \eqref{eq:comparison_2sides_dFR_TV} shows that
$$
\tilde\rhp_s \xrightarrow[h\to 0]{TV} \tilde\rhp_0=\rhp_t
\qquad\mbox{for a.a. }s\in [0,1].
$$
As a consequence for fixed $\varphi\in C_b$ the inner integral in $B_h$ converges pointwise in time
$$
\int_\Omega\rd\tilde G_s\varphi:\varphi \xrightarrow[h\to 0]{} \int_\Omega\rd\rhp_t\varphi:\varphi
\qquad\mbox{for a.a. }s\in [0,1].
$$
An easy application of Lebesgue's dominated convergence (with the previous mass bound $\left|\int \rd\tilde\rhp_s\varphi:\varphi\right|\leq \tilde m_s|\varphi|^2_\infty\leq C$ uniformly in $s$) finally gives
$$
B_h=\left(\int_0^1 \int_\Omega\rd\tilde G_s\varphi:\varphi\,\rd s\right)^\frac 12
\xrightarrow[h\to 0]{} \left(\int_0^1 \int_\Omega\rd G_t\varphi:\varphi\,\rd s\right)^\frac 12=\|\varphi\|_{L^2(\rd\rhp_t)}.
$$
Taking the limit in \eqref{eq:diff_quotient_Phi_h} with \eqref{eq:Phi'=int_ptG_phi} we get
\begin{equation}
\label{eq:estimate_pt_G_t_L2}
\left|\int_\Omega \rd(\p_t\rhp_t):\varphi\right|\leq |\dot \rhp_t|\cdot\|\varphi\|_{L^2(\rd\rhp_t)}
\qquad\mbox{for all }\varphi\in C_b(\Omega;\Sm).
\end{equation}
hence by density the linear map $\varphi\mapsto \int_\Omega\rd(\p_t\rhp_t):\varphi$ is continuous for the $L^2(\rd\rhp_t)$ norm.
The Riesz representation theorem therefore gives a unique element $U_t\in L^2(\rd\rhp_t)$ such that
$$
\int_\Omega \rd(\p_t\rhp_t):V =\int_\Omega \rd \rhp_t U_t:V
\qquad\mbox{for all } V\in L^2(\rd\rhp_t).
$$
This means of course $\p_t\rhp_t=(\rhp_t U_t)^{Sym}$ for a.a. $t\in(0,1)$, from which it is easy to check that the weak formulation \eqref{eq:weak_formulation_ODE} is satisfied.
Moreover from \eqref{eq:estimate_pt_G_t_L2} we see that
\begin{equation}
\label{eq:upper_bound_U_L2}
\|U_t\|_{L^2(\rd\rhp_t)}\leq |\dot\rhp_t|\in L^2(0,1).
\end{equation}
Fix now any Lebesgue point $t\in (0,1)$ for $\tau\mapsto \|U_\tau\|_{L^2(\rd\rhp_\tau)}$ and take $h$ small.
Since $(\rhp_\tau,U_\tau)_{\tau\in[t,t+h]}$ is an admissible curve connecting $\rhp_t$ and $\rhp_{t+h}$ we get, by definition of $\dih^2$ and after a suitable scaling in time,
$$
\dih^2(\rhp_{t},\rhp_{t+h})\leq h \int_t^{t+h} \int_\Omega \rd \rhp_\tau U_\tau:U_\tau \rd\tau.
$$
Dividing by $h^2$, taking the limit, and observing that $\frac{\dih(\rhp_t,\rhp_{t+h})}{h}\to |\dot \rhp_t|$ in the left-hand side because $\rhp\in AC^2$, we get from \eqref{eq:upper_bound_U_L2}
$$
|\dot\rhp_t|^2\leq \lim\limits_{h\to 0} \frac 1h \int_t^{t+h}\int_\Omega  \rd \rhp_\tau U_\tau:U_\tau \rd\tau=\|U_t\|^2_{L^2(\rhp_t)}\leq |\dot\rhp_t|^2
$$
and the proof is complete.
\end{proof}

\section{The Fisher-Rao distance}
\label{sec:spherical}
In this section we will mainly take interest in the space $\Ma=\{\rhp\in \Mm,\quad \tr\rhp(\Omega)=1\}$.
Recall that we wish to view $\Ma$ as a sphere of radius one in $\Mm$.
\begin{remark}
\label{noncom}
This is compatible with free probability theory (cf. \cite{Tao12}) in the following sense:
Fix $P\in \Ma$, and consider the $*$-algebra of (generalized) random matrices $\mathcal M:=L^{\infty-}(\rd P; \C^{d\times d})$ with identity $I$.
Defining the \emph{trace} (in the sense of the free probability theory) by
$
\tau(\xi):=\int_\Omega \rd P:\xi
$,
we get a non-commutative probability space $(\mathcal M, \tau)$.
This space is faithful but in general not tracial.
For $d=1$ we recover the classical (commutative) probabilistic setting. Moreover, the Fisher-Rao distance that we are ready to introduce will coincide with the classical Fisher-Rao distance \cite{Ay17,KLMP} from information geometry for $d=1$. 
%
In \cite{VB01}, a free-probabilistic analogue of the Wasserstein distance was introduced.
It would be interesting to go beyond our matricial setting and try to define a free probabilistic counterpart of the Fisher-Rao distance, but this lies completely out of the scope of this article.  
\end{remark}

\begin{remark}
\label{noncom2}
\emph{States} of $C^*$-algebras \cite{S70} are natural non-commutative counterparts of probability measures. In this connection, it is possible to define a distance between two states of a $C^*$-algebra as the infimum (along all cyclic $*$-representations of this algebra on Hilbert spaces) of the Hilbertian distances between the corresponding cyclic vectors (this distance is called the \emph{Bures distance}).  This idea goes back to Kakutani \cite{K48}, who was working in the commutative setting (which corresponds to our space $\Ma$ for $d=1$), to Bures \cite{B69}, who implemented it for the normal states of $W^*$-algebras, and to Uhlmann \cite{U76}, who considered the general case but was mainly concerned with the related concept of transition probability. The space $C_b(\Omega;\Sm)$ has  an obvious structure of a $C^*$-algebra. The elements of $\Ma$ are the states of this $C^*$-algebra. It seems possible to prove, in the spirit of Bures himself \cite[Proposition 2.7]{B69}, that our Hellinger distance (restricted to $\Ma$) coincides with the generic Bures distance as defined above (up to a multiplicative constant). It would be interesting to try to generalize the results of our paper to the abstract $C^*$-algebra framework. However, any developments in that direction, as well as the rigorous proof of the claim above, overstep the limits of this paper. 
\end{remark}

The Fisher-Rao distance on $\Ma$ can be defined as follows. 
\begin{defi}
[Fisher-Rao distance]
\label{d:fr1}
Given $\rhp_0,\rhp_1\in \Ma$ we set
\begin{equation}
\label{e:minifr}
d_{FR}^2(\rhp_0,\rhp_1):=\inf_{\mathcal{A}_1(\rhp_0,\rhp_1)}\int_0^1\left(\int_{\Omega} \rd \rhp_t U_t :U_t \right) \rd t,
\end{equation}
where the admissible set $\mathcal{A}_1(\rhp_0,\rhp_1)$ consists of all pairs $(\rhp_t,U_t)_{t\in [0,1]}$ such that
\begin{equation*}
\begin{cases}
\rhp\in C([0,1];\Ma_{TV}),
\\
\rhp|_{t=0}=\rhp_0;\quad \rhp|_{t=1}=\rhp_1,
\\
U\in L^2(0,1;L^2(\rd \rhp_t;\Sm))
\end{cases}
\qqtext{and}
\p_t\rhp_t=\left(\rhp_tU_t\right)^{Sym} \quad \mbox{in the weak sense}.
\end{equation*}
\end{defi}
Comparing with Definition~\ref{d:fr1h} it is clear that, at least formally, we view $\Ma$ as a submanifold of $\Mm$ with the induced Riemannian metric.

\begin{theo} 
 $d_{FR}$ is a distance on $\Ma$.
 \label{theo:d_distance}
\end{theo}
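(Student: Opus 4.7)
The strategy is to mirror the proof of Theorem~\ref{theo:d_distanceh} for $\dih$ on $\Mm$, the only genuine novelty being finiteness. The inclusion of admissible sets $\mathcal{A}_1(\rhp_0,\rhp_1)\subset\mathcal{A}(\rhp_0,\rhp_1)$ gives at once $\dih(\rhp_0,\rhp_1)\leq\dFR(\rhp_0,\rhp_1)$ on $\Ma$, so the separation property $\dFR(\rhp_0,\rhp_1)=0\Rightarrow\rhp_0=\rhp_1$ will follow directly from Theorem~\ref{theo:d_distanceh}. Symmetry follows from the time reversal $(\rhp_t,U_t)\mapsto(\rhp_{1-t},-U_{1-t})$, the triviality $\dFR(\rhp,\rhp)=0$ is witnessed by the constant curve with vanishing potential, and the triangle inequality is obtained verbatim from the concatenation-and-rescaling argument of Theorem~\ref{theo:d_distanceh}: the time reparametrization preserves the unit-trace constraint as long as both pieces being concatenated already lie in $\Ma$ throughout.

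The main (and only new) issue is the finiteness of $\dFR$: one cannot connect $\rhp_0$ and $\rhp_1$ through the apex $0$ as in Theorem~\ref{theo:d_distanceh}, since $0\notin\Ma$. I will instead take a fiberwise Bures geodesic and apply a scalar-valued rescaling that restores the unit trace. Fix a positive scalar measure $\lambda$ dominating $\rhp_0,\rhp_1$ with densities $g_0,g_1\in\Po$, and let $(g_t(x),V_t(x))$ be a fiberwise Bures geodesic from $g_0(x)$ to $g_1(x)$ as in Theorem~\ref{theo:exist_geodesicsh}(i). Setting
$$
h(t):=\int_\Omega \tr g_t\,\rd\lambda,\qquad \rho(t):=\frac 1{h(t)},\qquad c(t):=\frac{\rho'(t)}{\rho(t)},
$$
the candidate $\tilde\rhp_t:=\rho(t)g_t(x)\,\rd\lambda$ together with the potential $U_t:=V_t+c(t)I$ satisfies $\partial_t\tilde\rhp_t=(\tilde\rhp_t U_t)^{Sym}$ in the weak sense and $\tr\tilde\rhp_t(\Omega)=\rho(t)h(t)\equiv 1$, hence defines an admissible curve in $\mathcal A_1(\rhp_0,\rhp_1)$.

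The technical heart is then a uniform lower bound on $h(t)$. Applying Lemma~\ref{l:masscal} fiberwise to the Bures cone yields $\tr g_t(x)=t\tr g_1(x)+(1-t)\tr g_0(x)-t(1-t)d_B^2(g_0(x),g_1(x))$; integrating in $x$ and invoking Lemma~\ref{lem:dih=int_dB} gives $h(t)=1-\tfrac{t(1-t)}{4}\dih^2(\rhp_0,\rhp_1)$, which together with $\dih^2\leq 8$ from Proposition~\ref{p:impr} forces $h\geq 1/2$, hence $\rho\in[1,2]$ and $c\in L^\infty(0,1)$. Expanding $\tilde\rhp_tU_t:U_t=\rho\,g_tV_t:V_t+2\rho c\,\tr(g_tV_t)+\rho c^2\,\tr g_t$ and exploiting $\int\tr(g_tV_t)\,\rd\lambda=h'(t)$, the cross and correction terms combine via $\rho=1/h$ into the non-positive contribution $-\int_0^1(h'/h)^2\,\rd t$, leaving
$$
\dFR^2(\rhp_0,\rhp_1)\leq \int_0^1 \rho(t)\left(\int_\Omega g_tV_t:V_t\,\rd\lambda\right)\rd t\leq 2\,\dih^2(\rhp_0,\rhp_1)<+\infty.
$$
The only delicate point I anticipate is verifying measurability of the fiberwise Bures selection and that the weak formulation of the continuity equation is preserved under the scalar correction; both are routine consequences of what has already been established in Theorem~\ref{theo:exist_geodesicsh}.
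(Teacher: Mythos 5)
Your proof is correct and follows essentially the same route as the paper: the standard axioms are inherited from Theorem~\ref{theo:d_distanceh}, and finiteness is established by taking a Hellinger geodesic and mass-normalizing it via $\tilde\rhp_t=\rhp_t/m_t$, $\tilde U_t=U_t-(\dot m_t/m_t)I$, with the mass lower bound $m_t\geq 1/2$ coming from the cone formula of Lemma~\ref{l:masscal} together with Proposition~\ref{p:impr}. The only (minor, favorable) divergence is in the final energy estimate: you expand $\tilde\rhp_t\tilde U_t:\tilde U_t$ exactly and observe the cross and correction terms combine into the non-positive $-(\dot m_t/m_t)^2$, yielding $\dFR^2\leq 2\dih^2$, whereas the paper uses the cruder inequality $(a+b)^2\leq 2a^2+2b^2$ together with a Cauchy--Schwarz bound on $\dot m_t$ to get $\dFR^2\leq (4/\underline m)\dih^2=8\dih^2$; both of course suffice for finiteness.
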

\begin{proof}
The identity of indiscernibles, symmetry, triangular inequality can be proved exactly as in Theorem~\ref{theo:d_distanceh} and we omit the details (the a priori bound on the masses $m_t=\tr \rhp_t(\Omega)=1$ even simplifies some parts).
The only delicate point is to check that the infimum in \eqref{e:minifr} is finite, i.e. that there is at least one admissible curve remaining in $\Ma$ while joining $\rhp_0,\rhp_1$ with finite cost.
As already mentioned, we will later on view $(\Ma,\dFR)$ as a unit sphere in the ambient cone-space $(\Mm,\dih)$, in which we already proved existence of geodesics.
The natural thing to do is therefore to project down these conic Hellinger geodesics onto the sphere, i.e. renormalize to unit masses.

To this end, take from Theorem~\ref{theo:exist_geodesicsh} a geodesic $(\rhp_t,U_t)_{t\in[0,1]}$ from $\rhp_0$ to $\rhp_1$ with $\p_t\rhp_t=(\rhp_t U_t)^{Sym}$.
By Corollary \ref{c:circarg} we control the masses from below as
$$
m_t=\tr\rhp_t(\Omega)\geq \underline m=\frac 12.
$$
(There is of course no circular argument in this anticipated use of Corollary \ref{c:circarg}, see Remark \ref{r:circarg}.)
It is then a simple exercise to check that
$$
(\tilde \rhp_t,\tilde U_t):=\left(\frac{1}{m_t}\rhp_t, U_t - \frac{\dot m_t}{m_t}I\right)
$$
satisfies $\p_t\tilde\rhp_t
=(\tilde\rhp_t\tilde U_t)^{Sym}
$.
Moreover taking the trace in $\p_t\rhp_t=(\rhp_t U_t)^{Sym}$ we know that $\dot m_t=\frac{d}{dt}\int_\Omega\rd\rhp_t :I=\int_\Omega\rd\rhp_t U_t:I$ and therefore
\begin{equation}
\label{eq:mdot_leq_mt_E}
|\dot m_t|^2=\left|\int_\Omega\rd\rhp_t U_t:I\right|^2
 \leq
 \left(\int_\Omega\rd \rhp_t U_t:U_t\right)\left(\int_\Omega\rd \rhp_t I:I\right)
 =
 m_t \int_\Omega\rd \rhp_t U_t:U_t.
\end{equation}
This allows to estimate the energy of the rescaled path as
\begin{multline*}
 \int_0^1\int_\Omega\rd\tilde\rhp_t\tilde U_t:\tilde U_t\,\rd t
 =\int_0^1\int_\Omega \frac{1}{m_t}\rd \rhp_t\left(U_t - \frac{\dot m_t}{m_t}I\right):\left(U_t - \frac{\dot m_t}{m_t}I\right)\,\rd t
 \\
 \leq \frac{2}{\underline m}\left[\int_0^1\int_\Omega \rd \rhp_t U_t:U_t\, \rd t
 + \int_0^1\int_\Omega \left|\frac{\dot m_t}{m_t}\right|^2\rd \rhp_t I:I\, \rd t\right]
 \\
 = \frac{2}{\underline m}\left[\int_0^1\int_\Omega \rd \rhp_t U_t:U_t \, \rd t
 + \int_0^1 \left|\frac{\dot m_t}{m_t}\right|^2m_t\, \rd t\right]
 \overset{\eqref{eq:mdot_leq_mt_E}}{\leq}\frac{4}{\underline m}\int_0^1\int_\Omega \rd \rhp_t U_t:U_t \, \rd t<+\infty
\end{multline*}
and the proof is complete.
\end{proof}

\begin{remark}
\label{rmk:characterization_AC2_H_FR}
Lemma \ref{l:ac2curves} holds in particular when the curve $\rhp$ takes values in $\Ma\subset\Mm$:
As a consequence the characterization of $AC^2$ curves in $(\Ma,\dFR)$ is exactly identical.
(Of course this stems from the fact that the Riemannian metric on $\Ma$ is simply induced by the overlying metric on $\Mm$.) 
\end{remark}

\subsection{Conic structure}

We are going to show that the abstract metric cone over our Fisher-Rao space $(\Ma,\dFR/2)$ coincides with the Hellinger space $(\Mm,\dih/2)$.
In other words, $\Ma$ is a unit sphere in the cone $\Mm$.
Firstly, for any element $G\in \Mm$, we set
$$
r=r(G):=\sqrt{m}=\sqrt{\tr\,\rhp (\Omega)}.
$$
Then we can identify $G$ with a pair $[G/r^2,r]\in \mathfrak C(\Ma)$, where the first factor is again normalized to unity, $\int_\Omega \tr G/r^2=1$.

\begin{theo}[Conic structure]
\label{th:cone} 
The space $(\Mm,\dih/2)$ is a metric cone over $(\Ma,\dFR/2)$, where $\Mm$ is identified with $\mathfrak C(\Ma)$ via $G\simeq [G/r^2,r]$. 
\end{theo}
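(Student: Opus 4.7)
The plan is to verify the cone identity
\begin{equation}
\tfrac{1}{4}\dih^2(G_0, G_1) = r_0^2 + r_1^2 - 2 r_0 r_1 \cos\bigl(\tfrac{1}{2}\dFR(\hat G_0, \hat G_1)\bigr) \tag{$\star$}
\end{equation}
for all $G_0, G_1 \in \Mm$ with $m_0, m_1 > 0$, along with the diameter bound $\dFR \leq \pi$ on $\Ma$; jointly these identify $(\Mm, \dih/2)$ with $\mathfrak C(\Ma, \dFR/2)$ via $G \simeq [\hat G, r]$. The degenerate apex cases $m_0 m_1 = 0$ reduce immediately to \eqref{optimpr}. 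The cornerstone is a Pythagoras-type splitting of the Hellinger kinetic energy. Transplanting the ansatz $(\tilde G_t, \tilde U_t) := (G_t/m_t,\, U_t - (\dot m_t/m_t)\, I)$ already used in the proof of Theorem~\ref{theo:d_distance}, a direct computation---in which the identity $\int_\Omega d\tilde G_t \tilde U_t : I = 0$, obtained by testing $\partial_t \tilde G_t = (\tilde G_t \tilde U_t)^{Sym}$ against $\psi \equiv I$, annihilates the cross-term---yields
$$\int_\Omega dG_t\, U_t : U_t \;=\; 4\dot r_t^2 \;+\; r_t^2 \int_\Omega d\tilde G_t\, \tilde U_t : \tilde U_t,$$
while $(\tilde G_t, \tilde U_t)$ is admissible for $\dFR$ joining $\hat G_0$ to $\hat G_1$.

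For the direction ``$\geq$'' in $(\star)$, for any admissible $(G_t, U_t)$ with $m_t>0$ I introduce the auxiliary planar curve $P(t) := r_t(\cos\varphi_t, \sin\varphi_t) \in \R^2$, with $\varphi_t := \tfrac{1}{2}\int_0^t \|\tilde U_s\|_{L^2(d\tilde G_s)}\,ds$. The splitting above shows that the $\R^2$-kinetic energy $\int_0^1 |\dot P|^2\,dt$ equals $\tfrac{1}{4}\int_0^1\!\int_\Omega dG_t U_t : U_t\,dt$. Cauchy-Schwarz controls $(\text{length of }P)^2$ by this energy, and an elementary planar argument yields $(\text{length of }P)^2 \geq r_0^2 + r_1^2 - 2 r_0 r_1 \cos\varphi_1$ when $\varphi_1 \leq \pi$, and $(\text{length of }P)^2 \geq (r_0 + r_1)^2$ when $\varphi_1 > \pi$ (in the latter case $P$ must cross the negative real axis, and splitting at the crossing time with two triangle inequalities suffices). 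Since $2\varphi_1$ dominates the $\dFR$-length of $\tilde G$, which in turn dominates $\dFR(\hat G_0, \hat G_1)$ by Lemma~\ref{l:ac2curves}, monotonicity of $\cos$ on $[0, \pi]$ gives $\tfrac{1}{4}\int_0^1\!\int_\Omega dG_t U_t : U_t\,dt \geq r_0^2 + r_1^2 - 2r_0 r_1 \cos\bigl(\min\{\dFR/2, \pi\}\bigr)$. Competitors approaching or passing through the apex ($m_{t^\ast} = 0$) are handled separately by concatenation together with \eqref{optimpr} and a standard Cauchy-Schwarz time-rescaling, forcing their energy to be at least $4(r_0 + r_1)^2$, which dominates the right-hand side of $(\star)$.

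For the matching ``$\leq$'' in $(\star)$ I reverse this construction. Given $\epsilon > 0$, fix a constant-speed near-geodesic $(\tilde G_s, \tilde U_s)_{s\in[0,1]}$ for $\dFR$ from $\hat G_0$ to $\hat G_1$ with length $\leq \dFR + \epsilon$ (Theorem~\ref{theo:d_distance} and Lemma~\ref{lem:constant_speed_reparametrization}), and set $\Theta := \dFR(\hat G_0, \hat G_1)/2$. Consider the straight segment $P$ in $\R^2$ from $(r_0, 0)$ to $(r_1 \cos\Theta, r_1 \sin\Theta)$; it avoids the origin since $\Theta < \pi$ (established \emph{a posteriori}). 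Writing $P(t) = r_t(\cos\varphi_t, \sin\varphi_t)$ in polar form, set $G_t := r_t^2 \tilde G_{\sigma(t)}$, with reparametrization $\sigma:[0,1]\to[0,1]$ chosen so that the $\dFR$-speed of $t \mapsto \tilde G_{\sigma(t)}$ matches $2\dot\varphi_t$, and define $U_t$ by inverting the polar decomposition of the first paragraph. One checks that $(G_t, U_t)$ is admissible for $\dih$ and has total energy $4|P(1)-P(0)|^2 + O(\epsilon) = 4(r_0^2 + r_1^2 - 2 r_0 r_1 \cos\Theta) + O(\epsilon)$; letting $\epsilon\to 0$ yields the inequality.

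The diameter bound $\dFR(\hat G_0, \hat G_1)/2 \leq \pi/2$ is obtained \emph{a posteriori} by specialising ``$\geq$'' to $r_0 = r_1 = 1$ and invoking Proposition~\ref{p:impr}'s estimate $\dih^2 \leq 8$ on $\Ma$: this forces $\cos(\min\{\dFR/2, \pi\}) \geq 0$, hence $\dFR/2 \leq \pi/2$ and the $\min$s above are retroactively superfluous. The main technical obstacles will be the planar geometric step when $\varphi_1 > \pi$ (crossing the antipode), treating curves whose mass $m_t$ approaches but does not touch zero (where the polar reduction degenerates), and the careful verification of $L^2$-admissibility for the reconstructed competitor in the upper bound.
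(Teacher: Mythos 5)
Your proof is correct in outline, but it follows a genuinely different route from the paper. The paper does not verify the cone distance formula $(\star)$ directly; instead it invokes the characterization from \cite{LM17} (their Theorem~2.2) that a geodesic space is a cone if and only if the scaling identity $d_H^2(r_0^2 G_0, r_1^2 G_1) = r_0 r_1\, d_H^2(G_0,G_1) + 4(r_1-r_0)^2$ holds, and then proves that identity by an explicit time--change of variables $a(t)=r_1 t/((1-t)r_0 + t r_1)$ applied to an arbitrary admissible path in $\Ma$. The identification of the link distance with $d_{FR}/2$ is then offloaded to abstract results from \cite{BH99} and Lemma~\ref{charsp}, and the diameter bound comes directly from Proposition~\ref{p:impr} together with positivity of the cosine in \eqref{cone}. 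By contrast, you verify $(\star)$ hands-on: the Pythagoras splitting $\int dG_t\,U_t{:}U_t = 4\dot r_t^2 + r_t^2\int d\tilde G_t\,\tilde U_t{:}\tilde U_t$ (which indeed follows from the vanishing cross-term $\int d\tilde G_t\,\tilde U_t{:}I=0$, since $\tilde G_t$ has constant unit mass) reduces the computation to a planar curve $P(t)=r_t(\cos\varphi_t,\sin\varphi_t)$ and standard Euclidean geometry, and the upper bound reconstructs a competitor from a straight segment in $\R^2$ plus a constant-speed near-$d_{FR}$-geodesic. Your route is more self-contained (no external cone-characterization lemma, and the scaling identity is a corollary rather than the input), and it makes the geometry transparent; the price is that you must separately discharge the degenerate cases---competitors touching the apex via \eqref{optimpr} and concatenation, angles $\varphi_1>\pi$ via the crossing-the-antipode triangle argument, and the a posteriori diameter bound---all of which you correctly identify as the obstacles and all of which are indeed surmountable by the arguments you sketch. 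The paper's scaling-identity strategy sidesteps these degeneracies entirely, which is why its proof is shorter; your approach effectively reproves (a special case of) \cite[Theorem~2.2]{LM17} along the way.
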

Note that this covers one-point spaces $\Omega=\{x\}$ and implies Proposition \ref{p:coneb}, up to a minor dimensional scaling issue (the details of which are left to the reader).
\begin{proof}
{\it Step 1.}
We first observe that it suffices to establish the weaker claim that $(\Mm,d_{H}/2)$ is  a metric cone over \emph{some} metric space (which, due to the identification above, is nothing but $\Ma$ equipped with \emph{some} distance $d$).
Indeed, by Proposition \ref{p:impr}, for any two matrix measures $G_0,G_1\in \Ma$ one has
\begin{equation}
\label{hkbound}
\frac 12d_{H}(G_0,G_1)\leq \sqrt 2.
\end{equation}
If $(\Mm,d_{H}/2)$ is to be a cone over $(\Ma,d)$ as claimed, \eqref{hkbound} and \eqref{cone} imply that $\cos(d(G_0,G_1))\geq 0$, whence the diameter of $(\Ma,d)$ would be controlled from above by $\pi/2<\pi$.
By Theorem \ref{theo:exist_geodesicsh} $(\Mm,\dih)$ is a geodesic space, hence from \cite[Corollary 5.11]{BH99} $(\Ma,d)$ would also necessarily be a geodesic space.
Evoking Lemma~\ref{charsp} and Definition~\ref{d:fr1}, we see that $d$ should actually coincide with $d_{FR}/2$ as claimed (note that the infimum of the lengths coincides with the infimum of the $AC^2$-energies due to Lemma \ref{lem:constant_speed_reparametrization}).
\\
{\it Step 2.} 
In view of \eqref{hkbound} and \cite[Theorem 2.2]{LM17}, in order to prove the weaker claim in step~1 it suffices to establish the following scaling property that fully characterizes cones:
 \begin{equation}
 \label{scaling}
 d_{H}^2(r_0^2G_0,r_1^2G_1)=r_0r_1d_{H}^2(G_0,G_1)+4(r_1-r_0)^2,
 \end{equation}
 for all $G_0,G_1\in \Ma$ and $r_0,r_1\geq 0$. In the case $r_0r_1=0$ the claim is immediate by Remark \ref{r:ipmr} and our choice of $r^2=\tr \rhp(\Omega)=m$.
 We can thus assume that $r_0r_1>0$. 
 Consider the monotone increasing function
 $$
 a(t):=\frac {r_1 t}{(1-t)r_0+tr_1},
 $$
 and observe that
 $$
 a(0)=0,
 \qquad
  a(1)=1,
  \qquad 
  a'(t)[(1-t)r_0+tr_1]^2=cst=r_0r_1.
 $$
 We will also need its inverse function $t(a)$. 
 
 Let $(G_t,U_t)$ be any admissible path joining $G_0,G_1\in \Ma$.
 Then the rescaled path $(\tilde G_t,\tilde U_t)$ defined as
 \begin{equation}
 \label{eq:def_path_change_variables}
 \tilde G_t := [(1-t)r_0+tr_1]^2 G_{a(t)},
 \qquad
 \tilde U_t:=a'(t)U_{a(t)}+ 2\frac {(r_1-r_0)}{(1-t)r_0+tr_1}I
 \end{equation}
 connects $r_0^2G_0$ and $r_1^2G_1$ in $\Mm$.
 A straightforward computation shows that  $(\tilde G_t,\tilde U_t)$ satisfies $\p_t\tilde\rhp_t=(\tilde\rhp_t\tilde U_t)^{Sym}$.
 Testing \eqref{eq:weak_formulation_ODE} with $\Psi=\Phi_a=(r_0+(r_1-r_0)t(a))I$ and $a\in[0,1]$, we infer 
 \begin{multline}
 \label{num4.8}
 (r_0+(r_1-r_0)t(1))\int_{\Omega} \rd \rhp_{1} : I -(r_0+(r_1-r_0)t(0))\int_{\Omega} \rd \rhp_{0} : I
 \\
 - (r_1-r_0)\int_0^1 t'(a)\int_{\Omega} \rd \rhp_{a} : I \,\rd a\\= \int_0^1(r_0+(r_1-r_0)t(a))\int_{\Omega} \rd \rhp_{a} : U_{a} \,\rd a.
 \end{multline}
 
 Let us compute the energy of the path $\tilde G_t$, employing \eqref{num4.8}:
 \begin{align*}
 \int_0^1\left(\int_{\Omega} \rd \tilde \rhp_t\tilde U_t:\tilde U_t\right) \rd t
 &
 = r_0r_1\int_0^1a'(t)\left(\int_{\Omega} \rd \rhp_{a(t)} U_{a(t)}: U_{a(t)}\right) \,\rd t 
 \\
  & \hspace{1cm} + 4 (r_1-r_0)\int_0^1a'(t)(r_0+(r_1-r_0)t)\int_{\Omega} \rd \rhp_{a(t)} : U_{a(t)} \,\rd t
 \\
 & \hspace{2cm}+ 4 (r_1-r_0)^2\int_0^1\int_{\Omega} \rd \rhp_{a(t)} : I \,\rd t
 \\
 & =r_0r_1\int_0^1\left(\int_{\Omega} \rd \rhp_{a} U_{a}: U_{a}\right) \,\rd a
 \\
 & \hspace{1cm}
 + 4 (r_1-r_0)\int_0^1(r_0+(r_1-r_0)t(a))\int_{\Omega} \rd \rhp_{a} : U_{a} \,\rd a
 \\
 & \hspace{2cm}
 + 4 (r_1-r_0)^2\int_0^1 t'(a)\int_{\Omega} \rd \rhp_{a} : I \,\rd a
 \\
 & =r_0r_1 \int_0^1\left(\int_{\Omega} \rd  \rhp_t U_t: U_t\right) \rd t
 \\
 & \hspace{1cm}
 +4 (r_1-r_0)(r_0+(r_1-r_0)t(1))\int_{\Omega} \rd \rhp_{1} : I
 \\
 & \hspace{2cm}
 -4 (r_1-r_0)(r_0+(r_1-r_0)t(0))\int_{\Omega} \rd \rhp_{0} : I
 \\
  & =r_0r_1\int_0^1\left(\int_{\Omega} \rd \rhp_t U_t:U_t\right) \rd t+4(r_1-r_0)^2.
 \end{align*}
 Consequently,  $d_{H}^2(r_0^2G_0,r_1^2G_1)\leq r_0r_1d_{H}^2(G_0,G_1)+4(r_1-r_0)^2$.
 
 The opposite inequality is proved in a similar fashion: Taking any path $(\tilde\rhp,\tilde U)$ connecting $r_0^2\rhp_0$ to $r_1^2\rhp_1$ in $\Mm$ and undoing the change of variables \eqref{eq:def_path_change_variables} gives an admissible path $(\rhp,U)$ connecting $\rhp_0,\rhp_1$ in $\Ma$, whose cost can be computed explicitly as above.
 \end{proof}
 Note that we also proved along the way
 \begin{cor}
 The space $(\Ma,\dFR)$ has diameter $\le \pi$. 
 \end{cor}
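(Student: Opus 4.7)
The plan is to deduce the diameter bound directly from the conic structure theorem just proved, together with the universal upper bound on the Hellinger distance from Proposition~\ref{p:impr}. The calculation is essentially the inversion of the cone law \eqref{cone2} applied to the sphere $\Ma$ inside the cone $\Mm$.

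First I would fix two arbitrary elements $G_0, G_1 \in \Ma$. Since these lie on the unit sphere, their conic radii satisfy $r_0 = r_1 = \sqrt{\tr G_i(\Omega)} = 1$, so Theorem~\ref{th:cone} together with formula \eqref{cone} (applied to the cone $(\Mm, \dih/2)$ over $(\Ma, \dFR/2)$) specializes to the identity
$$
\left(\tfrac{1}{2}\dih(G_0,G_1)\right)^2 = 2 - 2\cos\!\left(\tfrac{1}{2}\dFR(G_0,G_1)\right),
$$
or equivalently
$$
\cos\!\left(\tfrac{1}{2}\dFR(G_0,G_1)\right) = 1 - \tfrac{1}{8}\dih^2(G_0,G_1).
$$

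Next I would invoke Proposition~\ref{p:impr}, which for masses $m_0 = m_1 = 1$ gives the bound $\dih^2(G_0,G_1) \leq 4(m_0 + m_1) = 8$. Substituting this into the identity above yields $\cos\!\left(\tfrac{1}{2}\dFR(G_0,G_1)\right) \geq 0$, whence $\tfrac{1}{2}\dFR(G_0,G_1) \leq \tfrac{\pi}{2}$, i.e.\ $\dFR(G_0,G_1) \leq \pi$. Since $G_0,G_1 \in \Ma$ were arbitrary, the diameter of $(\Ma,\dFR)$ is at most $\pi$.

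There is essentially no obstacle here: the bulk of the work was already done in Theorem~\ref{th:cone} and Proposition~\ref{p:impr}. The only subtlety worth flagging explicitly is that the cone formula \eqref{cone} requires the underlying sphere to have diameter $\leq \pi$ for consistency, which is exactly what we are verifying a posteriori; this is not circular because the conclusion is extracted from the previously established upper bound on $\dih$ combined with the cone identity, rather than from any hypothesis used in proving the cone structure.
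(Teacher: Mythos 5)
Your proof is correct and follows essentially the same route as the paper: the paper derives the diameter bound inside Step~1 of the proof of Theorem~\ref{th:cone}, by combining the bound $\frac{1}{2}d_H(G_0,G_1)\leq\sqrt{2}$ from Proposition~\ref{p:impr} with the cone formula \eqref{cone} to conclude $\cos\bigl(\tfrac{1}{2}d_{FR}(G_0,G_1)\bigr)\geq 0$, hence $d_{FR}\leq\pi$. Your remark about the apparent circularity is also well placed, and it is resolved exactly as you say: the cone structure is established via the scaling identity \eqref{scaling} (which does not presuppose the diameter bound), and the diameter bound is then extracted a posteriori from the cone law combined with the Hellinger upper bound.
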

Another useful consequence is

\begin{cor}[Existence of Fisher-Rao geodesics]
\label{theo:exist_geodesicsfr}
$(\Ma,d_{FR})$ is a geodesic space, i.e. for all $\rhp_0,\rhp_1\in \Ma$ the infimum in \eqref{e:minifr} is always a minimum.
Moreover any minimizer is a $d_{FR}$-Lipschitz curve such that $d_{FR}(\rhp_t,\rhp_s)=|t-s|d_{FR}(\rhp_0,\rhp_1)$ with potential $U\in  L^2(0,1;L^2(\rd \rhp_t;\Sm))$ such that $\|U_t\|^2_{L^2(\rd \rhp_t;\Sm)}=cst=d^2_{FR}(\rhp_0,\rhp_1)$ for a.e. $t\in [0,1]$.
\end{cor}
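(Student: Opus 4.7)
The strategy is to construct Fisher-Rao geodesics by lifting to the ambient Hellinger cone, exploiting the conic structure of Theorem~\ref{th:cone} and the existence of Hellinger geodesics established in Theorem~\ref{theo:exist_geodesicsh}. Given $\rhp_0,\rhp_1\in\Ma$, extract a constant-speed Hellinger geodesic $(G_t,U_t)_{t\in[0,1]}$ with $\int_\Omega dG_t U_t:U_t\equiv c^2:=d_H^2(\rhp_0,\rhp_1)$. Corollary~\ref{c:circarg} guarantees the masses $m_t=\tr G_t(\Omega)$ remain bounded below by $1/2$, and Lemma~\ref{l:masscal} combined with Theorem~\ref{th:cone} gives the explicit form $m_t=1-t(1-t)c^2/4$. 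Radially project onto the sphere, as in the proof of Theorem~\ref{theo:d_distance}, via
\[
\tilde G_t:=G_t/m_t,\qquad \tilde U_t:=U_t-(\dot m_t/m_t)I,
\]
which yields an admissible curve in $\mathcal A_1(\rhp_0,\rhp_1)$.

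The next step is to compute the Fisher-Rao length of this projection. Testing the weak continuity equation for $(G,U)$ against $\Psi\equiv I$ yields $\int_\Omega dG_t U_t:I=\dot m_t$, while $\int_\Omega dG_t I:I=m_t$ is immediate. Expanding and invoking the $AC^2$ characterization of Lemma~\ref{l:ac2curves} (valid on the sphere by Remark~\ref{rmk:characterization_AC2_H_FR}) gives the metric speed
\[
|\dot{\tilde G}_t|_{FR}^2=\int_\Omega d\tilde G_t\tilde U_t:\tilde U_t=\frac{c^2 m_t-\dot m_t^2}{m_t^2}.
\]
A short calculation using the elementary identity $4t(1-t)+(1-2t)^2=1$ reveals the striking $t$-independent cancellation $c^2 m_t-\dot m_t^2=c^2(1-c^2/16)$. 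Hence the Fisher-Rao length of the projection is $\sqrt{c^2-c^4/16}\int_0^1 dt/m_t$, and a trigonometric integration — using the cone-sphere parametrization $c=4\sin(d_{FR}(\rhp_0,\rhp_1)/4)$ that comes from specializing \eqref{cone2} to $r_0=r_1=1$ — evaluates this exactly to $d_{FR}(\rhp_0,\rhp_1)$.

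Reparametrizing $\tilde G$ to constant speed by Lemma~\ref{lem:constant_speed_reparametrization} produces a curve $\hat G$ on $[0,1]$ with constant $d_{FR}$-speed equal to $d_{FR}(\rhp_0,\rhp_1)$ and kinetic energy equal to $d_{FR}^2(\rhp_0,\rhp_1)$. This achieves the infimum in Definition~\ref{d:fr1}, so $\hat G$ is a minimizer and $(\Ma,d_{FR})$ is a geodesic space. The further statements about arbitrary minimizers, namely the constant-speed identity $d_{FR}(\rhp_t,\rhp_s)=|t-s|d_{FR}(\rhp_0,\rhp_1)$ and the normalization $\|U_t\|^2_{L^2(d\rhp_t)}\equiv d_{FR}^2(\rhp_0,\rhp_1)$ for a.e.\ $t$, follow from Cauchy-Schwarz: any non-constant-speed minimizer could be strictly improved in energy by arc-length reparametrization (Lemma~\ref{lem:constant_speed_reparametrization}), contradicting minimality, and Lemma~\ref{l:ac2curves} then identifies $\|U_t\|^2_{L^2(d\rhp_t)}$ with the squared metric derivative. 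The main technical obstacle is the explicit length computation, which concretely expresses the geometric principle that radial projections of cone geodesics yield sphere geodesics; the algebraic cancellation making $c^2 m_t-\dot m_t^2$ constant in $t$ is clean, but it is the subsequent trigonometric integration where the cone structure of Theorem~\ref{th:cone} genuinely enters.
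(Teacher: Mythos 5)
Your argument is correct, and the key computation checks out: with the constant-speed Hellinger geodesic $(G_t,U_t)$, $c=d_H(\rhp_0,\rhp_1)$, and $m_t=1-t(1-t)c^2/4$ from Lemma~\ref{l:masscal}, the identity $c^2m_t-\dot m_t^2=c^2(1-c^2/16)$ is indeed $t$-independent, and the arctangent integral
\[
\sqrt{c^2-c^4/16}\int_0^1\frac{\rd t}{m_t}=4\arctan\!\left(\frac{c/4}{\sqrt{1-c^2/16}}\right)=4\arctan\big(\tan(\dFR/4)\big)=\dFR(\rhp_0,\rhp_1)
\]
does evaluate as you claim via $c=4\sin(\dFR/4)$ from \eqref{cone2}. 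However, this is a genuinely different route from the paper's. The paper's proof of this corollary is essentially two lines: it appeals to Theorem~\ref{theo:exist_geodesicsh} (the cone $(\Mm,\dih)$ is geodesic), to Theorem~\ref{th:cone} (the cone structure identifies $(\Ma,\dFR/2)$ with the base, whose diameter is $\le\pi/2$), and to the abstract result of [BH99, Corollary~5.11] that the base of a geodesic cone of small diameter is itself geodesic; the constant-speed and $L^\infty$ claims then come from Lemma~\ref{l:ac2curves} exactly as in your closing paragraph. Your constructive approach buys something the paper leaves implicit: an explicit formula for the Fisher--Rao geodesic as the radial projection $\tilde G_t=G_t/m_t$ of the Hellinger geodesic, followed by an explicit arc-length reparametrization, which makes visible how the cone geometry (explicit mass law plus trigonometric identities) forces the projected curve's length to collapse to $\dFR(\rhp_0,\rhp_1)$. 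The trade-off is length: the paper's appeal to metric-geometry black boxes is considerably shorter, and the computational cancellations you exploit are, in effect, the finite-dimensional reason \emph{why} [BH99, Corollary~5.11] is true. Both are valid; yours is arguably more informative, the paper's more economical.
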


\begin{proof}
We have already proved in Theorem \ref{theo:exist_geodesicsh} that $(\Mm,\dih)$ is a geodesic space. 
Owing to the cone structure (Theorem~\ref{th:cone}), $(\Ma,d_{FR})$ is automatically a geodesic space (see the discussion in step 1 of the proof of Theorem~\ref{th:cone}).
The rest follows by Lemma \ref{l:ac2curves}.
\end{proof}

 \begin{cor} \label{c:circarg}
 Let $G_t$ be a $d_H$-geodesic in $\Mm$ joining $G_0,\, G_1\in \Ma$.  Then the corresponding masses $m_t$ are bounded from below:
\begin{equation} \label{e:circarg} m_t\geq 1-2t(1-t). \end{equation}
 \end{cor}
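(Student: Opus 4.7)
The plan is to apply the cone mass-scaling formula (Lemma \ref{l:masscal}) directly, once we identify the correct cone structure and boundary radii. By Theorem \ref{th:cone}, the space $(\Mm,d_H/2)$ is a metric cone over $(\Ma,d_{FR}/2)$ via the identification $G\simeq[G/r^2,r]$ with $r=\sqrt{m}$. Since $G_0,G_1\in\Ma$ have mass $1$, the corresponding radii are $r_0=r_1=1$. Moreover, any $d_H$-geodesic is, by our convention, constant-speed minimizing; rescaling the metric by $1/2$ changes neither minimality nor constant-speed parametrization, so $G_t$ is a constant-speed geodesic in the cone $(\Mm,d_H/2)$ in the sense required by Lemma \ref{l:masscal}.

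Applying Lemma \ref{l:masscal} to $G_t\simeq [G_t/r_t^2,r_t]$, with $d_Y=d_H/2$, immediately yields
\begin{equation*}
m_t=r_t^2 = t\cdot 1+(1-t)\cdot 1-t(1-t)\left(\tfrac{1}{2}d_H(G_0,G_1)\right)^2=1-\tfrac{1}{4}\,t(1-t)\,d_H^2(G_0,G_1).
\end{equation*}
It remains to bound $d_H^2(G_0,G_1)$ from above. This is exactly Proposition \ref{p:impr}: since $m_0=m_1=1$, one has $d_H^2(G_0,G_1)\leq 4(m_0+m_1)=8$. Substituting gives $m_t\geq 1-2t(1-t)$, which is the desired estimate.

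There is no real obstacle; the only subtlety is bookkeeping the factor of $1/2$ between $d_H$ and the cone metric, and checking that we are invoking only results independent of Theorem \ref{theo:d_distance} (so that the use in its proof is not circular, cf. Remark \ref{r:circarg}). Indeed, Theorem \ref{th:cone} relies on $(\Mm,d_H)$ being geodesic (Theorem \ref{theo:exist_geodesicsh}) and on Proposition \ref{p:impr}, neither of which uses $d_{FR}$; Lemma \ref{l:masscal} is a purely abstract statement about metric cones. Thus the argument is valid and fits cleanly in the logical order of the paper.
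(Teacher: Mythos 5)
Your proposal is correct and follows essentially the same route as the paper, which simply cites Lemma \ref{l:masscal}, the bound \eqref{hkbound} (itself a consequence of Proposition \ref{p:impr}), and $m_0=m_1=1$. Your explicit unpacking of the $1/2$ factor in the cone metric and the substitution $d_H^2\leq 8$ is exactly the intended one-line computation; the only small imprecision is your claim that Theorem \ref{th:cone} ``relies on\dots{} neither of which uses $d_{FR}$'' --- step 1 of that proof does evoke the \emph{definition} of $d_{FR}$, but as Remark \ref{r:circarg} points out, the corollary truly needs only \eqref{hkbound} and the scaling identity \eqref{scaling} from step 2, so the non-circularity conclusion still stands.
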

 The proof is immediate by Lemma \ref{l:masscal}, the bound \eqref{hkbound}, and the fact that $m_0=m_1=1$.
  \begin{remark} \label{r:circarg}
There was no circular reasoning in applying Corollary \ref{c:circarg} in the proof of Theorem \ref{theo:d_distance}, since \eqref{e:circarg} merely relies on \eqref{hkbound} and step 2 of the proof of Theorem~\ref{th:cone}, both of which have nothing to do with Theorem \ref{theo:d_distance}. 
 \end{remark}
%
\subsection{Topological properties}
The very particular conic structure automatically entails nice topological properties:
\begin{cor} 
 The distance $\dFR$ is topologically equivalent to the TV distance on $\Ma$.
 \label{cor:top}
\end{cor}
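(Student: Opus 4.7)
The plan is to chain two topological equivalences, namely $\mathrm{TV}\ \leftrightarrow\ \dih\ \leftrightarrow\ \dFR$, both restricted to $\Ma$. No new ingredients are needed beyond results established earlier in the paper.

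For the first link, I would specialize the comparison of Theorem~\ref{theo:comparison_2sides_dFR_TV} to the case $m_0=m_1=1$, which is automatic on $\Ma$. This yields
$$\frac{1}{4\sqrt d}\,\dih^2(G_0,G_1) \leq \|G_0 - G_1\|_{TV} \leq \sqrt{2}\,\dih(G_0,G_1)\qquad \forall\, G_0, G_1 \in \Ma.$$
The right-hand inequality shows that $\dih$-convergence implies TV-convergence, while the left-hand one gives the converse (through a H\"older-type bound). Hence $\dih$ and $\mathrm{TV}$ are topologically equivalent on $\Ma$.

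For the second link, I would invoke the cone structure of Theorem~\ref{th:cone}: the distance induced by $\dih/2$ on the unit sphere $\Ma\subset \Mm$ is (the restriction of) $\dih/2$ itself, i.e.\ the $d_{\mathrm{conic}}$ of Lemma~\ref{lequiv}, while $\dFR/2$ plays the role of the intrinsic length distance $d_X$ on the sphere (cf.\ Lemma~\ref{charsp}). The bound \eqref{hkbound} established during the proof of Theorem~\ref{th:cone} shows that the $d_{\mathrm{conic}}$-diameter of $\Ma$ is at most $2$, which is precisely the hypothesis of Lemma~\ref{lequiv}. Applying that lemma and multiplying by~$2$ gives
$$\dih(G_0,G_1) \leq \dFR(G_0,G_1) \leq \tfrac{\pi}{2}\,\dih(G_0,G_1)\qquad \forall\, G_0, G_1 \in \Ma,$$
i.e.\ $\dih$ and $\dFR$ are in fact Lipschitz-equivalent on $\Ma$.

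Chaining the two links yields the topological equivalence of $\mathrm{TV}$ and $\dFR$ on $\Ma$. No genuine obstacle arises: the cone structure, together with the previously established TV/$\dih$ comparison, does all the work; the only thing worth double-checking is that Lemma~\ref{lequiv} is applicable, which is exactly what the uniform diameter bound \eqref{hkbound} guarantees.
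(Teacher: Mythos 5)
Your proof is correct and follows essentially the same route as the paper's: the paper also chains the Lipschitz equivalence of $\dFR$ and $\dih$ on the sphere (via Lemma~\ref{lequiv} and the cone structure, using the diameter bound \eqref{hkbound}) with the topological equivalence of $\dih$ and TV from Theorem~\ref{theo:comparison_2sides_dFR_TV}. You simply spell the constants out more explicitly; the argument is the same.
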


\begin{proof} It is immediate from Lemma \ref{lequiv} that the spherical and the conic distances $\dFR,\dih$ are topologically equivalent on the sphere, but by Theorem \ref{theo:comparison_2sides_dFR_TV} the latter one is topologically equivalent to the total variation distance. \end{proof}

\begin{prop}
\label{compls}
The metric space $(\Ma, d_{FR})$ is complete. 
\end{prop}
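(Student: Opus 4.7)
The strategy is to leverage the Lipschitz comparison between $\dFR$, $\dih$, and the TV norm (valid thanks to the uniform mass constraint $m\equiv 1$ on $\Ma$) to reduce completeness of $(\Ma,\dFR)$ to completeness of the Banach space $\Smv$ equipped with the total variation norm.

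I would first take an arbitrary Cauchy sequence $\{G^n\}$ in $(\Ma,\dFR)$. By Theorem~\ref{th:cone} and Lemma~\ref{lequiv} applied on the sphere, $\dih\leq \dFR$ on $\Ma$, hence $\{G^n\}$ is also $\dih$-Cauchy. Since every element of $\Ma$ has unit mass, the upper bound of Theorem~\ref{theo:comparison_2sides_dFR_TV} specializes to
\[
\|G^n-G^m\|_{TV}\leq \sqrt{2}\,\dih(G^n,G^m),
\]
so $\{G^n\}$ is likewise Cauchy in total variation.

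Since the space of $\Sm$-valued finite Radon measures is complete for TV, there exists $G\in\Smv$ with $\|G^n-G\|_{TV}\to 0$. Next I would verify that $G\in\Ma$: TV convergence forces $G^n(B)\to G(B)$ in $\Sm$ for every Borel set $B\subset\Omega$, and since $\Po$ is closed in $\Sm$, the limit $G$ is $\Po$-valued; moreover, testing the TV convergence against the constant function $\Psi\equiv I$ gives $\tr G(\Omega)=\lim_n \tr G^n(\Omega)=1$, so indeed $G\in\Ma$.

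Finally, the lower bound of Theorem~\ref{theo:comparison_2sides_dFR_TV} yields $\dih^2(G^n,G)\leq 4\sqrt{d}\,\|G^n-G\|_{TV}\to 0$, and a second application of Lemma~\ref{lequiv} upgrades this into $\dFR(G^n,G)\leq c\,\dih(G^n,G)\to 0$, which proves completeness. There is no serious obstacle in this argument; the only point that deserves emphasis is that Lemma~\ref{lequiv} provides a genuine Lipschitz equivalence (not merely topological equivalence) between $\dFR$ and $\dih$ on the sphere, which is what allows the Cauchy property—not just the notion of convergence—to be transferred between the three metrics.
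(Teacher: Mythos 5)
Your argument is correct and follows essentially the same route as the paper's: reduce $\dFR$-Cauchy to $TV$-Cauchy via the mass bound in Theorem~\ref{theo:comparison_2sides_dFR_TV}, invoke completeness of $\Ma$ under $TV$, and transfer back. The only differences are cosmetic: you spell out the verification that the $TV$-limit lies in $\Ma$ (which the paper takes for granted), and you invoke the Lipschitz equivalence of Lemma~\ref{lequiv} for the final step where the paper cites only the topological equivalence of Corollary~\ref{cor:top} (either suffices there since one is transferring an actual convergence, not a Cauchy property).
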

\begin{proof}
Take any $\dFR$-Cauchy sequence $\rhp^n$.
Since $\rhp^n$ has unit mass $m^n=\tr\rhp^n(\Omega)=1$ for all $n$, \eqref{eq:comparison_2sides_dFR_TV} shows that this sequence is also Cauchy for the $TV$ distance.
Since $\Ma$ is complete for the total variation we see that the $\rhp^n$ converges in $TV$, and therefore in $\dFR$ too owing to Corollary~\ref{cor:top}.
\end{proof}

\begin{cor} \label{complc} 
The metric space $(\Mm, d_{H})$ is complete. 
\end{cor}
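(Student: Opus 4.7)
The plan is to deduce completeness of $(\Mm, d_H)$ from the cone structure of Theorem \ref{th:cone} combined with the already-established completeness of $(\Ma, d_{FR})$ from Proposition \ref{compls}. Given a $d_H$-Cauchy sequence $G^n \in \Mm$, the cone identification lets me write $G^n \simeq [P^n, r^n]$ with $r^n := \sqrt{m^n} = \sqrt{\tr G^n(\Omega)}$ and, when $r^n>0$, $P^n := G^n/(r^n)^2 \in \Ma$.

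First I would extract a limit radius. By Remark \ref{r:ipmr} we have $d_H(G,0) = 2r$, so the reverse triangle inequality gives
\[
2|r^n - r^m| = |d_H(G^n,0) - d_H(G^m,0)| \leq d_H(G^n, G^m),
\]
and hence $r^n \to r$ for some $r \geq 0$. If $r = 0$, then $d_H(G^n, 0) = 2 r^n \to 0$ and the sequence converges to the apex $G = 0$, so the claim is trivial.

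In the main case $r > 0$, I would exploit the cone identity \eqref{cone}, which for $(\Mm, d_H/2)$ viewed as a cone over $(\Ma, d_{FR}/2)$ reads
\[
\tfrac{1}{4} d_H^2(G^n, G^m) = (r^n)^2 + (r^m)^2 - 2 r^n r^m \cos\!\left( \tfrac{1}{2} d_{FR}(P^n, P^m) \right).
\]
Solving for the cosine,
\[
\cos\!\left( \tfrac{1}{2} d_{FR}(P^n, P^m) \right) = \frac{(r^n)^2 + (r^m)^2 - \tfrac{1}{4} d_H^2(G^n, G^m)}{2 r^n r^m} \xrightarrow[n,m\to\infty]{} 1,
\]
where the denominator is bounded away from zero since $r^n, r^m \to r > 0$. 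Because $(\Ma, d_{FR})$ has diameter $\leq \pi$ and $\cos$ is injective on $[0, \pi/2]$, this forces $d_{FR}(P^n, P^m) \to 0$. Proposition \ref{compls} then yields $P^n \xrightarrow{d_{FR}} P$ for some $P \in \Ma$. Setting $G := r^2 P \simeq [P,r]$, one final application of \eqref{cone} gives
\[
\tfrac{1}{4} d_H^2(G^n, G) = (r^n)^2 + r^2 - 2 r^n r \cos\!\left( \tfrac{1}{2} d_{FR}(P^n, P) \right) \xrightarrow[n\to\infty]{} 2r^2 - 2r^2 = 0,
\]
which completes the argument.

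The only subtle point is the dichotomy between the degenerate case $r = 0$ (where the spherical component $P^n$ may fail to be defined and the cone formula degenerates) and $r > 0$ (where the division by $r^n r^m$ is safe); both are handled cleanly above. No new analytic ingredient is required beyond the cone structure and the previously proved completeness of the spherical Fisher-Rao space.
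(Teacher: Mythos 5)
Your proof is correct, but it takes a genuinely different and more self-contained route than the paper. The paper's proof is a one-liner: it invokes the abstract fact, quoted from \cite[Proposition~5.9]{BH99}, that a metric cone $\mathfrak C(X)$ is complete if and only if its base $X$ is complete, and then simply combines Theorem~\ref{th:cone} with Proposition~\ref{compls}. You instead prove the needed implication (base complete $\Rightarrow$ cone complete) by hand for this specific cone: extract the limit radius from $d_H(G,0)=2r$ via the reverse triangle inequality, dispose of the apex case $r=0$ immediately, and in the main case $r>0$ invert the cone identity \eqref{cone} to show the spherical components $P^n$ are $d_{FR}$-Cauchy. The delicate points are handled correctly --- the denominator $2r^nr^m$ is bounded away from zero once $r>0$, and since $(\Ma,d_{FR})$ has diameter at most $\pi$, the quantity $\tfrac12 d_{FR}(P^n,P^m)$ lives in $[0,\pi/2]$ where $\arccos$ is continuous, so $\cos\to 1$ really does force $d_{FR}\to 0$. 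In effect you have reconstructed the relevant half of the Bridson--Haefliger proposition in this concrete setting. The trade-off is transparency and self-containment at the cost of length; the paper's citation is shorter but asks the reader to trust an external reference.
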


\begin{proof}
By \cite[Proposition 5.9]{BH99} a metric cone $(\mathfrak C(X),d_{\mathfrak C(X)})$ is complete if and only if $(X,d_X)$ is complete, hence the result immediately follows from the cone structure (Theorem~\ref{th:cone}) and Proposition \ref{compls}.
\end{proof}

A last result will turn out to be useful later on for technical purposes:
\begin{lem}[Lower-semicontinuity]
\label{lem:dihs_LSC_weak*}
The distance $d_{FR}$ is sequentially lower semicontinuous with respect to the weak-$*$ topology on $\Ma$.
\end{lem}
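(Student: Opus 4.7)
The plan is to use the classical Benamou--Brenier lower semicontinuity scheme, adapted to our matrix-valued spherical setting. Suppose $\rhp^n\narrowcv \rhp$ and $\tilde\rhp^n\narrowcv\tilde\rhp$ in $\Ma$, and set $L:=\liminf_n d_{FR}(\rhp^n,\tilde\rhp^n)$. We may assume $L<\infty$ and pass to a realizing subsequence. By Corollary~\ref{theo:exist_geodesicsfr}, choose constant-speed Fisher--Rao geodesics $(\rhp^n_t,U^n_t)_{t\in[0,1]}$ from $\rhp^n$ to $\tilde\rhp^n$ with $\int_0^1\!\int_\Omega \rd\rhp^n_t U^n_t:U^n_t\,\rd t = d_{FR}^2(\rhp^n,\tilde\rhp^n)\to L^2$. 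Since these curves lie in $\Ma$, the masses are identically one; the spherical/conic comparison $d_H\le d_{FR}$ from Lemma~\ref{lequiv}, combined with Theorem~\ref{theo:comparison_2sides_dFR_TV}, yields the uniform TV-Lipschitz bound $\|\rhp^n_t-\rhp^n_s\|_{TV}\le\sqrt{2}(L+1)|t-s|$ eventually in $n$.

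Next I lift to space-time: set $\bar\rhp^n:=\rd t\otimes\rhp^n_t$ and $\bar M^n:=\rd t\otimes(\rhp^n_t U^n_t)^{Sym}$, viewed as $\Sm$-valued Radon measures on $[0,1]\times\Omega$. The first has unit total mass, and the inequality $\|(\rhp^n_t U^n_t)^{Sym}\|_{TV}\le\sqrt{m^n_t}\,\|U^n_t\|_{L^2(\rd\rhp^n_t)}$ (Cauchy--Schwarz) gives $\|\bar M^n\|_{TV}\le d_{FR}(\rhp^n,\tilde\rhp^n)$ uniformly. Banach--Alaoglu produces a subsequence with $\bar\rhp^n\narrowcv\bar\rhp$ and $\bar M^n\narrowcv\bar M$; the weak formulation \eqref{eq:weak_formulation_ODE} being linear in $(\bar\rhp^n,\bar M^n)$ passes to the limit, so $\partial_t\bar\rhp=\bar M$. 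The uniform TV-equicontinuity combined with metrizability of the weak-$*$ topology on the unit ball of $\Smv$ (inherited from separability of $C_0(\Omega;\Sm)$) permits an Arzel\`a--Ascoli extraction refining to $\rhp^n_t\narrowcv\rhp_t$ for every $t\in[0,1]$. In particular $\rhp_0=\rhp$, $\rhp_1=\tilde\rhp$, $\bar\rhp=\rd t\otimes\rhp_t$, and disintegration yields $\bar M=\rd t\otimes M_t$.

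The decisive step is the joint weak-$*$ lower semicontinuity of the matrix-valued action
$$
\mathcal A(\bar\rhp,\bar M) := \inf\bigg\{\int_0^1\!\int_\Omega \rd\rhp_t U_t:U_t\,\rd t\ :\ U\in L^2(0,1;L^2(\rd\rhp_t;\Sm)),\ M_t=(\rhp_t U_t)^{Sym}\bigg\},
$$
extended by $+\infty$ otherwise. The underlying integrand $(\rho,M)\mapsto M\rho^{-1}M$ (in the appropriate matrix-perspective sense) is jointly convex and positively 1-homogeneous in $(\rho,M)$, so $\mathcal A$ is weak-$*$ lower semicontinuous by a matrix version of Reshetnyak's theorem. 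Applied to our sequence this yields $U\in L^2(0,1;L^2(\rd\rhp_t;\Sm))$ with $M_t=(\rhp_t U_t)^{Sym}$ and $\mathcal A(\bar\rhp,\bar M)\le L^2$. It remains to verify that $\rhp_t\in\Ma$ for $\rd t$-a.e. $t$: tightness of the endpoint sequences (inherited from $\rhp,\tilde\rhp\in\Ma$) together with the uniform TV-Lipschitz control along the curves prevents mass escape to infinity at intermediate times, so $\tr\rhp_t(\Omega)\equiv 1$. The pair $(\rhp_t,U_t)$ is then admissible in Definition~\ref{d:fr1}, and $d_{FR}^2(\rhp,\tilde\rhp)\le L^2$ follows.

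The main obstacle is the joint weak-$*$ lower semicontinuity of $\mathcal A$: the scalar Benamou--Brenier case is classical, but the Hermitian-matrix perspective function requires correct extension to possibly singular $\rho$ (the $M\rho^{-1}M$ formalism only makes sense when $M$ is suitably dominated by $\rho$), and the Reshetnyak-type argument must be adapted accordingly. A secondary technical point is the tightness argument ruling out mass loss at intermediate times, handled via the TV-equicontinuity and tightness of the endpoints.
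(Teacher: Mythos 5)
Your proposal takes a genuinely different route from the paper's. You lift to the momentum variables $\bar M^n=\rd t\otimes(\rhp^n_t U^n_t)^{Sym}$, extract a weak-$*$ limit pair $(\bar\rhp,\bar M)$, and close with a joint weak-$*$ lower semicontinuity (Reshetnyak-type) argument for the matrix Benamou--Brenier action $\mathcal A$. The paper instead never introduces the momentum measure: it keeps the potentials $U^n\in L^2(\rd\rhp^n_t)$ and applies a refined Banach--Alaoglu theorem (Lemma~\ref{Ban}) with $n$-dependent seminorms $\|\Phi\|_n=\|\Phi\|_{L^2(\rd\mu^n)}$ to produce a limit potential $U\in L^2(\rd\rhp_t)$ directly, together with the bound $\|U\|_{L^2}\le\liminf d_{FR}(\rhp^n_0,\rhp^n_1)$. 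The two are morally dual: the key pointwise identity \eqref{eq:dual_AC2_pointwise}, which the paper later uses in Proposition~\ref{prop:AC2_energy_convex_LSC}, is exactly the Fenchel representation underpinning your Reshetnyak appeal, while the paper's refined Banach--Alaoglu is the Hilbert-space (Riesz representation) version of the same compactness. Your route is the more classical Benamou--Brenier blueprint, at the price of having to establish the matrix perspective version of Reshetnyak; the paper's route is self-contained at this stage since Proposition~\ref{prop:AC2_energy_convex_LSC} only appears afterwards.

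Two points should be tightened. First, as you flag, your Reshetnyak appeal needs the perspective function written in its dual form $\alpha(\rho,M):=\sup_{\phi\in\Sm}\{\,2M:\phi-\rho\,\phi:\phi\,\}$, which is $1$-homogeneous, jointly convex, and lower semicontinuous as a supremum of linear functions; the $M\rho^{-1}M$ formalism is only heuristic when $\rho$ is singular. This is precisely the representation the paper proves in \eqref{eq:dual_AC2_pointwise}. Second, and more substantively, the claim that ``the uniform TV-Lipschitz control along the curves prevents mass escape to infinity at intermediate times'' does not hold as stated: the TV distance between $\rhp^n_t$ and $\rhp^n_0$ can be of order one across the unit interval, so tightness of the endpoint sequences alone does not propagate through the TV bound. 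What actually saves the mass $\tr\rhp_t(\Omega)\equiv 1$ is the fiberwise structure of the (Hellinger, hence via Theorem~\ref{th:cone} and Lemma~\ref{charsp} also Fisher--Rao) geodesics selected in Theorem~\ref{theo:exist_geodesicsh}(i): one has $\rhp^n_t\ll\rhp^n_0+\rhp^n_1$, so tightness of the endpoint sequences (forced by their weak-$*$ convergence to elements of $\Ma$) transfers uniformly to all $t\in[0,1]$. Without this, the limit curve could a priori land in $\Mm\setminus\Ma$ and one would only recover a bound on $d_{H}$, not on $d_{FR}$, since the two differ by the strictly convex $\arccos$ rescaling of the cone construction. (The paper's own assertion $\mu^0\in\Ma(Q)$ is similarly terse and rests on the same tightness observation.)
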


\begin{proof}
Consider any two converging sequences of measures from $\Ma$,
$$
\rhp_0^k\xrightarrow[k\to\infty]{}\rhp_0,
\qquad
\rhp_1^k \xrightarrow[k\to\infty]{} \rhp_1\qquad \mbox{weakly-}*
$$
and assume that $\liminf \dFR(\rhp^k_0,\rhp^k_1)<+\infty$ (otherwise there is nothing to prove).
Up to extraction of a subsequence if needed we can moreover take $\lim \dFR(\rhp^k_0,\rhp^k_1)=\liminf \dFR(\rhp^k_0,\rhp^k_1)<+\infty$.
For each $k$, the endpoints $\rhp_0^k$ and $ \rhp_1^k$ can be joined by a geodesic $(\rhp^k_t,\u^k_t)_{t\in [0,1]}$, whose energies are therefore bounded as
$$
E[\rhp^k;\u^k]= d_{FR}^2(\rhp_0^k,\rhp_1^k)\leq E
$$
uniformly in $k\in \N$.

By the fundamental estimate \eqref{eq:control_TV_leq_sqrt_FR} with $m^k_t=M=1$ on $\Ma$ we get
$$
\forall\,t,s\in [0,1],\,\forall k\in \N:\qquad \|\rhp^k_s-\rhp^k_t\|_{TV}\leq C|t-s|^{1/2}.
$$
By the (classical) Banach-Alaoglu theorem, $\Ma\subset(C_0(\Omega;\Sm) )^*$ is moreover weakly-$*$ sequentially relatively compact.
The previous $\frac 12$-H\"older bound and the sequential lower semicontinuity of $d_{TV}$ with respect to the weak-$*$ convergence allow us to apply a refined version of the Arzel\`a-Ascoli theorem (Lemma \ref{L:aa} in the Appendix) to conclude that there exists a $TV$-continuous curve $(\rhp_t)_{t\in [0,1]}$ connecting $\rhp_0$ and $\rhp_1$ such that
\begin{equation}
\label{eq:pointwise_CV_w*}
\forall t\in [0,1]:\qquad \rhp^k_t\to \rhp_t\quad\mbox{ weakly-}*
\end{equation}
along some subsequence $k\to\infty$ (not relabeled here) and with $\|\rhp_s-\rhp_t\|\leq C|t-s|^\frac 12$.
Let $\mu^k=\rd t\otimes \rd\rhp^k_t$ be the matricial measure on $Q:=[0,1]\times\Omega$ defined by disintegration as
$$
\forall \,\phi\in C_c(Q;\Sm):\qquad \int_Q\phi(t,x):\rd\mu^k(t,x)
:=
\int_0^1\left(\int_{\Omega}\phi(t,.):\rd\rhp^k_t\right)\rd t.
$$
Leveraging the pointwise convergence \eqref{eq:pointwise_CV_w*} and the uniform bounds on the mass $m^k_t=\tr \rhp^k_t(\Omega)=1$, a simple application of Lebesgue's dominated convergence guarantees that
$$
\mu^k\to \mu^0\qquad \mbox{ weakly-}*\mbox{ in }{\Ma}(Q),
$$
where the finite measure $\mu^0=\rd t\otimes\rd\rhp_t\in \Ma(Q)$ is defined by duality in terms of the weak-$*$ limit $\rhp_t=\lim \rhp^k_t$ (as was $\mu^k$ in terms of $\rhp^k_t$), and, moreover,
$$
\mu^k\mres [t_0,t_1]\times \Omega \to \mu^0\mres [t_0,t_1]\times \Omega\quad \mbox{ weakly-}*
\qquad \forall\,t_0,t_1 \in[0,1].
$$
Let
$$
X\subset L^\infty(Q;\Sm)
$$
be the linear span of the functions of the form
$$
\Psi(t,x)=\Phi(t,x){\mathbf 1}_{[t_0,t_1]\times \Omega}(t,x), \quad \Phi\in  C^{1,0}_c(Q;\Sm),\quad t_0,t_1\in \mathbb Q\cap [0,1].
$$
We are going to apply a refined Banach-Alaoglu theorem (Lemma \ref{Ban} in the Appendix) on the space $X$ equipped with the norm
$\|\cdot\|:=
\|\cdot\|_{L^\infty(Q)}
$.
To this end, it is easy to see that $(X,\|\cdot\|)$ is separable.
Consider the following norms on $X$
$$\|\Phi\|_k=\left(\int_{Q} \rd\mu^k \Phi:\Phi\right)^{1/2},\qquad k=0,1,\dots,
$$
and the linear forms
$$
\varphi_k(\Phi)=\int_{Q}\rd\mu^k U^k :\Phi \,,\qquad k=1,2,\dots.
 $$ 
The weak-$*$ convergence of $\mu^k$ and the Cauchy-Schwarz inequality imply that the hypotheses of Lemma~\ref{Ban} are met with
$$
c_k:=\|\varphi_k\|_{(X,\|.\|_k)^*}\leq  \sqrt{E[\rhp^k;\u^k]}= d_{FR}(\rhp_0^k,\rhp_1^k).
$$ 
Hence, there exists a continuous functional $\varphi_0$ on the space $(X,\|\cdot\|_0)$
 such that up to a subsequence 
$$
\forall \Phi\in C^1_c(Q;\Sm):\qquad
\int_0^1\left(\int_{\Omega}\rd\rhp^k_t U^k_t :\Phi_t\right) \rd t 
\xrightarrow[k\to\infty]{} \varphi_0(\Phi)
$$
with moreover
\begin{equation}\label{e:phin}
\|\varphi_0\|_{(X,\|\cdot\|_0)^*}\leq \liminf_{k \to \infty} d_{FR}(\rhp_0^k,\rhp_1^k).
\end{equation}
Let $N_0\subset X$ be the kernel of the seminorm $\|\cdot\|_0$.
By the Riesz representation theorem, the dual $(X,\|\cdot\|_0)^*=(X/N_0,\|\cdot\|_0)^*$ can be isometrically identified with the completion $\overline {X/N_0}$ of ${X/N_0}$ with respect to  $\|\cdot\|_0$.
One can check that this completion is exactly $L^2(0,1;L^2(\rd\rhp_t;\Sm))$.

Consequently, there exists $\u\in L^2(0,1;L^2(\rd\rhp_t;\Sm))$ such that
$$
\varphi_0(\Phi)=\int_{Q}\rd\mu^0 U :\Phi =
\int_0^1\left(\int_{\Omega}\rd 
\rhp_t U_t :\Phi_t\right)\rd t
 $$
 and
 $$
 \|\u\|_{L^2(0,1;L^2(\rd\rhp_t))}=\|\varphi_0\|_{(X,\|\cdot\|_0)^*}.
 $$
Moreover, $(\rhp,\u)$ is an admissible curve joining $\rhp_0,\rhp_1$.
Indeed, the established convergences are enough to pass to the limit in the constraint \eqref{eq:weak_formulation_ODE} inside time intervals $[s,t]$ with $ s,t\in  \mathbb Q\cap[0,1]$ and $\Psi \in  C^1_c(Q; \Sm)$.
Since $G_t$ is a $TV$-continuous matrix function, an easy approximation argument shows that \eqref{eq:weak_formulation_ODE} actually holds for any $ s,t\in [0,1]$ and $\Psi \in  C^1_b(Q; \Sm)$.

Recalling \eqref{e:phin}, it remains to take into account that 
\begin{align*}
d_{FR}^2(\rhp_0,\rhp_1)\leq E[\rhp;\u]=\|\u\|^2_{L^2(0,1;L^2(\rd\rhp_t))}=\|\varphi_0\|_{(X,\|\cdot\|_0)^*}^2\leq \liminf\limits_{k\to\infty}d_{FR}^2(\rhp^k_0,\rhp^k_1).
\end{align*}
\end{proof}

\section{The spherical heat flow and Schr\"odinger problem}
\label{sec:spherical_heat_Schro}

\subsection{Otto calculus}
At least formally, it is clear from the above construction that one can view $\Ma$ as a real Riemannian manifold such that $d_{FR}$ becomes the Riemannian distance.
This is very similar to the celebrated Otto calculus  \cite{otto01,villani03topics,villani08oldnew},
in particular the tangent space at a point $\rhp\in\Ma$ is
$$
T_\rhp\Ma:=\left\{\Xi=\left(\rhp U\right)^{Sym}-G\int_{\Omega} \rd G:U\Big|
\qquad U\in L^2(\rd\rhp,\Sm)\right\}
$$
and the (squared) tangent norm reads
\begin{equation}
\label{eq:tangent_norm}
\|\Xi\|^2_{T_\rhp\Ma}:=\int_{\Omega} \rd \rhp U:U-\left(\int_{\Omega} \rd \rhp:U\right)^2
.
\end{equation}
The gradients of functionals $f:\Ma\to\R$ are 
\begin{equation}
 \nabla_{FR} f(\rhp)=\left[\rhp\frac{\delta f}{\delta\rhp}\right]^{Sym}-G\int_{\Omega} \rd G:\frac{\delta f}{\delta\rhp},
 \end{equation}
 where $\frac{\delta f}{\delta\rhp}$ stands for the usual first variation.
 We omit the details and refer to \cite[Appendix C]{BV18} for similar considerations.
 %
\subsection{The heat flow}
As motivated in Section \ref{s:prem}, cf. \eqref{eq:def_Entropy_modulated}, the relevant entropy here is not the classical von Neumann entropy
$
S(G)=\tr\int_\Omega \der{\rhp} {\la} \log \der{\rhp} {\la}\rd\la
$,
 but rather
\begin{multline} 
E(G)=\int_\Omega  \left(\tr \der{\rhp} {\la}- \tr I-\log \det \der{\rhp} {\la}\right)\rd\la
\\
=\int_\Omega -  \log \det \left(\der{\rhp} {\la}\right)\rd\la
=-\tr \int_\Omega  \log \left(\der{\rhp} {\la}\right)\rd \la.
\end{multline} 
By construction, $E$ is a non-negative functional that achieves its minimum and vanishes at $G=\Lambda I$. It is very similar  to the classical Itakura-Saito divergence \cite{I68,M85,JNG12} from signal processing. 

Here we slightly abused the notations and implicitly assumed that $G\in \Mml$ was absolutely continuous w.r.t $\Lambda$ for these formulas to make sense. 
However, $E$ can be extended from $\Ma_\la$ to the whole space $\Ma$ in the following natural way.
Indeed, let \begin{equation} \rd G=\rd G^\Lambda+\rd G^\bot=\der{\rhp^\Lambda} {\la}\rd \Lambda+\rd G^\bot \end{equation} be the (Radon-Nikodym)-Lebesgue decomposition of $G$ w.r.t. $\Lambda$.
Since the function $f(A)=-\log \det(A)$ is sublinear at infinity, its recession  function
$
f^\infty(A)
:=\lim\limits_{t\to+\infty}
\frac{f(tA)}{t}
$
is identically zero.
Employing the standard definition for a convex functional of measures, cf. \cite{Se64, DT84}, we can legitimately set 
\begin{equation} \label{e:mainentropy}
E(G):=\int_\Omega -  \log \det\left( \der{\rhp^\Lambda} {\la}\right)\rd\la
\hspace{1cm} \mbox{for all }\rhp\in \Ma,
\end{equation} 
 where the integral may be infinite.
 Note that this definition automatically makes $E$ lower semicontinuous w.r.t. the weak-$*$ convergence of measures, \cite[thm. 2.34]{ambrosio2000functions}.

As motivated in Section \ref{s:prem}, the analogue of the heat flow in our Fisher-Rao space can be defined as the gradient flow $\p_t G=-\nabla_{FR} E(G)$. 
In order to compute explicitly the latter FR gradient, note first that the Lebesgue decomposition $\rhp\mapsto \rhp^\la$ is linear.
 Accordingly, and since the first variation of $A\in\Po\mapsto \log\det A$ is $A^{-1}$, it is not difficult to check that the $L^2$-variation of $\rhp\mapsto \int_\Omega -\log \det \left(\der{G^\la}{\la}\right) \rd\la $ is the $\Sm$-valued function $x\mapsto -\left(\der{G^\la}{\la}\right)^{-1}(x)$.
 Since $\int_\Omega \rd \Lambda I: I=1$ and $\der{\rhp^\la}{\la}(x)=0$ for $\rd\rhp^\perp$-a.e. $x$, we compute explicitly
 \begin{multline*}
 \nabla_{FR} E(\rhp)
 =
 -\left[\rhp\left(\der{G^\la}{\la}\right)^{-1}\right]^{Sym}+G\int_{\Omega} \rd G:\left(\der{G^\la}{\la}\right)^{-1}
 \\
 =
- \left[\left(\la\der{\rhp^\la}{\la}+\rhp^\perp\right)\times\left(\der{G^\la}{\la}\right)^{-1}\right]^{Sym}
+ G\int_{\Omega} \left(\rd\la\der{\rhp^\la}{\la}+\rd\rhp^\perp\right):\left(\der{G^\la}{\la}\right)^{-1}
 \\
 =
 - \left[\la I +0\right]^{Sym}
 + G\left[\int_{\Omega} \rd\la \der{\rhp^\la}{\la} :\left(\der{\rhp^\la}{\la}\right)^{-1}
 +0\right]
 =
 -\Lambda I+\rhp.
 \end{multline*} 
 Thus the heat flow is
 \begin{equation}
\label{e:gf}
\p_t G= \Lambda I -G
\end{equation}
\begin{remark}
We point out that, although the entropy $E$ itself only sees the absolute continuous part $\der{\rhp^\la}{\la}$ of $G$, its gradient really does depend on the singular part as well through the full $\rhp$ term in \eqref{e:gf}.
 This might sound surprising at first, but can be explained recalling that $\rhp\in\Ma$ must satisfy the mass constraint $\tr\rhp(\Omega)=1$.
 One therefore cannot perturb the singular part without (in general) perturbing the absolutely continuous part as well in order to comply with the mass constraint, and it becomes clear that the gradient should depend on both the absolutely continuous and singular parts of $\rhp$.
 (This would be different if we worked in the conic Hellinger space, where no correction term is needed to enforce mass conservation.
 In that case the Hellinger gradient would only depend on $\rhp^\la$, not on $\rhp^\perp$.)
 \end{remark}
 
 As usual, the corresponding Fisher information is then defined as the production of the entropy $E$ along its own (negative) gradient flow \eqref{e:gf}.
 To make this more explicit, assume for simplicity that $\rhp=\rhp^\la$ is absolutely continuous.
 Then by definition we set
\begin{multline}
\label{e:fism}
F(G):=-\frac {d} {dt}E(G)=\frac d {dt}\int_\Omega  \log \det \left(\der{G}{\la}\right)\rd\la 
\\
= \int_\Omega \rd \la  \left(I-\der{G}{\la}\right):\left(\der{G}{\la}\right)^{-1}=\tr \int_\Omega \rd\la \left [\left(\der{G}{\la}\right)^{-1}-I \right].
\end{multline}
With \eqref{e:fism} at hand, observe that the function $A\mapsto A^{-1}-I$ is sublinear and therefore its recession function vanishes.
Just like we did for the entropy in \eqref{e:mainentropy}, we can lawfully extend the definition of the Fisher information to all $G\in \Ma$ by setting 
\begin{equation}
\label{eq:def_Fisher}
F(G):=\tr \int_\Omega \rd\la \left [\left(\der{G^\Lambda}{\la}\right)^{-1}-I \right],
\end{equation}
where the integral may be infinite.
This is of course consistent with what one would obtain by differentiating in time $E(\rhp)=E(\rhp^\la)$ along solutions of \eqref{e:gf}, even if $\rhp$ was not absolutely continuous.
This definition is also consistent with the usual representation
$$
F(\rhp)=\|\nabla_{FR} E(\rhp)\|^2_{T_\rhp\Ma}
$$
in terms of squared Riemannian norms.
Indeed, the above computation shows that the tangent vector $\nabla_{FR}E(\rhp)$ is represented in $T_\rhp\Ma$ by the matrix function $\left(\der{\rhp^\la}{\la}\right)^{-1}(x)$, and plugging $U=\left(\der{\rhp^\la}{\la}\right)^{-1}$ into \eqref{eq:tangent_norm} gives exactly \eqref{eq:def_Fisher}.
\\

Denote the semigroup generated by the gradient flow
\eqref{e:gf}
by
\begin{equation}
\label{eq:explicit_heat_flow}
\S_t(G):=\Lambda I +e^{-t}(G-\Lambda I).
\end{equation}
Its absolutely-continuous part is 
\begin{equation}
\label{eq:explicit_heat_flow_ac}
[\S_t(G)]^\Lambda=\Lambda I + e^{-t}(\rhp^\la-\la I)=\S_t(G^\Lambda),
\end{equation}
and the singular part is 
\begin{equation}
\label{eq:explicit_heat_flow_s}
[\S_t(G)]^\bot=e^{-t}G^\bot.
\end{equation}
The main regularization property of the heat flow that will serve our purpose is the time decay of the driving entropy, expressed here in the slightly stronger sense (in every fiber):
\begin{lem}[Exponential decay]
\label{lem:entdecay}
For any $G$ in $\Ma$ and $t\geq s\geq 0$, we have 
$$
-\log \det \left(\der{\S_t(G^\Lambda)} {\la}(x)\right)
\leq e^{-(t-s)}\left\{- \log \det \left(\der{\S_s(G^\Lambda)} {\la}(x)\right)\right\}
$$
for $\Lambda$-a.e. $x\in\Omega$.
Consequently,
$$
E(\S_t(G))\leq e^{-(t-s)} E(\S_s(G)).
$$
\end{lem}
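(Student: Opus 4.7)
The plan is to reduce the fiberwise inequality to a one-line application of concavity of $\log\det$, using the very explicit form of the semigroup.

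First I would exploit \eqref{eq:explicit_heat_flow_ac}: for $\lambda$-a.e.\ $x\in\Omega$, setting
$$
h_t(x):=\der{\S_t(G^\Lambda)}{\la}(x)=I+e^{-t}\bigl(g(x)-I\bigr),\qquad g(x):=\der{G^\Lambda}{\la}(x),
$$
we have $h_t-I=e^{-t}(g-I)=e^{-(t-s)}(h_s-I)$, hence for all $t\ge s\ge 0$
$$
h_t(x)=\alpha\,h_s(x)+(1-\alpha)\,I,\qquad \alpha:=e^{-(t-s)}\in(0,1].
$$
In other words, along the heat flow the density is pointwise a convex combination of its earlier value and the reference identity $I$.

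Next I would invoke the classical concavity of $A\mapsto\log\det A$ on $\Pm$ together with $\log\det I=0$ to obtain, at each $x$ where $h_s(x)\in\Pm$,
$$
\log\det h_t(x)\ \ge\ \alpha\log\det h_s(x)+(1-\alpha)\log\det I\ =\ \alpha\log\det h_s(x),
$$
which after multiplying by $-1$ is exactly the desired pointwise bound. The degenerate case requires only a quick remark: for $s>0$ one has $h_s(x)\succeq (1-e^{-s})I\in\Pm$ so the inequality above is well-posed; for $s=0$, either $g(x)\in\Pm$ and concavity applies directly, or $\det g(x)=0$ so the right-hand side is $+\infty$ and the inequality is trivial.

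For the global statement it suffices to integrate the pointwise bound against $\la$. By construction of $E$ via \eqref{e:mainentropy} the singular part $\S_t(G)^\bot=e^{-t}G^\bot$ from \eqref{eq:explicit_heat_flow_s} is invisible to the entropy, so $E(\S_t(G))=\int_\Omega (-\log\det h_t)\,\rd\la$ and similarly for $s$; the fiberwise inequality is preserved by integration since both sides are nonnegative. I do not anticipate any serious obstacle here: the only care needed is the handling of the possibly merely semi-definite density $g(x)$, which is dispatched by the degeneracy remark above.
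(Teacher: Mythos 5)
Your argument is correct and is essentially identical to the paper's: both rewrite $\der{\S_t(G^\Lambda)}{\la}=(1-e^{s-t})I+e^{s-t}\der{\S_s(G^\Lambda)}{\la}$ as a convex combination and then apply the concavity of $\log\det$ together with $\log\det I=0$, before integrating against $\la$. Your explicit handling of the degenerate case (using $h_s\succeq(1-e^{-s})I$ for $s>0$ and treating $\det g=0$ at $s=0$ separately) is a small but welcome refinement that the paper leaves implicit; the only loose phrase is that the fiberwise integrands are not literally nonnegative, though the integrated entropies are and the inequality passes to the integral without difficulty.
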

\begin{remark}
 For abstract gradient flows in Riemannian manifolds $\dot x=-\nabla \Phi(x)$ the exponential decay $\Phi(x_t)\leq e^{-2\lambda (t-s)}\Phi(x_s)$ is often related to the $\lambda$-convexity of $\Phi$ along Riemannian geodesics.
Lemma~\ref{lem:entdecay} thus suggests that our entropy functional might be $\frac 12$-geodesically convex w.r.t. to our Fisher-Rao distance $\dFR$ on $\Ma$.
This will be proved completely rigorously later on, see Theorem~\ref{theo:1/2_convex}.
\end{remark}

\begin{proof}
Writing for simplicity $g=\der{\rhp^\la}{\la}$ and employing the concavity of the function $\log \det$, we deduce
\begin{multline*}
\log \det\left( \der{\S_t(G^\Lambda)} {\la}\right)=\log \det(I+e^{-t}(g-I))\\=\log \det((1-e^{s-t})I+e^{s-t}(I+e^{-s}(g-I)))\\ \geq e^{s-t}\log \det(I+e^{-s}(g-I))=e^{s-t} \log \det \left(\der{\S_s(G^\Lambda)} {\la}\right).
\end{multline*}
The second part of the statement immediately follows by integrating w.r.t $\la$ over $\Omega$.
\end{proof}

%
\subsection{The Schr\"odinger problem}
With the Fisher information now properly defined, the Schr\"odinger problem on $(\Ma,\dFR)$ rather classically reads, cf. \cite{L19,  CGP20, C14, CGP16, Y81, Z86}:
\begin{defi}[Schr\"odinger problem]
\label{d:yas}
Given a fixed $\epsilon>0$ and $\rhp_0,\rhp_1\in \Ma$ we define
\begin{equation}
\label{e:miniy_metric}
\mathfrak S_\epsilon(\rhp_0,\rhp_1):=
\inf\limits_{\rhp}
\left\{
\frac 1 2 \int_0^1 |\dot\rhp_t|^2_{FR}\rd t 
+ \frac {\eps^2} 2 \int_0^1 F(\rhp_t)\,\rd t,
\quad\mbox{s.t. }t\mapsto \rhp_t\in \Ma\mbox{ has endpoints }\rhp_0,\rhp_1
\right\}.
\end{equation}
\end{defi}
The metric speed in the first term is computed relatively to the distance $d_{FR}$ on $\Ma$, and $F(G_t)$ is the Fisher information just defined in \eqref{eq:def_Fisher}.
Owing to our characterization of $AC^2$ curves (Lemma~\ref{l:ac2curves}) and our explicit formula for the Fisher information we see that \eqref{e:miniy} also reads
\begin{equation}
\label{e:miniy}
\mathfrak S_\epsilon(\rhp_0,\rhp_1)
=\inf_{\mathcal{A}_1(\rhp_0,\rhp_1)}
\left\{
\frac 1 2  \int_0^1\left(\int_{\Omega} \rd \rhp_t U_t :U_t \right) \rd t
+\frac{\epsilon^2}{2} \tr\int_0^1\left(\int_{\Omega} \rd\la \left [\left(\der{G^\la_t}{\la}\right)^{-1}-I \right] \right) \rd t
\right\},
\end{equation}
where the admissible pairs $(\rhp_t,U_t)_{t\in[0,1]}\in\mathcal{A}_1(\rhp_0,\rhp_1)$ are as in Definition~\ref{d:fr1}.
 
We will prove shortly that this problem has a unique minimizer, which we call the {\it$\eps$-geodesic} or {\it Schr\"odinger bridge} between $\rhp_0,\rhp_1$.
In order to study this dynamical Schr\"odinger problem we shall need the following technical result:
\begin{prop}
\label{prop:AC2_energy_convex_LSC}
 The kinetic action functional
 $$
 \rhp\mapsto \mathcal K(G):=\frac 12\int_0^1 |\dot \rhp_t|_{FR}^2\rd t
 $$
 is convex for the linear interpolation on $C([0,1];\Ma_{FR})$, and l.s.c. for the pointwise weak-$*$ convergence (i.e. $\rhp_t^n\narrowcv \rhp_t$ for all $t\in[0,1]$).
\end{prop}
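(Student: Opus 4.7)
My plan is to realize $\mathcal K$ as a supremum of continuous--affine functionals of $G$, from which both convexity and LSC follow immediately.

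First, for any $AC^2$ curve $G\in C([0,1];\Ma_{FR})$, Lemma~\ref{l:ac2curves} (applicable on $\Ma$ by Remark~\ref{rmk:characterization_AC2_H_FR}) produces a unique velocity $U\in L^2(0,1;L^2(\rd G_t;\Sm))$ with $\partial_tG_t=(G_tU_t)^{Sym}$ weakly and $|\dot G_t|_{FR}^2=\int_\Omega\rd G_tU_t:U_t$ for a.a. $t$. Cauchy--Schwarz duality in the Hilbert space $L^2(\rd t\otimes\rd G_t;\Sm)$, combined with the cyclicity identity $\rd G_tU_t:\Psi=(G_tU_t)^{Sym}:\Psi=\rd(\partial_tG_t):\Psi$ valid for Hermitian $\Psi$, together with integration by parts in the weak formulation \eqref{eq:weak_formulation_ODE}, should yield the dual representation
\begin{equation}
\label{eq:KeyDualRep}
\mathcal K(G)=\sup_{\Psi\in C^1([0,1];C_b(\Omega;\Sm))}J_\Psi(G),
\end{equation}
where
$$
J_\Psi(G):=\int_\Omega\Psi_1:\rd G_1-\int_\Omega\Psi_0:\rd G_0-\int_0^1\int_\Omega\partial_t\Psi_t:\rd G_t\,\rd t-\frac 1 2\int_0^1\int_\Omega\rd G_t\Psi_t:\Psi_t\,\rd t.
$$
For $G\in C([0,1];\Ma_{FR})$ that fails to be $AC^2$ one has $\mathcal K(G)=+\infty$, consistently with the right-hand side of \eqref{eq:KeyDualRep} being infinite: a finite sup would, by Riesz representation, yield a velocity $U$ contradicting Lemma~\ref{l:ac2curves}.

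Convexity is then immediate. Each $J_\Psi$ is affine in $G$ viewed as a matrix-valued measure, so for $G^\theta:=(1-\theta)G^0+\theta G^1$ and any fixed $\Psi$,
$$
J_\Psi(G^\theta)=(1-\theta)J_\Psi(G^0)+\theta J_\Psi(G^1)\leq (1-\theta)\mathcal K(G^0)+\theta\mathcal K(G^1),
$$
and taking the supremum in $\Psi$ yields $\mathcal K(G^\theta)\leq(1-\theta)\mathcal K(G^0)+\theta\mathcal K(G^1)$.

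For lower semicontinuity, assume $G^n_t\narrowcv G_t$ for every $t\in[0,1]$. I would show that $J_\Psi(G^n)\to J_\Psi(G)$ for each fixed smooth $\Psi$. The two boundary terms converge by assumption at $t=0,1$. For the two time-integrals, the uniform mass identity $\tr G^n_t(\Omega)=1$ intrinsic to $\Ma$ together with \eqref{eq:gUU_leq_trgU2} furnishes the pointwise-in-$t$ bounds $|\int_\Omega\partial_t\Psi_t:\rd G^n_t|\leq\|\partial_t\Psi\|_\infty$ and $|\int_\Omega\rd G^n_t\Psi_t:\Psi_t|\leq\|\Psi\|_\infty^2$; the pointwise weak-$*$ convergence at each $t$ yields convergence of the corresponding inner integrals, and dominated convergence then passes the outer integral to the limit. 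Hence $J_\Psi(G)\leq\liminf_n\mathcal K(G^n)$ for every fixed $\Psi$, and taking the sup in $\Psi$ yields $\mathcal K(G)\leq\liminf_n\mathcal K(G^n)$.

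The main technical obstacle lies in rigorously justifying the sup--integral exchange needed to derive \eqref{eq:KeyDualRep}: a priori a single smooth $\Psi$ need not simultaneously realize the pointwise-in-$t$ supremum. This can be handled by a measurable selection argument, or more cleanly by splitting \eqref{eq:KeyDualRep} into ``$\geq$'' (trivial from the pointwise Cauchy--Schwarz and the weak formulation) and ``$\leq$'', the latter obtained by constructing an explicit recovery sequence via time mollification of the canonical velocity $U$ from Lemma~\ref{l:ac2curves} and verifying that $J_\Psi$ evaluated on these smooth approximants approaches $\mathcal K(G)$.
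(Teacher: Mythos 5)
Your proof is correct and follows essentially the same route as the paper: the paper also establishes the dual representation $\mathcal K(G)=\sup_{\varphi\in C^{1,0}_b}J_\varphi(G)$ (its equation \eqref{eq:dual_AC2}), proves the easy inequality by pointwise Cauchy--Schwarz (its pointwise identity \eqref{eq:dual_AC2_pointwise}), closes the duality gap by density of smooth test functions approximating the canonical velocity $U$ in $L^2(\rd t\otimes\rd G_t)$ and in $L^1(\rd t\otimes\rd(\partial_tG_t))$, and then reads off convexity and pointwise-weak-$*$ LSC exactly as you do (affine dependence on $G$ for each $\Psi$, dominated convergence via the uniform unit-mass bound). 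The one point you flag as a ``main technical obstacle'' --- justifying \eqref{eq:KeyDualRep} rigorously --- is handled in the paper exactly as you propose, by an approximation/density argument rather than a measurable-selection argument; and your observation that a finite supremum forces $G\in AC^2$ via Riesz representation and Lemma~\ref{l:ac2curves} is a slightly more explicit treatment of the non-$AC^2$ case than the paper bothers to spell out.
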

\begin{proof}
We claim that we have the dual formulation
\begin{multline}
\label{eq:dual_AC2}
\mathcal K(G) = \sup\limits_{\varphi\in C^{1,0}_b([0,1]\times\Omega;\Sm)}
\Bigg\{\int_\Omega\rd\rhp_1:\varphi_1 - \int_\Omega\rd\rhp_0:\varphi_0
\\
-\frac 12\int_0^1\int_\Omega\rd \rhp_t \varphi_t:\varphi_t\, \rd t -\int_0^1\int_\Omega \rd\rhp_t:\partial_t\varphi_t \,\rd t
\Bigg\}
\end{multline}
where we denoted $\varphi_t(x)=\varphi(t,x)$ with an obvious abuse of notations.
To see this, observe from our characterization of $AC^2$ curves (Lemma~\ref{l:ac2curves} in $\Ma$ rather than in $\Mm$, see Remark~\ref{rmk:characterization_AC2_H_FR}) that
$$
\frac 12\int_0^1|\dot \rhp_t|_{FR}^2\rd t
= \int_0^1 \int_\Omega \rd \rhp_t U_t:U_t \,\rd t
= \int_0^1 \int_\Omega (g_t U_t:U_t )\,\rd\gamma_t\rd t,
$$
where we denoted as before $g_t(x)=\frac{\rd \rhp_t}{\rd \gamma_t}(x)$ with $\gamma_t=\tr\rhp_t$.
The potential $U_t(x)$ is such that
$$
\partial_t\rhp_t=(\rhp_t U_t)^{Sym},
$$
and we recall from Lemma~\ref{lem:mass_estimate_TV} that $t\mapsto \rhp_t$ is $AC^2$ in time with values in $TV$.
In particular the ODE holds for a.e. $t$ in the Banach space $\Smv_{TV}$, and since $(\rhp_t U_t)^{Sym}\ll \gamma _t$ we see that $\partial_t \rhp_t\ll \gamma_t$ as well.
We denote
$$
\xi_t(x):=\frac{\rd (\partial_t\rhp_t)}{\rd\gamma_t}
\qqtext{and}
(g_t(x)U_t(x))^{Sym}=\frac{\rd\left((\rhp_t U_t)^{Sym}\right)}{\rd\gamma_t}
$$
the corresponding Radon-Nikodym densities, and we have thus the constraint
$$
\xi_t(x)=(g_t(x)U_t(x))^{Sym}
\qquad \mbox{for }\rd t\otimes\rd \gamma_t \mbox{-a.e. }(t,x).
$$
Now it is a simple (pointwise and finite-dimensional) exercise to check that, given $g\in \Pon$ (with unit trace) and $\xi,U\in \Sm$ with $\xi=(gU)^{Sym}=\frac 12(gU+Ug)$, we have the characterization
\begin{equation}
\label{eq:dual_AC2_pointwise}
\frac 12 gU:U=\max\limits_{\phi\in \Sm}\left\{-\frac 12 g\phi:\phi+\xi:\phi\right\}.
\end{equation}
The maximizer is of course $\phi=U$.
For any fixed test-function $\varphi=\varphi_t(x)\in C^{1,0}_b$ this gives
\begin{multline*}
 \frac 12\int_0^1|\dot \rhp_t|^2\rd t
= \int_0^1 \int_\Omega (g_t U_t:U_t \rd t)\rd\lambda_t
= \int_0^1 \int_\Omega \left(
\max\limits_{\phi\in \Sm}\left\{-\frac 12 g_t(x)\phi:\phi+\xi_t(x):\phi\right\}
\right)\rd\lambda_t(x)\rd t
\\
\geq
\int_0^1 \int_\Omega \left\{-\frac 12 g_t(x)\varphi_t(x):\varphi_t(x)+\xi_t(x):\varphi_t(x)\right\} \rd\lambda_t(x)\rd t
\\
= 
-\frac 12\int_0^1 \int_\Omega \rd\rhp_t \varphi_t:\varphi_t \,\rd t +\int_0^1\int_\Omega \rd(\partial_t\rhp_t):\varphi_t\,\rd t 
\\
=\int_\Omega\rd\rhp_1:\varphi_1 - \int_\Omega\rd\rhp_0:\varphi_0
\\
-\frac 12\int_0^1\int_\Omega\rd \rhp_t \varphi_t:\varphi_t \rd t -\int_0^1\int_\Omega \rd\rhp_t:\partial_t\varphi_t \rd t
\end{multline*}
In order to check that there is no duality gap in the above inequality, we simply approximate the optimizer in \eqref{eq:dual_AC2_pointwise}: More precisely, a density argument \cite[Thm. 2.11]{fonseca2007modern} allows to pick a sequence $\varphi^n\in C^{1,0}_b$ such that $\varphi^n\to U$ in $L^2(0,1;L^2(\rd \rhp_t;\Sm))$ and $\varphi^n\to U$ in $L^1(0,1;L^1(\rd(\partial_t\rhp_t);\Sm))$, hence
\begin{multline*}
-\frac 12\int_0^1 \int_\Omega \rd\rhp_t \varphi^n_t:\varphi^n_t \,\rd t +\int_0^1\int_\Omega \rd(\partial_t\rhp_t):\varphi^n_t\,\rd t
\\
\xrightarrow[n\to\infty]{}
-\frac 12\int_0^1 \int_\Omega \rd\rhp_t U_t:U_t \,\rd t +\int_0^1\int_\Omega \rd(\partial_t\rhp_t):U_t\,\rd t
\end{multline*}
as needed and \eqref{eq:dual_AC2} follows.

Returning now to our main statement, we see from the dual representation \eqref{eq:dual_AC2_pointwise} that the kinetic action can be written as a supremum of convex (linear) functionals, hence the convexity.
Likewise, observe that, for fixed $\varphi\in C^{1,0}_b$, the functional
$$
\rhp\mapsto 
\int_\Omega\rd\rhp_1:\varphi_1 - \int_\Omega\rd\rhp_0:\varphi_0
-\frac 12\int_0^1\int_\Omega\rd \rhp_t \varphi_t:\varphi_t \rd t -\int_0^1\int_\Omega \rd\rhp_t:\partial_t\varphi_t \rd t
$$
is continuous for the pointwise weak-$*$ convergence
(the first two boundary terms are immediate, and for the time-space integrals one can simply apply Lebesgue's dominated convergence with $m_t=\tr \rhp_t(\Omega) \equiv 1$).
As a consequence the action functional $\mathcal K$ is l.s.c. as a supremum of continuous functionals.
\end{proof}

\begin{theo}
\label{theo:exist_shr}
For fixed $\rhp_0,\rhp_1\in \Ma$ with $E(\rhp_0),E(\rhp_1)<+\infty$ the infimum in \eqref{e:miniy_metric}\eqref{e:miniy} is always a minimum.
Moreover this minimum is attained for a unique curve $\rhp^\epsilon\in AC^2([0,1];\Ma_{FR })$ and a unique potential $U^\epsilon\in  L^2(0,1;L^2(\rd \rhp_t;\Sm))$.
\end{theo}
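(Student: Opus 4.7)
The plan is the classical direct method of the calculus of variations, structurally parallel to the proof of Lemma~\ref{lem:dihs_LSC_weak*}, followed by a convexity argument for uniqueness.

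First I would verify that the infimum is finite. Starting from $E(\rhp_i)<+\infty$, which forces $\rhp_i\ll\la$ with log-integrable density $g_i:=\der{\rhp_i}{\la}$, I would \emph{dilute} each endpoint towards the reference via $g_i^\eta := \tfrac{(1-\eta)g_i + \eta I}{1+\eta(d-1)}$. A short check shows $g_i^\eta\in\Pon$, $g_i^\eta\succeq \tfrac{\eta}{1+\eta(d-1)} I$ and hence $F(\rhp_i^\eta)\lesssim 1/\eta$. I would then concatenate a short dilution phase $\rhp_0\leadsto\rhp_0^\eta$, a Fisher-Rao geodesic $\rhp_0^\eta\leadsto\rhp_1^\eta$ (supplied by Corollary~\ref{theo:exist_geodesicsfr}), and a reversed dilution $\rhp_1^\eta\leadsto\rhp_1$; with a careful choice of the dilution speed $\eta(t)$, the resulting admissible competitor should have both finite kinetic action and finite $\int F\,\rd t$.

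Then the direct method proper: I would pick a minimizing sequence $(\rhp^n,U^n)_{n\in\N}$ with uniformly bounded kinetic part. Lemma~\ref{lem:mass_estimate_TV} applied on $\Ma$ (where $m_t\equiv 1$, so $M=1$) gives a uniform $\tfrac12$-Hölder estimate $\|\rhp^n_t-\rhp^n_s\|_{TV}\leq C|t-s|^{1/2}$; combining this with Banach-Alaoglu and the refined Arzelà-Ascoli lemma from the Appendix, exactly as in Lemma~\ref{lem:dihs_LSC_weak*}, I would extract a subsequence converging pointwise weakly-$*$ to a limit $\rhp^\infty\in C([0,1];\Ma_{TV})$ with the correct endpoints. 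The kinetic part is then lower semicontinuous under pointwise weak-$*$ convergence by Proposition~\ref{prop:AC2_energy_convex_LSC}. For the Fisher term, I would use that $A\mapsto\tr(A^{-1})$ is convex on $\Pm$ with vanishing recession function, so that $G\mapsto F(G)$ is weakly-$*$ sequentially l.s.c. on $\Ma$ by the Goffman-Serrin / Reshetnyak theory (cf.~\cite[Thm. 2.34]{ambrosio2000functions}); Fatou's lemma in time then yields $\int_0^1 F(\rhp^\infty_t)\,\rd t\leq \liminf_n\int_0^1 F(\rhp^n_t)\,\rd t$, so $\rhp^\infty$ achieves the infimum, and Lemma~\ref{l:ac2curves} (cf.~Remark~\ref{rmk:characterization_AC2_H_FR}) supplies the associated unique potential $U^\infty$. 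For uniqueness, I would take two putative minimizers $\rhp^1,\rhp^2$ and consider their pointwise midpoint $\check\rhp_t:=\tfrac12(\rhp^1_t+\rhp^2_t)$, which stays in $\Ma$ because the trace is linear; convexity of $\mathcal K$ along such linear interpolations (Proposition~\ref{prop:AC2_energy_convex_LSC}) combined with strict convexity of $A\mapsto\tr(A^{-1})$ on $\Pm$ would give a strictly smaller action for $\check\rhp$ if the AC densities $\der{\rhp^i_t}{\la}$ (which must be a.e. positive-definite since the Fisher terms are finite a.e. at a minimizer) disagreed on a positive $\rd t\otimes\rd\la$-set, contradicting minimality. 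Thus $\rhp^1=\rhp^2$ as curves and then $U^1=U^2$ in $L^2(\rd\rhp_t)$ again by Lemma~\ref{l:ac2curves}.

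The step I anticipate as the main obstacle is the finiteness-of-infimum argument: the hypothesis $E(\rhp_i)<+\infty$ is genuinely weaker than $F(\rhp_i)<+\infty$ (for instance $g(x)=x$ on $(0,1)$ has integrable $-\log\det g$ but non-integrable $\tr g^{-1}$), and a naive heat-flow regularization $\S_t(\rhp_0)$ fails because $F(\S_t(\rhp_0))\sim d/(1-e^{-t})$ is not integrable near $t=0$, so the dilution speed along the regularization phases must be tuned delicately. A secondary subtlety is the precise l.s.c. statement for the matrix-valued Fisher functional, which I would either ground via the aforementioned recession-function framework or, as a fallback, via a direct supremum-of-affine-functionals representation mirroring the dual formulation of Proposition~\ref{prop:AC2_energy_convex_LSC}.
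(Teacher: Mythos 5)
Your direct-method machinery (steps two and three) is exactly the paper's: uniform $AC^2$ bound $\Rightarrow$ $\tfrac12$-H\"older in TV $\Rightarrow$ Arzel\`a-Ascoli (Lemma~\ref{L:aa}) $\Rightarrow$ pointwise weak-$*$ limit; l.s.c.\ of $\mathcal K$ from Proposition~\ref{prop:AC2_energy_convex_LSC}; l.s.c.\ of $F$ from \cite[Thm.~2.34]{ambrosio2000functions} plus Fatou; uniqueness from convexity of $\mathcal K$ along linear interpolation plus strict convexity of $A\mapsto\tr A^{-1}$ and Lemma~\ref{l:ac2curves}. No remarks there.

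The finiteness-of-the-infimum step is where you depart from the paper, and where your proposal has genuine gaps. You write off the heat-flow regularization as ``naive'' because $F(\S_t(\rhp_0))\sim 1/(1-e^{-t})$ is not integrable in $t$. That diagnosis is correct for a crude pointwise bound, but the paper does \emph{not} use such a bound. Its Lemma~\ref{theo:upper_bound_recovery} applies $\S_{h(t)}$ (with $h(t)=\eps\min(t,1-t)$) to the \emph{whole} Fisher-Rao geodesic $t\mapsto\rhp_t$, then uses the pointwise identity \eqref{eq:pre},
$$
\tfrac12|\dot{\tilde\rhp}_t|^2+\tfrac12|h'(t)|^2F(\tilde\rhp_t)+h'(t)\tfrac{d}{dt}E(\tilde\rhp_t)\leq\tfrac12|\dot\rhp_t|^2,
$$
and integrates the $h'\,\tfrac{d}{dt}E$ term by parts (enabled by the chain rule Lemma~\ref{l:chain} and $h''\equiv 0$ on each half-interval). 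The Fisher term is thus controlled \emph{after integration in time} by $\frac12\int_0^1|\dot\rhp_t|^2+\eps\,(E(\rhp_0)+E(\rhp_1))$, never pointwise. This is exactly the refinement your observation calls for, and it also delivers, for free, the sharp quantitative bound \eqref{eq:upper_bound_recovery} that is reused verbatim in the $\Gamma$-$\limsup$ (Theorem~\ref{t:gc}); a dilution-based competitor would have to be re-derived from scratch there.

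Your dilution route, as written, has two unaddressed obstacles. First, the crude bound $F(\rhp_i^\eta)\lesssim 1/\eta$ forces $\int\frac{\rd t}{\eta(t)}<\infty$ along the dilution phase, which conflicts with the $AC^2$ constraint on the kinetic energy (e.g.\ a power profile $\eta(t)=t^\alpha$ is squeezed between incompatible ranges of $\alpha$ in the worst cases). Closing this requires the much finer, entropy-dependent estimate
$$
\int_0^{\bar\eta}F(\rhp_0^{\,s})\,\rd s\;\lesssim\;\sum_j\int_\Omega\frac{(\lambda_j-1)-\log\lambda_j}{1-\lambda_j}\,\rd\la\;\lesssim\;E(\rhp_0)+1,
$$
where $\lambda_j(x)$ are the eigenvalues of $\der{\rhp_0^\la}{\la}(x)$; this is not a ``short check'' and is not at all captured by $1/\eta$. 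Second, and more seriously, your middle phase is a Fisher-Rao geodesic joining $\rhp_0^\eta$ and $\rhp_1^\eta$, and you need $\int F(\rhp_t)\,\rd t<\infty$ along it. Boundedness of $F$ at the two endpoints does not imply boundedness of $F$ along the FR-geodesic (the Fisher information is not geodesically convex and there is no a priori lower bound on the intermediate fiberwise densities $g_t(x)$). The paper circumvents this by regularizing the entire geodesic at once rather than its endpoints. Without an argument for this second point, the proposed competitor is not known to have finite Schr\"odinger cost.
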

For the usual Schr\"odinger problem it is known that $E(\rhp_0),E(\rhp_1)<+\infty$ is also a necessary condition for the well-posedness of \eqref{e:miniy} but for the sake of simplicity we omit the details, see e.g. \cite{MTV}.
\begin{proof}
Pick from Corollary~\ref{theo:exist_geodesicsfr} a Fisher-Rao geodesic $(\rhp_t)_{t\in[0,1]}$ between $\rhp_0,\rhp_1$ (for $\eps=0$).
Slightly anticipating on independent results from the next section, let $\rhp^\eps=(\rhp^\eps_t)_{t\in[0,1]}$ be the curve obtained by perturbing $G$ as in Lemma~\ref{theo:upper_bound_recovery} (here $\eps>0$ is the fixed temperature parameter in the Schr\"odinger functional).
Then \eqref{eq:upper_bound_recovery} guarantees that $\rhp^\eps$ has finite $\eps$-cost, and therefore the infimum in \eqref{e:miniy} is finite.

Choose now any minimizing sequence $\{\rhp^n\}_n$, and let us implement the direct method in the Calculus of Variations.
From \eqref{e:miniy_metric} we see that the kinetic action $\mathcal K(G^n)=\frac 12\int_0^1 |\dot \rhp^n_t|_{FR}^2\rd t$ is bounded, thus $\{\rhp^n\}_n$ is uniformly $AC^2$ and therefore equicontinuous w.r.t. the metric space $(\Ma,\dFR)$.
From a variant of the Arzel\`a-Ascoli theorem (Lemma~\ref{L:aa} in the Appendix) and the weak-$*$ lower semicontinuity from Lemma~\ref{lem:dihs_LSC_weak*} we see that there is a $\dFR$-continuous curve such that (up to extraction of a subsequence if needed)
$$
\rhp^n_t \narrowcv \rhp_t
\qqtext{for all}t\in[0,1].
$$
The lower semicontinuity in Proposition~\ref{prop:AC2_energy_convex_LSC} immediately gives
$$
\int _0^1|\dot\rhp_t|^2\rd t \leq \liminf \int _0^1|\dot\rhp^n_t|^2\rd t.
$$
As for the Fisher information $\mathcal F(\rhp)=\int_0^1 F(\rhp_t)\rd t$, note that $g\mapsto Tr\,g^{-1}$ is (linearly) strictly convex and l.s.c. on $\Po$.
As a consequence we infer from \cite[thm. 2.34]{ambrosio2000functions} that $F$ is l.s.c. for the weak-$*$ convergence on $\Mm$.
By Fatou's lemma we see that
$$
\int_0^1 F(\rhp ^n_t)\,\rd t
\leq \int_0^1 \liminf F(\rhp^n_t)\,\rd t\leq \liminf \int_0^1 F(\rhp^n_t)\,\rd t
$$
and therefore $\rhp=\lim \rhp^n$ is a minimizer.
 
Finally, the uniqueness follows from the convexity of the $AC^2$ energy in Proposition~\ref{prop:AC2_energy_convex_LSC} combined with the strict convexity of the Fisher information, and the proof is complete.
\end{proof}

\section{$\Gamma$-convergence and geodesic convexity}
\label{sec:GCV_convex}

In this section we aim at proving the $\Gamma$-convergence of the $\eps$-Schr\"odinger problem \eqref{d:yas} towards the geodesic problem \eqref{e:minifr} as $\eps\to 0$, for fixed endpoints $\rhp_0,\rhp_1\in\Ma$.
As it is often the case, the $\Gamma-\liminf$ part will not be too difficult and will rely on some suitable lower semicontinuity.
The construction of recovery sequences in the $\Gamma-\limsup$ will be technically more involved, and requires the construction of a suitable $\eps$-perturbation $(\tilde\rhp_t)_{t\in[0,1]}$ of any fixed $AC^2$ curve $(\rhp_t)_{t\in[0,1]}$.
Our construction will involve the heat flow \eqref{e:gf} as a quantitative regularizing tool, and a careful examination of the defect of optimality (at order one in $\eps$) will yield as a byproduct the $\frac 12$-geodesic convexity of the entropy.
This strategy was already exploited in \cite{baradat2020small}, and will be extended to abstract metric spaces in our subsequent work \cite{MTV}.
\\

We will first need a significant number of technical preliminaries.
For brevity, and given the Lebesgue decomposition $\rhp=\rhp^\la+\rhp^{\perp}$ of an arbitrary $\rhp\in \Ma$ with respect to the fixed reference measure $\Lambda$, we will denote below
$$
g(x)=\der{G^\Lambda}{\la}(x).
$$
We use similar lowercase notations $\tilde g,g_0$ for the Radon-Nikodym densities (w.r.t. $\la$) of other corresponding objects $\tilde G, G_0$, etc.  

In the next several lemmas, $(G_t)_{t\in[0,1]}$ is a given curve in $AC^2([0,1];\Ma_{FR})$,
and we consider a fixed Lipschitz-continuous, non-negative function $h:[0,1]\to\R^+$ with
$$
h(t)>0
\qqtext{for all}
t\in(0,1).
$$
This will allow to control a change of time scale $s=h(t)$, and $h$ will be carefully chosen later on.
Then we define the perturbed curve
\begin{equation}
\label{e:gfs}
\tilde G_t:=\S_{h(t)}G_t
\qquad \mbox{for any fixed }t\in[0,1]
\end{equation}
constructed by solving the heat-flow for a time $s=h(t)$ starting from $\rhp_t$.
\begin{lem}
\label{l:fbound}
For $\tilde G$ defined by \eqref{e:gfs} we have  $F(\tilde G)\in L^\infty_{loc}(0,1)$.
\end{lem}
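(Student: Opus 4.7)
The plan is to use the explicit formula \eqref{eq:explicit_heat_flow} for the heat semigroup and the fact that adding a multiple of the identity makes the density uniformly non-degenerate, which then trivially bounds the Fisher information.

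First, I would compute explicitly the Lebesgue decomposition of $\tilde G_t=\S_{h(t)}G_t$ with respect to $\la$. Starting from $\S_{h(t)}(G_t)=\la I+e^{-h(t)}(G_t-\la I)$ and using $G_t=g_t\cdot\la+G_t^{\perp}$, one finds
\begin{equation*}
\tilde g_t(x):=\der{\tilde G_t^\la}{\la}(x)=\bigl(1-e^{-h(t)}\bigr)I+e^{-h(t)}g_t(x),
\end{equation*}
while the singular part $e^{-h(t)}G_t^{\perp}$ plays no role since $F$ only sees the absolutely continuous part.

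Second, since $g_t(x)\in\Po$ for $\la$-a.e. $x$, the shift by $\bigl(1-e^{-h(t)}\bigr)I>0$ gives a uniform lower bound in the PSD ordering:
\begin{equation*}
\tilde g_t(x)\geq \bigl(1-e^{-h(t)}\bigr)I,\qquad\text{hence}\qquad \tilde g_t(x)^{-1}\leq \frac{1}{1-e^{-h(t)}}\,I.
\end{equation*}
Taking the trace and integrating against $\la$ in \eqref{eq:def_Fisher}, and using $\tr\tilde g_t^{-1}\geq 0$ on the other side for the (trivial) lower bound, yields
\begin{equation*}
-d\,\la(\Omega)\leq F(\tilde G_t)\leq \frac{d\,e^{-h(t)}}{1-e^{-h(t)}}\,\la(\Omega).
\end{equation*}

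Third, to conclude $L^\infty_{loc}(0,1)$, fix any compact subinterval $[a,b]\subset(0,1)$. By continuity of $h$ and the assumption $h>0$ on $(0,1)$, there exists $\delta>0$ with $h(t)\geq\delta$ for all $t\in[a,b]$, so $1-e^{-h(t)}\geq 1-e^{-\delta}>0$ uniformly on $[a,b]$. Inserting this into the upper bound above gives $\sup_{t\in[a,b]}|F(\tilde G_t)|<+\infty$, which is the claim.

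I do not anticipate a serious obstacle: the only mild point is to make sure the singular part of $G_t$ does not spoil the argument, which is immediate from the linearity of the Lebesgue decomposition together with the fact that $\S_{h(t)}$ acts on the singular part simply by scalar multiplication by $e^{-h(t)}$.
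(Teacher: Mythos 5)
Your proof is correct and follows essentially the same route as the paper's: compute the absolutely continuous density $\tilde g_t=(1-e^{-h(t)})I+e^{-h(t)}g_t$, bound it from below by $(1-e^{-h(t)})I$ in the PSD order, invert, integrate, and conclude from $h>0$ on $(0,1)$. The extra trivial lower bound $F(\tilde G_t)\geq -d\,\la(\Omega)$ you include is not in the paper but harmless; note also that $d\,\la(\Omega)=1$ by the normalization $\tr(\la I)(\Omega)=1$, which turns your bound into exactly the paper's $\frac{1}{1-e^{-h(t)}}-1$.
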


\begin{proof}
For every $t\in(0,1)$ we can estimate
\begin{multline*}
F(\tilde G_t)=\tr \int_\Omega \rd\la \left [\left(I+e^{-h(t)}(g_t-I)\right)^{-1}-I \right]
\\
\leq \tr \int_\Omega \rd\la \left [\left((1-e^{-h(t)})I)\right)^{-1}-I \right]=\frac 1 {1-e^{-h(t)}}-1,
\end{multline*}
and we conclude observing that with our assumptions $h(t)>0$ is locally bounded away from zero.
\end{proof}

\begin{lem} 
\label{l:acreg}
We have  $\tilde G\in AC^2_{loc}((0,1); \Ma_{FR})$ and, moreover, $\tilde G\in C([0,1]; \Ma)$. 
\end{lem}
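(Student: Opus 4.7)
The plan is to handle the two claims separately, starting from the explicit semigroup formula \eqref{eq:explicit_heat_flow}:
\[
\tilde \rhp_t = \S_{h(t)}(\rhp_t) = (1-e^{-h(t)})\,\la I + e^{-h(t)}\,\rhp_t.
\]

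\textbf{Continuity on $[0,1]$.} Since the right-hand side is a convex combination of two elements of $\Ma$, we have $\tilde\rhp_t\in\Ma$ for every $t\in[0,1]$. Since $h$ is Lipschitz on $[0,1]$ and $\rhp\in AC^2([0,1];\Ma_{FR})\subset C([0,1];\Ma_{TV})$ (by Lemma \ref{l:ac2curves} combined with Theorem \ref{theo:comparison_2sides_dFR_TV}), the above formula immediately gives TV-continuity of $t\mapsto \tilde\rhp_t$ on $[0,1]$; by Corollary \ref{cor:top} this yields $\tilde\rhp\in C([0,1];\Ma)$.

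\textbf{Local absolute continuity on $(0,1)$.} According to Lemma \ref{l:ac2curves} (valid on $\Ma$ as well, cf. Remark \ref{rmk:characterization_AC2_H_FR}), it suffices to produce a potential $\tilde\u\in L^2_{loc}((0,1);L^2(\rd\tilde\rhp_t;\Sm))$ such that $\p_t\tilde\rhp_t=(\tilde\rhp_t\tilde\u_t)^{Sym}$ weakly. Differentiating the explicit formula in the sense of distributions gives
\[
\p_t\tilde\rhp_t = h'(t)\,(\la I-\tilde\rhp_t) + e^{-h(t)}(\rhp_t\u_t)^{Sym},
\]
where $\u\in L^2(0,1;L^2(\rd\rhp_t;\Sm))$ is the potential of the original $AC^2$ curve $\rhp$ furnished by Lemma \ref{l:ac2curves}. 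I would construct $\tilde\u_t$ by inverting the pointwise operator $X\mapsto (\tilde\rhp_t X)^{Sym}$ separately on the $\la$-absolutely continuous and singular parts of $\tilde\rhp_t$. The key observation is that the absolutely continuous density reads
\[
\tilde g_t := \der{\tilde\rhp_t^\la}{\la} = (1-e^{-h(t)})\,I + e^{-h(t)}\,g_t \;\geq\; (1-e^{-h(t)})\,I,
\]
so that the Sylvester operator $X\mapsto (\tilde g_t X)^{Sym}$ on $\Sm$ has spectrum bounded below by $1-e^{-h(t)}$, hence is boundedly invertible with inverse norm at most $1/(1-e^{-h(t)})$; on the singular piece $e^{-h(t)}\rhp_t^\perp$, a direct computation shows that the choice $\tilde\u_t=\u_t-h'(t)I$ realizes the required identity.

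\textbf{Main obstacle.} The delicate step is the $L^2_{loc}$ bound on $\tilde\u_t$. On any compact subinterval $K\subset(0,1)$ one has $h\geq\delta_K>0$ and $|h'|\leq L$, so that $1/(1-e^{-h(t)})\leq C_K$ uniformly on $K$. Combining the Sylvester bound on the AC part with the direct expression on the singular part, this should lead to an estimate of the form
\[
\int_\Omega \rd\tilde\rhp_t\,\tilde\u_t:\tilde\u_t \;\lesssim_K\; |h'(t)|^2 + \int_\Omega \rd\rhp_t\,\u_t:\u_t,
\]
which is integrable in $t$ over $K$ thanks to $h\in W^{1,\infty}(0,1)$ and $\u\in L^2(0,1;L^2(\rd\rhp_t;\Sm))$. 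This produces the desired $L^2_{loc}$ potential and concludes the $AC^2_{loc}((0,1);\Ma_{FR})$ regularity via Lemma \ref{l:ac2curves}.
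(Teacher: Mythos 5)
Your overall strategy (differentiate the explicit semigroup formula, split the potential along the $\la$-absolutely continuous and singular parts of $\tilde\rhp_t$, and check the ODE separately) matches the paper's, and your treatment of the singular part ($\tilde U_t = U_t - h'(t)I$ on $\operatorname{supp}\rhp_t^\perp$) and of the boundary continuity (via the convex-combination formula, TV-Lipschitz dependence on $h$, and Corollary~\ref{cor:top}) are both correct and slightly cleaner than the paper's versions. However, the \emph{key estimate} on the absolutely continuous part has a genuine gap, and the tool you propose does not close it.

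The Sylvester operator norm bound $\|\mathcal L_{\tilde g_t}^{-1}\|\leq (1-e^{-h(t)})^{-1}$ controls $|\tilde U_t(x)|_2$ pointwise in $x$ by $|\xi_t(x)|_2$, but the quantity you need to bound is the \emph{weighted} norm $\int_\Omega\rd\la\,\tilde g_t\tilde U_t:\tilde U_t$, not $\int_\Omega\rd\la\,|\tilde U_t|^2_2$. Passing from one to the other brings in the unbounded factor $\tr\tilde g_t$ (one only knows $\tr g_t\in L^1(\rd\la)$, not $L^\infty$), and moreover the resulting integrand involves products like $(\tr g_t)\,|g_t|_2^2$ and $(\tr g_t)\,|g_tU_t|_2^2$ for which you have no control — $U_t$ is only $L^2(\rd\rhp_t)$, not $L^\infty$, and higher powers of $\tr g_t$ need not be integrable. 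So the pointwise Frobenius bound simply cannot be integrated to give your claimed estimate $\int_\Omega\rd\tilde\rhp_t\,\tilde U_t:\tilde U_t\lesssim_K|h'(t)|^2+\int_\Omega\rd\rhp_t\,U_t:U_t$. What the paper does instead is work directly in the weighted geometry: it splits $\tilde U_t = h'(t)\tilde V_t + e^{-h(t)}\tilde W_t$ on the AC part, and (a) for $\tilde V_t := \tilde g_t^{-1}-I$ exploits the exact identity $\tilde g_t\tilde V_t:\tilde V_t = \tr(\tilde g_t^{-1}-2I+\tilde g_t)$, which together with the singular contribution reassembles exactly into the Fisher information $F(\tilde G_t)$ (bounded by Lemma~\ref{l:fbound}); and (b) for $\tilde W_t$ uses that it is the $\tilde g_t$-weighted orthogonal projection of $\tilde g_t^{-1}g_tU_t$ onto $\Sm$, whose contractivity combined with the algebraic inequality $A(I+\theta(A-I))^{-1}A\leq \theta^{-1}A$ (\eqref{e:algf}) gives the sharp bound by $e^{-h(t)}\int_\Omega\rd\rhp_t\,U_t:U_t$. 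Both steps crucially keep the weight $\tilde g_t$ attached and would be lost in a crude operator-norm argument; you need to replace your "Sylvester bound" by these two weighted estimates to make the argument go through.
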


\begin{proof} 
Differentiating \eqref{e:gfs} in time at a.a. $t\in (0,1)$ with \eqref{eq:explicit_heat_flow} gives
$$
\p_t \tilde G_t =e^{-h(t)}\p_t G_t -h'(t)e^{-h(t)}(G_t-\Lambda I),
$$
whence 
\begin{equation}
\label{eq: pre1}
\p_t \tilde G-h'(t)\Lambda e^{-h(t)}(I-g_t)+h'(t) e^{-h(t)}G_t^\perp=e^{-h(t)}\p_t G_t.
\end{equation} 
By Lemma \ref{l:ac2curves} (characterization of $AC^2$ curves), there exists $U_t\in L^2(0,1;L^2(\rd {G}_t;\Sm))$ such that
\begin{equation}
\label{eq: expag}
\p_t G_t=(\rhp_tU_t)^{Sym}=\Lambda\left({g}_tU_t\right)^{Sym}+\left(G^\perp_t U_t\right)^{Sym}
\end{equation}
in the weak sense.
Consequently, by linearity of the Lebesgue decomposition,
\begin{equation}
\label{eq: expagtop}
\p_t G^\Lambda_t=\Lambda\left({g}_tU_t\right)^{Sym},
\end{equation}
\begin{equation}
\label{eq: expagbot}
\p_t G^\perp_t=\left(G^\perp_t U_t\right)^{Sym}.
\end{equation}
Moreover, $|\dot{G}_t|^2=\int_\Omega\rd\rhp_t U_t:U_t=\int_\Omega\rd \la g_t U_t:U_t+\int_\Omega\rd G^\perp_t U_t:U_t$.
Now we identify in \eqref{eq: pre1}
\begin{equation}
\label{eq: expa2}
\Lambda e^{-h(t)}(I-g_t)
=
\la\left[(I+e^{-h(t)}(g_t-I))\tilde V_t\right]^{Sym},
\end{equation}
where
$$
\tilde V_t:=\left(I+e^{-h(t)}(g_t-I)\right)^{-1}-I.
$$
Indeed,
$$
\left[\left(I+e^{-h(t)}(g_t-I)\right)\tilde V_t\right]^{Sym}=I-I-e^{-h(t)}(g_t-I)=e^{-h(t)}(I-g_t).
$$
Furthermore, for $\Lambda$-a.a. $x$ and a.a. $t$,
$$
(I+e^{-h(t)}(g_t(x)-I))\,\Xi_1:\Xi_2
$$
defines a real scalar product on the space of  matrices $\Xi\in \C^{d\times d}$.
Let $\Pi$ denote the corresponding orthogonal projection onto the subspace of Hermitian matrices (we omit the indexes and simply write $\Pi=\Pi_{t,x}$ for simplicity).
Then \eqref{eq: expagtop} implies
\begin{equation} 
\label{eq: expag1}
\p_t G_t^\Lambda=\Lambda\left[(I+e^{-h(t)}(g_t-I))\tilde W_t\right]^{Sym},
\end{equation}
where
$$
\tilde W_t:=\Pi\left(\left(I+e^{-h(t)}(g_t-I)\right)^{-1}g_tU_t\right).
$$

Now set
\begin{equation}\label{eq: pre3}
\tilde U_t:=\left(1-\mathbf{1}_{\mathrm{supp}\, G_t^\perp}\right)\left(h'(t)\tilde V_t+e^{-h(t)}\tilde W_t\right)+\mathbf{1}_{\mathrm{supp}\, G_t^\perp}\left(-h'(t)I+U_t\right).
\end{equation} 
From \eqref{eq: pre1} we see that 
\begin{equation}
\label{eq: pre2}
\p_t \tilde G_t^\Lambda-\Lambda\left[(I+e^{-h(t)}(g_t-I))(h'(t)\tilde V_t)\right]^{Sym}
=
\Lambda\left[(I+e^{-h(t)}(g_t-I))(e^{-h(t)}\tilde W_t)\right]^{Sym}.
\end{equation} 
From \eqref{eq: pre3} and \eqref{eq: pre2} we deduce that
\begin{equation} 
\label{eq: expa1top}
\p_t \tilde G^\Lambda_t=\Lambda\left[(I+e^{-h(t)}(g_t-I))\tilde U_t\right]^{Sym}
\end{equation}
For the singular part, \eqref{eq: pre1}, \eqref{eq:explicit_heat_flow_s} and \eqref{eq: expagbot} yield
\begin{equation}
\label{eq: pre7}
\p_t \tilde G_t^\perp+h'(t)\tilde G_t^\perp=(\tilde G_t^\perp U_t)^{Sym},
\end{equation}
and using  \eqref{eq: pre3} we conclude that
\begin{equation}
\label{eq: expa1bot}
\p_t \tilde G_t^\perp=(\tilde G_t^\perp  \tilde U_t)^{Sym}.
\end{equation}
Now \eqref{eq: pre3} gives
 \begin{multline}
 \label{eq: pre5}
 \frac 1 2 \int_\Omega \rd \tilde G_t\tilde U_t:\tilde U_t = \frac 1 2 \int_\Omega \rd \Lambda (I+e^{-h(t)}(g_t-I))\tilde U_t:\tilde U_t+\frac 1 2 \int_\Omega \rd \tilde \rhp_t^\perp\tilde U_t:\tilde U_t
 \\
 \leq  |h'(t)|^2\int_\Omega \rd \Lambda (I+e^{-h(t)}(g_t-I))\tilde V_t:\tilde V_t+  e^{-2h(t)}\int_\Omega \rd \Lambda  (I+e^{-h(t)}(g_t-I))\tilde W_t:\tilde W_t
 \\
 + |h'(t)|^2\int_\Omega \rd \tilde G_t^\perp I:I+\int_\Omega \rd \tilde G_t^\perp U_t:U_t
 \end{multline}
 for a.a. $t$.
An explicit computation shows moreover that
\begin{multline}
\label{e:calcf}
\int_\Omega \rd \Lambda (I+e^{-h(t)}(g_t-I))\tilde V_t:\tilde V_t+\int_\Omega \rd \tilde G_t^\perp I:I
\\
=\int_\Omega \rd \Lambda \left(I-(I+e^{-h(t)}(g_t-I))\right):\left(\left(I+e^{-h(t)}(g_t-I)\right)^{-1}-I\right)+\tr\int_\Omega \rd \tilde G_t^\perp
\\
=\int_\Omega \rd \Lambda \tr (\tilde g_t^{-1}+\tilde g_t-2I)+\tr\int_\Omega \rd \tilde G_t^\perp 
\\
= \int_\Omega \rd \Lambda \tr (\tilde g_t^{-1}-I) = F(\tilde G_t)
\end{multline}
since $\Lambda \tilde g_t+\tilde G_t^\perp=\tilde G_t$ has unit mass for a.a. $t$. Hence, the sum of the first and the third terms in the right-hand side of \eqref{eq: pre5} is $L^{\infty}_{loc}(0,1)$ by Lemma \ref{l:fbound}.

The second and the fourth term in the right-hand side of \eqref{eq: pre5} can
can be estimated in the following way:
\begin{multline}
\label{e:calcrhs}
e^{-2h(t)}\int_\Omega \rd \Lambda  (I+e^{-h(t)}(g_t-I))\tilde W_t:\tilde W_t+\int_\Omega \rd \tilde G_t^\perp U:U
\\
=  e^{-2h(t)}\int_\Omega \rd \Lambda  (I+e^{-h(t)}(g_t-I))\left[\Pi\left(\left(I+e^{-h(t)}(g_t-I)\right)^{-1}g_tU_t\right)\right]:\left[\Pi\left(\left(I+e^{-h(t)}(g_t-I)\right)^{-1}g_tU_t\right)\right]
\\
+e^{-h(t)}\int_\Omega \rd G_t^\perp U_t:U_t
\\
\leq  e^{-2h(t)}\int_\Omega \rd \Lambda  (I+e^{-h(t)}(g_t-I))\left[\left(I+e^{-h(t)}(g_t-I)\right)^{-1}g_tU_t\right]:\left[\left(I+e^{-h(t)}(g_t-I)\right)^{-1}g_tU_t\right]
\\
+e^{-h(t)}\int_\Omega \rd G_t^\perp U_t:U_t 
\\
= e^{-2h(t)}\int_\Omega \rd \Lambda  g_t(I+e^{-h(t)}(g_t-I))^{-1}g_tU_t:U_t + e^{-h(t)}\int_\Omega \rd G_t^\perp U_t:U_t
\\
\leq  e^{-h(t)}\int_\Omega \rd \Lambda  g_tU_t:U_t+e^{-h(t)}\int_\Omega \rd G_t^\perp U_t:U_t
=e^{-h(t)}\int_\Omega\rd\rhp_t U_t:U_t
=e^{-h(t)} |\dot G_t|^2.
\end{multline}
In the last inequality we have employed the purely algebraic inequality
\begin{equation} 
\label{e:algf}
A(I+\theta (A-I))^{-1}A\leq \theta^{-1}A
\end{equation}
for any positive-semidefinite matrix $A$ and $\theta \in (0,1)$.  
Since $G$ is an $AC^2$ curve we have $|\dot G|^2\in L^2(0,1)$ in \eqref{e:calcrhs}, the above calculations show that the right-hand side in \eqref{eq: pre5} is $L^2_{loc}$ in time, hence
$$
\tilde U_t\in L^2_{loc}(0,1;L^2(\rd {\tilde G}_t;\Sm)).
$$
Given that by construction $\p_t\tilde\rhp_t=(\tilde\rhp_t\tilde U_t)^{Sym}$ we conclude from Lemma \ref{l:ac2curves} that $\tilde G\in AC^2_{loc}((0,1); \Ma_{FR})$ and $\left|\dot{\tilde G}\right|^2=\int_\Omega \rd \tilde G \tilde U:\tilde U$ for a.a. $t\in(0,1)$. 

In view of Corollary \ref{cor:top} and in order to finally show the strong continuity of $\tilde G$ at the boundary (for definiteness, at $t=0$), we estimate
\begin{multline*}
\|\tilde G_t-\tilde G_0\|_{TV}
=\int_\Omega \rd \la |\tilde g_t-\tilde g_0|_2
+\|e^{-h(t)} G^\perp_t-e^{-h(0)}  G_0^\perp\|_{TV}
\\
=
\int_\Omega \rd \la \left|\left(I+e^{-h(t)}(g_t-I)\right)-\left(I+e^{-h(0)}(g_0-I)\right)\right|_2
\\
+ \left\|e^{-h(t)} (G^\perp_t-G^\perp_0)+(e^{-h(t)}-e^{-h(0)})  G_0^\perp\right\|_{TV}
\\
\leq |e^{-h(t)}-e^{-h(0)}| \int_\Omega\rd  \la \, |g_0-I|_2 + e^{-h(t)}\int_\Omega\rd  \la  |g_t-g_0|_2
\\
+e^{-h(t)}\| G^\perp_t- G_0^\perp\|_{TV}+|e^{-h(t)}-e^{-h(0)}|\times\|G^\perp_0\|_{TV}
\to 0 \end{multline*}
as $t\searrow 0$ and the proof is achieved.
\end{proof}

\begin{lem}[Chain rule]
\label{l:chain}
For $a.a. t\in (0,1)$ we have $\tr \tilde U_t\in L^1(\Omega;\rd \Lambda )$, where  $\tilde U_t$ is defined by \eqref{eq: pre3} and represents the dynamics of $\tilde\rhp$ through $\p_t \tilde G_t=(\tilde G_t  \tilde U_t)^{Sym}$.
Moreover $E(\tilde G_t)\in AC^2_{loc}((0,1);\R)$, and 
\begin{equation}
\label{eq:chain} 
\frac{d}{dt}\left(E(\tilde G_{t})\right) = -\int _\Omega\tr \tilde U_t \rd \Lambda
\end{equation} 
a.e.  in $(0,1)$.
\end{lem}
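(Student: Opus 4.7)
The plan is to first establish the $L^1$-integrability of $\tr\tilde U_t$ against $\Lambda$, and then to differentiate $E(\tilde G_t)=-\int_\Omega \log\det\tilde g_t\,\rd\Lambda$ pointwise in $x$ and integrate. Because $G_t^\perp\perp\Lambda$, the indicator $\mathbf 1_{\mathrm{supp}\,G_t^\perp}$ in \eqref{eq: pre3} can be chosen $\Lambda$-null, so integration against $\Lambda$ only sees the ``absolutely continuous'' part
$$
\tilde U_t^{(1)}:=h'(t)\tilde V_t+e^{-h(t)}\tilde W_t.
$$
For $\tilde V_t=\tilde g_t^{-1}-I$ I would use the basic pointwise bound $\tilde g_t=(1-e^{-h(t)})I+e^{-h(t)}g_t\geq (1-e^{-h(t)})I$ (since $g_t\geq 0$), which gives $|\tilde V_t|_\infty\leq 1+(1-e^{-h(t)})^{-1}$ uniformly in $x$, hence trivially $\tr\tilde V_t\in L^1(\Lambda)$ as $\Lambda$ is a finite measure. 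For $\tilde W_t$ I would leverage the same lower bound together with the estimate already proven inside Lemma~\ref{l:acreg}, namely
$$
\int_\Omega\rd\Lambda(I+e^{-h(t)}(g_t-I))\tilde W_t:\tilde W_t<+\infty
\qquad\text{for a.e.\ }t,
$$
to deduce $|\tilde W_t|_2^2\leq (1-e^{-h(t)})^{-1}\tilde g_t\tilde W_t:\tilde W_t\in L^1(\Lambda)$, and then conclude via Cauchy--Schwarz that $\tr\tilde W_t=\tilde W_t:I\in L^1(\Lambda)$.

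Next I would carry out the differentiation. Since $\tilde G\in AC^2_{loc}((0,1);\Ma_{FR})$ (Lemma~\ref{l:acreg}) and Theorem~\ref{theo:comparison_2sides_dFR_TV} controls $TV$ by $\dFR$ (with the trivially bounded mass $m\equiv 1$ on $\Ma$), the curve $t\mapsto \tilde G_t$ is also locally $AC^2$ in $TV$; in particular $t\mapsto\tilde g_t\in L^1(\Lambda;\Sm)$ is locally absolutely continuous, and \eqref{eq: expa1top} gives the strong identification $\dot{\tilde g}_t=(\tilde g_t\tilde U_t^{(1)})^{Sym}$. Fix any compact $[s,t]\subset(0,1)$: the uniform bound $\tilde g_\tau\geq c I$ makes $-\log\det$ globally Lipschitz on the range of $\tilde g_\tau$, so in every fiber and after a mollification-plus-passing-to-the-limit argument I can write
$$
-\log\det\tilde g_t(x)+\log\det\tilde g_s(x)=-\int_s^t\tr\bigl(\tilde g_\tau(x)^{-1}\dot{\tilde g}_\tau(x)\bigr)\,\rd\tau.
$$
The key algebraic step uses the Hermiticity of $\tilde U_t^{(1)}$ (built into the definitions of $\tilde V_t$ and $\tilde W_t=\Pi(\ldots)$) together with the cyclicity of the trace:
$$
\tr\bigl(\tilde g_\tau^{-1}(\tilde g_\tau\tilde U_\tau^{(1)})^{Sym}\bigr)
=\tfrac 12\tr\tilde U_\tau^{(1)}+\tfrac 12\tr\bigl(\tilde g_\tau^{-1}\tilde U_\tau^{(1)}\tilde g_\tau\bigr)
=\tr\tilde U_\tau^{(1)}.
$$

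Finally I would integrate over $\Omega$ and apply Fubini, which is justified by the joint $L^1(\rd\tau\otimes\rd\Lambda)$ bound from step~1 (locally in $\tau$). This yields
$$
E(\tilde G_t)-E(\tilde G_s)=-\int_s^t\int_\Omega \tr\tilde U_\tau^{(1)}\,\rd\Lambda\,\rd\tau=-\int_s^t\int_\Omega \tr\tilde U_\tau\,\rd\Lambda\,\rd\tau,
$$
the second equality being the $\Lambda$-nullity of $\mathrm{supp}\,G_\tau^\perp$. This simultaneously gives the chain-rule identity \eqref{eq:chain} and the $AC^2_{loc}$ regularity of $t\mapsto E(\tilde G_t)$: the integrand belongs to $L^2_{loc}(0,1)$ because, combining the pointwise bounds on $\tilde V_\tau$ with $|\tr\tilde W_\tau|\leq\sqrt d\,|\tilde W_\tau|_2\in L^2_{\text{loc}}(0,1;L^2(\Lambda))$, the right-hand side is dominated by something proportional to $|\dot{\tilde G}_\tau|_{FR}+|h'(\tau)|$.

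I expect the main obstacle to be the rigorous justification of the fibrewise pointwise differentiation of $\log\det\tilde g_\tau(x)$ together with the Fubini exchange: $\tilde g_\tau$ is naturally an element of $AC^2_{loc}((0,1);L^1(\Lambda;\Sm))$, not a pointwise-in-$x$ absolutely continuous curve. A clean way around this is to avoid pointwise derivatives altogether and work directly with the Bochner-type identity above, combined with the mean-value representation $-\log\det g_1+\log\det g_0=-\int_0^1\tr[(g_0+\theta(g_1-g_0))^{-1}(g_1-g_0)]\,\rd\theta$, which only requires the local $L^1$-continuity already in hand; the algebraic cancellation then takes care of the rest.
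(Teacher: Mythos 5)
Your proposal is correct in substance, and the two central ingredients --- the $L^1(\Lambda)$ integrability of $\tr\tilde U_t$ and the trace cancellation $\tr\bigl(\tilde g^{-1}(\tilde g\tilde U^{(1)})^{Sym}\bigr)=\tr\tilde U^{(1)}$ for Hermitian $\tilde U^{(1)}$ --- are exactly those the paper relies on. The one genuine difference is in how the chain rule for $t\mapsto E(\tilde G_t)$ is justified. You first attempt a fibrewise differentiation of $-\log\det\tilde g_t(x)$, correctly flag that $AC^2$ regularity of $t\mapsto\tilde g_t$ is an $L^1(\Lambda)$-valued statement rather than a pointwise-in-$x$ one, and then offer a valid repair via the mean-value representation of $\log\det g_1-\log\det g_0$; this still needs a final limit $s\to t$, for which one uses the local $TV$-differentiability of $\tilde G_t$ coming from its $AC^2_{loc}$ regularity. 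The paper sidesteps fibrewise differentiation altogether: it computes $\frac{d}{dt}E(\tilde G_t)$ in $\mathcal D'(0,1)$, combining the first variation $-g^{-1}$ of the convex functional $g\mapsto-\int_\Omega\log\det g\,\rd\Lambda$ (using the uniform bound $\tilde g_t\geq(1-e^{-h(t)})I$) with the weak formulation \eqref{eq: expa1top} of $\partial_t\tilde G_t^\Lambda=(\tilde G_t^\Lambda\tilde U_t)^{Sym}$, then upgrades the distributional identity to a pointwise one via the $L^2_{loc}$ bound on the right-hand side. For the $L^1$ estimate you decompose $\tilde U^{(1)}_t=h'(t)\tilde V_t+e^{-h(t)}\tilde W_t$ and bound each piece separately; the paper obtains it in a single stroke by writing $\tr\tilde U_t=\tilde g_t\tilde U_t:\tilde g_t^{-1}$ and applying Cauchy--Schwarz in $L^2(\rd\tilde G_t^\Lambda)$, which also directly gives the quantitative bound $\bigl|\int_\Omega\tr\tilde U_t\,\rd\Lambda\bigr|\lesssim(1-e^{-h(t)})^{-1/2}|\dot{\tilde G}_t|$ needed for the $AC^2_{loc}$ conclusion. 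Both routes are equivalent; yours is slightly more elementary but requires extra care in the Fubini/differentiation exchange, a point you correctly anticipate and resolve.
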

\begin{proof}
Recalling that  $\la I\in \Ma$ has unit mass and $\tr \tilde U$ is real, we observe that the Cauchy-Schwarz inequality gives for a.e. $t\in(0,1)$
\begin{multline}
\label{eq:boundtu}
\left(\int_\Omega \rd \la \tr \tilde U_t\right)^2 =\left(\int_\Omega \rd \la \tilde g_t \tilde U_t:\tilde g_t^{-1}\right)^2
\\
\leq 
\left(\int_\Omega \rd \la \tilde g_t \tilde U_t:\tilde U_t\right)\left(  \int_\Omega \rd \la \tilde g_t \tilde g_t^{-1}:\tilde g_t^{-1}\right)
=\left( \int_\Omega \rd \tilde G_t \tilde U_t:\tilde U_t\right) 
\left(\int_\Omega \rd \la \,\tr\left(\left[I+e^{-h(t)}(g_t-I) \right]^{-1}\right)\right)
\\
\leq 
\left(\int_\Omega \rd \tilde G_t \tilde U_t:\tilde U_t\right)
\left( \int_\Omega \rd \la \, \tr \left(\left[(1-e^{-h(t)})I) \right]^{-1}\right)\right)
=\frac 1 {1-e^{-h(t)}} \int_\Omega \rd \tilde G_t \tilde U_t:\tilde U_t
\end{multline}
Hence, $\tr \tilde U_t\in L^1(\Omega;\rd \Lambda )$ for a.a. $t$. 

It is well-known that the first variation of $g\mapsto \int_\Omega \rd\la \log \det g$ is $x\mapsto g^{-1}(x)$, as soon as $g$ is invertible $\Lambda$-a.e. in $\Omega$.
Since $\p_t\tilde G_t^\Lambda=\left({\tilde G}_t^\Lambda\tilde U_t\right)^{Sym}$ in the weak sense, see \eqref{eq: expa1top}, for every smooth test function $\psi:(0,1)\to \R$ we have, in the sense of distributions $\mathcal D'(0,1)$,
\begin{multline*}
\left\langle\frac d {dt} E(\tilde G), \psi\right\rangle=\left\langle\int_\Omega \rd\la \log \det (I+e^{-h(t)}(g-I)), \psi'\right\rangle 
\\ 
=-\left\langle \int_\Omega \left(\rd {\tilde G^\Lambda}\tilde U\right)^{Sym}:(I+e^{-h(t)}(g-I))^{-1}, \psi\right\rangle
\\
=-\left\langle \int_\Omega\rd \la  \left((I+e^{-h(t)}(g-I))\tilde U\right):(I+e^{-h(t)}(g-I))^{-1}, \psi\right\rangle 
\\
=-\left\langle \int_\Omega\rd \la \tr \tilde U, \psi\right\rangle. 
\end{multline*} 
This shows that \eqref{eq:chain} holds in the sense of distributions, but since the right-hand side is locally $L^2$ in time (due to \eqref{eq:boundtu} with $h(t)>0$ locally bounded away from zero), it also holds almost everywhere in $(0,1)$.
\end{proof}
The main ingredient in our construction of recovery sequences later on will be
\begin{lem}
For a.a. $t\in (0,1)$ we have
\begin{equation}
\label{eq:pre}
\frac{1}{2}\left|\dot{\tilde G}_t\right|^2
+
\frac{1}{2}|h'(t)|^2 F(\tilde G_{t})
+
h'(t)\frac{d}{dt}\left(E(\tilde G_{t})\right)
\leq \frac{1}{2}e^{-h(t)}\left|\dot G_t\right|^2\leq \frac{1}{2}\left|\dot G_t\right|^2.
\end{equation}
\end{lem}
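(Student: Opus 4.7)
The plan is to derive \eqref{eq:pre} by (i) computing the LHS as an exact identity via complete-the-square, then (ii) bounding the resulting expression using the algebraic inequality \eqref{e:algf} and the orthogonal-projection characterization of $\tilde W$, both of which have been invoked already in the proof of Lemma~\ref{l:acreg}. The second inequality in \eqref{eq:pre} is immediate from $e^{-h(t)}\leq 1$. The underlying conceptual picture is that, in the Otto formalism, the LHS is
\[
\tfrac12\bigl|\partial_t\tilde G_t + h'(t)\nabla_{FR}E(\tilde G_t)\bigr|^2_{\tilde G_t},
\]
and differentiating $\tilde G_t = \Lambda I + e^{-h(t)}(G_t - \Lambda I)$ together with $\nabla_{FR}E(\tilde G)=\tilde G-\Lambda I$ yields
\[
\partial_t\tilde G_t + h'(t)\nabla_{FR}E(\tilde G_t) = e^{-h(t)}\partial_t G_t;
\]
thus the target is morally the $\tfrac12$-contraction property $|e^{-h}\partial_t G|^2_{\tilde G}\leq e^{-h}|\partial_t G|^2_G$ of the heat flow on $\Ma$.

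For step (i), I would expand $|\dot{\tilde G}|^2 = \int\rd\tilde G\,\tilde U:\tilde U$ using \eqref{eq: pre3} and invoke \eqref{e:calcf}, then substitute the chain-rule value $\tfrac{d}{dt}E(\tilde G_t) = -\int\tr\tilde U\,\rd\Lambda = -h'(t) F(\tilde G_t) - e^{-h(t)}\int\tr\tilde W_t\,\rd\Lambda$ from Lemma~\ref{l:chain}. Using $\tilde g\tilde V = I-\tilde g$ and completing the square with $Y := e^{-h}\tilde W - h'I$ on the $\Lambda$-support and $Z := U - h'I$ on the singular support, together with the normalization $\int\tr\tilde g\,\rd\Lambda + \|\tilde G^\perp\|_1 = \tr\tilde G(\Omega) = 1$, the LHS of \eqref{eq:pre} becomes the exact expression
\[
\tfrac12\int\tilde g\,Y:Y\,\rd\Lambda + \tfrac12\int\rd\tilde G^\perp\,Z:Z - \tfrac12|h'|^2.
\]

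For step (ii), I would upper-bound $\int\tilde g\,Y:Y\,\rd\Lambda$ via the projection identity $\tilde g\tilde W: Z_0 = gU: Z_0$ (valid for all $Z_0\in\Sm$, applied with $Z_0 = \tilde W$ and $Z_0 = I$) combined with the algebraic inequality $g\tilde g^{-1}g \leq e^{h(t)}g$ from \eqref{e:algf}, while the singular term admits an exact expansion via $\tilde G^\perp = e^{-h}G^\perp$. Adding the two contributions, the coefficient of $|h'|^2$ sums to $\int\tr\tilde g\,\rd\Lambda + e^{-h}\|G^\perp\|_1 = \int\tr\tilde g\,\rd\Lambda + \|\tilde G^\perp\|_1 = 1$, which precisely cancels the $-\tfrac12|h'|^2$ above. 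The only cross term in $h'$ reduces to $-2h'e^{-h}\int\rd G:U$, and this vanishes by mass conservation on $\Ma$: testing $\partial_t G_t = (G_tU_t)^{Sym}$ against $I$ gives $\int\rd G_t:U_t = \tfrac{d}{dt}\tr G_t(\Omega) = 0$. What remains is exactly $\tfrac12 e^{-h(t)}|\dot G_t|^2$.

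The main obstacle is the algebraic bookkeeping: the AC and singular contributions must be tracked separately through several complete-the-square steps, and the projection identity applied in the $\tilde g$-weighted inner product rather than in the Frobenius one. The mass-conservation identity $\int\rd G_t:U_t=0$ is the unique place where the $\Ma$ (unit-trace) constraint enters; without it, an unfavorable term in $h'$ would survive and the contraction estimate would fail in this sharp form.
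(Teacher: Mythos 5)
Your proof is correct and follows essentially the same route as the paper's: complete the square using the decomposition \eqref{eq: pre3} of $\tilde U$ into absolutely continuous and singular pieces, identify the three terms of the left-hand side, and bound the resulting quadratic via the orthogonal-projection contraction in the $\tilde g$-weighted inner product together with the algebraic inequality \eqref{e:algf}. The only differences are cosmetic: you complete the square around $Y=e^{-h}\tilde W-h'I$ and $Z=U-h'I$ rather than around $e^{-h}\tilde W=\tilde U-h'\tilde V$ and $U=\tilde U+h'I$ as in \eqref{eq: pre4}--\eqref{eq: pre8}, which shifts the $-\tfrac12|h'|^2$ bookkeeping; and the cross term is $-h'e^{-h}\int\rd G:U$ rather than $-2h'e^{-h}\int\rd G:U$ (a harmless factor slip since it vanishes anyway by mass conservation). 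Your opening Otto-calculus observation that the left-hand side equals $\tfrac12\bigl|\partial_t\tilde G_t+h'(t)\nabla_{FR}E(\tilde G_t)\bigr|^2_{\tilde G_t}=\tfrac12\bigl|e^{-h}\partial_t G_t\bigr|^2_{\tilde G_t}$, so that the lemma is exactly the $\tfrac12$-contraction of the heat semigroup, is a genuinely illuminating remark that the paper does not make explicit.
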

Let us slightly anticipate at this stage that our construction of recovery sequences will consist in keeping the second Fisher information term and throwing away part of the third term on the left, while the proof of the geodesic convexity of the entropy will on the contrary be based on discarding $F\geq 0$ and integrating by parts $h'\frac {dE}{dt}$.
\begin{proof}
We use the same framework as in the proof of Lemma \ref{l:acreg} and keep the the same notations (for $\tilde U,\tilde V,\tilde W$).
From \eqref{eq: pre3} we compute 
\begin{multline}
\label{eq: pre4}
\frac 1 2 \int_\Omega \rd \Lambda \left[I+e^{-h(t)}(g_t-I)\right]\tilde U_t:\tilde U_t
\\
+ \frac 1 2 |h'(t)|^2\int_\Omega \rd \Lambda \left[I+e^{-h(t)}(g_t-I)\right]\tilde V_t:\tilde V_t
-  h'(t)\int_\Omega \rd \Lambda \left[I+e^{-h(t)}(g_t-I)\right]\tilde U_t:\tilde V_t
\\
= \frac 1 2 e^{-2h(t)}\int_\Omega \rd \Lambda  \left[I+e^{-h(t)}(g_t-I)\right]\tilde W_t:\tilde W_t,
\end{multline}
and 
\begin{equation}
\label{eq: pre8}
\frac 1 2 \int_\Omega \rd \tilde G^\perp_t \tilde U_t:\tilde U_t
+ \frac 1 2 |h'(t)|^2\int_\Omega \rd \tilde G_t^\perp I:I\\+ h'(t)\int_\Omega \rd \tilde G_t^\perp \tilde U_t:I
= \frac 1 2 \int_\Omega \rd \tilde G_t^\perp U_t:U_t.
\end{equation}
The sum of the left-hand sides of \eqref{eq: pre4} and \eqref{eq: pre8} is equal to the left-hand side of \eqref{eq:pre}.
Indeed,  the sum of the integrals in the first terms of \eqref{eq: pre4} and \eqref{eq: pre8} is exactly the squared metric derivative $\left|\dot{\tilde G}_t\right|^2=\int_\Omega\rd\tilde G_t\tilde U_t:\tilde U_t$ of $\tilde G$.
The sum of the second terms of \eqref{eq: pre4} and \eqref{eq: pre8} matches the second term of \eqref{eq:pre} due to \eqref{e:calcf}. 
Moreover,
\begin{multline*} 
-\int_\Omega \rd \Lambda \left[I+e^{-h(t)}(g_t-I)\right]\tilde U_t:\tilde V_t
+\int_\Omega \rd \tilde G^\perp_t \tilde U_t:I
\\
= -\int_\Omega \rd \Lambda \left[I+e^{-h(t)}(g_t-I)\right]\tilde U_t:\left(\left(I+e^{-h(t)}(g_t-I)\right)^{-1}-I\right)+ \tr \int_\Omega \rd \tilde G_t^\perp \tilde U_t
\\
=-\int \tr \tilde U_t \rd \Lambda+ \frac{d}{dt} \int \tr \tilde G_t^\Lambda+ \frac{d}{dt} \int \tr \tilde G_t^\perp= \frac{d}{dt}\left(E(\tilde G_t)\right).
\end{multline*}
We have used Lemma \ref{l:chain}, formulas \eqref{eq: expa1top} and \eqref{eq: expa1bot}, and the fact that the mass of $\tilde G_t\in \Ma$ is conserved.
We conclude by estimating the sum of the right-hand sides of \eqref{eq: pre4} and \eqref{eq: pre8} exactly as in \eqref{e:calcrhs}.
\end{proof}

The next result will essentially allow to integrate \eqref{eq:pre} in time all the way to $t=0,1$, which will be crucial later on but is {\it a priori} not legitimate so far since Lemma~\ref{l:acreg} and Lemma~\ref{l:chain} only give regularity \emph{locally} in $(0,1)$ at this stage.
\begin{lem}
\label{l:cont}
Assume that $h(0)=h(1)=0$ and that $h'(t)$ is bounded away from $0$ near $t=0$ and $t=1$.
Then $t\mapsto E(\tilde G_t)$ is continuous at the endpoints
 $0$ and $1$.
 \end{lem}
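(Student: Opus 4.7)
The plan is to prove $\lim_{t\to 0^+} E(\tilde G_t) = E(G_0)$; the case $t\to 1^-$ will follow by the symmetric hypotheses via a time-reversal $t\mapsto 1-t$.

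The easy half comes for free. By Lemma~\ref{l:acreg}, $\tilde G_t\to G_0$ in TV as $t\to 0^+$, and by weak-$*$ lower semicontinuity of $E$ (inherited from its recession-function definition \eqref{e:mainentropy} via \cite[Thm.~2.34]{ambrosio2000functions}) we immediately have
$$E(G_0)\leq\liminf_{t\to 0^+} E(\tilde G_t).$$

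Next I would show that $\alpha:=\lim_{t\to 0^+} E(\tilde G_t)\in[0,+\infty]$ actually exists, using the bound \eqref{eq:pre}. Discarding the nonnegative terms $\tfrac{1}{2}|\dot{\tilde G}|^2$ and $\tfrac{1}{2}|h'|^2 F(\tilde G)$ on the left-hand side and exploiting $h'(\tau)\geq c>0$ near $\tau=0$ yields the pointwise a.e.\ inequality
$$\frac{d}{d\tau}E(\tilde G_\tau)\leq\frac{1}{2c}|\dot G_\tau|^2\in L^1(0,\delta).$$
Combined with the $AC^2_{loc}$ regularity of $\tau\mapsto E(\tilde G_\tau)$ from Lemma~\ref{l:chain}, this makes the function $\phi(t):= E(\tilde G_t) - \tfrac{1}{2c}\int_0^t|\dot G_s|^2\,ds$ non-increasing on $(0,\delta)$; as a monotone function $\phi$ has a limit, and hence $\lim_{t\to 0^+} E(\tilde G_t)=\alpha$ exists. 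Coupled with the LSC bound, $\alpha\geq E(G_0)$, which already settles the extended case $E(G_0)=+\infty$.

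The delicate case is $E(G_0)<+\infty$, where the goal is $\alpha\leq E(G_0)$. Here $g_0$ is $\Lambda$-a.e.\ positive definite; TV convergence $G_t\to G_0$ plus linearity of the Lebesgue decomposition gives $g_t\to g_0$ in $L^1(\Lambda;\Sm)$, hence along a subsequence $\tilde g_t(x)\to g_0(x)$ and $-\log\det\tilde g_t(x)\to -\log\det g_0(x)$ pointwise $\Lambda$-a.e. I would close the argument by dominated convergence. The negative parts are easily dominated by AM-GM via $(-\log\det\tilde g_t)_-\leq d\log(1+\tr\tilde g_t/d)\leq \tr g_t$, which is uniformly integrable thanks to $L^1$ convergence of $g_t$. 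The main obstacle is the positive parts: the concavity bound $(-\log\det\tilde g_t)_+\leq e^{-h(t)}(-\log\det g_t)_+$ reduces the question to uniform integrability of $\{(-\log\det g_t)_+\}$, which need not hold for a general $AC^2$ curve. My remedy is to introduce the auxiliary reference curve $\hat G_t:=\S_{h(t)} G_0$, for which $E(\hat G_t)\to E(G_0)$ follows directly from dominated convergence via the integrable bound $|-\log\det\hat g_t|\leq |-\log\det g_0|+O(h(t))$, and then to control $E(\tilde G_t) - E(\hat G_t)$ using the first-order concavity inequality $\log\det\tilde g_t\geq\log\det\hat g_t+\hat g_t^{-1}:(\tilde g_t-\hat g_t)$ together with $\tilde g_t-\hat g_t=e^{-h(t)}(g_t-g_0)$. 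The main technical work will consist of a careful level-set decomposition of $\Omega$ that handles the simultaneous smallness of $g_t-g_0$ in $L^1$ and the largeness of $\hat g_t^{-1}$ near the floor $(1-e^{-h(t)})I$, using the $AC^2$ rate $\|g_t-g_0\|_{L^1}=O(\sqrt t)$ from \eqref{eq:control_TV_leq_sqrt_FR} and the linear lower bound $h(t)\geq ct$ afforded by the hypothesis on $h'$.
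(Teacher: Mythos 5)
Your opening moves are sound: lower semicontinuity of $E$ for the weak-$*$ topology gives $\liminf_{t\to 0^+}E(\tilde G_t)\geq E(G_0)$, and your observation that the full limit $\alpha:=\lim_{t\to 0^+}E(\tilde G_t)$ exists --- by discarding the nonnegative terms in \eqref{eq:pre} and noting $\phi(t):=E(\tilde G_t)-\tfrac{1}{2c}\int_0^t|\dot G_s|^2\,\rd s$ is monotone --- is a clean way to dispose of the case $E(G_0)=+\infty$ that the paper only implicitly handles. The real work, however, is proving $\alpha\leq E(G_0)$ when $E(G_0)<+\infty$, and this is where the proposal has a genuine gap.

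First, a sign error: $\log\det$ is \emph{concave}, so the first-order inequality reads $\log\det\tilde g_t\leq\log\det\hat g_t+\hat g_t^{-1}:(\tilde g_t-\hat g_t)$, not $\geq$. Integrating the correct inequality produces a \emph{lower} bound on $E(\tilde G_t)-E(\hat G_t)$, which is useless for upper semicontinuity. Swapping the roles (expanding around $\tilde g_t$) gives $E(\tilde G_t)\leq E(\hat G_t)+e^{-h(t)}\int\tilde g_t^{-1}:(g_0-g_t)\,\rd\Lambda$, but this is no better behaved.

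Second --- and more fundamental --- even with the sign fixed, a linearized (first-order concavity or mean-value) estimate cannot close, because it necessarily produces a factor $\|\hat g_t^{-1}\|_\infty\sim(1-e^{-h(t)})^{-1}\sim t^{-1}$, while the $AC^2$ estimate \eqref{eq:control_TV_leq_sqrt_FR} only gives $\|g_t-g_0\|_{L^1(\Lambda)}=O(\sqrt t)$. No threshold $\eta(t)$ in a level-set split $\{|g_t-g_0|_2>\eta(t)\}$ can offset the mismatch $\sqrt t / t\to\infty$: the good-set contribution scales like $\eta(t)/t$ and the bad-set measure like $\sqrt t/\eta(t)$, and there is no admissible $\eta$ making both vanish. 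The paper's proof avoids this entirely by \emph{not} linearizing: on the good set $\{g_0-g_t\leq\tfrac{t^{1/4}}{\sqrt d}I\}$ it uses the \emph{monotonicity} of $\log\det$ together with the one-sided matrix inequality $\tilde g_t\geq e^{-h(t)}g_t\geq e^{-h(t)}(g_0-\tfrac{t^{1/4}}{\sqrt d}I)$, so the troublesome $1/(1-e^{-h(t)})$ factor never appears there (it is confined to the bad set of measure $O(t^{1/4})$, where it costs only $t^{1/4}|\log t|\to 0$). Once you adopt this device, the auxiliary curve $\hat G_t$ becomes superfluous: you compare directly to $g_0$.

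Finally, the monotonicity device requires $g_0-\tfrac{t^{1/4}}{\sqrt d}I$ to remain positive semidefinite for small $t$, i.e.\ $\det g_0$ bounded away from zero, which your assumption $E(G_0)<+\infty$ (giving only $g_0(x)>0$ for a.e.\ $x$) does not ensure. Your proposal does not address this; the paper handles it in a separate second step by regularizing $G^n_t:=\S_{1/n}G_t$, applying the first step to each $G^n$, and performing a careful interchange of the limits $n\to\infty$ and $t\to 0^+$ via the decomposition of $E(\tilde G^n_t)$ into its nondecreasing part $E^n_+$ and remainder $E^n_-$, using Lemma~\ref{lem:entdecay} and Beppo Levi to control $E(\tilde G^n_t)\uparrow E(\tilde G_t)$ in $n$. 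That interchange is the delicate heart of the argument and is missing from your sketch.
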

 \begin{proof}
 Since $t=0$ and $t=1$ are completely symmetric we only prove the statement at $0$.
 Observe from Lemma~\ref{l:acreg} that $\tilde\rhp_t\to\tilde\rhp_0=\rhp_0$ in $\dFR$ as $t\to 0^+$, or equivalently in total variation.
 Since $TV$ is stronger than the weak-$*$ convergence and due to the lower semi-continuity of the entropy $E$ for the weak-$*$ convergence \cite[thm. 2.34]{ambrosio2000functions}, we conclude immediately that $t\mapsto E(\tilde\rhp_t)$ is lower semicontinuous at $t=0^+$ and therefore it suffices to prove the upper semicontinuity.
 We only consider the case when $E(G_0)=E(\tilde G_0)$ is finite, otherwise there is nothing to prove.

 {\it Step 1:} Assume first that $\det g_0(x)\geq C>0$ is bounded away from $0$.
The curve $G$ is $AC^2$ with values in $\Ma$, thus by \eqref{eq:comparison_2sides_dFR_TV} and Lemma~\ref{lequiv} it is also $AC^2$ with values in the Banach space $\Smv_{TV}$.
By Morrey's embedding \cite{AK18} we see that $G:[0,1]\to \Smv_{TV}$ is $\frac 1 2$-H\"older continuous.
Thus,
$$
\int_{\Omega} \rd \la |g_0(x)-g_t(x)|_2=
\|\rhp_0^\Lambda-\rhp_t^\Lambda\|_{TV}
\leq
\|\rhp_0-\rhp_t\|_{TV}\leq 
C t^{1/2}.
$$
Consequently
$
\int_{[|g_0-g_t|_2>t^{1/4}]}\rd \la \leq Ct^{1/4}
$,
whence 
\begin{equation} 
\label{e:compres}
\int_{[g_0-g_t>\frac {t^{1/4}} {\sqrt d} I]}\rd \la \leq Ct^{1/4}.
\end{equation}
Then 
\begin{multline}
\label{eq:estimate_limsup_E}
E(\tilde G_t)=-\int_\Omega \rd\la \log \det \left[I+e^{-h(t)}(g_t-I)\right]
\\
=-\int_{[g_0-g_t>\frac {t^{1/4}} {\sqrt d} I]} \rd\la \log \det \left[I+e^{-h(t)}(g_t-I)\right]
-\int_{[g_0-g_t\leq \frac {t^{1/4}} {\sqrt d} I]} \rd\la \log \det \left[I+e^{-h(t)}(g_t-I)\right]
\\
\leq -\int_{[g_0-g_t>\frac {t^{1/4}} {\sqrt d} I]} \rd\la \log \det \left[(1-e^{-h(t)})I\right]-\int_{[g_0-g_t\leq \frac {t^{1/4}} {\sqrt d} I]} \rd\la \log \det (e^{-h(t)}g_t)
\\
\leq  -d\log (1-e^{-h(t)})\int_{[g_0-g_t>\frac {t^{1/4}} {\sqrt d} I]} \rd\la
- \int_{[g_0-g_t\leq \frac {t^{1/4}} {\sqrt d} I]} \rd\la \log \det \left[e^{-h(t)}\left(g_0-\frac {t^{1/4}} {\sqrt d} I\right)\right]
\\
\leq  -Ct^{1/4} \log (1-e^{-h(t)})
+\Bigg\{
\int_{[g_0-g_t> \frac {t^{1/4}} {\sqrt d} I]} \rd\la \log \det \left[e^{-h(t)}\left(g_0-\frac {t^{1/4}} {\sqrt d} I\right)\right]
\\
-\int_{\Omega} \rd\la \log \det \left[e^{-h(t)}\left(g_0-\frac {t^{1/4}} {\sqrt d} I\right)\right]\Bigg\}
\\
\leq 
-Ct^{1/4} \log (1-e^{-h(t)})
-\int_{\Omega} \rd\la \log \det \left[e^{-h(t)}\left(g_0-\frac {t^{1/4}} {\sqrt d} I\right)\right]\\
+
\int_{[g_0-g_t>\frac {t^{1/4}} {\sqrt d} I]} \rd\la \log \det g_0.
\end{multline}
The last inequality simply follows from $e^{-h(t)}(g_0-\frac {t^{1/4}} {\sqrt d} I)\leq e^{-h(t)}g_0\leq g_0$, and we exploited on several occasions that $\det g_0$ is bounded away from zero to guarantee that $g_0-\frac {t^{1/4}} {\sqrt d} I$ remains positive definite at least for small times.
The first term in the r.h.s. of \eqref{eq:estimate_limsup_E} behaves as $t^\frac 14|\log h(t)|$, which tends to zero as $t\to 0^+$ due to our current assumptions on $h$.
The second term tends to $-\int_{\Omega} \rd\la \log \det g_0=E(\tilde G_0)$.
(Our temporary assumption that $\det g_0$ is bounded away from zero allows to apply Lebesgue's dominated convergence theorem and we omit the details.)
Finally, the third member tends to zero due to \eqref{e:compres} and $\log\det g_0\in L^1(\rd\Lambda)$, by absolute continuity of the integral.

{\it Step 2:} for general $g_0$ we argue now by approximation.
Consider the curve
$$
(G^n)_t:=\S_{\frac 1 n}(G_t),
$$ 
and let $(\tilde G^n)_t=\S_{h(t)}(G^n_t)$ be the perturbed curve constructed as before but starting now from $G^n$ instead of $G$.
By \eqref{eq:pre} with $h(t)\equiv \frac 1 n$,
\begin{equation}
\label{e:goodgn}
|\dot{G}^n_t|^2\leq e^{-\frac 1n } |\dot G_t|^2
\end{equation}
for a.a. $t\in (0,1)$ and therefore $G^n\in AC^2([0,1];\Ma)$. 
Obviously
$$
\det g_0^n(x)=\det \left[I+e^{-\frac 1 n} (g_0(x)-I)\right]\geq \det \left[(1-e^{-\frac 1 n}) I\right]
$$
is bounded away from $0$ (for fixed $n$), hence from step 1
\begin{equation}
\label{e:entrn}
E(\tilde G^n_0)\geq \limsup_{t\to 0^+} E(\tilde G^n_t).  
\end{equation}
Note that by Lemma~\ref{lem:entdecay} the integrand in the entropy $E(\tilde \rhp ^n_t)$ is nondecreasing in $n$ for any $x\in\Omega$ ($t$ is fixed here), thus by Beppo Levi's monotone convergence theorem
\begin{equation}
\label{e:entrn1}
\lim_{n\to \infty} E(\tilde G^n_t)=E(\tilde G_t)
\qqtext{for all}
t\in[0,1].
\end{equation}
Set 
$$
E_+^n(t):=\int_0^t \left[\frac d {ds} E(\tilde G^n_s)\right]_+\,ds,
\qquad
\ E_-^n(t):=E(\tilde G^n_t)-E_+^n(t).
$$
That $\left[\frac d {ds} E(\tilde G^n_s)\right]_+\geq 0$ is indeed integrable in this definition follows from \eqref{eq:pre} and our assumption that $h'(t)$ is bounded away from zero (and positive) near $t=0$.
Recall from Lemma~\ref{l:chain} that $t\mapsto E(\tilde G_t)$ is locally absolutely continuous, so the only scenario possibly contradicting the upper semi-continuity would be an initial upwards jump.
In the limit $n\to+\infty$ the ``good part'' $E_+^n$ accordingly contains the nondecreasing absolutely continuous part of $E(\tilde G^n_t)$, while the ``bad part'' $E_-^n$ contains the nonincreasing absolutely continuous part together with any possible (asymptotically) ``bad'' jump at $t=0^+$.
Dropping the first two non-negative terms in \eqref{eq:pre} and integrating in time from $0$ to small $t>0$ implies that 
\begin{equation*}
0\leq E_+^n(t)\leq \frac{1}{2\inf_{[0,t]} h'} \int_0^t |\dot{G}^n_s|^2\, \rd s
\leq C  \int_0^t|\dot G_s|^2\, \rd s,
\end{equation*} 
where the last inequality follows from \eqref{e:goodgn}.
Note that we have  employed the fact that $h'$ is positive and bounded away from $0$ near $t=0$.
Since $G$ is $AC^2$ in the right-hand side, we infer that
\begin{equation}
\label{eq:limsupt_limsupn_E+}
 \limsup_{t\to 0^+}\limsup_{n\to \infty} E_+^n(t)=0.
\end{equation}
It is clear that $E_-^n(t)$ is a non-increasing function at least for $t>0$.
 Using the elementary fact that
 $
 \lim_{n\to \infty} (a_n+b_n)=\liminf_{n\to \infty} a_n+\limsup_{n\to \infty} b_n
 $
 for any real sequences such that the first limit exists, we estimate
\begin{multline*}
\limsup_{t\to 0^+}  E(\tilde G_t)= \limsup_{t\to 0^+} \lim_{n\to \infty} E(\tilde G^n_t)
= \limsup_{t\to 0^+} \lim_{n\to \infty}\left( E^n_-(t) + E^n_+(t) \right)
\\
=
\limsup_{t\to 0^+} \left(\liminf_{n\to \infty} E_-^n(t)+\limsup_{n\to \infty} E_+^n(t)\right)
\\
\leq \limsup_{t\to 0^+} \liminf_{n\to \infty} E_-^n(t)+\limsup_{t\to 0^+}\limsup_{n\to \infty} E_+^n(t)
\\
\overset{\eqref{eq:limsupt_limsupn_E+}}{=} \limsup_{t\to 0^+} \liminf_{n\to \infty} E_-^n(t)
\leq
\sup_{t>0} \liminf_{n\to \infty} E_-^n(t).
\end{multline*}
Leveraging the time monotonicity $E^n_-\downarrow$ for $t>0$ we continue as
\begin{multline*}
 \sup_{t>0} \liminf_{n\to \infty} E_-^n(t)
 \leq \liminf_{n\to \infty} \sup_{t>0} E_-^n(t)
\\
=  \liminf_{n\to \infty} \limsup_{t\to 0^+} E_-^n(t)
\leq  \liminf_{n\to \infty} \limsup_{t\to 0^+}\left (E_-^n(t)+E_+^n(t)\right)
\\
=  \liminf_{n\to \infty} \limsup_{t\to 0^+} E(\tilde G^n_t)
\overset{\eqref{e:entrn}}{=}  \liminf_{n\to \infty} E(\tilde G^n_0)
\overset{\eqref{e:entrn1}}{=}E(\tilde G_0).
\end{multline*}
\end{proof}
We start now carefully choosing some specific $h(t)$ in order to retrieve quantitative information
\begin{lem}
\label{theo:upper_bound_recovery}
 For $\eps>0$ and a given path $G\in AC^2([0,1];\Ma_{FR})$ with $E(G_0),E(G_1)<+\infty$, let $G^\eps$ be the path obtained as
 $$
 \rhp^\eps_t:=\S_{h(t)} G_t 
 \qqtext{with}
 h(t):=\eps \min(t,1-t).
 $$
 Then $G^\eps\in AC^2([0,1];\Ma_{FR})$, $F(G^\eps)\in L^1(0,1)$, and
 \begin{equation}
  \label{eq:upper_bound_recovery}
\frac{1}{2}\int_0^1\left|\dot{G^\eps_t}\right|^2\, \rd t
+
\frac{\eps^2}{2}\int_0^1 F(G^\eps_t)\, \rd t
\leq \frac{1}{2}\int_0^1\left|\dot G_t\right|^2+\eps(E(G_0)+E(G_1)).
 \end{equation} 
\end{lem}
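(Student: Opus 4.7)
The proof plan is to specialize the inequality \eqref{eq:pre} to the choice $h(t)=\eps\min(t,1-t)$ and integrate it across $[0,1]$, using the chain rule for $E(\tilde G_t)$ from Lemma~\ref{l:chain} together with the boundary continuity from Lemma~\ref{l:cont}. First I would verify that this $h$ satisfies all the standing assumptions of the preceding lemmas: it is non-negative, Lipschitz, strictly positive on $(0,1)$, vanishes at the endpoints with $|h'(t)|=\eps$ for a.e.\ $t$ (in particular, bounded away from $0$ near $t=0,1$), and has a single kink at $t=1/2$. In particular $G^\eps_0=\S_0 G_0 = G_0$ and $G^\eps_1=G_1$.

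Next I would apply \eqref{eq:pre}, which for this choice of $h$ reads
\begin{equation*}
\tfrac{1}{2}|\dot{G^\eps_t}|^2 + \tfrac{\eps^2}{2}F(G^\eps_t) + h'(t)\tfrac{d}{dt}E(G^\eps_t) \leq \tfrac{1}{2}|\dot G_t|^2
\end{equation*}
for a.a.\ $t\in(0,1)\setminus\{1/2\}$. I would then integrate separately on $(\delta,1/2)$ and $(1/2,1-\delta)$ for small $\delta>0$, using that $h'\equiv+\eps$ on the first interval and $h'\equiv-\eps$ on the second, and invoking Lemma~\ref{l:chain} to treat $\int h'\,\frac{d}{dt}E(G^\eps_t)\,\rd t$ as a difference of boundary values. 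This yields
\begin{equation*}
\tfrac{1}{2}\!\int_\delta^{1-\delta}\!|\dot{G^\eps_t}|^2\rd t + \tfrac{\eps^2}{2}\!\int_\delta^{1-\delta}\!F(G^\eps_t)\rd t
\leq \tfrac{1}{2}\!\int_\delta^{1-\delta}\!|\dot G_t|^2 \rd t + \eps\bigl[E(G^\eps_\delta)+E(G^\eps_{1-\delta})-2E(G^\eps_{1/2})\bigr].
\end{equation*}
Passing to the limit $\delta\to 0^+$ and appealing to Lemma~\ref{l:cont} (which is applicable precisely because $h(0)=h(1)=0$ and $|h'|$ is bounded away from zero near the endpoints) replaces $E(G^\eps_\delta)\to E(G_0)$ and $E(G^\eps_{1-\delta})\to E(G_1)$. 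Finally, discarding the non-negative term $-2\eps E(G^\eps_{1/2})\leq 0$ gives \eqref{eq:upper_bound_recovery}, and the finiteness of the right-hand side entails both $F(G^\eps)\in L^1(0,1)$ and the global $AC^2$ bound $\int_0^1|\dot{G^\eps_t}|^2\rd t<+\infty$. Combined with the strong continuity of $\tilde G$ at the endpoints already provided by Lemma~\ref{l:acreg}, this promotes $G^\eps\in AC^2_{loc}((0,1);\Ma_{FR})$ to a global statement on $[0,1]$.

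The main subtlety lies not in the algebra but in the boundary behaviour: the integration by parts of $\int_0^1 h'(t)\frac{d}{dt}E(G^\eps_t)\rd t$ genuinely requires continuity of $E(G^\eps_t)$ at $t=0$ and $t=1$, which is delicate because $E$ is only a priori lower semicontinuous and $G^\eps_t$ typically fails to be absolutely continuous w.r.t.\ $\Lambda$. This is exactly what Lemma~\ref{l:cont} provides, and the role of the linear profile $h(t)=\eps\min(t,1-t)$ is to guarantee the hypotheses of that lemma while making the cross-term $h'\frac{d}{dt}E$ telescope cleanly. The internal kink at $t=1/2$ is harmless since $t\mapsto E(G^\eps_t)$ is continuous there by Lemma~\ref{l:chain}, and the jump of $h'$ contributes only the bulk term $-2\eps E(G^\eps_{1/2})$ which has the favourable sign.
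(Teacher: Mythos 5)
Your proposal is correct and follows essentially the same route as the paper: integrate \eqref{eq:pre} on $(\delta,1/2)$ and $(1/2,1-\delta)$ where $h'$ is constant, telescope the entropy term via Lemma~\ref{l:chain}, pass to the limit $\delta\to 0$ with Lemma~\ref{l:cont}, and discard the favourable $-2\eps E(G^\eps_{1/2})$ term. Your closing remark that the finite $L^2$ bound on $|\dot G^\eps|$ together with the boundary continuity from Lemma~\ref{l:acreg} upgrades $AC^2_{loc}$ to $AC^2([0,1])$ makes explicit a point the paper leaves implicit.
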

\begin{proof}
 Take \eqref{eq:pre} and integrate by parts separately on the time intervals $(\delta,1/2)$ and $(1/2,1-\delta)$ for some small $\delta>0$ to obtain
 \begin{equation}
  \label{eq:upper_bound_recovery1}
\frac{1}{2}\int_\delta^{1-\delta}\left|\dot{G^\eps_t}\right|^2\, \rd t+
\frac{\eps^2}{2}\int_\delta^{1-\delta} F(G^\eps_t)\, \rd t+ 2\eps E(G^\eps_{1/2})
\leq
\frac{1}{2}\int_\delta^{1-\delta}\left|\dot G_t\right|^2+\eps(E(G^\eps_\delta)+E(G^\eps_{1-\delta})).
 \end{equation}
 (Note that $h''(t)\equiv 0$ separately inside both intervals.)
 The odd-looking term $2\eps E(G^\eps_{1/2})$ arises from the two boundary terms at $t=1/2^\pm$ in the two integrations by parts, and can be safely discarded since it is non-negative. 
Let us point out that although this term is simply ignored here, it will be crucial later on in our proof of the geodesic convexity, Theorem~\ref{theo:1/2_convex}.
 
 Our assumption that $G\in AC^2$ together with Lemma \ref{l:cont} give a uniform bound for the right-hand side
 \begin{multline*}
 \limsup\limits_{\delta\to 0} \frac{1}{2}\int_\delta^{1-\delta}\left|\dot G_t\right|^2+\eps(E(G^\eps_\delta)+E(G^\eps_{1-\delta}))
 =
 \frac{1}{2}\int_0^{1}\left|\dot G_t\right|^2 + \eps(E(G^\eps_0)+E(G^\eps_{1}))
 \\
 =\frac{1}{2}\int_0^{1}\left|\dot G_t\right|^2 + \eps(E(G_0)+E(G_{1}))<+\infty
 \end{multline*}
 (since we assumed that the endpoints have finite entropy).
 As a consequence \eqref{eq:upper_bound_recovery1} holds with $\delta=0$, which is exactly our claim.
 \end{proof} 

We now have enough technical tools to prove the $\Gamma$-convergence.
For $G_0, G_1\in \Ma$ and $G\in C([0,1];\Ma)$, let
$$
\iota_{G_0, G_1} (G)=
\begin{cases}
0 & \mbox{if }G|_{t=0} =G_0 \mbox{ and }G|_{t=1} =G_1\\
+\infty & \mbox{otherwise}
\end{cases}
%
$$
be the convex indicator of the endpoint constraints.
For any $G\in C([0,1];\Ma)$ we define the kinetic action
$$
\mathcal K(G):=
\frac 1 2\int_0^1|\dot G_t|_{FR}^2\,\rd t ,
$$
 with the usual convention that $\mathcal K(G)=+\infty$ whenever $G\not\in AC^2([0,1];\Ma_{FR})$.
We also set 
$$
\mathcal F(G):=\int_0^1 F (G_t)\,\rd t
$$
where the integral may be infinite.

\begin{theo} \label{t:gc}
Let $G_0, G_1\in \Ma$ with $E(G_0),E(G_1)<+\infty$.
Then
\begin{equation}
\label{e:glim}
\Gamma-\lim_{\epsilon\to 0}  \left[ \mathcal K+ \frac {\epsilon^2} 2 \mathcal F + \iota_{G_0, G_1} \right]=\mathcal K  + \iota_{G_0, G_1}
\end{equation}
both in the strong uniform topology of $C([0,1];\Ma)$ and pointwise-in-time weak-$*$ topology of $\Ma\subset(C_0(\Omega;\Sm) )^*$.
\end{theo}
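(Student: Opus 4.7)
\emph{Plan.} The strategy is the textbook split into a $\Gamma$-liminf inequality plus the exhibition of a recovery sequence, and essentially all the hard work is already packaged in Lemma~\ref{theo:upper_bound_recovery}. Since the strong uniform topology on $C([0,1];\Ma)$ is finer than the pointwise-in-time weak-$*$ topology, I plan to establish the $\Gamma$-liminf inequality relative to the weaker topology and the $\Gamma$-limsup relative to the stronger one: together these two statements immediately yield $\Gamma$-convergence in both topologies simultaneously.

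For the $\Gamma$-liminf, let $G^\eps$ converge pointwise weak-$*$ to $G$. Since $\frac{\eps^2}{2}\mathcal F\geq 0$ on the full domain, it suffices to show
\begin{equation*}
\liminf_{\eps\to 0}\mathcal K(G^\eps)\geq \mathcal K(G)
\qquad\text{and}\qquad
\liminf_{\eps\to 0}\iota_{G_0,G_1}(G^\eps)\geq \iota_{G_0,G_1}(G).
\end{equation*}
The first inequality is exactly the content of Proposition~\ref{prop:AC2_energy_convex_LSC}. The second is immediate: if the liminf of the indicators is finite, then along a subsequence $G^\eps|_{t=0}=G_0$ and $G^\eps|_{t=1}=G_1$, and uniqueness of the weak-$*$ limit at $t=0,1$ transfers these identities to $G$.

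For the $\Gamma$-limsup I only need to produce a recovery sequence when $\mathcal K(G)+\iota_{G_0,G_1}(G)<+\infty$ (otherwise the constant sequence $G^\eps\equiv G$ works trivially). In that case $G\in AC^2([0,1];\Ma_{FR})$ with boundary values $G_0,G_1$ whose entropies are finite by assumption, so I simply take the regularization $G^\eps_t:=\S_{h(t)}G_t$ with $h(t):=\eps\min(t,1-t)$ already built in Lemma~\ref{theo:upper_bound_recovery}. The quantitative estimate \eqref{eq:upper_bound_recovery} immediately gives
\begin{equation*}
\limsup_{\eps\to 0}\left[\mathcal K(G^\eps)+\tfrac{\eps^2}{2}\mathcal F(G^\eps)\right]\leq \mathcal K(G),
\end{equation*}
and the endpoint constraint is preserved since $h(0)=h(1)=0$. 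To verify uniform convergence $G^\eps\to G$ I invoke the closed form \eqref{eq:explicit_heat_flow} of the heat semigroup to write $G^\eps_t-G_t=(e^{-h(t)}-1)(G_t-\Lambda I)$, which controls
\begin{equation*}
\sup_{t\in[0,1]}\|G^\eps_t-G_t\|_{TV}\leq 2\bigl(1-e^{-\eps/2}\bigr)\xrightarrow[\eps\to 0]{}0,
\end{equation*}
and Corollary~\ref{cor:top} (together with Theorem~\ref{theo:comparison_2sides_dFR_TV} to bypass the mass dependence in \eqref{eq:comparison_2sides_dFR_TV}, using that $m^\eps_t=1$ on $\Ma$) upgrades TV-uniform convergence to $d_{FR}$-uniform convergence, which is the underlying topology of $C([0,1];\Ma)$.

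The whole argument is really an assembly of previously established pieces; the only nontrivial ingredient is the quantitative regularization estimate in Lemma~\ref{theo:upper_bound_recovery} itself, which in turn rests on the Otto-calculus identity \eqref{eq:pre} along the heat-flow-perturbed curve and the delicate boundary continuity of the entropy from Lemma~\ref{l:cont}. Once those tools are available I do not expect any further obstacle: the present statement reduces to one line of lower-semicontinuity plus one line of upper bound.
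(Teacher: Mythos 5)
Your proposal is correct and follows essentially the same line as the paper's proof: split into a $\Gamma$-liminf for the weaker (pointwise weak-$*$) topology using Proposition~\ref{prop:AC2_energy_convex_LSC} and lower semicontinuity of the indicator, plus a $\Gamma$-limsup for the stronger uniform topology by taking the heat-flow-regularized recovery sequence $G^\eps_t=\S_{h(t)}G_t$ from Lemma~\ref{theo:upper_bound_recovery}, and verifying TV-uniform convergence via the explicit semigroup formula before upgrading to $d_{FR}$ via \eqref{eq:comparison_2sides_dFR_TV} and Lemma~\ref{lequiv}. No meaningful deviation from the paper.
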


\begin{proof}
Since the strong uniform topology (w.r.t. to $TV$ or $\dFR$, equivalently) is stronger than the pointwise weak-$*$ topology, it suffices to prove the $\Gamma-\liminf$ for the latter topology and the $\Gamma-\limsup$ for the former one.

The $\Gamma-\liminf$ is quite obvious since $\mathcal K$ is lower-semicontinuous w.r.t. pointwise-in-time weak-$*$ topology (Proposition \ref{prop:AC2_energy_convex_LSC}), the convex indicator $\iota_{G_0, G_1}$ is also l.s.c., and $\mathcal K+\iota_{G_0, G_1}\leq \mathcal K+ \epsilon^2 \mathcal F + \iota_{G_0, G_1}$.

To prove the $\Gamma-\limsup$, for any sequence $\eps_n\to 0$ it suffices to construct a recovery sequence for \eqref{e:glim}. 
Moreover, we can restrict to curves $G$ such that the right-hand side of \eqref{e:glim} is finite, that is, $G\in AC^2([0,1];\Ma_{FR})$ with $G(0)=G_0,\ G(1)=G_1$ (otherwise there is noting to prove).
Thus, the desired recovery sequence $G^n\in C([0,1];\Ma)$ should satisfy
\begin{equation}
\label{recovery}
\limsup_{n\to \infty}\left\{\frac 1 2\int_0^1|\dot G^n_t|^2\rd t+\frac {\eps_n^2} 2\int_0^1 F (G^n_t) \rd t\right\}\leq \frac 1 2\int_0^1|\dot G_t|^2
\end{equation}  
with $G^n(0)=G_0$ and $G^n_1=G_1$.
By \eqref{eq:upper_bound_recovery}, the sequence $G^{\eps_n}$ from Lemma \ref{theo:upper_bound_recovery} does the job. Indeed, the time-continuity of $G^{\eps_n}$ follows from Lemma \ref{l:acreg}. 
It remains to check that $\rhp^{\eps_n}\to \rhp$ in the uniform topology of $C([0,1]; \Ma)$.
From the representation formula \eqref{eq:explicit_heat_flow} for solutions of the heat flow we compute explicitly 
\begin{multline*}
\|\rhp^{\eps_n}_t-\rhp_t\|_{TV}
=
\left\|\left(\Lambda I +e^{-\eps_n\min(t,1-t)}(\rhp_t-\Lambda I)\right) - \rhp_t\right\|_{TV}
\\
=\left(1-e^{-\eps_n\min(t,1-t)}\right)\|\rhp_t-\Lambda I\|_{TV}
\leq \left(1-e^{-\eps_n\min(t,1-t)}\right)\left(\|\rhp_t\|_{TV}  + \|\Lambda I\|_{TV}\right)
\\
= 2 \left(1-e^{-\eps_n\min(t,1-t)}\right)
\to 0
\end{multline*}
uniformly in $t$ as $\eps_n\to 0$. 
By the lower bound in \eqref{eq:comparison_2sides_dFR_TV} and Lemma \ref{lequiv} we conclude that
$$
\dihs^2(\rhp_t^{\eps_n},\rhp_t)\leq C
\dih^2(\rhp_t^{\eps_n},\rhp_t)
\leq 
C \|\rhp_t^{\eps_n}-\rhp_t\|_{TV}\to 0
$$
uniformly in $t$ as $\eps_n\to 0$, and the proof is complete.
\end{proof}
%
%
As an immediate and natural consequence we have that the $\eps$-geodesics (minimizers of $\mathcal K+\eps^2\mathcal F$) converge to FR-geodesics (minimizers of $\mathcal K$):
\begin{cor}
\label{c:gc}
Let $\eps_k\searrow 0$ and $G^k$ be the corresponding solution of the Schr\"odinger problem \eqref{e:miniy} with $\eps=\eps_k$ and $E(G_0), \, E(G_1)<\infty$.
Then there exists $G\in C([0,1];\Ma)$ such that, up to a subsequence,
$$
G^k_t\xrightarrow[k\to\infty]{}G_t
\qquad\mbox{weakly-}*
$$
for every $t\in [0,1]$, and $G$ solves the geodesic problem \eqref{e:minifr}.
Moreover,
$$
2\mathfrak S_{\eps_k}(G_0,G_1)\to \dFR^2(G_0,G_1).
$$
\end{cor}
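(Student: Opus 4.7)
The plan is to combine the $\Gamma$-convergence statement of Theorem~\ref{t:gc} with a compactness argument for the minimizers $G^k$, following the standard ``fundamental theorem of $\Gamma$-convergence'' scheme.

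First I would establish an a~priori upper bound for $\mathfrak S_{\eps_k}(G_0,G_1)$. Corollary~\ref{theo:exist_geodesicsfr} furnishes a constant-speed Fisher-Rao geodesic $G^{\star}$ joining $G_0$ to $G_1$ with $\mathcal K(G^{\star})=\tfrac{1}{2}\dFR^2(G_0,G_1)$. Since $E(G_0),E(G_1)<\infty$, Lemma~\ref{theo:upper_bound_recovery} applied to $G^{\star}$ produces an admissible perturbation $G^{\star,\eps_k}$ with
$$
\frac{1}{2}\int_0^1|\dot G^{\star,\eps_k}_t|^2\,\rd t+\frac{\eps_k^2}{2}\int_0^1F(G^{\star,\eps_k}_t)\,\rd t\leq \frac{1}{2}\dFR^2(G_0,G_1)+\eps_k\bigl(E(G_0)+E(G_1)\bigr).
$$
Since $G^k$ is a minimizer, this yields $2\mathfrak S_{\eps_k}(G_0,G_1)\leq \dFR^2(G_0,G_1)+2\eps_k(E(G_0)+E(G_1))$, and in particular
$$
\limsup_{k\to\infty}\,2\mathfrak S_{\eps_k}(G_0,G_1)\leq \dFR^2(G_0,G_1).
$$

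Next I would extract compactness from this bound. Dropping the non-negative Fisher term shows that $\mathcal K(G^k)\leq \mathfrak S_{\eps_k}(G_0,G_1)\leq C$ uniformly in $k$, so the family $\{G^k\}$ is equi-$AC^2$ in $(\Ma,\dFR)$. Repeating verbatim the argument from the proof of Lemma~\ref{lem:dihs_LSC_weak*} (uniform $\tfrac12$-H\"older TV bound via Lemma~\ref{lem:mass_estimate_TV} with $m_t\equiv 1$, combined with weak-$*$ sequential compactness of $\Ma\subset (C_0(\Omega;\Sm))^*$ and the refined Arzel\`a-Ascoli statement of Lemma~\ref{L:aa}) produces, along a subsequence, a limit curve $G\in C([0,1];\Ma)$ such that $G^k_t\narrowcv G_t$ for every $t\in[0,1]$ and $G|_{t=0}=G_0$, $G|_{t=1}=G_1$.

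Finally I would invoke the $\Gamma$-liminf part of Theorem~\ref{t:gc} in the pointwise weak-$*$ topology: since $\mathcal F\geq 0$,
$$
\frac{1}{2}\dFR^2(G_0,G_1)\leq \mathcal K(G)+\iota_{G_0,G_1}(G)\leq \liminf_{k\to\infty}\left[\mathcal K(G^k)+\frac{\eps_k^2}{2}\mathcal F(G^k)+\iota_{G_0,G_1}(G^k)\right]=\liminf_{k\to\infty}\mathfrak S_{\eps_k}(G_0,G_1).
$$
Combined with the upper bound from the first step, this forces $2\mathfrak S_{\eps_k}(G_0,G_1)\to \dFR^2(G_0,G_1)$ and $\mathcal K(G)=\tfrac12\dFR^2(G_0,G_1)$, so that $G$ is an FR-geodesic joining $G_0$ and $G_1$, as required. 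The only non-routine ingredient is the compactness step, and this is essentially already contained in the proof of Lemma~\ref{lem:dihs_LSC_weak*}; the rest is a direct application of the $\Gamma$-convergence together with the recovery sequence of Lemma~\ref{theo:upper_bound_recovery}.
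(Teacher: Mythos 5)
Your proposal is correct and follows essentially the same strategy as the paper: equi-coercivity (uniform bound on $\mathcal K(G^k)$), refined Arzelà--Ascoli compactness borrowed from the proof of Lemma~\ref{lem:dihs_LSC_weak*}, and then the $\Gamma$-convergence of Theorem~\ref{t:gc}. The only cosmetic difference is that you spell out the standard $\liminf$/$\limsup$ sandwich explicitly, whereas the paper delegates the conclusion to the abstract ``limits of minimizers are minimizers'' statement from Braides and bounds $\mathcal K(G^k)$ by comparison with the fixed bridge $G^1$ rather than via the recovery sequence; both yield the same uniform bound and the same conclusion.
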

\begin{proof}
Recall that $\Gamma$-convergence precisely guarantees that limits of minimizers are minimizers, \cite[Theorem 1.21]{Braides}, thus in view of Theorem~\ref{t:gc} it suffices to prove that the set of minimizers $\{G^k\}$  is relatively compact in the pointwise-in-time weak-$*$ topology.
Indeed, the $AC^2$-energies of the curves $G^k$ are uniformly bounded since
$$
\mathcal K(G^k)\leq \mathcal K(G^k)+ \frac {\epsilon_k^2} 2 \mathcal F(G^k) \leq \mathcal K(G^1)+ \frac {\epsilon_k^2} 2 \mathcal F(G^1)  \leq \mathcal K(G^1)+ \frac {\epsilon_1^2} 2 \mathcal F(G^1)<+\infty.
$$
By the fundamental estimate \eqref{eq:control_TV_leq_sqrt_FR} with $m^k_t=M=1$ on $\Ma$ we get
$$
\forall\,t,s\in [0,1],\,\forall k\in \N:\qquad \|\rhp^k_s-\rhp^k_t\|_{TV}\leq C|t-s|^{1/2}.
$$
Arguing as in the proof of Lemma \ref{lem:dihs_LSC_weak*}, we deduce that there exists a $TV$-continuous curve $(\rhp_t)_{t\in [0,1]}$ connecting $\rhp_0$ and $\rhp_1$ such that
\begin{equation*}
\forall t\in [0,1]:\qquad \rhp^k_t\to \rhp_t\quad\mbox{ weakly-}*
\end{equation*}
along some subsequence $k\to\infty$. 
\end{proof}
\begin{remark}If $E(\rhp_0)$ or $E(\rhp_1)$ are infinite, a careful regularization (involving of course the heat flow) allows to prove that the $\eps_n$-geodesics with suitably regularized endpoints $\rhp_0^n,\rhp_1^n\to \rhp_0,\rhp_1$ still converge to the geodesic with endpoints $\rhp_0,\rhp_1$.
The statement and proof both become slightly more involved (essentially one should make sure to regularize enough so that $\eps_n[E(\rhp^n_0)+E(\rhp^n_1)]\to 0$), and we omit the details for the sake of brevity.\end{remark}

We finish this section with a slightly different and perhaps unexpected consequence of our previous construction of the regularized curves:
\begin{theo}
\label{theo:1/2_convex}
 The entropy $E:\Ma\to \R^+$ is $\frac 12$-geodesically convex for the Fisher-Rao distance, namely for any $\rhp_0,\rhp_1\in\Ma$ and any FR geodesic $(\rhp_\theta)_{\theta\in[0,1]}$ joining $\rhp_0,\rhp_1$ there holds
 \begin{equation}
  E(\rhp_{\theta})\leq (1-\theta) E(\rhp_0) + \theta E(\rhp_1) -\frac 14\theta(1-\theta)\dFR^2(\rhp_0,\rhp_1)
  \qqtext{for all}
  \theta\in(0,1).
 \end{equation}
\end{theo}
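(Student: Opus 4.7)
The plan is to repeat the heat-flow perturbation used in Lemma \ref{theo:upper_bound_recovery} but with a tent function peaked at $\theta$ rather than at $1/2$, retaining the previously discarded boundary contribution $E(\tilde{\rhp}_\theta)$ and expanding the $e^{-h(t)}$ factor on the right-hand side of \eqref{eq:pre} to first order in $\eps$. We may assume $E(\rhp_0), E(\rhp_1)<\infty$ (otherwise the claim is trivial). Fix a Fisher-Rao geodesic $(\rhp_t)_{t\in[0,1]}$ joining $\rhp_0,\rhp_1$ (Corollary \ref{theo:exist_geodesicsfr}), so $|\dot{\rhp}_t|^2\equiv L^2:=\dFR^2(\rhp_0,\rhp_1)$. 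For $\eps>0$ set
\[
\phi(t):=\min\!\left(\tfrac{t}{\theta},\tfrac{1-t}{1-\theta}\right),\qquad h(t):=\eps\,\phi(t),\qquad \tilde\rhp_t:=\S_{h(t)}\rhp_t.
\]
Then $h(0)=h(1)=0$, $\phi(\theta)=1$, and $h'$ is the positive constant $\eps/\theta$ on $(0,\theta)$ and the negative constant $-\eps/(1-\theta)$ on $(\theta,1)$; in particular $|h'|$ is bounded away from $0$ near $t=0,1$, so Lemma \ref{l:cont} applies and $E(\tilde\rhp_t)$ is continuous on $[0,1]$ with $\tilde\rhp_0=\rhp_0$, $\tilde\rhp_1=\rhp_1$.

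Integrating the pointwise estimate \eqref{eq:pre} over $[\delta,\theta]\cup[\theta,1-\delta]$ and letting $\delta\to 0^+$ (legitimate thanks to Lemma \ref{l:cont} and the finiteness of the endpoint entropies, compare the proof of Lemma \ref{theo:upper_bound_recovery}), the chain rule (Lemma \ref{l:chain}) together with the piecewise-constant form of $h'$ yield the telescoping identity
\[
\int_0^1 h'(t)\,\tfrac{d}{dt} E(\tilde\rhp_t)\,\rd t=\frac{\eps}{\theta(1-\theta)}E(\tilde\rhp_\theta)-\frac{\eps}{\theta}E(\rhp_0)-\frac{\eps}{1-\theta}E(\rhp_1).
\]
Discarding the non-negative Fisher-information term in \eqref{eq:pre} and using $\int_0^1|\dot{\tilde\rhp}_t|^2\rd t\geq \dFR^2(\tilde\rhp_0,\tilde\rhp_1)=L^2$, the time-integrated form of \eqref{eq:pre} becomes
\[
\frac{L^2}{2}+\frac{\eps}{\theta(1-\theta)}E(\tilde\rhp_\theta)\leq \frac{L^2}{2}\int_0^1 e^{-\eps\phi(t)}\,\rd t+\frac{\eps}{\theta}E(\rhp_0)+\frac{\eps}{1-\theta}E(\rhp_1).
\]

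The curvature correction now emerges from a first-order expansion of the exponential. An immediate computation gives $\int_0^1\phi(t)\,\rd t=\tfrac 1 2$ (mean of a unit-height tent function on $[0,1]$), while $e^{-x}\leq 1-x+x^2/2$ for $x\geq 0$ yields
\[
\int_0^1 e^{-\eps\phi(t)}\,\rd t - 1 \leq -\frac{\eps}{2}+O(\eps^2).
\]
Substituting, dividing by $\eps>0$ and rearranging,
\[
\frac{E(\tilde\rhp_\theta)}{\theta(1-\theta)}\leq \frac{E(\rhp_0)}{\theta}+\frac{E(\rhp_1)}{1-\theta}-\frac{L^2}{4}+O(\eps).
\]
From the explicit heat-flow formula \eqref{eq:explicit_heat_flow} we have $\tilde\rhp_\theta=\S_\eps \rhp_\theta\to \rhp_\theta$ strongly in TV as $\eps\to 0$, hence weakly-$*$. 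Combined with the weak-$*$ lower semicontinuity of $E$ on $\Ma$ (cf.\ \eqref{e:mainentropy} and \cite[Thm.~2.34]{ambrosio2000functions}), this gives $E(\rhp_\theta)\leq \liminf_{\eps\to 0}E(\tilde\rhp_\theta)$; multiplying through by $\theta(1-\theta)$ yields the desired inequality.

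The main technical obstacle is to make rigorous the integration of \eqref{eq:pre} all the way to $t=0,1$, since a priori $\tilde\rhp\in AC^2_{loc}((0,1);\Ma_{FR})$ only. This is exactly what the continuity Lemma \ref{l:cont} was designed for, and the tent function $h$ was tailored so that its hypotheses ($h(0)=h(1)=0$ and $|h'|$ bounded away from zero at both endpoints) are met. All other steps are quantitative refinements of the construction already carried out in Lemma \ref{theo:upper_bound_recovery}.
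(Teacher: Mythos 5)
Your proof is correct and follows essentially the same route as the paper: you use the same heat-flow perturbation $\S_{\eps H_\theta(t)}\rhp_t$ with the hat function peaked at $\theta$, integrate \eqref{eq:pre} with boundary terms supplied by Lemmas \ref{l:chain} and \ref{l:cont}, discard the Fisher term, use minimality of the geodesic to bound $\int_0^1|\dot{\tilde\rhp}_t|^2\ge L^2$, rearrange, and pass to the limit via weak-$*$ lower semicontinuity of $E$. The only (inconsequential) stylistic difference is that you bound $\int_0^1 e^{-\eps\phi}$ by the Taylor estimate $e^{-x}\le 1-x+x^2/2$, whereas the paper computes $\lim_{\eps\to 0}\int_0^1(e^{-\eps H_\theta}-1)/\eps=-\int_0^1 H_\theta=-\tfrac12$ exactly.
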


\begin{proof}
Observe that our statement is vacuous if either of the endpoints $\rhp_0,\rhp_1$ has infinite entropy, thus we need only consider $E(\rhp_0),E(\rhp_1)<+\infty$.
Pick any geodesic $(\rhp_t)_{t\in[0,1]}$, fix $\theta\in(0,1)$, and let
$$
 H_\theta(t):=
 \left\{
 \begin{array}{ll}
  \displaystyle{\frac{1}{\theta}} t & \mbox{if }t\in [0,\theta],\\
  - \displaystyle{\frac{1}{1-\theta}(t-1)} & \mbox{if }t\in [\theta,1]
 \end{array}
 \right.
 $$
 be the hat function centered at $t=\theta$ with height $1$ and vanishing at the boundaries.
 Setting $h(t):=\eps H_\theta(t)$ for small $\eps>0$, let
 $$
 \rhp^\eps_t:= \S_{h(t)}\rhp_t
 $$
 be the perturbed curve constucted as before.
 Integrating \eqref{eq:pre} in time separately on $[0,\theta]$ and $[\theta,1]$, discarding the nonnegative Fisher information term, and leveraging the continuity of the entropy from Lemma~\ref{l:cont} (with $G^\eps_t=G_t$ at the endpoints $t=0,1$), we get
 $$
 \frac{1}{2}\int_0^1\left|\dot \rhp^\eps_t\right|^2\rd t
 +0
 +
\frac\eps\theta  \left[E(\rhp^\eps_\theta)-E(\rhp_0) \right]
+
\frac{\eps}{1-\theta}  \left[E(\rhp^\eps_\theta)-E(\rhp_1) \right]
\leq
\frac{1}{2}\int_0^1 e^{-\eps H_\theta(t)}\left|\dot \rhp_t\right|^2\rd t.
 $$
 Because $\rhp^\eps$ is an admissible curve connecting $\rhp_0,\rhp_1$ the first term in the left-hand side is larger than the $AC^2$ energy of the minimizing geodesic $\rhp$: Multiplying by $\theta(1-\theta)/\eps>0$ and rearranging gives
 $$
  E(\rhp^\eps_{\theta})\leq (1-\theta) E(\rhp_0) + \theta E(\rhp_1) +\frac {\theta(1-\theta)}{2}\int_0^1\frac{e^{-\eps H_\theta(t)}-1}{\eps}|\dot \rhp_t|^2\rd t.
 $$
 Since $\rhp^\eps \to \rhp$ weakly-$*$ for any fixed time, the lower semicontinuity of the entropy allows to take the $\liminf$ as $\eps\to 0$ in the left-hand side.
 For the right-hand side, observe that because $\rhp$ is a minimizing geodesic it has constant speed, $|\dot \rhp_t|^2=cst =\dFR^2(\rhp_0,\rhp_1)$.
 As a consequence we get
 \begin{multline*}
  E(\rhp_{\theta})
  \leq \liminf\limits_{\eps\to 0} E(\rhp^\eps_{\theta})
  \\
  \leq (1-\theta) E(\rhp_0) + \theta E(\rhp_1) +\lim\limits_{\eps\to 0} \frac {\theta(1-\theta)}{2}\dFR^2(\rhp_0,\rhp_1)\int_0^1\frac{e^{-\eps H_\theta(t)}-1}{\eps}\rd t
  \\
  = (1-\theta) E(\rhp_0) + \theta E(\rhp_1) - \frac {\theta(1-\theta)}{2}\dFR^2(\rhp_0,\rhp_1)\int_0^1H_\theta(t)\rd t.
 \end{multline*}
Our statement finally follows from $\int_0^1 H_\theta(t)\rd t=\frac 12$ for all $\theta$.
\end{proof}
\appendix
\section{Some technical lemmas}
\label{sec:appendix}
\begin{lem}[Constant-speed reparametrization]
\label{lem:constant_speed_reparametrization}
 Let $(\rhp_t,U_t)$ be a curve connecting $\rhp_0,\rhp_1$ with finite energy
 $$
 E[\rhp,U]=\int_0^1\int_\Omega\rd\rhp_t U_t:U_t\rd t<+\infty.
 $$
 Then there exists a curve $(\check\rhp_t,\check U_t)_{t\in[0,1]}$ connecting $\check\rhp_0=\rhp_0$ to $\check\rhp_1=\rhp_1$ with
 $$
 \|\check U_t\|_{L^2(\rd\check\rhp_t)}\equiv cst
 \qqtext{and}
 E[\check\rhp,\check U]
 = \left(\int_0^1\|U_t\|_{L^2(\rd\rhp_t)}\rd t\right)^2
 \leq E[\rhp,U]
 $$
with strict inequality unless $\|U_t\|_{L^2(\rd\rhp_t)}$ is constant in time.
\end{lem}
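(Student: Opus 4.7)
The plan is the classical arc-length reparametrization, carried out in our measure-valued setting. Define the metric speed $\eta(t) := \|U_t\|_{L^2(\rd\rhp_t)}$ of the initial curve, the arc-length $s(t) := \int_0^t \eta(\tau)\,\rd\tau$, and the total length $L:=s(1)$. The Cauchy--Schwarz inequality in time already gives $L^2=\bigl(\int_0^1 \eta\bigr)^2 \leq \int_0^1 \eta^2 = E[\rhp,U]$, hence $\eta\in L^1(0,1)$ with $L\leq \sqrt{E[\rhp,U]}<+\infty$, and this is strict unless $\eta$ is constant. The degenerate case $L=0$ forces $U_t=0$ in $L^2(\rd\rhp_t)$ for a.e.\ $t$; the weak formulation \eqref{eq:weak_formulation_ODE} then gives $\p_t\rhp=0$, so $\rhp_t\equiv\rhp_0=\rhp_1$, and the trivial choice $(\check\rhp_t,\check U_t):=(\rhp_0,0)$ concludes.

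For $L>0$, the natural reparametrization is $\check\rhp_t:=\rhp_{\sigma(t)}$, $\check U_t:=\sigma'(t)\, U_{\sigma(t)}$ with $\sigma(t):=s^{-1}(Lt)$: a direct change of variables in \eqref{eq:weak_formulation_ODE} shows $(\check\rhp,\check U)\in \mathcal{A}(\rhp_0,\rhp_1)$, and the defining identity $\sigma'(t)\eta(\sigma(t))=L$ yields $\|\check U_t\|_{L^2(\rd\check\rhp_t)}=L$, hence $E[\check\rhp,\check U]=L^2$ as desired. The main obstacle is that $\eta$ may vanish on a set of positive measure, so $s$ is not strictly increasing and $\sigma$ exists only as a generalized inverse. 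To bypass this, I would regularize by setting
$$
s_\delta(t):=s(t)+\delta t,
\qquad
\Phi_\delta(t):=s_\delta^{-1}\bigl((L+\delta)t\bigr),
$$
and considering $(\check\rhp^\delta_t,\check U^\delta_t):=(\rhp_{\Phi_\delta(t)},\,\Phi_\delta'(t)\, U_{\Phi_\delta(t)})\in \mathcal{A}(\rhp_0,\rhp_1)$. The formula $\Phi_\delta'(t)=(L+\delta)/(\eta(\Phi_\delta(t))+\delta)$ then yields
$$
E[\check\rhp^\delta,\check U^\delta]=\int_0^1\left(\frac{(L+\delta)\,\eta(\Phi_\delta(t))}{\eta(\Phi_\delta(t))+\delta}\right)^2\rd t\;\leq\; (L+\delta)^2.
$$

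Passing to the limit $\delta\to 0$ via a weak-$*$ Arzel\`a--Ascoli argument in the spirit of Lemma~\ref{lem:dihs_LSC_weak*} (with $TV$-equicontinuity of $\check\rhp^\delta$ supplied by the uniform energy bound $(L+\delta)^2$ and Lemma~\ref{lem:mass_estimate_TV}) extracts, along a subsequence, a limit admissible curve $(\check\rhp,\check U)\in\mathcal{A}(\rhp_0,\rhp_1)$ with $E[\check\rhp,\check U]\leq L^2$. Each $\check\rhp^\delta$ is a reparametrization of $\rhp$ by a homeomorphism of $[0,1]$ and hence has length exactly $L$; in the limit, the ``rest'' intervals corresponding to $\{\eta=0\}$ merely collapse to points without decreasing length (on such intervals $\rhp$ is already constant), so $\check\rhp$ still has length $L$. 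Cauchy--Schwarz applied to $\check\rhp$ itself then gives $L^2\leq E[\check\rhp,\check U]$, forcing equality $E[\check\rhp,\check U]=L^2$ together with $\|\check U_t\|_{L^2(\rd\check\rhp_t)}\equiv L$ by the equality case of Cauchy--Schwarz. Combined with the bound $L^2\leq E[\rhp,U]$ noted at the start, this proves the claim, with strict inequality precisely unless $\eta$ is constant.
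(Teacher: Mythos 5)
Your regularize-and-pass-to-the-limit scheme is a genuinely different route from the paper's, which performs the arc-length reparametrization directly: there one sets $\mathsf t(s):=\min\{t:\mathsf s(t)=s\}$ (the left generalized inverse of $\mathsf s$), puts $\widetilde\rhp_s:=\rhp_{\mathsf t(s)}$ and $\widetilde U_s:=U_{\mathsf t(s)}/\|U_{\mathsf t(s)}\|_{L^2(\rd\widetilde\rhp_s)}$, observes that the denominator vanishes only on a $\rd s$-negligible set, and rescales $[0,L]\to[0,1]$. Your version avoids having to reason about where the speed vanishes by making $s_\delta$ strictly increasing, at the price of importing the Arzel\`a--Ascoli and lower-semicontinuity machinery (Lemmas~\ref{L:aa}, \ref{lem:mass_estimate_TV} and Proposition~\ref{prop:AC2_energy_convex_LSC}) to recover the limit. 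You also treat the degenerate case $L=0$ explicitly, which the paper leaves implicit.

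The one step that is not actually proved in your proposal is the claim that the limit curve still has length $L$. Lower semicontinuity of $d_H$ under weak-$*$ convergence only yields $\mathrm{length}(\check\rhp)\leq\liminf_\delta \mathrm{length}(\check\rhp^\delta)=L$, i.e.\ the inequality opposite to the one you need; saying that ``rest'' intervals collapse ``without decreasing length'' asserts precisely the reverse inequality without establishing it. The way to close this is to identify the limit explicitly: for every $t$, any accumulation point $\tau$ of $\Phi_\delta(t)$ satisfies $\mathsf s(\tau)=Lt$, hence lies in a level set of $\mathsf s$ on which $\rhp$ is constant, so $\check\rhp^\delta_t\to\rhp_{\mathsf t(Lt)}$ for all $t$ (no subsequence needed). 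One then has $\rhp_\tau=\check\rhp_{\mathsf s(\tau)/L}$ for every $\tau$, so every partition sum for $\rhp$ is a partition sum for $\check\rhp$, giving $L=\mathrm{length}(\rhp)\leq\mathrm{length}(\check\rhp)$; together with the Lipschitz bound $d_H(\check\rhp_{s_1},\check\rhp_{s_2})\leq L|s_1-s_2|$ this yields $\mathrm{length}(\check\rhp)=L$, and your Cauchy--Schwarz argument then finishes correctly. With that repair, your proof is sound, though noticeably heavier than the direct construction used in the paper.
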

\begin{proof}
The argument is fairly standard (see e.g. \cite[lemma 5.3]{KMV16A} or \cite[lemma 1.1.4]{AGS06}) hence we only sketch the idea.
Consider the change of time variable
$$
\mathsf s(t):=\int_0^t \|U_\tau\|_{L^2(\rd\rhp_\tau)}\rd\tau,
\qqtext{and}
\mathsf t(s):= \min\{t\in[0,1]:\quad \mathsf s(t)=s\}
$$
($\mathsf t$ is the left inverse of $\mathsf s$).
Setting
$$
\widetilde \rhp_s:=\rhp_{\mathsf t(s)},
\qquad 
\widetilde U_s:=\frac{U_{\mathsf t(s)}}{\|U_{\mathsf t(s)}\|_{L^2(\widetilde \rhp_s)}}
$$
gives an admissible path connecting $\widetilde\rhp_0=\rhp_0$ to $\widetilde\rhp_L=\rhp_1$ in time $s\in[0,L]$ with
$$
L=\mathsf s(1)=\int_0^1 \|U_\tau\|_{L^2(\rd\rhp_\tau)}\rd\tau
$$
and unit speed $\|\widetilde U_s\|_{L^2(\rd\widetilde\rhp_s)}\equiv 1$.
This is clear at least formally from the chain-rule
$$
\p_s\widetilde \rhp_s
=
\frac{\rd \mathsf t}{\rd s}(s) \p_t\rhp_t(\mathsf t(s))
=
\frac{1}{\frac{\rd \mathsf s}{\rd t}(\mathsf t(s))}\left( \rhp_{\mathsf t(s)} U_{\mathsf t(s)}\right)^{Sym}
=
\frac{1}{ \|U_{\mathsf t(s)}\|_{L^2(\rd\rhp_{\mathsf t(s)})}}\left( \rhp_{\mathsf t(s)} U_{\mathsf t(s)}\right)^{Sym}
=(\widetilde \rhp_s \widetilde U_s)^{Sym}
$$
and can be made rigorous since the denominator $\|U_{\mathsf t(s)}\|_{L^2(\rd\rhp_{\mathsf t(s)})}$ only vanishes at the discontinuity points of $\mathsf t(s)$, which are countable (being $\mathsf t$ monotone nondecreasing) and therefore $\rd s$-negligible.
Scaling $t=Ls$
$$
(\check \rhp_t,\check U_t):=(\widetilde\rhp_{tL},L\widetilde U_{tL})
\qquad \mbox{for }t\in[0,1]
$$
back to the unit interval and noticing that $\|\check U_t\|_{L^2(\rd\check\rhp_t)}\equiv L$ is constant, we get
\begin{multline*}
E[\check \rhp,\check U]
=
\int_0^1 \|\check U_t\|^2_{L^2(\rd\check\rhp_t)} \,\rd t
=
\int_0^1 L^2\rd t=L^2
\\
=\left(\int_0^1\|U_\tau\|_{L^2(\rd\rhp_\tau)}\rd \tau\right)^2
\leq 
\int_0^1\|U_\tau\|_{L^2(\rd\rhp_\tau)}^2\rd \tau
=
E[\rhp,U]
\end{multline*}
as desired.
(The inequality is strict unless $(\rhp,U)$ has constant speed as in our statement.)
\end{proof}
\begin{lem}[Refined Banach-Alaoglu \cite{KMV16A}]
\label{Ban}
Let $(X,\|\cdot\|)$ be a separable normed vector space. Assume that there exists a sequence of seminorms $\{\|\cdot\|_k\}$ ($k=0,1,2,\dots$) on $X$ such that for every $x\in X$ one has 
$$
\|x\|_k\leq C \|x\|
$$
with a constant $C$ independent of $k,x$, and
$$
\|x\|_k\underset{k\to\infty}{\rightarrow} \|x\|_0.
$$
Let $\varphi_k$ ($k=1,2,\dots$) be a uniformly bounded sequence of linear continuous functionals on $(X,\|\cdot\|_k)$, resp., in the sense that 
$$
c_k:=\|\varphi_k\|_{(X,\|\cdot\|_k)^*}\leq C.
$$
Then the sequence $\{\varphi_k\}$ admits a converging subsequence $\varphi_{k_n}\to \varphi_0$ in the weak-$*$ topology of $X^*$, and
\begin{equation} \label{e:liminfp}
\|\varphi_0\|_{(X,\|\cdot\|_0)^*}\leq c_0:=\liminf\limits_k c_{k}.
\end{equation}
\end{lem}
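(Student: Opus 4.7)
The plan is to apply the classical sequential Banach--Alaoglu theorem in $X^*=(X,\|\cdot\|)^*$ (which is applicable since $X$ is separable), and then to extract the seminorm estimate fiberwise by playing the bound $|\varphi_k(x)|\leq c_k\|x\|_k$ against the pointwise convergence $\|x\|_k\to\|x\|_0$.

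More precisely, the first observation is that the uniform domination $\|x\|_k\leq C\|x\|$ upgrades each $\varphi_k$ from an element of $(X,\|\cdot\|_k)^*$ into a genuine element of the norm dual $X^*$: indeed, for every $x\in X$,
\[
|\varphi_k(x)|\leq c_k\,\|x\|_k\leq C\,c_k\,\|x\|\leq C^2\,\|x\|,
\]
so that the sequence $\{\varphi_k\}_{k\geq 1}$ is uniformly bounded in $X^*$. Since $(X,\|\cdot\|)$ is separable, the closed balls of $X^*$ are sequentially weak-$*$ compact (the standard Banach--Alaoglu theorem for separable preduals). Thus, after a first extraction along any subsequence $k_n$ realizing $c_{k_n}\to c_0=\liminf c_k$, a further diagonal extraction (not relabeled) yields $\varphi_{k_n}\to\varphi_0$ weakly-$*$ in $X^*$ for some $\varphi_0\in X^*$.

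It remains to establish \eqref{e:liminfp}. Fix any $x\in X$. By definition of $c_{k_n}$ we have
\[
|\varphi_{k_n}(x)|\leq c_{k_n}\,\|x\|_{k_n}.
\]
The weak-$*$ convergence gives $\varphi_{k_n}(x)\to\varphi_0(x)$; by assumption $\|x\|_{k_n}\to\|x\|_0$; and by construction $c_{k_n}\to c_0$. Passing to the limit in the product of three convergent real sequences yields
\[
|\varphi_0(x)|\leq c_0\,\|x\|_0
\qquad\text{for every } x\in X.
\]
Taking the supremum over $x$ with $\|x\|_0\leq 1$ (and noting that $\varphi_0$ automatically vanishes on the kernel of $\|\cdot\|_0$ by the previous inequality, so that it descends to a bounded functional on the quotient) gives exactly $\|\varphi_0\|_{(X,\|\cdot\|_0)^*}\leq c_0$.

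No step in this argument is genuinely delicate; the only mild care needed is to perform the two extractions in the right order (first along $c_{k_n}\to c_0$, then along the sequential Banach--Alaoglu for $X^*$), so that in the final limit we simultaneously control $\varphi_{k_n}\to\varphi_0$ and $c_{k_n}\to c_0$. The separability assumption on $(X,\|\cdot\|)$ is essential only to guarantee the sequential version of Banach--Alaoglu and plays no other role.
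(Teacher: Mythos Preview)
Your proof is correct. The paper does not actually give its own proof of this lemma: it merely states the result and cites \cite{KMV16A}, so there is nothing to compare against beyond noting that your argument is the standard one---sequential Banach--Alaoglu in $X^*$ (available by separability) followed by passing to the limit in $|\varphi_{k_n}(x)|\leq c_{k_n}\|x\|_{k_n}$ pointwise in $x$, with the extraction done first along a subsequence realizing $c_{k_n}\to\liminf c_k$.
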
 
%
%
%
%
\begin{lem}[Refined Arzel\`a-Ascoli \cite{AGS06,BV18}]
\label{L:aa}
Let $(X,\varrho)$ be a metric space. Assume that there exists a Hausdorff topology $\sigma$ on $X$ such that $\varrho$ is sequentially lower semicontinuous with respect to $\sigma$.  Let $(x^k)_t$, $t\in[0,1]$, be a sequence of curves lying in a common $\sigma$-sequentially compact set $K\subset X$.  Let it be equicontinuous in the sense that there exists a symmetric continuous function $\omega:[0,1]\times [0,1]\to\R_+$, $\omega(t,t)=0$, such that \begin{equation}
\label{eccurve}
\varrho((x^k)_t,(x^k)_{\bar t})\leq \omega(t,\bar t).
\end{equation} for all $t,\bar t\in[0,1]$. Then there exists a $\varrho$-continuous curve $x_t$ such that \begin{equation}
\label{eccurvelim}
\varrho(x_t,x_{\bar t})\leq \omega(t,\bar t),
\end{equation} and (up to a not relabelled subsequence) \begin{equation}
\label{conv}
(x^k)_t\to x_t 
\end{equation} for all $t\in[0,1]$ in the topology $\sigma$.
\end{lem}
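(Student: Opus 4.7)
The strategy is the classical diagonal-extraction argument behind Arzel\`a--Ascoli, adapted to the weaker framework where the pointwise compactness lives in the Hausdorff topology $\sigma$ while equicontinuity is measured in the metric $\varrho$. The only glue between the two is the sequential lower semicontinuity of $\varrho$ w.r.t. $\sigma$, so I will call on it every time I need to transfer an estimate from $\sigma$-limits back to $\varrho$-distances.

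First, I fix a countable dense subset $D\subset[0,1]$ (say $D=\mathbb Q\cap[0,1]$), enumerate it, and perform a standard diagonal extraction: since each sequence $\{(x^k)_s\}_k$ lies in the $\sigma$-sequentially compact set $K$, I successively extract subsequences at $s=t_1,t_2,\dots$ and then take the diagonal, obtaining a subsequence (not relabeled) $(x^k)_s \xrightarrow{\sigma} x_s \in K$ for every $s\in D$. The LSC of $\varrho$ w.r.t. $\sigma$ applied to \eqref{eccurve} immediately yields
\[
\varrho(x_s,x_{\bar s})\leq \liminf_k \varrho((x^k)_s,(x^k)_{\bar s})\leq \omega(s,\bar s), \qquad s,\bar s \in D.
\]

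Second, I extend $x$ to $[0,1]$ and upgrade pointwise $\sigma$-convergence to \emph{every} $t$ along the \emph{same} subsequence. For fixed $t\in[0,1]\setminus D$, any $\sigma$-cluster point $x_t^\ast$ of $\{(x^k)_t\}$ lies in $K$ by $\sigma$-sequential compactness. Applying LSC once more along the converging sub-subsequence gives $\varrho(x_s,x_t^\ast)\leq \omega(s,t)$ for every $s\in D$. If $x_t^{\ast\ast}$ were another cluster point, the triangle inequality for $\varrho$ and continuity of $\omega$ would produce
\[
\varrho(x_t^\ast,x_t^{\ast\ast})\leq 2\omega(s,t)\xrightarrow[s\to t,\,s\in D]{}0,
\]
so the two cluster points coincide; call the common value $x_t$. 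A standard argument in a Hausdorff space with sequentially compact range then promotes ``unique cluster point'' to ``full $\sigma$-convergence'' of the subsequence (if it failed, some sub-subsequence would avoid a $\sigma$-neighborhood of $x_t$ and would, by $\sigma$-sequential compactness of $K$, yield yet another cluster point different from $x_t$, a contradiction).

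Third, I verify the properties of the limit curve. For \emph{arbitrary} $t,\bar t\in[0,1]$, the pointwise $\sigma$-convergence $(x^k)_t\xrightarrow{\sigma}x_t$, $(x^k)_{\bar t}\xrightarrow{\sigma}x_{\bar t}$ combined with LSC of $\varrho$ and \eqref{eccurve} gives $\varrho(x_t,x_{\bar t})\leq \omega(t,\bar t)$, which is \eqref{eccurvelim}; the $\varrho$-continuity of $t\mapsto x_t$ follows because $\omega(t,\bar t)\to 0$ as $\bar t\to t$ by the continuity of $\omega$ on the diagonal. This closes the argument.

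The main obstacle, and the only step that really uses the interplay of the two topologies, is the uniqueness-of-cluster-point step combined with its upgrade to full convergence at points of $[0,1]\setminus D$: one cannot rerun the diagonal procedure on uncountably many $t$'s, so one has to argue that the single subsequence already extracted converges at every $t$. The LSC of $\varrho$ w.r.t. $\sigma$ and the Hausdorff/sequentially-compact structure of $(K,\sigma)$ are precisely what make this possible.
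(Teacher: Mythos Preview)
The paper does not actually supply a proof of this lemma; it is stated in the Appendix with citations to \cite{AGS06,BV18} and no argument is given. Your proposal is correct and is precisely the classical diagonal-extraction argument one finds in \cite[Proposition~3.3.1]{AGS06}: extract on a countable dense set, use the joint sequential lower semicontinuity of $\varrho$ w.r.t.\ $\sigma$ to transfer the modulus $\omega$ to the limits, prove uniqueness of $\sigma$-cluster points at the remaining times via the triangle inequality and density, and upgrade to full convergence by the Hausdorff/sequentially-compact contradiction. There is nothing to compare against in the paper itself, and your write-up would serve perfectly well as the missing proof.
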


%
%
%
%
\begin{proof}[Proof of Proposition \ref{d:bur}] 
First of all, let us show that the right-hand side is always finite and that the minimum is always attained.
For any fixed $A_1\in\Po$ it is easy to check that $(A_t,U_t):=(t^2A_1,\frac 2t I)$ gives an admissible path connecting $A_0=0$ to $A_1$ with finite energy.
In particular any two matrices $A_0,A_1\in\Po$ can be connected through zero as $A_0\leadsto 0\leadsto A_1$ with finite cost, thus the problem is proper.
For fixed $A_0,A_1$ consider now a minimizing sequence $(A^n_t,U^n_t)_{t\in[0,1]}$.
Note that our Lemma~\ref{lem:mass_estimate_TV} applies in particular when $\Omega=\{x\}$ is a one-point space, which gives here equicontinuity and pointwise relative compactness in the form
$$
|A^n_t-A^n_s|_2\leq C |t-s|^\frac 12,
\qquad m^n_t=\tr A^n_t\leq M,
\qquad s,t\in[0,1]
$$
uniformly in $n$.
By the classical Arzel\'a-Ascoli theorem we get, up to extraction of a subsequence if needed,
$$
A^n\to A
\qquad\mbox{uniformly in }C([0,1];\Po).
$$
This immediately shows that the matrix-valued measure $\mu^n:=A^n_t\rd t\to A_t dt=:\mu$ at least weakly-$*$ on $\Mm(0,1)$.
Because
$
\|U^n\|_{L^2(\rd \mu^n)}^2=\int_0^1 A^n_tU^n_t:U^n_t\,\rd t\leq C
$,
an easy application of our Banach-Alaoglu variant (lemma~\ref{Ban}) in varying $L^2(\rd\mu^n)$ spaces gives a limit $U\in L^2(\rd \mu)$ with
$$
\int_0^1 A_t U_t:U_t\rd t
\leq \liminf\limits_{n\to\infty}
\int_0^1 A^n_tU^n_t:U^n_t\,\rd t
$$
and such that $\int_0^1 A^n U^n:V\,\rd t\to \int_0^1 A U:V\,\rd t$ for any reasonably smooth test function $V$.
This shows that this limit $(A_t,U_t)_{t\in[0,1]}$ is an admissible curve joining $A_0,A_1$ with energy
$
E[A,U]\leq \liminf E[A^n,U^n]
$
and this pair is therefore a minimizer.

In order to identify now the left-hand side and the right-hand side of \eqref{e:minibur} we proceed in two steps.\\
{\it Step 1:} assume that $A_0,A_1\in\Pm(d)$ are positive definite, and let $\check A_{0,1}:=\mathfrak r(A_{0,1})$ be the corresponding real extensions as defined in \eqref{eq:def_inclusion_r}.
From Proposition~\ref{d:burwc} there holds $d_B^2(A_0,A_1)=\frac 12 W^2_2(\mathcal N(\check A_0),\mathcal N(\check A_1))$.
In this real setting it is known \cite[Prop. A]{takatsu2011wasserstein} that
$$
W^2_2(\mathcal N(\check A_0),\mathcal N(\check A_1)) = \frac 1 4 \min\limits_{\check A\in\mathcal A(\check A_0,\check A_1)} \int_0^1 \check A_t \check U_t:\check U_t\,\rd t,
$$
where the infimum runs of course over real pairs $\frac{d\check A_t}{dt}=(\check A_t \check U_t)^{Sym}$ with $\check U_t\in \mathcal S(2d)$.
Complexifying back $(\check A,\check U)\leadsto (A,U)$ gives the result.
\\
{\it Step 2: }if now either $A_0$ or $A_1$ are only \emph{semi}-definite we approximate $A^n_0:=A_0+\frac 1n I\to A_0$ and $A^n_1:=A_1+\frac 1n I\to A_1$.
Clearly $A^n_{0,1}\in \Pm$ are positive-definite, hence step 1 applies.

Note from \eqref{e:burexp} that the left-hand side of \eqref{e:minibur} is of course continuous in $A_0,A_1$, hence it suffices to show that the optimal value in the right-hand side is continuous for this particular choice of $A^n_{0,1}\to A_{0,1}$.
Observe that
$$
\mathcal J^*(A_0,A_1):=\frac 1 4 \min_{\mathcal{A}(A_0,A_1)}\int_0^1 A_t U_t :U_t \rd t
$$
is a well-defined function of $A_0,A_1\in \Po$, and we proved earlier that there always exists a minimizer.
Arguing precisely as for the existence of the minimizers, cf. also Lemma \ref{lem:dihs_LSC_weak*}, it is easy to prove that $\mathcal J^*$ is lower semi-continuous in both arguments.
Indeed, up to a subsequence, any sequence $(A^n_t,U^n_t)_{t\in[0,1]}$ of minimizers in $\mathcal J^*(A_0^n,A_1^n)$ converges to an admissible candidate $(A_t,U_t)_{t\in[0,1]}$ connecting $A_0,A_1$, hence $\mathcal J^*(A_0,A_1)\leq E[A,U]\leq \liminf E[A^n,U^n]=\liminf \mathcal J^*(A^n_0,A^n_1)$
(regardless of the particular form of $A_0^n,A_1^n$).

In order to establish the upper continuity, let $A_0=R D_0 R^*$ be a spectral decomposition of $A_0$ and note that obviously $A^n_0=A_0+\frac 1n I=R(D_0+\frac 1n I)R^*$.
Since $A^n_0$ and $A_0$ commute it is easy to check that
%
$$
A^n_{0t}:=R\left[(1-t)\sqrt{D_0}+t\sqrt{D_0+\frac 1n I}\right]^2R^*,
\qquad
U^n_{0t}:=2 R\left(\sqrt{D_0+\frac 1n I}-\sqrt{D_0}\right)R^*\sqrt{A_{0t}^{-1}}
$$
defines an admissible path $(A^n_{0t})_{t\in[0,1]}$ between $A_0$ and $A^n_0$.
Moreover,
a straightforward computation shows that the corresponding energy is
\begin{equation}
\label{eq:appendix_energy_to_0}
\frac 1 4 \int_0^1A^n_{0t} U^n_{0t}:U^n_{0t}\,\rd t
=
\left|\sqrt{D_0}-\sqrt{D_0+I/n}\right|^2_2
\xrightarrow[n\to\infty]{} 0.
\end{equation}
(Actually this path is exactly the Bures geodesic between $A_0,A_0^n$.)
A similar construction yields a path $(A^n_{1t})_{t\in[0,1]}$ connecting $A_1^n$ to $A_1$ with cost
\begin{equation}
\label{eq:appendix_energy_to_01}
\frac 1 4 \int_0^1A^n_{1t} U^n_{1t}:U^n_{1t}\,\rd t
=
\left|\sqrt{D_1}-\sqrt{D_1+I/n}\right|^2_2
\xrightarrow[n\to\infty]{} 0,
\end{equation}
where $D_1$ is the spectral decomposition of $A_1=QD_1Q^*$.
Now pick a minimizer $(\tilde A_t,\tilde U_t)_{t\in[0,1]}$ in the definition of $\mathcal J^*(A_0,A_1)$
and fix a small $\theta_n\in(0,1)$ to be determined shortly.
Rescaling in time and concatenating the paths $A_0^n\leadsto A_0\leadsto A_1\leadsto A_1^n$ in the intervals $t\in[0,\theta_n]$, $t\in[\theta_n,1-\theta_n]$, and $t\in[1-\theta_n,1]$, respectively, we obtain an admissible path $(\hat A^n,\hat U^n)$ from $A_0^n$ to $A_1^n$ whose energy is bounded as
\begin{multline*}
4 \mathcal J^*(A_0^n,A_1^n)\leq \int _0^1\hat A_t^n\hat U^n_t:\hat U^n_t\,\rd t
\\
= \frac{1}{\theta_n} \int_0^1A^n_{0t} U^n_{0t}:U^n_{0t}\,\rd t +\frac{1}{1-2\theta_n} \int_0^1 \tilde A_t\tilde U_t:\tilde U_t\,\rd t
+\frac{1}{\theta_n} \int_0^1A^n_{1t} U^n_{1t}:U^n_{1t}\,\rd t.
\end{multline*}
By \eqref{eq:appendix_energy_to_0}, \eqref{eq:appendix_energy_to_01} we see that the first and third integrals in the right-hand side tend to zero, while the second integral is exactly $4 \mathcal J^*(A_0,A_1)$ by definition of $(\tilde A,\tilde U)$.
Choosing $\theta_n\to 0$ sufficiently slowly and taking $\limsup$ in the previous inequality gives
$$
\limsup_{n \to \infty}  4\mathcal J^*(A_0^n,A_1^n)\leq 0+\lim_{n\to \infty}\frac{1}{1-2\theta_n}\int_0^1 \tilde A_t\tilde U_t:\tilde U_t\,\rd t + 0
=
4\mathcal J^*(A_0,A_1)
$$
and the proof is complete.
\end{proof}
%
%
%
%
\begin{proof}[Proof of Proposition~\ref{prop:Schrodinger_Gaussian}]
Let $\rho_0=\mathcal N(A_0)$ and $\rho_1=\mathcal N(A_1)$.
By \cite[Thm. 3.3 and Thm. 3.4]{leonard2013survey}, if one could write the $(f,g)$ transform
\begin{equation}
\label{eq:Schrodinger_system}
\begin{cases}
\rho_0=f_0g_0\\
\rho_1=f_1g_1
\end{cases}
\end{equation}
for $(f_t)_{t\in[0,1]}$ a \emph{forward} solution of the heat equation with initial datum $f_0$ and $(g_t)_{t\in[0,1]}$ a \emph{backward} solution with terminal datum $g_1$, then the solution of \eqref{e:oldy} would be given by
$$
\rho_t=f_tg_t.
$$
Since the product of Gaussian distributions is Gaussian, it is legitimate to try and solve for $f_0=\mathcal N(B_0)$ and $g_1=\mathcal N(C_1)$ as Gaussians.
Since the heat flow for Gaussians is explicitly given by  \eqref{e:gfb1} we see that the corresponding forward and backward solutions read
$$
\begin{cases}
f_t=\mathcal N(B_t) & \qquad \mbox{with } B_t=B_0+2t I
\\
g_t=\mathcal N(C_t) & \qquad \mbox{with } C_t=C_1+2(1-t) I
\end{cases}
$$
Exploiting the algebraic product rule $\mathcal N(B)\times\mathcal N(C)=\mathcal N\left([B^{-1}+C^{-1}]^{-1}\right)$, we see that, given $A_0,A_1$ the Schr\"odinger system \eqref{eq:Schrodinger_system} is equivalent to solving
$$
\begin{cases}
\mathcal N(A_0)=\mathcal N(B_0)\mathcal N(C_0)\\
\mathcal N(A_1)=\mathcal N(B_1)\mathcal N(C_1)
\end{cases}
\qquad
\Leftrightarrow
\qquad
\begin{cases}
 A_0^{-1}= B_0^{-1}+[C_1+2I]^{-1}
 \\
 A_1^{-1} = [B_0+2I]^{-1}+ C_1^{-1}
\end{cases}
$$
in $B_0,C_1\in \Ppmre$.
It is then a simple exercise to check that this system has a unique solution, and in particular the covariance $A_t$ of $\rho_t=f_tg_t$ is fully determined by
$$
A_t^{-1}=[B_0+2tI]^{-1} +[C_1+2(1-t)I]^{-1}.
$$
\end{proof}
\section*{Acknowledgments}
Credit is due to Aymeric Baradat for our construction of recovery sequences, which was improved and adapted from \cite{baradat2020small}. We are very grateful to the anonymous referee for pointing out the link with the theory of $C^*$-algebras, cf. Remark \ref{noncom2}. 
LM wishes to thank Jean-Claude Zambrini for numerous and fruitful discussions on the Schr\"odinger problem, and acknowledges support from the Portuguese Science Foundation through FCT project PTDC/MAT-STA/28812/2017 {\it Schr\"oMoka}.  DV was partially supported by the FCT projects UID/MAT/00324/2020 and PTDC/MAT-PUR/28686/2017.


\begin{thebibliography}{99}

\bibitem{ahm89}
N.~A. Ahmed and D.~Gokhale.
\newblock Entropy expressions and their estimators for multivariate
  distributions.
\newblock {\em IEEE Transactions on Information Theory}, 35(3):688--692, 1989.

\bibitem{albeverio1989euclidean}
S.~Albeverio, K.~Yasue, and J.~Zambrini.
\newblock Euclidean quantum mechanics: analytical approach.
\newblock In {\em Annales de l'IHP Physique th{\'e}orique}, volume~50, pages
  259--308, 1989.

\bibitem{alv18}
P.~C. Alvarez-Esteban, E.~del Barrio, J.~A. Cuesta-Albertos, C.~Matr{\'a}n,
  et~al.
\newblock Wide consensus aggregation in the wasserstein space. application to
  location-scatter families.
\newblock {\em Bernoulli}, 24(4A):3147--3179, 2018.

\bibitem{ambrosio2000functions}
L.~Ambrosio, N.~Fusco, and D.~Pallara.
\newblock {\em Functions of bounded variation and free discontinuity problems},
  volume 254.
\newblock Clarendon Press Oxford, 2000.

\bibitem{AGS06}
L.~Ambrosio, N.~Gigli, and G.~Savar{\'e}.
\newblock {\em Gradient Flows: in Metric Spaces and in the Space of Probability
  Measures}.
\newblock Basel: Birkh{\"a}user Basel, 2008.

\bibitem{AK18}
W.~Arendt and M.~Kreuter.
\newblock Mapping theorems for {S}obolev spaces of vector-valued functions.
\newblock {\em Studia Math.}, 240(3):275--299, 2018.

\bibitem{Ay17}
N.~Ay, J.~Jost, H.~V{\^a}n~L{\^e}, and L.~Schwachh{\"o}fer.
\newblock {\em Information geometry}.
\newblock Springer, 2017.

\bibitem{baradat2020small}
A.~Baradat and L.~Monsaingeon.
\newblock Small noise limit and convexity for generalized incompressible flows,
  schr{\"o}dinger problems, and optimal transport.
\newblock {\em Archive for Rational Mechanics and Analysis}, 235(2):1357--1403,
  2020.

\bibitem{B13}
F.~Barbaresco.
\newblock Information geometry of covariance matrix: Cartan-siegel homogeneous
  bounded domains, {M}ostow/{B}erger fibration and {F}r\'echet median.
\newblock In {\em Matrix information geometry}, pages 199--255. Springer, 2013.

\bibitem{BenamouBrenier00}
J.-D. Benamou and Y.~Brenier.
\newblock A computational fluid mechanics solution to the {M}onge
  -{K}antorovich mass transfer problem.
\newblock {\em Numerische Mathematik}, 84(3):375--393, 2000.

\bibitem{BZ17}
I.~Bengtsson and K.~{\.Z}yczkowski.
\newblock {\em Geometry of quantum states: an introduction to quantum
  entanglement}.
\newblock Cambridge University Press, Cambridge, 2017.

\bibitem{BJL}
R.~Bhatia, T.~Jain, and Y.~Lim.
\newblock On the bures--wasserstein distance between positive definite
  matrices.
\newblock {\em Expositiones Mathematicae}, 37(2):165--191, 2019.

\bibitem{VB01}
P.~Biane and D.~Voiculescu.
\newblock A free probability analogue of the wasserstein metric on the
  trace--state space.
\newblock {\em Geometric \& Functional Analysis GAFA}, 11(6):1125--1138, 2001.

\bibitem{Braides}
A.~Braides.
\newblock {\em {$\Gamma$}-convergence for beginners}, volume~22 of {\em Oxford
  Lecture Series in Mathematics and its Applications}.
\newblock Oxford University Press, Oxford, 2002.

\bibitem{BV18}
Y.~Brenier and D.~Vorotnikov.
\newblock On optimal transport of matrix-valued measures.
\newblock {\em SIAM Journal on Mathematical Analysis}, 52(3):2849--2873, 2020.

\bibitem{BH99}
M.~R. Bridson and A.~Haefliger.
\newblock {\em Metric spaces of non-positive curvature}, volume 319 of {\em
  Grundlehren der Mathematischen Wissenschaften [Fundamental Principles of
  Mathematical Sciences]}.
\newblock Springer-Verlag, Berlin, 1999.

\bibitem{Bur}
D.~Burago, Y.~Burago, and S.~Ivanov.
\newblock {\em A course in metric geometry}.
\newblock AMS, 2001.

\bibitem{B69}
D.~Bures.
\newblock An extension of {K}akutani's theorem on infinite product measures to
  the tensor product of semifinite {$w^{\ast} $}-algebras.
\newblock {\em Trans. Amer. Math. Soc.}, 135:199--212, 1969.

\bibitem{C14}
E.~Carlen.
\newblock Stochastic mechanics: a look back and a look ahead.
\newblock {\em Diffusion, quantum theory and radically elementary mathematics},
  47:117--139, 2014.

\bibitem{CM14}
E.~A. Carlen and J.~Maas.
\newblock An analog of the 2-{W}asserstein metric in non-commutative
  probability under which the fermionic {F}okker-{P}lanck equation is gradient
  flow for the entropy.
\newblock {\em Comm. Math. Phys.}, 331(3):887--926, 2014.

\bibitem{CM17}
E.~A. Carlen and J.~Maas.
\newblock Gradient flow and entropy inequalities for quantum {M}arkov
  semigroups with detailed balance.
\newblock {\em J. Funct. Anal.}, 273(5):1810--1869, 2017.

\bibitem{carlier2017convergence}
G.~Carlier, V.~Duval, G.~Peyr{\'e}, and B.~Schmitzer.
\newblock Convergence of entropic schemes for optimal transport and gradient
  flows.
\newblock {\em SIAM Journal on Mathematical Analysis}, 49(2):1385--1418, 2017.

\bibitem{CGGT}
Y.~Chen, W.~Gangbo, T.~T. Georgiou, and A.~Tannenbaum.
\newblock On the matrix {M}onge-{K}antorovich problem.
\newblock {\em European Journal of Applied Mathematics, to appear}.

\bibitem{CGP16}
Y.~Chen, T.~T. Georgiou, and M.~Pavon.
\newblock On the relation between optimal transport and schr{\"o}dinger
  bridges: A stochastic control viewpoint.
\newblock {\em Journal of Optimization Theory and Applications},
  169(2):671--691, 2016.

\bibitem{CGP20}
Y.~Chen, T.~T. Georgiou, and M.~Pavon.
\newblock Stochastic control liasons: Richard sinkhorn meets gaspard monge on a
  schroedinger bridge.
\newblock {\em arXiv preprint arXiv:2005.10963}, 2020.

\bibitem{CGT17}
Y.~Chen, T.~T. Georgiou, and A.~Tannenbaum.
\newblock Matrix optimal mass transport: a quantum mechanical approach.
\newblock {\em IEEE Trans. Automat. Control}, 63(8):2612--2619, 2018.

\bibitem{CGT18}
Y.~Chen, T.~T. Georgiou, and A.~Tannenbaum.
\newblock Wasserstein geometry of quantum states and optimal transport of
  matrix-valued measures.
\newblock In {\em Emerging applications of control and systems theory}, Lect.
  Notes Control Inf. Sci. Proc., pages 139--150. Springer, Cham, 2018.

\bibitem{CGT18A}
Y.~Chen, T.~T. Georgiou, and A.~Tannenbaum.
\newblock Interpolation of matrices and matrix-valued densities: the unbalanced
  case.
\newblock {\em European J. Appl. Math.}, 30(3):458--480, 2019.

\bibitem{CP18}
L.~Chizat, G.~Peyr{\'e}, B.~Schmitzer, and F.-X. Vialard.
\newblock An interpolating distance between optimal transport and fisher--rao
  metrics.
\newblock {\em Foundations of Computational Mathematics}, 18(1):1--44, 2018.

\bibitem{CPS18}
L.~Chizat, G.~Peyr{\'e}, B.~Schmitzer, and F.-X. Vialard.
\newblock Scaling algorithms for unbalanced optimal transport problems.
\newblock {\em Mathematics of Computation}, 87(314):2563--2609, 2018.

\bibitem{cuturi2013sinkhorn}
M.~Cuturi.
\newblock Sinkhorn distances: Lightspeed computation of optimal transport.
\newblock In {\em Advances in neural information processing systems}, pages
  2292--2300, 2013.

\bibitem{DT84}
F.~Demengel and R.~Temam.
\newblock Convex functions of a measure and applications.
\newblock {\em Indiana University Mathematics Journal}, 33(5):673--709, 1984.

\bibitem{Bures}
J.~Dittmann.
\newblock Explicit formulae for the {B}ures metric.
\newblock {\em Journal of Physics A: Mathematical and General}, 32(14):2663,
  1999.

\bibitem{DR97}
A.~J. Duran and P.~Lopez-Rodriguez.
\newblock The {$L^p$} space of a positive definite matrix of measures and
  density of matrix polynomials in {$L^1$}.
\newblock {\em J. Approx. Theory}, 90(2):299--318, 1997.

\bibitem{F12}
A.~Ferrante, C.~Masiero, and M.~Pavon.
\newblock Time and spectral domain relative entropy: A new approach to
  multivariate spectral estimation.
\newblock {\em IEEE Transactions on Automatic Control}, 57(10):2561--2575,
  2012.

\bibitem{F08}
A.~Ferrante, M.~Pavon, and F.~Ramponi.
\newblock Hellinger versus {K}ullback--{L}eibler multivariable spectrum
  approximation.
\newblock {\em IEEE Transactions on Automatic Control}, 53(4):954--967, 2008.

\bibitem{fonseca2007modern}
I.~Fonseca and G.~Leoni.
\newblock {\em Modern Methods in the Calculus of Variations: {$L^p$} Spaces}.
\newblock Springer Science \& Business Media, 2007.

\bibitem{Se64}
C.~Goffman and J.~Serrin.
\newblock Sublinear functions of measures and variational integrals.
\newblock {\em Duke Math. J.}, 31:159--178, 1964.

\bibitem{GMP16}
F.~Golse, C.~Mouhot, and T.~Paul.
\newblock On the mean field and classical limits of quantum mechanics.
\newblock {\em Comm. Math. Phys.}, 343(1):165--205, 2016.

\bibitem{I68}
F.~Itakura.
\newblock Analysis synthesis telephony based on the maximum likelihood method.
\newblock In {\em The 6th international congress on acoustics, 1968}, pages
  280--292, 1968.

\bibitem{ja20}
H.~Janati, B.~Muzellec, G.~Peyr\'e, and M.~Cuturi.
\newblock Entropic optimal transport between (unbalanced) gaussian measures has
  a closed form, 2020.

\bibitem{JNG12}
X.~Jiang, L.~Ning, and T.~T. Georgiou.
\newblock Distances and riemannian metrics for multivariate spectral densities.
\newblock {\em IEEE Transactions on Automatic Control}, 57(7):1723--1735, 2012.

\bibitem{JKO}
R.~Jordan, D.~Kinderlehrer, and F.~Otto.
\newblock The variational formulation of the {F}okker--{P}lanck equation.
\newblock {\em SIAM journal on mathematical analysis}, 29(1):1--17, 1998.

\bibitem{K48}
S.~Kakutani.
\newblock On equivalence of infinite product measures.
\newblock {\em Ann. of Math. (2)}, 49:214--224, 1948.

\bibitem{KLMP}
B.~Khesin, J.~Lenells, G.~Misio{\l}ek, and S.~C. Preston.
\newblock Geometry of diffeomorphism groups, complete integrability and
  geometric statistics.
\newblock {\em Geom. Funct. Anal.}, 23(1):334--366, 2013.

\bibitem{Kh20}
B.~Khesin, G.~Misiolek, and K.~Modin.
\newblock Geometric hydrodynamics of compressible fluids.
\newblock {\em arXiv preprint arXiv:2001.01143}, 2020.

\bibitem{KMV16A}
S.~Kondratyev, L.~Monsaingeon, and D.~Vorotnikov.
\newblock A new optimal transport distance on the space of finite {R}adon
  measures.
\newblock {\em Adv. Differential Equations}, 21(11-12):1117--1164, 2016.

\bibitem{LM17}
V.~Laschos and A.~Mielke.
\newblock Geometric properties of cones with applications on the
  {H}ellinger-{K}antorovich space, and a new distance on the space of
  probability measures.
\newblock {\em J. Funct. Anal.}, 276(11):3529--3576, 2019.

\bibitem{L19}
F.~L\'{e}ger.
\newblock A geometric perspective on regularized optimal transport.
\newblock {\em J. Dynam. Differential Equations}, 31(4):1777--1791, 2019.

\bibitem{leonard2012schrodinger}
C.~L{\'e}onard.
\newblock From the {S}chr{\"o}dinger problem to the {M}onge--{K}antorovich
  problem.
\newblock {\em Journal of Functional Analysis}, 262(4):1879--1920, 2012.

\bibitem{leonard2013survey}
C.~L{\'e}onard.
\newblock A survey of the {S}chr{\"o}dinger problem and some of its connections
  with optimal transport.
\newblock {\em arXiv preprint arXiv:1308.0215}, 2013.

\bibitem{LMS16}
M.~Liero, A.~Mielke, and G.~Savar\'e.
\newblock Optimal transport in competition with reaction: the
  {H}ellinger-{K}antorovich distance and geodesic curves.
\newblock {\em SIAM J. Math. Anal.}, 48(4):2869--2911, 2016.

\bibitem{LMS18}
M.~Liero, A.~Mielke, and G.~Savar{\'e}.
\newblock Optimal entropy transport problems and a new
  {H}ellinger-{K}antorovich distance between positive measures.
\newblock {\em Inventiones mathematicae}, 211(3):969--1117, 2018.

\bibitem{lott2009ricci}
J.~Lott and C.~Villani.
\newblock Ricci curvature for metric-measure spaces via optimal transport.
\newblock {\em Annals of Mathematics}, pages 903--991, 2009.

\bibitem{MMP18}
L.~Malag\`o, L.~Montrucchio, and G.~Pistone.
\newblock Wasserstein {R}iemannian geometry of {G}aussian densities.
\newblock {\em Inf. Geom.}, 1(2):137--179, 2018.

\bibitem{MM17}
M.~Mittnenzweig and A.~Mielke.
\newblock An entropic gradient structure for {L}indblad equations and couplings
  of quantum systems to macroscopic models.
\newblock {\em J. Stat. Phys.}, 167(2):205--233, 2017.

\bibitem{M17}
K.~Modin.
\newblock Geometry of matrix decompositions seen through optimal transport and
  information geometry.
\newblock {\em Journal of Geometric Mechanics}, 9(3):335--390, 2017.

\bibitem{MTV}
L.~Monsaingeon, L.~Tamanini, and D.~Vorotnikov.
\newblock The dynamical Schr{\"o}dinger problem in abstract metric spaces.
\newblock {\em In preparation}, 2020.

\bibitem{M85}
B.~Musicus and R.~Johnson.
\newblock Multichannel relative-entropy spectrum analysis.
\newblock {\em IEEE transactions on acoustics, speech, and signal processing},
  34(3):554--564, 1986.

\bibitem{N14}
L.~Ning, T.~T. Georgiou, and A.~Tannenbaum.
\newblock On matrix-valued monge--kantorovich optimal mass transport.
\newblock {\em IEEE transactions on automatic control}, 60(2):373--382, 2014.

\bibitem{otto01}
F.~Otto.
\newblock The geometry of dissipative evolution equations: the porous medium
  equation.
\newblock {\em Comm. Partial Differential Equations}, 26(1-2):101--174, 2001.

\bibitem{PCS16}
G.~Peyr{\'e}, L.~Chizat, F.-X. Vialard, and J.~Solomon.
\newblock Quantum entropic regularization of matrix-valued optimal transport.
\newblock {\em European Journal of Applied Mathematics}, 30(6):1079--1102,
  2019.

\bibitem{peyre2019computational}
G.~Peyr{\'e}, M.~Cuturi, et~al.
\newblock Computational optimal transport: With applications to data science.
\newblock {\em Foundations and Trends{\textregistered} in Machine Learning},
  11(5-6):355--607, 2019.

\bibitem{powers1970free}
R.~T. Powers and E.~St{\o}rmer.
\newblock Free states of the canonical anticommutation relations.
\newblock {\em Communications in Mathematical Physics}, 16(1):1--33, 1970.

\bibitem{S70}
S.~Sakai.
\newblock {\em {$C\sp*$}-algebras and {$W\sp*$}-algebras}.
\newblock Springer-Verlag, New York-Heidelberg, 1971.
\newblock Ergebnisse der Mathematik und ihrer Grenzgebiete, Band 60.


\bibitem{S15}
F.~Santambrogio.
\newblock {\em Optimal transport for applied mathematicians}.
\newblock Birkh\"auser/Springer, 2015.

\bibitem{VP19}
T.~S{\'e}journ{\'e}, J.~Feydy, F.-X. Vialard, A.~Trouv{\'e}, and G.~Peyr{\'e}.
\newblock Sinkhorn divergences for unbalanced optimal transport.
\newblock {\em arXiv preprint arXiv:1910.12958}, 2019.

\bibitem{takatsu2011wasserstein}
A.~Takatsu.
\newblock Wasserstein geometry of gaussian measures.
\newblock {\em Osaka Journal of Mathematics}, 48(4):1005--1026, 2011.

\bibitem{Tao12}
T.~Tao.
\newblock {\em Topics in random matrix theory}.
\newblock American Mathematical Soc., 2012.

\bibitem{U76}
A.~Uhlmann.
\newblock The ``transition probability'' in the state space of a
  {$\sp*$}-algebra.
\newblock {\em Rep. Mathematical Phys.}, 9(2):273--279, 1976.

\bibitem{U92}
A.~Uhlmann.
\newblock The metric of {B}ures and the geometric phase.
\newblock In {\em Groups and related Topics, Gielerak et al. (eds.)}, pages
  267--274. Springer, 1992.

\bibitem{villani03topics}
C.~Villani.
\newblock {\em Topics in optimal transportation}.
\newblock American Mathematical Soc., 2003.

\bibitem{villani08oldnew}
C.~Villani.
\newblock {\em Optimal transport: old and new}.
\newblock Springer Science \& Business Media, 2008.

\bibitem{Y81}
K.~Yasue.
\newblock Stochastic calculus of variations.
\newblock {\em Journal of functional Analysis}, 41(3):327--340, 1981.

\bibitem{Z86}
J.-C. Zambrini.
\newblock Variational processes and stochastic versions of mechanics.
\newblock {\em Journal of Mathematical Physics}, 27(9):2307--2330, 1986.

\end{thebibliography}
\end{document}